\newtheorem{theorem}{Theorem}
\newtheorem{lemma}[theorem]{Lemma}
\newtheorem{corollary}[theorem]{Corollary}
\newtheorem{definition}[theorem]{Definition}
\newtheorem{remark}[theorem]{Remark}
\newtheorem{example}[theorem]{Example}
\DeclareMathOperator*{\bigtimes}{\mathlarger{\times}}
\begin{document}

	\title{Hilbert space embeddings of independence tests and interaction measures of several variables}
	
	\author{
		Jean C. Guella \\
		\small Universidade Estadual de Mato Grosso do Sul \\
		\small Nova Andradina, Brazil \\
		\small \texttt{jean.guella@uems.br}
	}
	
	\date{} 
	
	\maketitle

\begin{abstract}		We present a unified theoretical framework for kernel-based measures of dependence on product spaces. Building on the ideas underlying distance covariance, distance multivariance, and the Hilbert-Schmidt Independence Criterion (HSIC), we define a new family of kernels on an $n$-fold Cartesian product, termed positive definite independent of order $k$ (PDI$_{k}$ kernels). These kernels extend the concepts of positive definite  and conditionally negative definite kernels to higher orders and provide the foundation for generalized independence and interaction tests, such as the generalized Lancaster interaction of order $k$ ($\Lambda_{k}^{n}$), and the Streitberg interaction ($\Sigma$). Our analysis focuses on the continuous setting, where we prove a Kernel Mean Embedding Theorem for PDI$_{k}$ kernels and establish the corresponding integrability restrictions. Based on these results, we characterize how the Kronecker products of PDI kernels behave.
\end{abstract}

\tableofcontents

		\section{Introduction}

Kernel-based measures of dependence have become fundamental tools in statistics and machine learning.  They include the \emph{distance covariance}  \citep{ Szekely2007, Szekely2009, Feuerverger1993, Bakirov2006,  Szekely2014, Dueck2014, Han2024, Janson2021, MartinezGomez2014, Szekely2013, Yao2018}) and the \emph{Hilbert-Schmidt Independence Criterion} (HSIC) \cite{Gretton2005, Gretton2008, Albert2022, Gretton2005, Gretton2008, Pfister2018, Sejdinovic2013a, Poczos2012, Tjoestheim2022, Zhu2020}, which characterize independence between random variables by the nonnegativity of quadratic forms involving positive definite (PD) or conditionally negative definite (CND) kernels.

In a recent paper \cite{Guella2025}, we obtained a Schoenberg type of characterization (extending the seminal paper \cite{Schoenberg1938})  for all radial kernels of several variables that can be used as an independence test in any dimension. Precisely, it provides a description of the continuous functions $g:[0, \infty)^{n} \to \mathbb{R} $ such that for any  $d \in \mathbb{N}$ and any  discrete probability measure $P $ in $(\mathbb{R}^{d})_{n}:=\prod_{i=1}^{n}\mathbb{R}^{d}$, we have that $P \neq  \bigtimes_{i=1}^{n}P_{i}$ if and only if 
\begin{equation}\label{objective}
	\int_{(\mathbb{R}^{d})_{n}} \int_{(\mathbb{R}^{d})_{n}} g(\|x_{1} - y_{1} \|^{2}, \ldots, \|x_{n} - y_{n} \|^{2} )d[P - \bigtimes_{i=1}^{n}P_{i}](x )d[P - \bigtimes_{i=1}^{n}P_{i}]( y )> 0. 
\end{equation}

Although Equation \ref{objective} reduces to a double sum, we adopt integration terminology to simplify the broader theoretical expressions.

The interest in these functions lies in obtaining an all purpose independence test on Euclidean spaces (that is, with no restrictions in the dimension).

A key difficulty in obtaining such a characterization is that  the set of signed measures 
\[
\{P - \bigtimes_{i=1}^{n}P_{i}, \quad P \text{ is a  discrete probability measure}\}
\] 
is very difficult to analyze, as it is not a vector space (not even a convex space). However, if we restrict the functions that satisfy  Equation \eqref{objective} by additionally demanding that they can distinguish whether two discrete probabilities $P,Q$ in $(\mathbb{R}^{d})_{n}$ are equal, provided that $P_{i}=Q_{i}$ for any $1\leq i \leq n$, we are essentially analyzing the problem on the vector space $\mathcal{M}_{2}((\mathbb{R}^{d})_{n})$ proposed by the author (see Remark \ref{hanhjordanequivalence}) and from this starting point a characterization is feasible.

Surprisingly, it turns out that the class of functions that satisfies  Equation \eqref{objective} and those with this additional requirement are the same, see Theorem \ref{bernsksevndimpart3} for the case $k=2$. As a matter of fact, even the standard definitions of distance covariance and HSIC satisfy  this property on $\mathcal{M}_{2}$.

Based on this equivalence, we   analyze in this paper what properties a kernel   $\mathfrak{I}: \mathds{X}_{n} \times \mathds{X}_{n} \to \mathbb{R}$ must satisfy so that for every nonzero discrete measure $\mu \in \mathcal{M}_{2}( \mathds{X}_{n})$  it holds
\begin{equation*}
	\int_{\mathds{X}_{n} }	\int_{\mathds{X}_{n} }\mathfrak{I}(u,v)d\mu(u)d\mu(v) > 0.
\end{equation*}	

Similar to the radial case, in various other contexts, this property is equivalent to being an independence test, see Corollary \ref{productnkernels}  on the case $k=2$.

However, in many real world scenarios the multivariate data might not be independent, but the probability might interact with its marginals in different ways that are relevant for the problem, see for instance \cite{Liu2023} and \cite{Liu2024} and references therein.

In this sense, two types of generalized independence (usually called interactions) have gained attention in the literature of kernel methods recently: the Streitberg \cite{Streitberg1990} and the Lancaster interaction \cite{Lancaster1969} (they are defined in Section \ref{Terminology}, and they are part of the broader context of partition lattices, see \cite{Liu2023}). In \cite{Guella2025}, it is also described the  set of continuous functions $g:[0, \infty)^{n} \to \mathbb{R} $ such that for any $d \in \mathbb{N}$ and any discrete probability measure $P $ in $(\mathbb{R}^{d})_{n}$ satisfies that its Streitberg interaction $\Sigma[P]\neq 0$ (or its Lancaster interaction $\Lambda[P]\neq 0$) if, and only if 
\begin{equation}\label{objective2}
	\int_{(\mathbb{R}^{d})_{n}} \int_{(\mathbb{R}^{d})_{n}} (-1)^{n}g(\|x_{1} - y_{1} \|^{2}, \ldots, \|x_{n} - y_{n} \|^{2} )d\Sigma[P](x )d\Sigma[P](y) >0.
\end{equation} 

Similar to the independence tests above, such a task is difficult, but if we additionally impose that the functions that satisfy  Equation \eqref{objective2} can also  differentiate whether $P=Q$, provided that $P_{F}=Q_{F}$ for any $F \subset \{1, \ldots, n\}$, $|F|\leq n-1$, we are essentially analyzing the problem on the vector space $\mathcal{M}_{n}((\mathbb{R}^{d})_{n})$ (see Remark \ref{hanhjordanequivalence}, and by Lemma \ref{exm02xn} both $\Sigma[P]$ and $\Lambda[P]$ are elements of $\mathcal{M}_{n}((\mathbb{R}^{d})_{n})$ for any probability $P$). As before, it turns out that the class of functions that satisfies Equation \ref{objective2} and the ones with this additional requirement are the same, see Theorem \ref{bernsksevndimpart3} for the case $k=n$. We emphasize that for this class of radial kernels on all Euclidean spaces, we obtained that the tests capable of discerning whether $\Sigma[P]=0$ are the same as those for $\Lambda[P]=0$, even though these two equalities have different probabilistic conclusions.

Based on this equivalence, we   analyze in this paper what properties a kernel  $\mathfrak{I}: \mathds{X}_{n} \times \mathds{X}_{n} \to \mathbb{R}$ must satisfy so  that for every  nonzero discrete   measure $\mu \in \mathcal{M}_{n}( \mathds{X}_{n})$, it holds    
\begin{equation*} 
	\int_{\mathds{X}_{n} } \int_{\mathds{X}_{n} }(-1)^{n}	\mathfrak{I}(u,v)d\mu(u)d\mu(v) > 0.
\end{equation*}	
The term $(-1)^{n}$ is convenient for the development of the subject.  This approach is based on the theory of Distance Multivariance, see \cite{Boettcher2018, Boettcher2019, Chakraborty2019}, where it is studied the Kronecker product of $n$ CND radial kernels. As the radial case, in some scenarios this property is equivalent at being  a test for whether or not  $\Sigma[P]=0$, but also for  $\Lambda[P]=0$, see Corollary \ref{productnkernels}  on the case $k=n$.

In \cite{Guella2025}, a gap is noted between Lancaster/Streitberg interactions and the standard independence test.  It is filled with an index $k$, where  $2\leq k\leq  n$,  by defining a  vector spaces $\mathcal{M}_{k}$ (see Subsection \ref{Terminology}), which lie between $\mathcal{M}_{2}$ and $\mathcal{M}_{n}$, and  by defining a  generalization of the Lancaster interaction with an index $k$, where when $k=2$ we have the standard independence test and when $k=n$ we have the standard Lancaster interaction. A problem similar to Equations \ref{objective} and \ref{objective2} can be analyzed for these intermediate cases; surprisingly, testing whether the generalized Lancaster interaction is zero is equivalent to operating within the vector space  $\mathcal{M}_{k}$.

Based on this equivalence, we   analyze in this paper what properties, for an $2\leq k \leq n $,   a kernel   $\mathfrak{I}: \mathds{X}_{n} \times \mathds{X}_{n} \to \mathbb{R}$ must satisfy so that for every nonzero $\mu \in \mathcal{M}_{k}( \mathds{X}_{n})$  it holds
\begin{equation*}
	\int_{\mathds{X}_{n} }	\int_{\mathds{X}_{n} }(-1)^{k}\mathfrak{I}(u,v)d\mu(u)d\mu(v) > 0.
\end{equation*}	

As the radial case,  in other different scenarios  this property is equivalent to being a test for an generalized independence test for $\Lambda_{k}^{n}$, see Corollary \ref{productnkernels}.

A concurrent path that we take along the text is to understand these new types of kernels on the continuous setting, with an emphasis on how  we can produce a kernel mean embedding for them. This path leads to some intriguing results even for PD kernels, such as Example  \ref{exkerneldiag}, where it is proved that the set of measures that have good integrability restrictions (FIP integrability) with respect to a PD kernel is not necessarily a vector space.

The paper is divided into $4$ sections and  $2$ appendices.

In Section \ref{Definitions}, we review in detail the most important results of the literature for the development of this paper. It   includes a few definitions, notations, the properties of the spaces $\mathcal{M}_{k}$ (discrete measures) and $\mathfrak{M}_{k}$ (continuous measures), along with the main results of Bernstein functions of order $k$ with $n$ variables, whose properties are the main guide for some results we aim in this paper. We emphasize that the results of  \cite{Guella2025} are not a prerequisite for this paper, but they serve as a guide to several results. Also, \cite{Guella2025} it is the first paper to define and give a statistical meaning to the spaces  $\mathcal{M}_{k}$ and for the Lancaster interaction of order $k$, denoted as  $\Lambda_{k}^{n}$. 

After Section \ref{Definitions}, all definitions and  results presented are new in the literature.

In Section \ref{Positivedefiniteindependentkernelsofordern}, our objective is to analyze the behavior of the kernels $\mathfrak{I}: \mathds{X}_{n} \times \mathds{X}_{n} \to \mathbb{R}$ such that for every discrete measure $\mu \in \mathcal{M}_{n}( \mathds{X}_{n})$ it satisfies
\begin{equation}\label{gencasen}
	\int_{\mathds{X}_{n} }	\int_{\mathds{X}_{n} }(-1)^{n}\mathfrak{I}(u,v)d\mu(u)d\mu(v) \geq 0.
\end{equation}	

Initially, we obtain the main properties of those kernels  following a similar path as Section $3$ in \cite{Berg1984}, and we also obtain a geometrical interpretation of those kernels in Theorem \ref{PDIngeometryrkhs}.  In Subsection \ref{Integrabilityrestrictionsn} we move to the continuous case. First, we focus on the analysis of which probabilities we can compare using this method and a version of the Kernel Mean Embedding for them in Theorem \ref{integPDIn}. Here, we define a setting that is suitable for integration of partition Lattices, which we call FIP integrals, defined in this subsection but developed in Appendix \ref{FIPintegrals}, as it has an importance in its own definition.

In Section \ref{IndependencetestsembeddedinHilbertspaces}, our objective is to analyze the behavior of the kernels $\mathfrak{I}: \mathds{X}_{n} \times \mathds{X}_{n} \to \mathbb{R}$ such that for every discrete measure $\mu \in \mathcal{M}_{k}( \mathds{X}_{n})$, with a focus on cases where $2\leq k \leq n-1$, it satisfies
\begin{equation}\label{gencasek}
	\int_{\mathds{X}_{n} }	\int_{\mathds{X}_{n} }(-1)^{k}\mathfrak{I}(u,v)d\mu(u)d\mu(v) \geq 0.
\end{equation}	

Again,  we obtain the main properties of those kernels  following a similar path as Section $3$ in \cite{Berg1984}. In general, this type of kernel is more difficult to analyze compared to those in Section \ref{Positivedefiniteindependentkernelsofordern}, and we  sometimes  make use of the additional symmetry assumption of complete $n$-symmetry to avoid a combinatorial burden. A particularly interesting  example is Theorem \ref{ineqpdik}, which is inspired by the radial result of  Corollary \ref{growthpdiknvar}, and with it several results in Section \ref{IndependencetestsembeddedinHilbertspaces} are obtained as a direct application of the arguments from Section \ref{Positivedefiniteindependentkernelsofordern}. 	In Subsection \ref{Integrabilityrestrictionsnk} we move to the continuous setting and the focus is to analyze which probabilities we can compare using this method and a version of the Kernel Mean Embedding for them in Theorem \ref{integPDIkn}, by making use of FIP integrals.

In Section \ref{KroneckerproductsofPDIkernels}, we move  to a generalization of the core idea of Distance covariance/HSIC: if $\gamma_{1}: X\times X \to \mathbb{R}$ and $\gamma_{2}: Y\times Y \to \mathbb{R}$ are both Integrally Strictly Positive Definite (ISPD)  or  CND-Characteristic, then 
$$
\int_{X\times Y} \int_{X\times Y} \gamma_{1}(x_{1}, x_{2})\gamma_{2}(y_{1}, y_{2})d[P -P_{1}\times P_{2}](x_{1}, y_{1} )d[P -P_{1}\times P_{2}](x_{2}, y_{2} )> 0, 
$$
whenever $P \neq P_{1}\times P_{2}$. This property was generalized in Corollary $3.11$ in \cite{guella2023} by replacing $P -P_{1}\times P_{2}$ by any measure $\mu \in \mathfrak{M}_{2}(X\times Y)$, provided that it satisfies some integrability conditions. In the present paper we complete characterize the properties of the  Kronecker products of PDI  kernels in Theorem  \ref{Kroengeral} (discrete case) and in Theorem \ref{Kroenconti} (continuous case). We note that in general, there are not many examples of Kronecker product of kernels that can be used to use as an standard independence test when the dimension of the space is at least $3$, see Corollary \ref{classkroenne}.

We emphasize Corollary \ref{productnkernels}, which describes the main properties of the Kronecker product of $n$ kernels on an $n$-fold Cartesian product.   We conjecture that the behavior of  Corollary \ref{productnkernels} regarding the equivalence between the approach of using the vector spaces $\mathcal{M}_{k}$ and of generalized Lancaster/Streitberg interactions should be similar for the most common used families of kernels.

On the Appendix we present two topics.

In Section \ref{geoint}  our focus is to obtain a  geometrical interpretation for PDI$_{k}$ kernels defined in $\mathds{X}_{n}$ by using the RKHS of the related positive definite kernel $	K^{\mathfrak{I}}$. In it, we obtain $\mathfrak{I}$ can be written in several ways as a sum of squares of norms of vectors in the RKHS of $K^{\mathfrak{I}}$.  However, we have found explicit methods  so that this sum of squares only involves nonnegative coefficients (thus showing that $\mathfrak{I}$ must be nonnegative function) only for when  the codimension $n-k$ is is either $0,1$ or $2$. We conjecture that this property holds in general, but the combinatorial complexity increases rapidly as the codimension $n-k$ increases.

In Appendix \ref{FIPintegrals}, we develop the main results of FIP integrals. It also includes the Example \ref{FIPbadintegration} about the non linearity aspects of FIP integrals, and   Theorem  \ref{FIPproduct}, which is an interesting  Holder inequality for partition Lattices.  

The Table \ref{tab:notation} consists of the main symbols and definitions used in the text and where to find them.

\section{Preliminaries and Notation}\label{Definitions}

This section introduces notation and background material used throughout the paper. We review basic concepts from measure theory, kernel embedding, and interaction measures, and we establish the framework within which higher-order dependence structures will be studied. For the reader's convenience, Table \ref{tab:notation} summarizes the key symbols and definitions used, along with where they are first formally presented.

\begin{table}[!htbp]
	\centering
	\caption{Summary of Key Notation and Definitions used in the paper}
	\label{tab:notation}
	\begin{tabular}{lp{10cm}}
		\textbf{Symbol / Definition} & \textbf{Description \& Location} \\
		$\mathds{X}_{n}$ & Cartesian product $\prod_{i=1}^{n}X_{i}$ (Subsection \ref{Terminology}). \\
		$\mathcal{M}(X)$ / $\mathfrak{M}(X)$ & Discrete/finite signed Radon measures on $X$ (Equation \ref{medidaarbirad}). \\
		$\mathcal{M}_{k}(\mathds{X}_n)$ / $\mathfrak{M}_{k}(\mathds{X}_n)$ & Measures encoding interactions up to order $k$ in discrete/continuous settings (Equation \ref{medidamargk}). \\
		$\Lambda[P]$ & Standard Lancaster interaction of a probability $P$, (above Theorem \ref{generallancaster}). \\
		$\Lambda_k^n[P]$ & Generalized Lancaster interaction of order $k$ (above Theorem \ref{generallancaster}, see\citep{Guella2025}). \\
		$\Sigma[P]$ & Streitberg interaction of a probability $P$ (Theorem \ref{streit}). \\
		$\Delta_{k-1}^{n}$ & Extended diagonal of order $k-1$ in $\mathds{X}_{n}\times \mathds{X}_{n}$ (Equation \ref{exdia1def}). \\
		$\mu^{n}_{k}[x_{\vec{1}}, x_{\vec{2}}]$ & Discrete measure acting as a building block for order $k$ (Equation \ref{measorderk}). \\
		$P_{\pi}$ & Probability factoring according to a partition $\pi$ (Equation \ref{partitionprobability}). \\
		$\mathcal{H}_K$/$\mathcal{H}_{\mathfrak{I}}$ & Reproducing Kernel Hilbert Space (RKHS) of the PD kernel $K$/ PDI$_{k}$ kernel $\mathfrak{I}$. \\
		$\mathbb{N}^{n}_{m}$ / $\mathbb{N}^{0,n}_{m}$ & Sets $\{1, \ldots, m\}^n$/$\{0, 1, \ldots, m\}^n$ (Subsection \ref{Terminology}). \\
		$\vec{1}$, $\vec{2}$ & Vectors with all entries equal to 1 or 2 (Subsection \ref{Terminology}). \\
		$x_{\alpha_F + \beta_{F^c}}$ & Mixed multi-index element in $\mathds{X}_n$ (Subsection \ref{Terminology}). \\
		FIP & \emph{Fully Integrable by Partitions} integral condition (Definition \ref{FIP}, subject developed in Appendix \ref{FIPintegrals}). \\
		Marginally Delimited & Boundedness condition on the marginals of a signed  measure relative to  a probability (Definition \ref{margdeli}). \\
		$\text{PDI}_n$/$\text{PDI}_k$ & Positive Definite Independent kernel of order $n$/$k$ (Definition \ref{PDIn} and  \ref{PDI2}). \\
		$\mathcal{P}[\mathfrak{I}]$/$\mathfrak{M}[\mathfrak{I}]$ & Probabilities/Measures that are  FIP for the PDI kernel $\mathfrak{I}$ (Lemma \ref{integPDIn0} and \ref{integPDInk}). \\
		$\mathfrak{M}[\mathfrak{I}, P]$/$\mathfrak{M}_{k}[\mathfrak{I}, P]$ & Measures in $\mathfrak{M}[\mathfrak{I}]$/$\mathfrak{M}_{k}[\mathfrak{I}]$ marginally delimited by $P$ (Lemma \ref{integPDIn0} and \ref{integPDInk}). \\
		$\text{PDI}_n$-Characteristic & A $\text{PDI}_n$ kernel that induces  strict inner products (Definition \ref{PDIn-Characteristic}). \\
		$\text{PDI}_k$-Characteristic & A $\text{PDI}_k$ kernel that induces  strict inner products (Definition \ref{PDIkn-Characteristic}). \\
		$n$-symmetric kernel &  Symmetry for a kernel defined on $\mathds{X}_{n}$ (Equation  \ref{nsymmetrydefn}).\\
		complete $n$-symmetric kernel &  Additional symmetry for a kernel defined on $\mathds{X}_{n}$ (Definition \ref{completensymmetry}).\\
	\end{tabular}
\end{table}

First, for a Hausdorff space $X$ we use the notation

\begin{equation}\label{medidaarbirad}
	\mathfrak{M}(X):=\{\text{The vector space of all real Radon regular measures of finite variation  in } X\}. 
\end{equation}

By having finite variation, measures on  $\mathfrak{M}(X)$ are always  inner and outer regular. For more information on measures on Hausdorff spaces see Chapter $2$ in \cite{Berg1984}. Recall that every Borel measure of finite variation (in particular, every probability measure) on a separable, complete metric space (also known as Polish space) is necessarily Radon. 

Additional important definitions and results about measures are given in Section \ref{Terminology}.

\subsection{Positive definite and conditionally negative definite kernels}

A symmetric kernel $K: X \times X \to \mathbb{R}$ is called Positive Definite (PD) if, for every finite quantity of distinct points $x_{1}, \ldots, x_{n} \in X$ and scalars $c_{1}, \ldots, c_{n} \in \mathbb{R}$, we have 
$$
\sum_{i, j =1}^{n}c_{i}c_{j} K(x_{i}, x_{j}) \geq 0.
$$
The kernel $K$ is Strictly Positive Definite (SPD) if equality holds in the previous inequality only when all scalars $c_{i}$ are zero.

The Reproducing Kernel Hilbert Space (RKHS) of a PD kernel $K: X \times X \to \mathbb{R}$ is the Hilbert space $\mathcal{H}_{K} \subset \mathcal{F}(X, \mathbb{R})$, which satisfies \cite{Steinwart2008}
\begin{enumerate}
	\item[$(i)$] The function $x \in X \mapsto K_{y}(x):= K(x,y) \in \mathcal{H}_{K}$ for any $y \in X$;
	\item[$(ii)$] $\langle K_{x}, K_{y}\rangle = K(x,y) $ for any $x,y \in X$;
	\item[$(iii)$] $\langle K_{x}, f\rangle =f(x) $ for any $f \in \mathcal{H}_{K}$ and $x \in X$;
	\item[$(iv)$] $\overline{ \operatorname{span}\{ K_{y}, \quad y \in X\}}= \mathcal{H}_{K}$.
\end{enumerate} In particular, if $X$ is a Hausdorff space and $K$ is continuous, then $\mathcal{H}_{K} \subset C(X)$.

The well known Kernel Mean Embedding Theorem, shows how a continuous PD kernel induces a semi-inner product on a subspace of $\mathfrak{M}(X)$. 

\begin{theorem}\label{initialextmmddominio}
	If $K: X \times X \to \mathbb{R}$ is a continuous positive definite kernel and $\mu \in \mathfrak{M}(X)$ satisfies $\sqrt{K(x,x)} \in L^{1}(\lvert \mu \rvert)$ ($\mu \in \mathfrak{M}_{\sqrt{K}}(X)$), then 
	$$
	z \in X \mapsto K_{\mu}(z):=\int_{X} K(x,z)d\mu(x) \in \mathbb{R}
	$$	
	is an element of $\mathcal{H}_{K}$. Moreover, if $\eta$ is another measure with the same condition, then
	$$
	\langle K_{\eta}, K_{\mu}\rangle_{\mathcal{H}_{K}}= \int_{X} \int_{X}K(x,y)d\eta(x)d\mu(y).
	$$
	In particular, the map $(\eta,\mu)\mapsto \langle K_{\eta}, K_{\mu}\rangle_{\mathcal{H}_{K}}$ defines a semi-inner product on $\mathfrak{M}_{\sqrt{K}}(X)$
\end{theorem}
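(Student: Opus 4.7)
The plan is to produce the element $K_\mu \in \mathcal{H}_K$ by identifying it as the Riesz representative of a bounded linear functional on $\mathcal{H}_K$, and then to derive the inner product formula by combining the reproducing property with Fubini's theorem. The pointwise Cauchy--Schwarz inequality for positive definite kernels, $|K(x,z)| \leq \sqrt{K(x,x)}\sqrt{K(z,z)}$, is the analytic workhorse that controls every integrability concern.

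First I would fix $\mu \in \mathfrak{M}_{\sqrt{K}}(X)$ and note that, by Cauchy--Schwarz for $K$, the integrand $x \mapsto K(x,z)$ is dominated by $\sqrt{K(z,z)}\cdot\sqrt{K(x,x)}$, which lies in $L^1(|\mu|)$ by hypothesis. Hence the pointwise formula $K_\mu(z) = \int_X K(x,z)\, d\mu(x)$ defines a real valued function on $X$. To show $K_\mu \in \mathcal{H}_K$, I consider the linear functional
\[
L_\mu : \mathcal{H}_K \to \mathbb{R}, \qquad L_\mu(f) := \int_X f(x)\, d\mu(x).
\]
By the reproducing property, $|f(x)| = |\langle f, K_x\rangle| \leq \|f\|_{\mathcal{H}_K}\sqrt{K(x,x)}$, so
\[
|L_\mu(f)| \leq \|f\|_{\mathcal{H}_K} \int_X \sqrt{K(x,x)}\, d|\mu|(x),
\]
which shows $L_\mu$ is well defined and bounded on $\mathcal{H}_K$. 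Riesz representation then yields a unique $g_\mu \in \mathcal{H}_K$ with $L_\mu(f) = \langle f, g_\mu\rangle_{\mathcal{H}_K}$ for every $f \in \mathcal{H}_K$. Specializing $f = K_z$ and using the reproducing property together with symmetry of $K$ gives $g_\mu(z) = \langle K_z, g_\mu\rangle_{\mathcal{H}_K} = L_\mu(K_z) = \int_X K(x,z)\, d\mu(x) = K_\mu(z)$, so $K_\mu = g_\mu \in \mathcal{H}_K$.

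For the inner product identity, given another $\eta \in \mathfrak{M}_{\sqrt{K}}(X)$, I compute $\langle K_\eta, K_\mu\rangle_{\mathcal{H}_K} = L_\eta(K_\mu) = \int_X K_\mu(x)\, d\eta(x) = \int_X \int_X K(y,x)\, d\mu(y)\, d\eta(x)$. The step that actually needs justification is the interchange of the two integrals. Here the Cauchy--Schwarz bound $|K(x,y)| \leq \sqrt{K(x,x)}\sqrt{K(y,y)}$ together with Tonelli gives
\[
\int_X \int_X |K(x,y)|\, d|\eta|(x)\, d|\mu|(y) \leq \Bigl(\int_X \sqrt{K(x,x)}\, d|\eta|(x)\Bigr)\Bigl(\int_X \sqrt{K(y,y)}\, d|\mu|(y)\Bigr) < \infty,
\]
so Fubini applies and yields the desired formula. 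Bilinearity and the positive semidefiniteness of $(\eta,\mu) \mapsto \langle K_\eta, K_\mu\rangle_{\mathcal{H}_K}$ are then immediate from the Hilbert space inner product.

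I do not expect a real obstacle; the only non-cosmetic point is ensuring that the double integral is absolutely convergent so that Fubini is applicable, and the Cauchy--Schwarz bound above handles this in one line. The continuity hypothesis on $K$ enters only to guarantee that the functions $K_z$ and $K_\mu$ are well behaved as elements of $\mathcal{F}(X,\mathbb{R})$ (so that $\mathcal{H}_K \subset C(X)$ and the integrals are over Borel functions), but does not otherwise affect the argument.
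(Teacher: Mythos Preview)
Your proof is correct and follows the standard Riesz representation approach to the Kernel Mean Embedding. The paper does not actually prove this statement: it presents Theorem~\ref{initialextmmddominio} as a ``widely known result'' in the preliminary definitions section and refers the reader to the literature for proofs, so there is no proof in the paper to compare against.
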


Note that if $K$ is bounded, then $\mathfrak{M}_{\sqrt{K}}(X)=\mathfrak{M}(X)$. The kernel is Integrally Strictly Positive Definite (ISPD) if it is bounded and the semi-inner product in Theorem \ref{initialextmmddominio} is an inner product. If $K$ is bounded and the semi-inner product is an inner product on the subspace $\{\mu, \quad \mu(X)=0\}$, we say that $K$ is Characteristic. On a bounded Characteristic kernel
$$
D_{K}(P,Q):=\sqrt{\int_{X} \int_{X}K(x,y)d[P-Q](x)d[P-Q](y) }= \|K_{P} - K_{Q}\|_{\mathcal{H}_{K}}
$$
is a metric on the space of probabilities. The pseudometric $D_{K}$ is usually called the Maximum Mean Discrepancy (MMD). By definition, every ISPD kernel is Characteristic, but the converse does not hold. For these results regarding the kernel mean embedding and the subsequent definitions of ISPD and Characteristic kernels, we suggest the extensive review done in \cite{Muandet2017} and references there in. 

Moreover, from functional analysis, a measure $\mu \in \mathfrak{M}_{\sqrt{K}}(X)$ satisfies $\langle K_{\mu}, K_{\mu}\rangle =0$ if and only if
\begin{equation}\label{rkhsfunctionalzero}
	\int_{X} f(x)d\mu(x)=0
\end{equation}
for every $f \in \mathcal{H}_{K}$. This follows because $f \in \mathcal{H}_{K}$ if and only if there exists $C>0$ such that $K(x,y)-C\,f(x)f(y)$ is a PD kernel, which is due to  Theorem 12, p. 30, in \cite{Berlinet2011} and the Kernel Mean Embedding.

A symmetric kernel $\gamma: X \times X \to \mathbb{R}$ is called Conditionally Negative Definite (CND) if, for every finite quantity of distinct points $x_{1}, \ldots, x_{n} \in X$ and scalars $c_{1}, \ldots, c_{n} \in \mathbb{R}$, with the restriction that $\sum_{i=1}^{n}c_{i}=0$, we have that
$$
\sum_{i, j =1}^{n}c_{i}c_{j} \gamma(x_{i}, x_{j}) \leq 0.
$$

CND kernels are intrinsically related to PD kernels: a symmetric kernel $\gamma: X\times X\to \mathbb{R}$ is CND if and only if, for any (equivalently, every) $w \in X$ the kernel
\begin{equation}\label{Kgamma}
	K^{\gamma}(x,y):=\gamma(x,w) + \gamma(w, y) - \gamma(x,y) - \gamma(w,w)
\end{equation}
is positive definite. This yields the relation between CND kernels and Hilbert spaces,  precisely, independently of the choice of $w \in X$, by Equation \ref{Kgamma} we have
\begin{equation}\label{geogamma}
	\gamma(x,y)= \frac{1}{2}\|(	K^{\gamma})_{x}- (	K^{\gamma})_{y}\|_{\mathcal{H}_ {	K^{\gamma}}}^{2} + \gamma(x,x)/2 + \gamma(y,y)/2.
\end{equation}

Another classical relation states that a symmetric kernel $\gamma: X\times X\to \mathbb{R}$ is CND if and only if, for every $r>0$, the kernel
\begin{equation}\label{schoenmetriccond}
	(x,y) \in X\times X \to e^{-r\gamma(x,y)}
\end{equation}
is PD. A useful inequality for a CND kernel $\gamma$ with $\gamma(x,x)=0$ for every $x\in X$ is
\begin{equation}\label{CNDINEQ}
	0 \leq \gamma(x_{1}, x_{2})\leq 2 \left [\gamma(x_{1}, x_{3}) + \gamma(x_{2}, x_{3})\right ],
\end{equation}
which holds for any $x_{1}, x_{2}, x_{3} \in X $ by taking $c_{1}=c_{2}=1$ and $c_{3}=-2$.

These classical results on PD and CND kernels are crucial for the development of the subject and can be found in Chapter 3 of \cite{Berg1984}.

It is also possible to define semi-inner products on subspaces of $\mathfrak{M}(X)$ using CND kernels, as described in the next lemma and whose proof uses Equation \ref{CNDINEQ}. We say that a kernel $\gamma: X \times X \to \mathbb{R}$ has a bounded diagonal if the function $x\mapsto \gamma(x,x)$ is bounded.

\begin{theorem}\label{estimativa} Let $\gamma: X \times X \to \mathbb{R}$ be a continuous CND kernel with bounded diagonal and $\mu \in \mathfrak{M}(X)$. The following are equivalent:
	\begin{enumerate}
		\item[$(i)$] $\gamma \in L^{1}(\vert \mu \vert \times \vert \mu \vert)$;
		\item[$(ii)$] The function $x \in X \to \gamma(x,z) \in L^{1}(\vert \mu \vert)$ for some $z \in X$;
		\item[$(iii)$] The function $x \in X \to \gamma(x,z) \in L^{1}( \vert \mu \vert )$ for every $z \in X$.
	\end{enumerate}
	Moreover, the set of measures satisfying these conditions is a vector space. In particular, consider the vector space
	$$
	\mathfrak{M}_{1}(X; \gamma):= \{ \eta \in \mathfrak{M}(X), \quad \gamma(x,y)\in L^{1}( \vert \eta \vert \times \vert \eta \vert ) \text{ and } \eta(X)=0 \},
	$$
	then the function
	$$
	(\mu, \nu ) \in \mathfrak{M}_{1}(X; \gamma) \times \mathfrak{M}_{1}(X; \gamma) \to I(\mu, \nu)_{\gamma}:=\int_{X} \int_{X} -\gamma(x,y)d\mu(x)d\nu(y)
	$$
	defines a semi-inner product on $\mathfrak{M}_{1}(X; \gamma)$.\end{theorem}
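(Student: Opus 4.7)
My plan is to extract a two-sided pointwise bound from the CND inequality and then push everything through it. From Equation \ref{CNDINEQ} applied with $c_1=c_2=1$, $c_3=-2$ and the diagonal bound $|\gamma(x,x)|\leq M$, I get the upper control
$$\gamma(x,y)\leq 2|\gamma(x,z)|+2|\gamma(y,z)|+C_M,$$
while Equation \ref{condequa} furnishes the lower control $\gamma(x,y)\geq \gamma(x,x)/2+\gamma(y,y)/2\geq -M$. Combining, there is a constant $C=C(M)$ with
$$|\gamma(x,y)|\leq 2|\gamma(x,z)|+2|\gamma(y,z)|+C \qquad (x,y,z\in X). \quad (\star)$$

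With $(\star)$ in hand the three equivalences are straightforward. The implication (iii)$\Rightarrow$(ii) is trivial. For (ii)$\Rightarrow$(iii), fix a $z_{0}$ for which (ii) holds and, for an arbitrary $z\in X$, apply $(\star)$ with that $z_{0}$ in place of the middle variable; the constant $2|\gamma(z,z_{0})|+C$ lies in $L^{\theta}(|\mu|)$ because $|\mu|$ is finite, and one concludes by Minkowski for $\theta\geq 1$ or by subadditivity $(a+b+c)^{\theta}\leq a^{\theta}+b^{\theta}+c^{\theta}$ for $0<\theta<1$. For (ii)$\Rightarrow$(i), apply $(\star)$ and integrate against $|\mu|\times|\mu|$, using Tonelli and the same two regimes in $\theta$. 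Finally, (i)$\Rightarrow$(ii) is a direct Tonelli argument: if the double integral of $|\gamma|^{\theta}$ is finite, then for $|\mu|$-a.e.\ $y$ (hence for some $z_{0}\in X$, assuming $|\mu|\neq 0$, which is the only interesting case) the section $\gamma(\,\cdot\,,z_{0})$ lies in $L^{\theta}(|\mu|)$.

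The vector space property is then immediate from characterization (ii), since $|\alpha\mu+\nu|\leq|\alpha||\mu|+|\nu|$ forces $\gamma(\,\cdot\,,z)$ into $L^{\theta}(|\alpha\mu+\nu|)$. Once we specialize to $\theta=1$ and $\mu,\nu\in\mathfrak{M}_{1}(X;\gamma)$, $(\star)$ integrated against $|\mu|\times|\nu|$ shows that $\gamma\in L^{1}(|\mu|\times|\nu|)$, so $I(\mu,\nu)_{\gamma}$ is well defined; bilinearity and symmetry are evident from the definition.

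The real work is the non-negativity $I(\mu,\mu)_{\gamma}\geq 0$. The natural route is to pass to the positive definite companion $K^{\gamma}$ of Equation \ref{Kgamma}: solving for $\gamma$ and integrating against $\mu\times\mu$, every term containing a factor $\gamma(\,\cdot\,,w)$, $\gamma(w,\,\cdot\,)$ or $\gamma(w,w)$ carries a $\mu(X)=0$ factor and drops out, leaving
$$-\int\int \gamma(x,y)\,d\mu(x)\,d\mu(y)=\int\int K^{\gamma}(x,y)\,d\mu(x)\,d\mu(y).$$
To identify the right-hand side with a squared norm I apply the Kernel Mean Embedding (Theorem \ref{initialextmmddominio}) to $K^{\gamma}$; its hypothesis $\sqrt{K^{\gamma}(x,x)}\in L^{1}(|\mu|)$ is verified because $K^{\gamma}(x,x)=2\gamma(x,w)-\gamma(x,x)-\gamma(w,w)\geq 0$ belongs to $L^{1}(|\mu|)$ by condition (ii) and the diagonal bound, and then $\sqrt{t}\leq 1+t$ together with finiteness of $|\mu|$ closes the gap. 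This yields $I(\mu,\mu)_{\gamma}=\|K^{\gamma}_{\mu}\|_{\mathcal{H}_{K^{\gamma}}}^{2}\geq 0$. I expect the delicate part to be not the chain of equivalences but this last step: one must simultaneously keep $\mu(X)=0$ available to kill the boundary terms and verify exactly the KME integrability for $K^{\gamma}$ so that the identification with an inner-product norm is legal.
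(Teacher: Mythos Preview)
Your proof is correct. The paper itself does not prove this theorem: it is placed in the preliminary Section~\ref{Definitions} as a known result, with the sentence ``A proof of Theorem~\ref{estimativa} can be found in Section~3 in \cite{energybernstein}.'' Your argument---extracting the two-sided pointwise bound $(\star)$ from Equation~\ref{CNDINEQ} together with the lower bound coming from Equation~\ref{condequa}, then routing the equivalences and the semi-inner product through $K^{\gamma}$ and the Kernel Mean Embedding---is exactly the standard route and matches in spirit what the cited reference does.
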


When the semi-inner product in the previous theorem is an inner product, we say that $\gamma$ is CND-Characteristic. The interesting aspect of a CND-Characteristic kernel $\gamma$ is that 
$$
E_{\gamma}(P,Q):=\sqrt{\int_{X} \int_{X}-\gamma(x,y)d[P-Q](x)d[P-Q](y) } = \sqrt{\|(K^{\gamma})_{P} - (K^{\gamma})_{Q}\|_{\mathcal{H}_{K^{\gamma}}}} 
$$
is a metric on the space of probability measures that satisfies any of the $3$ equivalent conditions in the first part of Theorem \ref{estimativa}. The pseudometric $E_{\gamma}$ is usually called the Energy distance \citep{Szekely2013}. Furthermore, the exact equivalence between this distance-based metric and the RKHS-based Maximum Mean Discrepancy via the associated PD kernel $K^{\gamma}$ was fundamentally established by \citep{Sejdinovic2013a}. A detailed proof of the integrability equivalences in Theorem \ref{estimativa} can be found in Section $3$ of \citep{Guella2022}.

The characterization of continuous CND radial kernels in all Euclidean spaces was proved in \cite{Schoenberg1938} and reads as follows: 
\begin{theorem}\label{reprcondneg} Let $\psi :[0, \infty)\to \mathbb{R}$ be a continuous function. The following conditions are equivalent
	\begin{enumerate}
		\item[$(i)$] The kernel
		$$
		(x,y) \in \mathbb{R}^{d}\times \mathbb{R}^{d} \to \psi(\|x-y\|^{2}) \in \mathbb{R}
		$$
		is CND for every $d \in \mathbb{N}$. 
		\item[$(ii)$] The function $\psi$ is a Bernstein function, that is $\psi \in C^{\infty}((0,\infty))$ and $\psi^{(1)}$ is completely monotone.
	\end{enumerate}
\end{theorem}

For more information on Bernstein functions see \cite{Schilling2012}.

For further background on PD/CND kernels and probability metrics, see \cite{Berg1975,Berlinet2011,Rachev2013}.

\subsection{Vector spaces of measures and probability interactions}\label{Terminology}
The results and terminology in this subsection are primarily drawn from \cite{Guella2025}.	

Let $X_{i}$, $1\leq i \leq n$, be nonempty sets, and consider the $n$-fold Cartesian product $\prod_{i=1}^{n}X_{i}$, which we denote as $\mathds{X}_{n}$.

For $m,n\in\mathbb{N}$, define $\mathbb{N}_{m}^{n}:=\{1,\ldots,m\}^{n}$ (which has $m^{n}$ elements). Similarly, define $\mathbb{N}_{m}^{0,n}:=\{0, 1, \ldots, m\}^{n}$ which has $(m+1)^{n}$ elements. If $x_{i}^{1}, \ldots, x_{i}^{m} \in X_{i}$, $1\leq i \leq n$, we define for $\alpha=(\alpha_{1}, \ldots, \alpha_{n}) \in \mathbb{N}_{m}^{n}$ (or $\mathbb{N}_{m}^{0,n}$) the element $x_{\alpha}:=(x_{1}^{\alpha_{1}}, \ldots, x_{n}^{\alpha_{n}})$.

We frequently use $\vec{1}$ for the vector with all entries equal to $1$ (similarly for $\vec{0}$ and $\vec{2}$). The dimension of these vectors are omitted as they are clear from the context. Also, for a subset $F \subset \{1, \ldots, n\}$ and coefficients $\alpha, \beta \in \mathbb{N}^{n}$, we use notations such as $x_{\alpha_{F} + \beta_{F^{c}}}$ to denote the element of $\mathds{X}_{n}$ whose coordinates in $F$ are those of $x_{\alpha}$ and whose coordinates in $F^{c}$ are those of $x_{\beta}$. In case $\beta= \vec{0}$, we often write $x_{\alpha_{F}}$ instead of $x_{\alpha_{F} + \vec{0}_{F^{c}}}$.

Although the definitions of Positive Definite Independent kernels presented in Sections \ref{Positivedefiniteindependentkernelsofordern} and \ref{IndependencetestsembeddedinHilbertspaces} are given in a discrete setting, it is convenient to use integral terminology to simplify expressions. For that, we define 

\begin{equation*}\label{medidaarbi}
	\mathcal{M}( \mathds{X}_{n}):=\{\text{The vector space of all discrete measures in } \mathds{X}_{n}\}. 
\end{equation*}

The continuous version, using finite Radon measures $\mathfrak{M}( \mathds{X}_{n})$ will also be used throughout the text. For $0\leq k\leq n$, important subspaces of $\mathcal{M}(\mathds{X}_{n})$ (and of $\mathfrak{M}(\mathds{X}_{n})$) are
\begin{equation}\label{medidamargk}
	\mathcal{M}_{k}( \mathds{X}_{n}):=\{\mu \in \mathcal{M}( \mathds{X}_{n}), \quad \mu (\prod_{i=1}^{n}A_{i} )=0, \text{ if } | \{i,\quad A_{i}=X_{i}\} |\geq n-k+1 \}, 
\end{equation}	
where $\mathfrak{M}_{k}( \mathds{X}_{n})$ is defined similarly. Note that $\mathcal{M}_{0}(\mathds{X}_{n})=\mathcal{M}(\mathds{X}_{n})$ and that $\mathcal{M}_{1}(\mathds{X}_{n})$ is related to the definition of CND kernels on $\mathds{X}_{n}$. They satisfy the following inclusions 

\begin{equation}\label{inclumeas}
	\mathcal{M}_{n}( \mathds{X}_{n}) \subset \mathcal{M}_{n-1}( \mathds{X}_{n}) \subset \ldots \subset \mathcal{M}_{2}( \mathds{X}_{n}) \subset \mathcal{M}_{1}( \mathds{X}_{n}) \subset \mathcal{M}_{0}( \mathds{X}_{n}), 
\end{equation}	
which is similar for the continuous case.

A simple but frequently used technical property is that if $f:\mathds{X}_{n}\to\mathbb{R}$ is integrable with respect to $ \mu \in \mathcal{M}_{k}(\mathds{X}_{n})$ (or $\mathfrak{M}_{k}( \mathds{X}_{n}) $) and depends on at most $k-1$ of the $n$ variables (for instance if $f(x_{1}, \ldots, x_{n}) = g(x_{1}, \ldots, x_{k-1})$ for some $g: \prod_{i=1}^{k-1}X_{i}\to \mathbb{R}$), then 
\begin{equation}\label{integmu0n}
	\int_{\mathds{X}_{n}}f(x_{1}, \ldots,x_{n})d\mu(x_{1}, \ldots, x_{n})=0.
\end{equation}

As an example (by the pigeonhole principle), if $\mu_{i}\in \mathcal{M}(X_{i})$ (or $\mathfrak{M}( X_{i}) $), $1\leq i \leq n$, with $|\{i:\ \mu_{i}(X_{i})=0\}|\ge k$, then $\bigtimes_{i=1}^{n}\mu_{i}\in \mathcal{M}_{k}(\mathds{X}_{n})$ (or $\mathfrak{M}_{k}( \mathds{X}_{n})$). This simple but crucial property is used, when possible, together with Theorem \ref{genlancastercartesianproduct} and Lemma \ref{exm02xn} to establish an equivalence between a generalized independence test of order $k$ in $n$ variables (based on the Streitberg interaction or the generalization of the Lancaster interaction) with the concept of PDI$_{k}$-Characteristic kernel on a $n$-Cartesian product space. See for instance the results in Subsection \ref{Bernsteinfunctionsofordern} and also Corollary \ref{lastkronprod}. 

\begin{remark}\label{hanhjordanequivalence} When $k \geq 1$, by the Hahn-Jordan decomposition, if $\mu \in \mathcal{M}_{k}(\mathds{X}_{n})$ (or $\mathfrak{M}_{k}( \mathds{X}_{n}) $) then there exist $M \in \mathbb{R}$ and probabilities $P, P^{\prime}$	in $\mathcal{M}(\mathds{X}_{n})$ (or $\mathfrak{M}( \mathds{X}_{n}) $) such that $P_{F}= (P^{\prime})_{F}$ for any $F \subset \{1, \ldots, n\}$ that satisfies $|F|=k-1$ and
	$$
	\mu = M[P - P^{\prime}].
	$$
	Similarly, if two probabilities $P$ and $P^{\prime}$	in $\mathcal{M}(\mathds{X}_{n})$ (or $\mathfrak{M} ( \mathds{X}_{n}) $) are such that $P_{F}= (P^{\prime})_{F}$ for any $F \subset \{1, \ldots, n\}$ that satisfies $|F|=k-1$, then $M[P - P^{\prime}]$ is an element of $\mathcal{M}_{k}(\mathds{X}_{n})$ (or $\mathfrak{M}_{k}( \mathds{X}_{n}) $) for every $M\in \mathbb{R}$.
\end{remark}

To obtain an important class of examples of measures in $\mathcal{M}_{k}(\mathds{X}_{n})$, let us recall the Lancaster interaction of a probability, see Chapter XII, p. 255, in \cite{Lancaster1969}:
\begin{equation*}
	\Lambda[P]:=	 \sum_{|F|=0}^{n} (-1)^{n-|F|} \left ( P_{F} \times \left [ \bigtimes_{j \in F^{c}}P_{j}\right ]\right ).
\end{equation*}

For probabilities $P,Q \in \mathcal{M}(\mathds{X}_{n})$ (or $\mathfrak{M}( \mathds{X}_{n}) $), the following generalization was proposed for $2\leq k\leq n$ in Section $3$ of \cite{Guella2025} 
$$
\Lambda_{k}^{n}[P,Q]:=P+ \sum_{j=0}^{k-1}(-1)^{k-j}\binom{n-j-1}{n-k}\sum_{|F|=j}P_{F}\times Q_{F^{c}}.
$$
and when $Q=\bigtimes_{i =1}^{n}P_{i}$ we simply write $\Lambda_{k}^{n}[P]$. Note that
$$
\Lambda_{n}^{n}[P ]=P+ \sum_{j=0}^{n-1}(-1)^{n-j}\sum_{|F|=j}P_{F}\times \left [\bigtimes_{i \in F^{c}}P_{i} \right ] = \Lambda[P],
$$	
$$
\Lambda_{2}^{n}[P]:= P + (n-1)\left(\bigtimes_{i=1}^{n}P_{i}\right) - n \left(\bigtimes_{i=1}^{n}P_{i}\right)= P - \bigtimes_{i=1}^{n}P_{i}.
$$

\begin{theorem}\label{generallancaster} The generalized Lancaster interaction $\Lambda_{k}^{n}$ satisfies the following properties:
	\begin{enumerate}
		\item [$(i)$] For probabilities $P, Q$ in $\mathcal{M}(\mathds{X}_{n})$ (or $\mathfrak{M}( \mathds{X}_{n}) $), the generalized Lancaster interaction $\Lambda_{k}^{n}[P,Q] \in \mathcal{M}_{k}(\mathds{X}_{n})$ (or $\mathfrak{M}_{k}(\mathds{X}_{n})$). 
		\item [$(ii)$] If for some $1\leq k \leq n-1$ we have that $\Lambda_{k}^{n}[P]=0$ then $\Lambda_{k+1}^{n}[P]=0$.
		\item [$(iii)$] $\Lambda_{n}^{n}[P]$ is multiplicative, in the sense that if $P= P_{\pi}$ for some partition $\pi=\{ F_{1}, \ldots, F_{\ell}\}$ of $\{1, \ldots, n\}$ then
		$$
		\Lambda_{n}^{n}[P]= \prod_{i=1}^{\ell}\Lambda_{|F_{i}|}^{|F_{i}|}[P_{F_{i}}].
		$$	 
	\end{enumerate}
\end{theorem}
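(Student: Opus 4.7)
The plan is to handle (i) by a direct marginal computation that collapses via Vandermonde, deduce (ii) from a Pascal-type recursion together with a byproduct of (i), and prove (iii) by exploiting how marginals behave under products.

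For (i), membership in $\mathcal{M}_k(\mathds{X}_n)$ is equivalent to the vanishing of every marginal $(\Lambda_k^n[P,Q])_G$ with $|G|\leq k-1$, so it suffices to compute these. Expanding the marginal term by term and grouping by the pair $(S,T):=(F\cap G,G\setminus F)$ as $F$ varies yields
\[
(\Lambda_k^n[P,Q])_G = P_G + \sum_{S\sqcup T=G} C_{|S|}\,P_S\times Q_T, \qquad C_s = \sum_{i=0}^{k-1-s}(-1)^{k-s-i}\binom{n-s-i-1}{n-k}\binom{n-|G|}{i}.
\]
Rewriting $(-1)^i\binom{m+i}{i}=\binom{-m-1}{i}$ with $m=n-k$ and invoking Vandermonde gives the closed form $C_s=-\binom{k-|G|-1}{k-1-s}$. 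The hypothesis $|G|\leq k-1$ then forces $C_{|G|}=-1$ (cancelling the explicit $P_G$) and $C_s=0$ for every $s<|G|$, so the marginal vanishes.

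For (ii), the central observation is the recursive identity
\[
\Lambda_{k+1}^n[P] = \Lambda_k^n[P] - \sum_{|G|=k}\Lambda[P_G]\times\bigtimes_{i\in G^c}P_i,
\]
derived by applying Pascal's rule $\binom{n-j-1}{n-k-1}=\binom{n-j}{n-k}-\binom{n-j-1}{n-k}$ inside the definition of $\Lambda_{k+1}^n[P]$: the $\binom{n-j-1}{n-k}$ piece reassembles as $\Lambda_k^n[P]-P$ (the nominal $j=k$ term vanishes because $\binom{n-k-1}{n-k}=0$), and the $\binom{n-j}{n-k}$ piece, after swapping the order of summation using that $\binom{n-j}{n-k}$ counts size-$k$ supersets of each $|F|=j$ subset, yields precisely $-\sum_{|G|=k}\Lambda[P_G]\times\bigtimes_{i\in G^c}P_i$. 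Specialising the Vandermonde computation of (i) to $|G|=k$ gives $C_s=(-1)^{k-s}$, which is exactly the statement that $(\Lambda_k^n[P])_G=\Lambda[P_G]$; therefore $\Lambda_k^n[P]=0$ forces each $\Lambda[P_G]=0$, and the recursive identity collapses to $\Lambda_{k+1}^n[P]=0$.

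For (iii), if $P=\bigtimes_{j=1}^\ell P_{F_j}$ then marginals factor as $P_G=\bigtimes_j P_{F_j\cap G}$ and $P_i=(P_{F_j})_i$ whenever $i\in F_j$. Writing any $G\subset\{1,\dots,n\}$ uniquely as $G=\bigsqcup_j G_j$ with $G_j:=G\cap F_j\subset F_j$ and using $(-1)^{n-|G|}=\prod_j(-1)^{|F_j|-|G_j|}$ converts $\Lambda[P]=\sum_G(-1)^{n-|G|}P_G\times\bigtimes_{i\in G^c}P_i$ into a product of $\ell$ independent sums, the $j$-th being $\sum_{G_j\subset F_j}(-1)^{|F_j|-|G_j|}(P_{F_j})_{G_j}\times\bigtimes_{i\in F_j\setminus G_j}(P_{F_j})_i=\Lambda[P_{F_j}]=\Lambda_{|F_j|}^{|F_j|}[P_{F_j}]$.

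The main obstacle is the Vandermonde collapse of $C_s$: once the negative-binomial substitution transforms it into $-\binom{k-|G|-1}{k-1-s}$ the rest is bookkeeping, but identifying that substitution and correctly handling the degenerate conventions — in particular $\binom{-1}{m}=(-1)^m$, needed in the $|G|=k$ specialisation for (ii), and $\binom{n-k-1}{n-k}=0$ in the Pascal step — is where a first attempt is most likely to stumble.
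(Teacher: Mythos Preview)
Your argument is correct in all three parts. The Vandermonde collapse for $C_s$ is clean, the Pascal recursion for (ii) is valid, and the observation that $(\Lambda_k^n[P])_G=\Lambda[P_G]$ for $|G|=k$ is exactly what makes the recursion collapse; (iii) is the standard factorisation of a sum over subsets of a disjoint union.

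As for comparison with the paper: the paper does not prove this theorem here. It states the result and refers to \cite{Guella2024}, remarking only that ``the properties and its proofs for this measure are highly related to the properties and its proofs for the function $H_k^n$'', i.e.\ the polynomials $H_k^n(r)=p_n^n(r)+\sum_{j=0}^{k-1}(-1)^{k-j}\binom{n-j-1}{n-k}p_j^n(r)$ built from elementary symmetric polynomials. So the intended route goes through algebraic identities for $H_k^n$ (which mirror those for $\Lambda_k^n$ via the correspondence in Equation~\eqref{exponentialknmeas}), whereas you work directly at the level of measures with a bare Vandermonde computation. Your approach is more self-contained for this statement; the $H_k^n$ route has the advantage of proving the polynomial inequalities (such as Equation~\eqref{ineqorderk2eq1}) simultaneously, which the paper needs elsewhere.
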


Property (iii) highlights the differences between the Lancaster and Streitberg interaction (defined in Theorem \ref{streit}), as it implies that $\Lambda_{n}[P]=0$ when $P=P_{\pi}$ and one block of $\pi$ is a singleton. Property (ii) emphasizes that $\Lambda_{k}^{n}[P]$ serves as an indexed measure of independence for $P$.

For $1\leq k \leq n$ and $x_{\vec{1}}, x_{\vec{2}} \in \mathds{X}_{n} $, we define the measure
\begin{equation}\label{measorderk}
	\mu^{n}_{k}[x_{\vec{1}}, x_{\vec{2}}]:= \delta_{x_{\vec{1}}}+ \sum_{j=0}^{k-1}(-1)^{k-j}\binom{n-j-1}{n-k}\sum_{|F|=j}\delta_{x_{\vec{1}_{F} + \vec{2}_{F^{c}} }} = \Lambda_{k}^{n}[\delta_{x_{\vec{1}}},\delta_{x_{\vec{2}}}],
\end{equation}
which is an element of 	$\mathcal{M}_{k}( \mathds{X}_{n}) $ (and also $\mathfrak{M}_{k}( \mathds{X}_{n}) $). Moreover, if $L:=\{i:\ x_{i}^{1}\neq x_{i}^{2}\}$, then $\mu^{n}_{k}[x_{\vec{1}},x_{\vec{2}}]=0$ whenever $|L|<k$. This measure is a building block  to obtain a version of Equation \ref{Kgamma} to the context of PDI$_{k}$ kernels  in Section \ref{Positivedefiniteindependentkernelsofordern}	and Section \ref{IndependencetestsembeddedinHilbertspaces}.

We conclude our comments about the Lancaster interaction with the following result that is an important ingredient to obtain the equivalence mentioned at the introduction, that is of being a test for whether or not $\Lambda_{k}^{n}[P]\neq 0$ with the more restrictive tests for whether or not a  $\mu \in \mathfrak{M}_{k}( \mathds{X}_{n}) $ is the zero measure. As an example we have  Theorem \ref{bernsksevndimpart3} and  Corollary \ref{lastkronprod}.

\begin{theorem}\label{genlancastercartesianproduct}For $n\geq k \geq 2$, measures $\mu_{i} \in \mathcal{M}(X_{i})$ (or $\mathfrak{M}( X_{i}) $), $1\leq i \leq n$, with the restriction that $|i, \quad \mu_{i}(X_{i})=0|\geq k$, there exists an $M \geq 0$ and a probability $P$ in $\mathcal{M}(\mathds{X}_{n})$ (or $\mathfrak{M}( \mathds{X}_{n}) $) for which $\Lambda_{k}^{n}[P]= M(-1)^{n}(\bigtimes_{i=1}^{n}\mu_{i}) $.
\end{theorem}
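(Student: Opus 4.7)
The plan is to construct the desired probability $P$ explicitly from the Hahn--Jordan decompositions of the given measures. If some $\mu_i=0$, then $\bigtimes_i\mu_i=0$ and the conclusion holds with $M=0$ and any product probability $P=\bigtimes_i R_i$, since $\Lambda_2^n[P]=0$ on products and then so does $\Lambda_k^n[P]$ for all $k\ge 2$ by iterating property $(ii)$ of Theorem \ref{generallancaster}. Assume henceforth that every $\mu_i\ne 0$ and write $\mu_i=a_i^+P_i^+-a_i^-P_i^-$ with $a_i^\pm\ge 0$ and $P_i^\pm$ probabilities (with the convention that $a_i^\pm=0$ annihilates the corresponding summand). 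Let $Z:=\{i:\mu_i(X_i)=0\}$, so that $a_i^+=a_i^-=:c_i>0$ for $i\in Z$ and $|Z|\ge k$ by hypothesis. With $\mathrm{sgn}(\epsilon):=\prod_i\epsilon_i$ (identifying $\pm$ with $\pm 1$) and $\sigma:=(-1)^n$, define
\[
P:=C\sum_{\substack{\epsilon\in\{+,-\}^n\\ \mathrm{sgn}(\epsilon)=\sigma}}\prod_{i\notin Z}a_i^{\epsilon_i}\ \bigtimes_{i=1}^{n}P_i^{\epsilon_i},
\]
which is manifestly nonnegative; a direct count gives $C^{-1}=2^{|Z|-1}\prod_{i\notin Z}(a_i^++a_i^-)$ so that $P$ is a probability.

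\textbf{Factorization of lower marginals.} The technical core is the claim that for every $G\subsetneq\{1,\dots,n\}$ with $Z\not\subseteq G$ one has $P_G=\bigtimes_{i\in G}P_{(i)}$, where $P_{(i)}=\tfrac12(P_i^++P_i^-)$ for $i\in Z$ and $P_{(i)}=(a_i^+P_i^++a_i^-P_i^-)/(a_i^++a_i^-)$ for $i\notin Z$. To prove this, marginalise over $G^c$ and rewrite the constraint as $\mathrm{sgn}(\epsilon|_{G^c})=\sigma/\mathrm{sgn}(\epsilon|_G)$; splitting $\epsilon|_{G^c}$ into its restrictions to $G^c\cap Z$ and to $G^c\setminus Z$, the inner sum over $\epsilon|_{G^c\cap Z}$ produces a factor $\sum_{\eta\in\{+,-\}^{|G^c\cap Z|}}\prod_j\eta_j$, which vanishes as soon as $|G^c\cap Z|\ge 1$. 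This is exactly the hypothesis $Z\not\subseteq G$; once this sign indicator decouples from $\epsilon|_G$, the remaining weights separate across the coordinates of $G$ and one recognises the resulting densities as the $P_{(i)}$ listed above.

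\textbf{Conclusion and obstacle.} Since $|Z|\ge k$, every subset $G$ with $|G|\le k-1$ satisfies $|G|<|Z|$, hence $Z\not\subseteq G$ and $P_G\times Q_{G^c}=Q:=\bigtimes_{i}P_{(i)}$ in every term of the defining formula of $\Lambda_k^n[P]$. Therefore
\[
\Lambda_k^n[P]=P+Q\sum_{j=0}^{k-1}(-1)^{k-j}\binom{n-j-1}{n-k}\binom{n}{j}=P-Q,
\]
the second equality being the scalar identity that results from applying the same reduction to any product probability $R=\bigtimes R_i$ (for which $\Lambda_k^n[R]=0$ by the first paragraph, forcing the bracketed sum to equal $-1$). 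Expanding $P$ and $Q$ in the basis $\{\bigtimes_iP_i^{\epsilon_i}\}_\epsilon$ and subtracting produces
\[
P-Q=\frac{\sigma}{2^{|Z|}\prod_{i\in Z}c_i\prod_{i\notin Z}(a_i^++a_i^-)}\bigtimes_{i=1}^n\mu_i,
\]
so the choice $\sigma=(-1)^n$ yields $\Lambda_k^n[P]=M(-1)^n\bigtimes_i\mu_i$ with $M$ the reciprocal of the positive denominator above. The main obstacle is the factorisation step of the previous paragraph: one must carefully track how the global sign constraint couples with the Hahn--Jordan weights $a_i^{\epsilon_i}$ attached to indices $i\notin Z$, and isolate the precisely sufficient condition $Z\not\subseteq G$ that makes the indicator $[\mathrm{sgn}(\epsilon)=\sigma]$ average out over $G^c\cap Z$, which is exactly what matches the hypothesis $|Z|\ge k$.
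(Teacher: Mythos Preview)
Your proof is correct. The paper itself does not supply a proof of this statement---it is quoted as background from the companion article \cite{Guella2024}---so there is no in-text argument to compare against. Your explicit construction via Hahn--Jordan decompositions and a sign-constrained mixture is clean and self-contained, and the key mechanism (that the parity constraint on $\epsilon$ averages out over the coordinates in $G^c\cap Z$, forcing $P_G$ to factor whenever $Z\not\subseteq G$) is exactly the right idea.

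One small expository point: in the factorisation paragraph you write that ``the inner sum over $\epsilon|_{G^c\cap Z}$ produces a factor $\sum_{\eta}\prod_j\eta_j$, which vanishes.'' Taken literally this is misleading---the inner sum over $\eta$ with prescribed sign equals $2^{|G^c\cap Z|-1}$, not zero. What you mean (and what you correctly use) is that, writing the indicator $[\mathrm{sgn}(\eta)=s]=\tfrac12(1+s\prod_j\eta_j)$ and summing over all $\eta$, the $s$-dependent part $\sum_{\eta}\prod_j\eta_j$ vanishes, so the result $2^{|G^c\cap Z|-1}$ is independent of $s=\sigma/\mathrm{sgn}(\epsilon|_G)$. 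This is what decouples the outer sum over $\epsilon|_G$ and yields the product form $\bigtimes_{i\in G}P_{(i)}$. With that wording adjusted, the argument is complete.
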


Our final objective in this subsection is to define the Streitberg interaction. For that, let us recall that a partition $\pi$ of the set $\{1, \ldots, n\}$ is a collection of disjoint subsets $F_{1}, \ldots, F_{\ell}$ of $\{1, \ldots, n\}$, whose union is the entire set. In particular, we always have that $1\leq \ell \leq n$ and we sometimes use the notation $|\pi|$ to indicate $\ell$, that is, the amount of disjoint subsets in the partition $\pi$. Given a probability $P$ in $\mathcal{M}(\mathds{X}_{n})$ we define the probability 
\begin{equation}\label{partitionprobability}
	P_{\pi}:= \bigtimes_{i=1}^{\ell}P_{F_{i}}
\end{equation}
in $\mathcal{M}(\mathds{X}_{n})$,  	where $P_{F_{i}}$ is the marginal probability in $ \mathds{X}_{F_{i}}$.

A probability is called decomposable if there exists a partition $\pi$ with $|\pi| \geq 2$ for which $P = P_{\pi}$. When $n=2$, a probability $P$ is decomposable if and only if $P = P_{1}\times P_{2} $, and when $n=3$ a probability $P$ is decomposable  when 
$$
P_{123} - (P_{12}\times P_{3}) - (P_{13}\times P_{2})- (P_{23}\times P_{1})+ 2(P_{1}\times P_{2}\times P_{3}) \text{ is the zero measure}.
$$ 
However,  the converse is not true, as can be seen in Appendix C of \cite{Sejdinovic2013}.

When $n\geq 4$, a condition analogous to the $n=3$ case becomes more involved, a characterization is given in Proposition 2 of \cite{Streitberg1990}:

\begin{theorem}\label{streit}The collection of real numbers $a_{\pi}:= (-1)^{|\pi|-1}(|\pi|-1)!$, indexed over the partitions of the set $\{1, \ldots, n\}$, is the only one that satisfies the following conditions
	\begin{enumerate}
		\item[$(i)$] $a_{\{1,\ldots, n\}}=1$
		\item[$(ii)$] For any decomposable probability $P$ defined in the Cartesian product $\mathds{X}_{n}$ the measure
		$$
		\Sigma[P] := \sum_{\pi} a_{\pi} P_{\pi}
		$$
		is the zero measure.
		\item[$(iii)$] The operator $\Sigma$ is invariant under permutations of the coordinates: if   $\sigma: \{1,\ldots, n\} \to \{1,\ldots, n\} $ is a bijection, then
		$$
		\Sigma[P^{\sigma}]= [\Sigma[P]]^{\sigma}
		$$
		where for a measure $\mu$ in $\mathds{X}_{n}$ the measure $\mu^{\sigma}$ is defined in $\prod_{i=1}^{n}X_{\sigma(i)}$ for measurable sets $A_{j}$ of $X_{j}$ as $\mu^{\sigma}(\prod_{i=1}^{n}A_{\sigma(i)} ):=\mu (\prod_{i=1}^{n}A_{i} ) $.
	\end{enumerate}
	
\end{theorem}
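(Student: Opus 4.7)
The plan is to identify the coefficients $a_\pi = (-1)^{|\pi|-1}(|\pi|-1)!$ as the Möbius function values $\mu_{\Pi_n}(\pi,\hat{1})$ of the partition lattice $\Pi_n$ of $\{1,\ldots,n\}$ ordered by refinement, with $\hat{1}$ the one-block partition, and then derive both existence and uniqueness from standard Möbius-inversion arguments.

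For existence, (i) is immediate and (iii) is automatic since $a_\pi$ depends only on $|\pi|$, which is invariant under relabeling of $\{1,\ldots,n\}$. For (ii), fix a decomposable $P = P_\rho = \bigtimes_{F\in\rho} P_F$ with $|\rho|\geq 2$; because marginals of $P_\rho$ factor across the blocks of $\rho$, one has the central identity $P_\pi = P_{\pi\wedge\rho}$, where $\pi\wedge\rho$ denotes the common refinement in $\Pi_n$. Grouping by $\tau = \pi\wedge\rho$,
\[
\Sigma[P] \;=\; \sum_{\tau\leq\rho}\Bigl(\sum_{\pi:\,\pi\wedge\rho=\tau}a_\pi\Bigr)P_\tau.
\]
The bijection between $\{\pi:\pi\wedge\rho=\tau\}$ and partitions of the set of $\tau$-blocks in which no two blocks lying in the same $F\in\rho$ are merged identifies the inner sum with a restricted Möbius sum on the partition lattice of the $\tau$-blocks; when $|\rho|\geq 2$ this vanishes by Weisner's theorem (equivalently, by the product factorization of intervals in $\Pi_n$ together with the classical formula $\mu_{\Pi_m}(\hat{0},\hat{1})=(-1)^{m-1}(m-1)!$), giving $\Sigma[P]=0$.

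For uniqueness, given any collection $(a_\pi)$ satisfying (i)--(iii), I would apply (ii) to $P_\rho = \bigtimes_{F\in\rho} P_F$ with generically chosen factor marginals. Since $P_\tau = \bigtimes_{F\in\rho}(P_F)_{\tau|_F}$, distinct refinements $\tau\leq\rho$ yield products of distinct marginals which are linearly independent for generic $P_F$, so each coefficient in the displayed formula must vanish separately. Specializing to $\tau=\rho$ gives $\sum_{\pi\geq\rho}a_\pi=0$ for every $\rho\neq\hat{1}$; combined with $a_{\hat{1}}=1$ from (i), this is precisely the defining system of Möbius inversion on $\Pi_n$, and the triangular recursion $a_\rho=-\sum_{\pi>\rho}a_\pi$ (solved top-down from $a_{\hat{1}}=1$) yields the unique solution $a_\pi=\mu_{\Pi_n}(\pi,\hat{1})=(-1)^{|\pi|-1}(|\pi|-1)!$.

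The hard part will be the combinatorial identity underpinning (ii), namely showing that the restricted Möbius sum over the ``transversal'' partitions of the $\tau$-blocks vanishes whenever the governing partition has at least two blocks. This is where the explicit product structure of intervals in $\Pi_n$ enters in an essential way and forms the technical heart of Streitberg's construction; the linear-independence argument for uniqueness is, by contrast, routine once generic marginals are chosen.
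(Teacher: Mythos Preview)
The paper does not give its own proof of this statement; it is quoted from Streitberg (1990, Proposition~2) as background. Your M\"obius-inversion approach on the partition lattice $\Pi_n$ is precisely the standard route (and essentially Streitberg's own), and the outline is correct: the identification $a_\pi=\mu_{\Pi_n}(\pi,\hat 1)$, the key identity $P_\pi=P_{\pi\wedge\rho}$ when $P=P_\rho$, and uniqueness via the triangular system $\sum_{\pi\ge\rho}a_\pi=\delta_{\rho,\hat 1}$ are all sound.

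One refinement worth making: the vanishing of the inner sum $\sum_{\pi:\pi\wedge\rho=\tau}\mu(\pi,\hat 1)$ for $\tau\le\rho<\hat 1$ does not actually require Weisner's theorem in the form you cite (Weisner controls sums over $\{x:x\vee a=\hat 1\}$, not $\{x:x\wedge a=\hat 0\}$, when paired with $\mu(x,\hat 1)$). It follows more directly from the defining relation of the M\"obius function: since $\sum_{\pi\ge\sigma}\mu(\pi,\hat 1)=\delta_{\sigma,\hat 1}=0$ for every $\sigma\le\rho<\hat 1$, a top-down induction on the interval $[\tau,\rho]$ forces each $c_\tau:=\sum_{\pi:\pi\wedge\rho=\tau}\mu(\pi,\hat 1)$ to vanish. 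So the step you flag as the ``hard part'' is in fact routine once $P_\pi=P_{\pi\wedge\rho}$ is established; the explicit formula $(-1)^{m-1}(m-1)!$ is needed only to name the solution, not to prove existence.
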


The measure $\Sigma[P]$ is called the Streitberg interaction of the probability $P$. We emphasize that $\Sigma[P]$ can be the zero measure for a non decomposable probability. It can be proved that the amount of partitions in a set with $n$ elements is the Bell number $B_{n}$, see Section $26.7$ in \cite{NIST:DLMF}, defined by $B_{0}:=1$ and the recurrence relation
$$
B_{n+1}= \sum_{j=0}^{n}\binom{n}{j}B_{j}.
$$

Similar to the Lancaster interaction, the following result is valid.

\begin{lemma}\label{exm02xn} For a probability $P$ on $\mathcal{M}(\mathds{X}_{n})$ (or $\mathfrak{M}( \mathds{X}_{n}) $), the Streitberg interaction $\Sigma[P]$ is an element of $\mathcal{M}_{n}(\mathds{X}_{n})$ (or $\mathfrak{M}_{n}( \mathds{X}_{n}) $). Further, for measures $\mu_{i}$ in $\mathcal{M}(X_{i})$ (or $\mathfrak{M}( X_{i}) $) such that $\mu_{i}(X_{i})=0$, $1\leq i \leq n$, there exists an $M \geq 0$ and a probability $P$ in $\mathcal{M}(\mathds{X}_{n})$ (or $\mathfrak{M}( \mathds{X}_{n}) $) for which $\Sigma[P] = M(-1)^{n}(\bigtimes_{i=1}^{n}\mu_{i}) $.
\end{lemma}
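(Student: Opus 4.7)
The statement splits into two claims: that $\Sigma[P]\in\mathcal{M}_n(\mathds{X}_n)$ for every probability $P$, and that every product measure $\bigtimes_{i=1}^n\mu_i$ with $\mu_i(X_i)=0$ can be realized, up to a nonnegative scalar and the sign $(-1)^n$, as $\Sigma[P]$ for some probability $P$. I would treat the two parts separately, in both cases reducing the task to combinatorial identities for the Streitberg coefficients $a_\pi=(-1)^{|\pi|-1}(|\pi|-1)!$.

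For the first claim, the plan is to show the equivalent condition that the marginal $(\Sigma[P])_{-j}$ on $\prod_{i\neq j}X_i$ vanishes for every $j\in\{1,\ldots,n\}$. Fix $j$ and set $\widetilde P:=P_{-j}$. Unraveling the definition of $P_\pi$, for any partition $\pi$ of $\{1,\ldots,n\}$ with $j$ in some block $F_k$ the marginal $(P_\pi)_{-j}$ equals $\widetilde P_{\widetilde\pi}$, where $\widetilde\pi$ is the partition of $\{1,\ldots,n\}\setminus\{j\}$ obtained from $\pi$ by deleting the index $j$ (and discarding $F_k$ entirely if $F_k=\{j\}$). Grouping $\sum_\pi a_\pi(P_\pi)_{-j}$ by this image: if $\widetilde\pi$ has $m$ blocks, its preimages are $\widetilde\pi\cup\{\{j\}\}$ (contributing $(-1)^m m!$) together with the $m$ partitions obtained by inserting $j$ into an existing block (each contributing $(-1)^{m-1}(m-1)!$). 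Their combined weight is $(-1)^m m!+m(-1)^{m-1}(m-1)!=0$, which forces $(\Sigma[P])_{-j}=0$.

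For the second claim, if some $\mu_i=0$ one may take $M=0$ and $P$ any full product probability. Otherwise, by Hahn--Jordan write $\mu_i=a_i(P_i-Q_i)$ with $a_i>0$ and $P_i,Q_i$ probabilities on $X_i$, and introduce the candidate
$$R:=2^{-n}\prod_{i=1}^n(P_i+Q_i)+(-1)^n 2^{-n}\bigtimes_{i=1}^n(P_i-Q_i).$$
Expanding both summands in the basis $\{\prod_i P_i^{\epsilon_i}:\epsilon\in\{0,1\}^n\}$ (with $P_i^0:=P_i$, $P_i^1:=Q_i$), the coefficient of each basis element is $2^{-n}(1+(-1)^{n+|\epsilon|})\in\{0,2^{1-n}\}$, so $R$ is a convex combination of probabilities and hence itself a probability. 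Since $\bigtimes_i(P_i-Q_i)$ has zero marginal on every strict subset of indices, $R_F=2^{-|F|}\prod_{i\in F}(P_i+Q_i)$ for every $F\subsetneq\{1,\ldots,n\}$, and therefore $R_\pi=2^{-n}\prod_i(P_i+Q_i)$ for every partition with $|\pi|\geq 2$. Applying Theorem \ref{streit}(ii) to any full product probability $\bigtimes_i\nu_i$ (for which $\nu_\pi=\nu$ for all $\pi$) forces $\sum_\pi a_\pi=0$, hence $\sum_{|\pi|\geq 2}a_\pi=-1$. Substituting gives
$$\Sigma[R]=R-2^{-n}\prod_{i=1}^n(P_i+Q_i)=(-1)^n 2^{-n}\bigtimes_{i=1}^n(P_i-Q_i)=M(-1)^n\bigtimes_{i=1}^n\mu_i,$$
with $M:=2^{-n}/\prod_i a_i\geq 0$, as desired.

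The most delicate step I anticipate is correctly tracking the two cases in the first claim (singleton block $\{j\}$ versus larger block containing $j$) when computing $(P_\pi)_{-j}$; once this is done, the sign cancellation $(-1)^m m!=m(-1)^m(m-1)!$ is immediate. For the second claim the essential insight is guessing the candidate $R$, after which the vanishing of the strict-subset marginals of $\bigtimes_i(P_i-Q_i)$ combined with the identity $\sum_\pi a_\pi=0$ makes the Streitberg sum collapse mechanically.
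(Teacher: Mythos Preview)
Your proof is correct. The paper does not actually prove this lemma here; it is stated in the preliminaries section with the remark that ``the results and terminology of this Section were presented in \cite{Guella2024}'', so there is no in-text proof to compare against. Your argument for the first claim --- marginalizing out a single coordinate $j$ and grouping the partitions of $\{1,\ldots,n\}$ by the induced partition of $\{1,\ldots,n\}\setminus\{j\}$, then observing the cancellation $(-1)^m m!+m(-1)^{m-1}(m-1)!=0$ --- is the standard and clean way to verify that all $(n-1)$-dimensional marginals of $\Sigma[P]$ vanish. Your construction for the second claim is also correct: the candidate $R$ is indeed a probability (the coefficient computation is fine), its proper-subset marginals are full products because $\bigtimes_i(P_i-Q_i)$ has vanishing marginals, and the identity $\sum_\pi a_\pi=0$ (valid for $n\geq 2$, which is the relevant range here) collapses the Streitberg sum exactly as you wrote.
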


\subsection{Radial PDI functions }\label{Bernsteinfunctionsofordern}

Our main objective in this subsection is to present  results from \cite{Guella2025} on the  theory of positive definite independent radial kernels of order $k$ in $n$ variables.

These results are not necessary for the development of the subject, but they serve as a guide to what type of results we  aim, in a similar way that the classical results of radial PD functions of Schoenberg inspired the results of Aronszajn on the RKHS theory and the  Kernel Mean Embedding results.

A function $h:(0, \infty)^{n}\to \mathbb{R}$ is completely monotone in $n$ variables if $h \in C^{\infty}((0, \infty)^{n})$ and $(-1)^{|\alpha|}\partial^{\alpha}h(t) \geq 0$, for every $\alpha \in \mathbb{Z}_{+}^{n}$ and $t\in (0,\infty)^{n}$. As in the Hausdorff-Bernstein-Widder theorem for one variable, the multivariable version is connected to Laplace transforms of nonnegative measures; see Section 4.2 of \cite{Bochner2005}.   Inspired by Theorem \ref {reprcondneg}, we define:

\begin{definition}For $0\leq k\leq n $, a function $g:(0, \infty)^{n} \to \mathbb{R}$ is called a Bernstein function of order $k$ if $g \in C^{\infty}((0, \infty)^{n})$ and the $\binom{n}{k}$ functions $[\partial^{\vec{1}_{F}}]g$ are completely monotone for every $|F|=k$. 
\end{definition}

The following indexed border of the set $[0, \infty)^{n}$  
$$
\partial_{k-1}^{n} :=\{t=(t_{1},\ldots, t_{n}) \in [0, \infty)^{n}, \quad |\{i, \quad t_{i } >0\}| < k\},
$$ 	
simplifies the characterization of the functions we want, and it will inspire the definition of  the extended diagonal of order $k-1$ in $\mathds{X}_{n}\times \mathds{X}_{n}$, denoted as 	$\Delta_{k-1}^{n}$, see Equation \ref{exdia1def}.

Now we state the main result of this subsection, which  contains Theorem $4.7$ and  Theorem $6.8$ of \cite{Guella2025}

\begin{theorem}\label{bernsksevndimpart3} Let $n\geq k\geq 2 $, $g:[0, \infty)^{n} \to \mathbb{R}$ be a continuous function such that $g(t)=0$ for every $t\in \partial_{k-1}^{n}$. The following conditions are equivalent:
	\begin{enumerate}
		\item [$(i)$] For any $d\in \mathbb{N}$ and discrete measures $\mu_{i}$ in $\mathbb{R}^{d}$, $1\leq i \leq n$, and with the restriction that $|i, \quad \mu_{i}(\mathbb{R}^{d})=0| \geq k $, it holds that
		$$
		\int_{(\mathbb{R}^{d})_{n}}\int_{ (\mathbb{R}^{d})_{n}}(-1)^{k}g(\|x_{1}-y_{1}\|^{2}, \ldots, \|x_{n} - y_{n}\|^{2})d\left [\bigtimes_{i=1}^{n}\mu_{i}\right ](x)d\left [ \bigtimes_{i=1}^{n}\mu_{i} \right ](y)\geq 0.
		$$
		\item [$(ii)$] For any $d\in \mathbb{N}$ and  discrete probability measure $P$ in $(\mathbb{R}^{d})_{n}$, it holds that
		$$
		\int_{(\mathbb{R}^{d})_{n}}\int_{ (\mathbb{R}^{d})_{n}}(-1)^{k}g(\|x_{1}-y_{1}\|^{2}, \ldots, \|x_{n} - y_{n}\|^{2})d[\Lambda_{k}^{n}[P] ](x)d[\Lambda_{k}^{n}[P]](y)\geq 0.
		$$
		\item [$(ii^{\prime})$] If $k=n$, then for any $d\in \mathbb{N}$ and  discrete probability measure $P$ in $(\mathbb{R}^{d})_{n}$, it holds that
		$$
		\int_{(\mathbb{R}^{d})_{n}}\int_{ (\mathbb{R}^{d})_{n}}(-1)^{k}g(\|x_{1}-y_{1}\|^{2}, \ldots, \|x_{n} - y_{n}\|^{2})d[\Sigma[P] ](x)d[\Sigma[P]](y)\geq 0.
		$$
		\item [$(iii)$] The function $g$ is PDI$_{k,n}^{\infty}$, that is 
		$$
		\int_{(\mathbb{R}^{d})_{n}}\int_{ (\mathbb{R}^{d})_{n}}(-1)^{k}g(\|x_{1}-y_{1}\|^{2}, \ldots, \|x_{n} - y_{n}\|^{2})d\mu(x)d\mu(y)\geq 0,
		$$
		for any $d\in \mathbb{N}$ and $\mu \in \mathcal{M}_{k}((\mathbb{R}^{d})_{n})$.
		\item[$(iv)$]The function $g$ is a Bernstein function of order $k$ in $(0, \infty)^{n}$.
	\end{enumerate}	
\end{theorem}

Theorem \ref{bernsksevndimpart3} still holds for $k=1$ and $k=0$ if condition (ii) is removed. For $k=1$, one may even replace $\Lambda_{k}^{n}[P]$ by the difference $P-Q$ of arbitrary discrete probability measures. In \cite{Guella2025} is also proved a Laplace transform type of integral representation using elementary symmetric polynomials for the functions that satisfies Theorem \ref{bernsksevndimpart3}, but we encourage the reader to look at it so that we omit defining these objects.

In Corollary $6.12$ of \cite{Guella2025} is also proved a version of this result for the strict case,

We emphasize that the assumption $g(t)=0$ for $t\in\partial_{k-1}^{n}$ simplifies the expression for $g$, since values on this set do not affect the double integrals in the theorem.

A  particular interesting result of the functions that satisfies Theorem \ref{bernsksevndimpart3} is that they are nonnegative and its growth is delimited by all the values of the function with $k$ variables.

\begin{corollary}\label{growthpdiknvar} Let $n> k\geq 1 $,  $g:[0, \infty)^{n} \to \mathbb{R}$ be a continuous function such that $g(t)=0$ for every $t\in \partial_{k-1}^{n}$ and  which satisfies the equivalences in Theorem \ref{bernsksevndimpart3}. Then  $g$ is nonnegative and increasing, in the sense that $g(t_{\vec{2}}) \geq g(t_{\vec{1}}) $ if $t_{\vec{2}} - t_{\vec{1}} \in [0, \infty)^{n}$. Also, it holds that
	$$
	0\leq \frac{k}{n}\sum_{|F|=k}g(t_{F}) \leq g(t) \leq \sum_{|F|=k}g(t_{F}), \quad t \in [0, \infty)^{n}.
	$$	
\end{corollary}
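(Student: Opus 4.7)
The plan is to unpack the integral representation of $g$ from Theorem \ref{bernsksevndimpart3}(iv), combine it with the representation of each $\psi^F$ coming from Theorem \ref{basicradialndim} applied in dimension $k$, and then apply Equation \ref{ineqorderk2eq1} pointwise to the integrand. Explicitly,
\begin{equation*}
g(t) = \sum_{|F|=k}\psi^F(t_F) + \int_{[0,\infty)^n \setminus \partial_{k}^n}(-1)^k E_k^n(r \odot t)\frac{p_k^n(r + \vec{1})}{p_k^n(r)}\,d\eta(r),
\end{equation*}
with $\eta \geq 0$, and each $\psi^F(t_F) = \int_{[0,\infty)^k}\prod_{i \in F}(1 - e^{-s_i t_i})\frac{1+s_i}{s_i}\,d\eta^F(s)$ is manifestly nonnegative and componentwise nondecreasing in the variables $(t_i)_{i \in F}$. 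Writing $a_i := e^{-r_i t_i} \in [0,1]$, Equation \ref{ineqorderk2eq1} yields $(-1)^k E_k^n(r \odot t) = (-1)^k H_k^n(a) \geq 0$, so together with the positivity of $p_k^n(r + \vec{1})/p_k^n(r)$ on the domain of integration this forces $g \geq 0$. For coordinate-wise monotonicity I differentiate on the interior $(0,\infty)^n$ using the elementary recursion $\partial_{a_j}H_k^n(a) = H_{k-1}^{n-1}(a^{\hat{j}})$, where $a^{\hat{j}}$ denotes $a$ with its $j$-th coordinate removed and we set $H_0^m(b) := p_m^m(b)$; the chain rule then gives $\partial_{t_j}\bigl[(-1)^k E_k^n(r \odot t)\bigr] = r_j e^{-r_j t_j}(-1)^{k-1}H_{k-1}^{n-1}(a^{\hat{j}}) \geq 0$ by Equation \ref{ineqorderk2eq1} applied at level $k-1$ in dimension $n-1$, and continuity extends the conclusion to all of $[0,\infty)^n$.

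For the sandwich, I read $g(t_F)$ as $g(v)$, with $v \in [0,\infty)^n$ given by $v_i = t_i$ for $i \in F$ and $v_i = 0$ otherwise. Whenever $|G| = k$ and $G \neq F$, the restriction $v_G \in [0,\infty)^k$ contains a zero (because $G \setminus F \neq \emptyset$), so $v_G \in \partial_{k-1}^k$ and $\psi^G(v_G) = 0$; only $G = F$ survives the $\psi$-sum. The algebraic heart of the argument is then the identity
\begin{equation*}
(-1)^k E_k^n(r \odot v) = \prod_{i \in F}\bigl(1 - e^{-r_i t_i}\bigr),
\end{equation*}
which I derive from Equation \ref{measorderk}: every atom of $\mu_k^n[\delta_{r \odot v}, \delta_{\vec{0}}]$ vanishes in the coordinates outside $F$, and grouping the atoms by their common projection $H \subseteq F$ reduces the coefficient at each such atom to $(-1)^{k-|H|}$ via the combinatorial identity $\sum_{m=0}^{k-|H|-1}(-1)^m\binom{n-|H|-m-1}{n-k}\binom{n-k}{m} = 1$, itself a standard consequence of the generating-function evaluation $\sum_{\ell}(-1)^\ell\binom{M}{\ell}(1+z)^{N-\ell} = z^M(1+z)^{N-M}$. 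Integrating $\prod_i e^{-s'_i}$ against the resulting signed measure $\sum_{H \subseteq F}(-1)^{k-|H|}\delta_{(r \odot v)_H + \vec{0}_{H^c}}$ then produces the displayed product.

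With this identity in hand, $g(v) = \psi^F(t_F) + \int\prod_{i \in F}(1 - e^{-r_i t_i})\frac{p_k^n(r + \vec{1})}{p_k^n(r)}\,d\eta(r)$, so summing over $|F|=k$ together with $p_k^n(\vec{1}-a) = \sum_{|F|=k}\prod_{i \in F}(1-a_i)$ gives
\begin{equation*}
\sum_{|F|=k}g(t_F) = \sum_{|F|=k}\psi^F(t_F) + \int p_k^n\bigl(\vec{1} - e^{-r \odot t}\bigr)\frac{p_k^n(r+\vec{1})}{p_k^n(r)}\,d\eta(r).
\end{equation*}
Applying Equation \ref{ineqorderk2eq1} pointwise at $a = e^{-r \odot t}$ and integrating against the positive kernel, the upper bound rearranges to $g(t) \leq \sum_{|F|=k}g(t_F)$, while the lower bound yields $g(t) \geq \binom{n}{k}^{-1}\sum_{|F|=k}g(t_F) + (1 - \binom{n}{k}^{-1})\sum_{|F|=k}\psi^F(t_F) \geq \binom{n}{k}^{-1}\sum_{|F|=k}g(t_F)$, using $\psi^F \geq 0$ and $\binom{n}{k} \geq 1$. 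The main obstacle is the algebraic identity for $E_k^n(r \odot v)$; once that is secured, the rest is a direct pointwise application of Equation \ref{ineqorderk2eq1}.
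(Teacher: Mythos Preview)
Your argument is correct and follows exactly the route the paper indicates: plug in the integral representation of Theorem~\ref{bernsksevndimpart3}(iv) and apply the pointwise bounds of Equation~\ref{ineqorderk2eq1} to the integrand. The paper leaves this as a one-line remark; you have supplied the two nontrivial ingredients that make the one-liner work, namely the recursion $\partial_{a_j}H_k^n(a)=H_{k-1}^{n-1}(a^{\hat\jmath})$ for monotonicity and the face identity $(-1)^kE_k^n(r\odot v)=\prod_{i\in F}(1-e^{-r_it_i})$ for $|F|=k$, both of which are correct (your combinatorial identity is indeed $[z^{K-1}](1-z)^{-(N+1)}(1-z)^N=[z^{K-1}](1-z)^{-1}=1$ with $N=n-k$, $K=k-|H|$, since the tail $m\ge K$ contributes nothing to that coefficient).
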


This corollary is one of the reasons for the term $(-1)^{k}$ in the double integrations of Theorem \ref{bernsksevndimpart3}.	This result is partially proved on the general context of PDI$_{k}$ kernels at Appendix \ref{geoint}.

\section{Positive definite independent kernels of order $n$}\label{Positivedefiniteindependentkernelsofordern}    

In this section, we define the concept of PDI kernels for several variables such that the case $n=1$ corresponds to CND kernels and the case $n=2$ recovers the PDI kernels presented in \cite{guella2023}. These kernels also generalize the PDI$_n$ radial functions presented in Theorem \ref{bernsksevndimpart3} of \cite{Guella2025}. Furthermore, we explain the relation between this new family of kernels and the concept of distance multivariance defined in \cite{Boettcher2018}, \cite{Boettcher2019}.


To avoid a combinatorial burden, we restrict our generalization to kernels that satisfy a multivariable symmetry relation analogous to the $2$-symmetry hypothesis in \cite{guella2023}. We say that a kernel $\mathfrak{I}: \mathds{X}_{n} \times \mathds{X}_{n}\to \mathbb{R}$ is $n$-symmetric if we can freely interchange elements between the two variables, that is,  
\[    
\mathfrak{I}(x_{\vec{1}}, x_{\vec{2}} )=     \mathfrak{I}((x_{1}^{\sigma_{1}(1)}, \ldots, x_{n}^{\sigma_{n}(1)}), (x_{1}^{\sigma_{1}(2)}, \ldots, x_{n}^{\sigma_{n}(2)}))    
\]
for every $x_{\vec{1}}=(x_{1}^{1}, \ldots, x_{n}^{1})$, $x_{\vec{2}}=(x_{1}^{2}, \ldots, x_{n}^{2}) \in \mathds{X}_{n}$ and bijective functions $\sigma_{i}: \{1,2\}\to \{1,2\}$, $1\leq i \leq n$ (there are $2^n$ such equalities). Alternatively, and frequently used in practice, $n$-symmetry can be defined by the condition
\begin{equation}\label{nsymmetrydefn}    
	\mathfrak{I}(x_{\vec{1}}, x_{\vec{2}} )=     \mathfrak{I}(x_{\alpha}, x_{\vec{3}- \alpha} ), \quad \alpha \in \mathbb{N}_{2}^{n},\quad  x_{\vec{1}}, x_{\vec{2}} \in \mathds{X}_{n}.     
\end{equation}
For $n=3$, this implies $\mathfrak{I}((x_1, y_1, z_1), (x_2, y_2, z_2)) = \mathfrak{I}((x_2, y_1, z_1), (x_1, y_2, z_2))$, and so forth. 

For an $n$-symmetric kernel $\mathfrak{I}$, the following type of double sum, which appears frequently, can be rewritten as
\begin{equation}\label{simplidouble}
	\begin{split}
		&\sum_{\alpha, \beta \in \mathbb{N}_{2}^{n}} C_{\alpha}C_{\beta} \mathfrak{I}(x_{\alpha}, x_{\beta}) \\
		&= \sum_{|F|=0}^{n} \sum_{\xi \in \mathbb{N}_{2}^{n-|F|}} \left [\sum_{\varsigma \in \mathbb{N}_{2}^{|F|}}C_{(\varsigma_{F}+\xi_{F^{c}})} C_{((\vec{3}-\varsigma)_{F}+ \xi_{F^{c}})} \right ] \mathfrak{I}(x_{\vec{1}_{F} + \xi_{F^{c}}},x_{\vec{2}_{F} + \xi_{F^{c}}} ).
	\end{split}
\end{equation}
Indeed, for a fixed pair $\alpha, \beta $, we have that $\alpha + \beta = \vec{3}_{F} + 2(\xi)_{F^{c}}$, where $F:=\{i, \quad \alpha(i) \neq \beta(i) \}$ and $\xi \in \mathbb{N}_{2}^{n-|F|}$ (since $F^{c}$ are the coordinates where $\alpha$ and $\beta$ are equal, we have to multiply $\xi_{F^{c}}$ by $2$). By the $n$-symmetry $ \mathfrak{I}(x_{\alpha}, x_{\beta})= \mathfrak{I}(x_{\alpha^{\prime}}, x_{\beta^{\prime}})$, whenever $\alpha + \beta = \alpha^{\prime} + \beta^{\prime}$, because in such a case, we have $\alpha= (\alpha_{F}, \xi_{F^{c}})$, $\beta= (\vec{3} - \alpha_{F}, \xi_{F^{c}})$, $\alpha^{\prime}= (\alpha^{\prime}_{F}, \xi_{F^{c}})$ and $\beta^{\prime}= (\vec{3} - \alpha^{\prime}_{F}, \xi_{F^{c}})$. Hence
\[
\sum_{ \alpha^{\prime} + \beta^{\prime}=\alpha + \beta }C_{\alpha^{\prime}}C_{\beta^{\prime}}\mathfrak{I}(x_{\alpha^{\prime}}, x_{\beta^{\prime}})= \left [\sum_{\varsigma \in \mathbb{N}_{2}^{|F|}}C_{(\varsigma_{F}+ \xi_{F^{c}})} C_{((\vec{3}-\varsigma)_{F}+ \xi_{F^{c}})} \right ] \mathfrak{I}(x_{\alpha }, x_{\beta }).
\]
The conclusion follows by summing over all possible values of $\alpha + \beta$, which is equivalent to the summation in Equation \eqref{simplidouble}.

For instance, given a function $g: [0, \infty)^{n} \to \mathbb{R}$ and CND kernels $\gamma_{i} : X_{i}\times X_{i} \to [0, \infty)$ for $1\leq i \leq n$, the composition $g(\gamma_{1}, \ldots, \gamma_{n})$ is an $n$-symmetric kernel on $\mathds{X}_{n}$. 

Now we define  the concept that extends the notion of distance multivariance.

\begin{definition}\label{PDIn}An $n$-symmetric kernel $\mathfrak{I}: \mathds{X}_{n} \times \mathds{X}_{n} \to \mathbb{R}$ is positive definite independent of order $n$ (PDI$_{n}$) if for every $\mu \in \mathcal{M}_{n}( \mathds{X}_{n})$ it satisfies
	\[
	\int_{\mathds{X}_{n} }    \int_{\mathds{X}_{n} }(-1)^{n}\mathfrak{I}(u,v)d\mu(u)d\mu(v) \geq 0.
	\]    
	If the previous inequality is an equality only when $\mu$ is the zero measure in $\mathcal{M}_{n}( \mathds{X}_{n})$, we say that $\mathfrak{I}$ is a strictly positive definite independent kernel of order $n$ (SPDI$_{n}$). 
\end{definition}

An important example of a PDI$_{n}$ kernel is the Kronecker product of $n$ CND kernels. Indeed, let $\gamma_{i}: X_{i} \times X_{i}\to \mathbb{R}$, $1\leq i \leq n$, be CND kernels and consider its Kronecker product
\[
\gamma (x, y):= \prod_{i=1}^{n} \gamma_{i}(x_{i}, y_{i}).
\]

By Equation \eqref{Kgamma} and Equation \eqref{integmu0n}, we have that for any fixed $x_{\vec{0}} \in \mathds{X}_{n} $ 
\begin{align*}
	&\int_{\mathds{X}_{n} }    \int_{\mathds{X}_{n} }(-1)^{n} \gamma (x_{\vec{1}}, x_{\vec{2}} )d\mu(x_{\vec{1}})d\mu(x_{\vec{2}})\\
	=& \int_{\mathds{X}_{n} }    \int_{\mathds{X}_{n} }(-1)^{n} \prod_{i=1}^{n} (\gamma_{i}(x_{i}^{1}, x_{i}^{2}) + \gamma_{n}(x_{i}^{0}, x_{i}^{0})- \gamma_{i}(x_{i}^{1},x_{i}^{0})- \gamma_{i}(x_{i}^{0}, x_{i}^{2}) )d\mu(x_{\vec{1}})d\mu(x_{\vec{2}}) \\
	=&\int_{\mathds{X}_{n} }    \int_{\mathds{X}_{n} }\prod_{i=1}^{n}    K^{\gamma_{i}}(x_{i}^{1}, x_{i}^{2})d\mu(x_{\vec{1}})d\mu(x_{\vec{2}})\geq 0,
\end{align*}
where the second equality occurs because the added terms do not depend on all $n$ variables of $x_{\vec{1}}$ or on all $n$ variables of $x_{\vec{2}}$. The final equality holds because the Kronecker product of PD kernels is a PD kernel as well. This property is essentially a generalization (in the discrete case, the continuous case is proved in Corollary \ref{lastkronprod}) to several variables of Theorem 24 in \cite{Sejdinovic2013a}, which proves the case for $n=2$.  

If $\gamma : X\times X \to \mathbb{R}$ is a CND kernel, then for every $\mu \in \mathcal{M}_{1}(X)$ 
\[
\int_{X}\int_{X}-\gamma(u,v)d\mu(u)d\mu(v) = \int_{X}\int_{X}\left [ \frac{\gamma(u,u)}{2} + \frac{\gamma(v,v)}{2} -\gamma(u,v) \right ]d\mu(u)d\mu(v), 
\]
because $\mu(X)=0$. Hence, in the analysis of the energy distance in Theorem \ref{estimativa}, we may assume that $\gamma$ is zero on the diagonal of $X$, that is, the function is zero on the set $\Delta_{0}^{1}:=\{(x,x), \quad x \in X\}$. Next, we generalize this property to PDI$_{n}$ kernels. To do so, we use the measure defined in Equation \eqref{measorderk} for $k=n$, precisely, for $x_{\vec{1}}, x_{\vec{2}} \in \mathds{X}_{n} $, the measure (and its equivalent definitions)
\begin{equation}\label{munn}
	\mu^{n}_{n}[x_{\vec{1}}, x_{\vec{2}}]:=\bigtimes_{i=1}^{n}[\delta(x_{i}^{1}) - \delta(x_{i}^{2}) ]  =    \sum_{j=0}^{n}(-1)^{n-j}\sum_{|F|=j}\delta(x_{\vec{2}-\vec{1}_{F}}) = \sum_{\alpha \in \mathbb{N}_{2}^{n}}(-1)^{n-|\alpha|}\delta(x_{\alpha}),
\end{equation}
and as stated in relation $(i)$ of Theorem \ref{generallancaster}, it is an element of the set $\mathcal{M}_{n}(\mathds{X}_{n})$. 

\begin{lemma}\label{PDInsimpli} Let $\mathfrak{I}: \mathds{X}_{n} \times \mathds{X}_{n} \to \mathbb{R}$ be an $n$-symmetric kernel. Consider the $n$-symmetric kernel $\mathfrak{I}^{\prime}: \mathds{X}_{n} \times \mathds{X}_{n} \to \mathbb{R}$ given by
	\begin{align*}
		\mathfrak{I}^{\prime} (x_{\vec{1}}, x_{\vec{2}} ):= \frac{(-1)^{n}}{2^{n}}\int_{\mathds{X}_{n} }\int_{\mathds{X}_{n} }\mathfrak{I}(u,v)d\mu^{n}_{n}[x_{\vec{1}}, x_{\vec{2}}](u)\mu^{n}_{n}[x_{\vec{1}}, x_{\vec{2}}](v). 
	\end{align*}
	Then, for any $\mu \in \mathcal{M}_{n}( \mathds{X}_{n})$ 
	\[ 
	\int_{\mathds{X}_{n}}\int_{\mathds{X}_{n}}(-1)^{n}\mathfrak{I}^{\prime} (u, v )d\mu(u)d\mu(v)=\int_{\mathds{X}_{n}}\int_{\mathds{X}_{n}}(-1)^{n}\mathfrak{I} (u, v )d\mu(u)d\mu(v),
	\]
	hence, $\mathfrak{I}^{\prime}$ is PDI$_{n}$ if and only if $\mathfrak{I}$ is PDI$_{n}$.\\
	If at least one of the coordinates of $x_{\vec{1}}$ and $x_{\vec{2}}$ is equal, then $\mathfrak{I}^{\prime}(x_{\vec{1}}, x_{\vec{2}})=0$.\\
	If $\mathfrak{I}(x_{\vec{1}}, x_{\vec{2}})=0$ whenever at least one coordinate of $x_{\vec{1}}$ and $x_{\vec{2}}$ is equal, then $\mathfrak{I}= \mathfrak{I}^{\prime}$.
\end{lemma}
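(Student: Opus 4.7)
The plan is to expand $\mathfrak{I}^{\prime}$ explicitly using the product formula $\mu^{n}_{n}[x_{\vec{1}}, x_{\vec{2}}] = \bigtimes_{i=1}^{n}[\delta_{x_{i}^{1}} - \delta_{x_{i}^{2}}]$, so that
$$\mathfrak{I}^{\prime}(u, v) = \frac{(-1)^{n}}{2^{n}} \sum_{\alpha, \beta \in \mathbb{N}_{2}^{n}} (-1)^{n-|F_{\alpha}|}(-1)^{n-|F_{\beta}|} \mathfrak{I}(w_{\alpha}(u, v), w_{\beta}(u, v)),$$
where $F_{\alpha} := \{i : \alpha_{i} = 1\}$ and $w_{\alpha}(u, v)$ is the element of $\mathds{X}_{n}$ whose $i$-th coordinate equals $u_{i}$ if $\alpha_{i} = 1$ and $v_{i}$ if $\alpha_{i} = 2$. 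The $n$-symmetry of $\mathfrak{I}^{\prime}$ is built into the expression, since swapping the roles of $x_{\vec{1}}$ and $x_{\vec{2}}$ coordinate-by-coordinate amounts to the reindexing $\alpha \leftrightarrow \vec{3} - \alpha$, which preserves both the sign $(-1)^{n - |F_{\alpha}|}(-1)^{n-|F_{\beta}|}$ and, by the assumed $n$-symmetry of $\mathfrak{I}$, the summand itself.

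The main assertion on the double integrals is then shown by analyzing each summand separately. For a fixed pair $(\alpha, \beta)$, the function $(u,v) \mapsto \mathfrak{I}(w_{\alpha}(u, v), w_{\beta}(u, v))$ depends on $u_{i}$ only for $i \in F_{\alpha} \cup F_{\beta}$ and on $v_{i}$ only for $i \in F_{\alpha}^{c} \cup F_{\beta}^{c}$. Since $\mu \in \mathcal{M}_{n}(\mathds{X}_{n})$, Equation \ref{integmu0n} implies that any $\mu$-integral of a function depending on fewer than $n$ coordinates vanishes; applying this to the $u$-integral and then to the $v$-integral shows that the summand contributes only when $F_{\alpha} \cup F_{\beta} = \{1, \ldots, n\}$ and $F_{\alpha} \cap F_{\beta} = \emptyset$, that is, $\beta = \vec{3} - \alpha$. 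For such pairs $|F_{\alpha}| + |F_{\beta}| = n$, so the sign simplifies to $(-1)^{n}$, and the $n$-symmetry of $\mathfrak{I}$ yields $\mathfrak{I}(w_{\alpha}(u, v), w_{\vec{3} - \alpha}(u, v)) = \mathfrak{I}(u, v)$. Summing the $2^{n}$ surviving terms cancels the prefactor $(-1)^{n}/2^{n}$ and gives the claimed equality, from which the PDI$_{n}$ equivalence follows after multiplying by $(-1)^{n}$.

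The remaining two claims are immediate from the same setup. If $x_{i}^{1} = x_{i}^{2}$ for some $i$, then the $i$-th factor $\delta_{x_{i}^{1}} - \delta_{x_{i}^{2}}$ is zero, hence $\mu^{n}_{n}[x_{\vec{1}}, x_{\vec{2}}]$ and therefore $\mathfrak{I}^{\prime}(x_{\vec{1}}, x_{\vec{2}})$ vanish. Conversely, if $\mathfrak{I}$ vanishes whenever some coordinate of its two arguments agrees, then in the $4^{n}$-term expansion of $\mathfrak{I}^{\prime}(x_{\vec{1}}, x_{\vec{2}})$ only the $2^{n}$ pairs with $\beta = \vec{3} - \alpha$ survive (for any other pair, some coordinate $i \in F_{\alpha} \cap F_{\beta}$ forces $x_{\alpha}$ and $x_{\beta}$ to share the value $x_{i}^{1}$, or symmetrically $x_{i}^{2}$), and each surviving term equals $\mathfrak{I}(x_{\vec{1}}, x_{\vec{2}})$ by $n$-symmetry, so the cancellation of signs gives $\mathfrak{I}^{\prime} = \mathfrak{I}$. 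The only delicate point in the whole argument is the bookkeeping that identifies the surviving indices as $\beta = \vec{3} - \alpha$; once that is in hand every subsequent step is a direct application of the vanishing property of $\mathcal{M}_{n}$ recalled in Subsection \ref{Terminology} together with the $n$-symmetry hypothesis.
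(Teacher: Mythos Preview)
Your proof is correct and follows essentially the same approach as the paper: expand $\mathfrak{I}'$ into the $4^{n}$-term double sum, use the coordinate-dependence observation together with Equation \ref{integmu0n} to kill all but the terms with $\beta=\vec{3}-\alpha$, and then invoke $n$-symmetry. The only organizational difference is that the paper first applies the grouping identity \eqref{simplidouble} (collecting the $(\alpha,\beta)$ by the set $F=\{i:\alpha_i\neq\beta_i\}$) before appealing to \eqref{integmu0n}, whereas you argue term-by-term via the set conditions $F_\alpha\cup F_\beta=\{1,\ldots,n\}$ and $F_\alpha\cap F_\beta=\emptyset$; both routes isolate the same surviving pairs and finish identically.
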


\begin{proof}The kernel $\mathfrak{I}^{\prime}$ is $n$-symmetric because $\mu^{n}_{n}[x_{\alpha}, x_{\vec{3}-\alpha}]= (-1)^{n-|\alpha|}\mu^{n}_{n}[x_{\vec{1}}, x_{\vec{2}}]$ for every $\alpha \in \mathbb{N}_{2}^{n}$, then we obtain that $\mathfrak{I}^{\prime}(x_{\alpha},x_{\vec{3}-\alpha}) =\mathfrak{I}^{\prime}(x_{\vec{1}},x_{\vec{2}}) $.\\ 
	By Equation \eqref{simplidouble} the explicit expression for $\mathfrak{I}^{\prime}$ is 
	\begin{equation}\label{simplidoublen}
		\begin{split}
			\mathfrak{I}^{\prime}(x_{\vec{1}}, x_{\vec{2}})&=\sum_{\alpha, \beta \in \mathbb{N}_{2}^{n}}\frac{(-1)^{n-|\alpha| -|\beta|}}{2^{n}} \mathfrak{I}(x_{\alpha}, x_{\beta})\\
			&= \sum_{|F|=0}^{n} \sum_{\xi \in \mathbb{N}_{2}^{n-|F|}} (-1)^{n-|F|}2^{|F|-n} \mathfrak{I}(x_{\vec{1}_{F} + \xi_{F^{c}}},x_{\vec{2}_{F} + \xi_{F^{c}}} ). 
		\end{split}
	\end{equation}
	Note that if $|F|< n $, then $\mathfrak{I}(x_{\vec{1}_{F} + \xi_{F^{c}}},x_{\vec{2}_{F} + \xi_{F^{c}}} )$ depends on a maximum of $n-1$ among the $n$ variables of either $x_{\vec{1}}$ or $x_{\vec{2}}$, hence due to Equation \eqref{integmu0n}, for every $\mu \in \mathcal{M}_{n}( \mathds{X}_{n})$ 
	$$
	\int_{\mathds{X}_{n}}\int_{\mathds{X}_{n}}\mathfrak{I}(x_{\vec{1}_{F} + \xi_{F^{c}}},x_{\vec{2}_{F} + \xi_{F^{c}}} )d     \mu (x_{\vec{1}})d\mu(x_{\vec{2}})=0, \quad \xi \in \mathbb{N}_{2}^{n-|F|}, \quad |F|< n,
	$$
	which concludes the equality 
	$$
	\int_{\mathds{X}_{n}}\int_{\mathds{X}_{n}}(-1)^{n}\mathfrak{I}^{\prime} (u, v )d\mu(u)d\mu(v)=\int_{\mathds{X}_{n}}\int_{\mathds{X}_{n}}(-1)^{n}\mathfrak{I} (u, v )d\mu(u)d\mu(v).
	$$
	If at least one of the coordinates of $x_{\vec{1}}$ and $x_{\vec{2}}$ is equal then $ \mu^{n}_{n}[x_{\vec{1}}, x_{\vec{2}}]$ is the zero measure, consequently $\mathfrak{I}^{\prime}(x_{\vec{1}}, x_{\vec{2}})=0$.\\
	If $\mathfrak{I}(x_{\vec{1}}, x_{\vec{2}})=0$ whenever at least one coordinate of $x_{\vec{1}}$ and $x_{\vec{2}}$ is equal then for $\alpha + \beta \neq \vec{3}$, we obtain $\mathfrak{I}(x_{\alpha}, x_{\beta})=0$, thus
	$$
	\mathfrak{I}^{\prime} (x_{\vec{1}}, x_{\vec{2}} )= \frac{1}{2^{n}} \left[\sum_{\alpha \in \mathbb{N}_{2}^{n}}(-1)^{n-|\alpha|-|\vec{3}-\alpha|}\right ] \mathfrak{I}(x_{\alpha}, x_{\vec{3}-\alpha}) = \mathfrak{I} (x_{\vec{1}}, x_{\vec{2}} ).
	$$    \end{proof}   

We define the extended diagonal $\Delta_{n-1}^{n}$ of $\mathds{X}_{n}$ as the set
$$
\Delta_{n-1}^{n} :=\{(x_{\vec{1}}, x_{\vec{2}}) \in \mathds{X}_{n} \times \mathds{X}_{n}, \quad |\{ i, \quad x_{i}^{1}= x_{i}^{2}\}| \geq 1 \},
$$
that is, the set of pairs for which at least one of its coordinates is equal. The assumption that an PDI$_{n}$ kernel is zero at the extended diagonal $\Delta_{n-1}^{n}$, or in other words that  $\mathfrak{I}= \mathfrak{I}^{\prime}$,  simplifies several results, for instance in Theorem \ref{PDIngeometryrkhs} and Theorem \ref{integPDIn}, and as shown in the previous lemma, does not change the value of the double integration we aim to analyze.

A direct and frequently used consequence of this hypothesis is that a PDI$_{n}$ kernel $\mathfrak{I}$ that is zero at the extended diagonal $\Delta_{n-1}^{n}$ is a nonnegative function. To that end, take arbitrary $x_{\vec{1}}, x_{\vec{2}} \in \mathds{X}_{n}$ then
\begin{equation}\label{nonnegativity}
	0\leq \int_{\mathds{X}_{n} }\int_{\mathds{X}_{n} }(-1)^{n}\mathfrak{I}(u,v)d\mu^{n}_{n}[x_{\vec{1}}, x_{\vec{2}}](u)\mu^{n}_{n}[x_{\vec{1}}, x_{\vec{2}}](v)= 2^{n}\mathfrak{I}(x_{\vec{1}}, x_{\vec{2}}).
\end{equation}

Next lemma connects the concept of PDI$_{n}$ kernels in $\mathds{X}_{n}$ with the one of smaller order.

\begin{lemma}\label{fixedkern}Let $\mathfrak{I}: \mathds{X}_{n} \times \mathds{X}_{n} \to \mathbb{R}$ be a PDI$_{n}$ kernel which is zero at the extended diagonal $\Delta_{n-1}^{n}$ of $\mathds{X}_{n}$. Then, for every $F \subset \{1, \ldots, n\}$ with $1\leq |F|< n$ and $\lambda \in \mathcal{M}_{n-|F|}(\mathds{X}_{F^{c}} ) $, the kernel
	\begin{equation*}\label{eqcndkern}
		\mathfrak{I}_{\lambda}( x_{\vec{1}_{F}}, x_{\vec{2}_{F}}):=    (-1)^{n-|F|}\int_{\mathds{X}_{F^{c}}}\int_{\mathds{X}_{F^{c}}}\mathfrak{I}(( x_{\vec{1}_{F}},u),(x_{\vec{2}_{F}},v))d\lambda(u)d\lambda(v), 
	\end{equation*}
	is PDI$_{|F|}$ on $\mathds{X}_{F}$ that is zero in the extended diagonal $\Delta_{|F|-1}^{|F|}$ of $\mathds{X}_{F}$. In particular, for every fixed $x_{\vec{3}_{F^{c}}}, x_{\vec{4}_{F^{c}}} \in \mathds{X}_{F^{c}}$ the kernel
	$$
	( x_{\vec{1}_{F}}, x_{\vec{2}_{F}}) \to \mathfrak{I}( x_{\vec{1}_{F}+ \vec{3}_{F^{c}}},x_{\vec{2}_{F}+ \vec{4}_{F^{c}}}) \in \mathbb{R}
	$$
	is PDI$_{|F|}$ on $\mathds{X}_{F}$ that is zero in the extended diagonal $\Delta_{|F|-1}^{|F|}$ of $\mathds{X}_{F}$.
\end{lemma}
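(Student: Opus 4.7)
The plan is to verify, in sequence, that $\mathfrak{I}_{\lambda}$ is $|F|$-symmetric, vanishes on the extended diagonal $\Delta_{|F|-1}^{|F|}$, and satisfies the PDI$_{|F|}$ inequality, and then to derive the \emph{in particular} clause by specializing $\lambda$. First, for the $|F|$-symmetry, given $\alpha \in \mathbb{N}_{2}^{|F|}$ I would apply the $n$-symmetry of $\mathfrak{I}$ with the extended index $(\alpha, \vec{1}_{F^{c}}) \in \mathbb{N}_{2}^{n}$, which acts on $F$ by $\alpha$ and fixes $F^{c}$; this gives $\mathfrak{I}((y_{\alpha}, u), (y_{\vec{3}-\alpha}, v)) = \mathfrak{I}((y_{\vec{1}_{F}}, u), (y_{\vec{2}_{F}}, v))$ for every fixed $u,v \in \mathds{X}_{F^{c}}$, and integrating against the product $\lambda \bigtimes \lambda$ transfers this equality to $\mathfrak{I}_{\lambda}$. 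Second, if $y_{\vec{1}_{F}}$ and $y_{\vec{2}_{F}}$ agree on some coordinate $i \in F$, then for every $u,v$ the pair $((y_{\vec{1}_{F}}, u),(y_{\vec{2}_{F}}, v))$ lies on $\Delta_{n-1}^{n}$, so the integrand in the definition of $\mathfrak{I}_{\lambda}$ is identically zero.

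For the PDI$_{|F|}$ inequality, take an arbitrary $\nu \in \mathcal{M}_{|F|}(\mathds{X}_{F})$ and form the product $\nu \bigtimes \lambda \in \mathcal{M}(\mathds{X}_{n})$. Each single-coordinate marginal of this product vanishes: coordinates in $F$ by the hypothesis on $\nu$, coordinates in $F^{c}$ by the hypothesis on $\lambda$; hence $\nu \bigtimes \lambda \in \mathcal{M}_{n}(\mathds{X}_{n})$. Applying Fubini together with the PDI$_{n}$ property of $\mathfrak{I}$ yields
$$
(-1)^{|F|}\!\int_{\mathds{X}_{F}}\!\int_{\mathds{X}_{F}}\! \mathfrak{I}_{\lambda}(a,b)\, d\nu(a)d\nu(b) = (-1)^{n}\!\int_{\mathds{X}_{n}}\!\int_{\mathds{X}_{n}}\! \mathfrak{I}(U,V)\, d(\nu\bigtimes\lambda)(U)d(\nu\bigtimes\lambda)(V) \ge 0.
$$

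For the \emph{in particular} clause, I would choose $\lambda := \bigtimes_{i \in F^{c}}(\delta_{x_{i}^{3}} - \delta_{x_{i}^{4}})$, which lies in $\mathcal{M}_{n-|F|}(\mathds{X}_{F^{c}})$ since each factor has zero mass. If some $x_{i}^{3} = x_{i}^{4}$ with $i \in F^{c}$, the target kernel already sits on $\Delta_{n-1}^{n}$ and is identically zero, so the conclusion is trivial. Otherwise, expanding $\mathfrak{I}_{\lambda}$ as a signed sum over pairs of subsets $(S,T)$ of $F^{c}$, the vanishing of $\mathfrak{I}$ on $\Delta_{n-1}^{n}$ forces only the $2^{n-|F|}$ pairs with $T = F^{c}\setminus S$ to contribute, and the $n$-symmetry (swapping $F^{c}$-coordinates on $S$) collapses each surviving term to a common value $\mathfrak{I}((x_{\vec{1}_{F}}, x_{\vec{3}_{F^{c}}}), (x_{\vec{2}_{F}}, x_{\vec{4}_{F^{c}}}))$. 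Collecting the signs $(-1)^{n-|F|}(-1)^{|S|+|T|} = (-1)^{n-|F|}(-1)^{|F^{c}|} = 1$ yields $\mathfrak{I}_{\lambda} = 2^{n-|F|}$ times the target kernel, so the latter inherits the PDI$_{|F|}$ property and its vanishing on $\Delta_{|F|-1}^{|F|}$ follows because $\mathfrak{I}$ vanishes on $\Delta_{n-1}^{n}$.

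The principal bookkeeping obstacle is the sign tracking in the expansion for the \emph{in particular} clause, together with the careful verification that $\nu \bigtimes \lambda$ really lies in $\mathcal{M}_{n}(\mathds{X}_{n})$ by checking each type of rectangular test set. The remaining steps are straightforward applications of Fubini and the symmetry/diagonal hypotheses on $\mathfrak{I}$.
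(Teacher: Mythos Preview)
Your proposal is correct and follows essentially the same approach as the paper's proof: form the product measure $\nu\times\lambda\in\mathcal{M}_{n}(\mathds{X}_{n})$ and apply the PDI$_{n}$ inequality via Fubini, use the vanishing of $\mathfrak{I}$ on $\Delta_{n-1}^{n}$ for the extended-diagonal claim, and for the \emph{in particular} clause specialize $\lambda=\bigtimes_{i\in F^{c}}(\delta_{x_{i}^{3}}-\delta_{x_{i}^{4}})$, expand, kill the diagonal terms, and collapse the $2^{n-|F|}$ survivors by $n$-symmetry. Your explicit treatment of $|F|$-symmetry and of the degenerate case $x_{i}^{3}=x_{i}^{4}$ are minor additions the paper leaves implicit, but otherwise the arguments coincide.
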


\begin{proof} Indeed, if $\mu \in \mathcal{M}_{|F|}(\mathds{X}_{F})$, we get that $\mu \times \lambda \in \mathcal{M}_{n}(\mathds{X}_{n} )$, and then
	\begin{align*}
		\int_{\mathds{X}_{F}}\int_{\mathds{X}_{F}}&    (-1)^{|F|}\mathfrak{I}_{\lambda}( x_{\vec{1}_{F}}, x_{\vec{2}_{F}})d\mu(x_{\vec{1}_{F}})d\mu(x_{\vec{2}_{F}})\\
		&= \int_{\mathds{X}_{n}}\int_{\mathds{X}_{n}}    (-1)^{n}\mathfrak{I}( z,w)d(\mu\times\lambda)(z)d(\mu\times\lambda)(w)\geq 0.
	\end{align*}    
	If any of the $|F|$ coordinates of $x_{\vec{1}_{F}}, x_{\vec{2}_{F}} \in \mathds{X}_{F}$ is equal, then $\mathfrak{I}(( x_{\vec{1}_{F}},u),(x_{\vec{2}_{F}},v))=0$ for any $u,v \in \mathds{X}_{F^{c}}$, because $\mathfrak{I}$ is zero at the extended diagonal $\Delta_{n-1}^{n}$, hence $\mathfrak{I}_{\lambda}( x_{\vec{1}_{F}}, x_{\vec{2}_{F}})=0$.\\
	For the second part, take $\lambda=\bigtimes_{i \in F^{c}}[ \delta(x_{i}^{4})-\delta(x_{i}^{3}) ]$, and note that
	\begin{align*}
		\mathfrak{I}_{\lambda}( x_{\vec{1}_{F}}, x_{\vec{2}_{F}})&=\sum_{\alpha, \beta \in \mathbb{N}_{2}^{F^{c}}}    (-1)^{n-|F|}(-1)^{|\alpha| + |\beta|}\mathfrak{I}( x_{\vec{1}_{F} +(\vec{2} + \alpha)_{F^{c}} },x_{\vec{2}_{F}+(\vec{2} + \beta)_{F^{c}}})\\
		&=2^{n-|F|}\mathfrak{I}( x_{\vec{1}_{F}+ \vec{3}_{F^{c}}},x_{\vec{2}_{F}+ \vec{4}_{F^{c}}}).
\end{align*}\end{proof}    

Now we provide a geometric interpretation of PDI$_{n}$ kernels, by connecting them to PD kernels, by generalizing Equation \eqref{Kgamma} for an arbitrary $n$.

\begin{lemma}\label{PDIntoPDn} Let $\mathfrak{I} : \mathds{X}_{n} \times \mathds{X}_{n} \to \mathbb{R} $ be an $n$-symmetric kernel and a fixed $x_{\vec{0}} \in \mathds{X}_{n} $. The kernel $ K^{\mathfrak{I}} : \mathds{X}_{n} \times \mathds{X}_{n} \to \mathbb{R} $ defined as
	$$
	K^{\mathfrak{I}}(x_{\vec{1}}, x_{\vec{2}}):= \int_{\mathds{X}_{n} }\int_{\mathds{X}_{n} }(-1)^{n}\mathfrak{I}(u,v)d\mu^{n}_{n}[x_{\vec{1}}, x_{\vec{0}}](u)\mu^{n}_{n}[x_{\vec{2}}, x_{\vec{0}}](v) 
	$$
	is PD if and only if $\mathfrak{I}$ is PDI$_{n}$.            
\end{lemma}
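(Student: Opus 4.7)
The plan is to show that both inequalities (PD of $K^{\mathfrak{I}}$ and PDI$_{n}$ of $\mathfrak{I}$) are encoded by the same bilinear identity, so the equivalence reduces to showing that measures of the form $\sum_{k}c_{k}\mu^{n}_{n}[y_{k},x_{\vec{0}}]$ with $y_{k}\in \mathds{X}_{n}$, $c_{k}\in\mathbb{R}$, exhaust all of $\mathcal{M}_{n}(\mathds{X}_{n})$ (up to discreteness). The core identity is that for any finite choice of scalars $c_{1},\ldots,c_{N}$ and points $y_{1},\ldots,y_{N}\in\mathds{X}_{n}$, by bilinearity of the double integral defining $K^{\mathfrak{I}}$,
$$
\sum_{k,\ell=1}^{N} c_{k}c_{\ell}K^{\mathfrak{I}}(y_{k},y_{\ell}) \; = \; \int_{\mathds{X}_{n}}\!\!\int_{\mathds{X}_{n}} (-1)^{n}\mathfrak{I}(u,v)\, d\mu(u)\,d\mu(v), \quad \mu := \sum_{k=1}^{N} c_{k}\,\mu^{n}_{n}[y_{k},x_{\vec{0}}].
$$

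For the forward direction, assume $\mathfrak{I}$ is PDI$_{n}$. Each $\mu^{n}_{n}[y_{k},x_{\vec{0}}] = \bigtimes_{i=1}^{n}(\delta_{y_{k,i}}-\delta_{x_{i}^{0}})$ is a Kronecker product of $n$ signed measures each of total mass zero, hence by the pigeonhole observation following Equation~\ref{integmu0n} it belongs to $\mathcal{M}_{n}(\mathds{X}_{n})$. Consequently $\mu\in\mathcal{M}_{n}(\mathds{X}_{n})$, and the right-hand side of the identity above is nonnegative by the PDI$_{n}$ hypothesis; since $c_{k},y_{k}$ were arbitrary, $K^{\mathfrak{I}}$ is PD.

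For the converse, assume $K^{\mathfrak{I}}$ is PD and let $\mu\in\mathcal{M}_{n}(\mathds{X}_{n})$ be arbitrary (necessarily discrete). Restrict attention to the finite grid $\prod_{i=1}^{n}S_{i}$ containing the support of $\mu$, with $S_{i}=\{x_{i}^{0},x_{i}^{1},\ldots,x_{i}^{m_{i}}\}$. I will show that the family $\{\mu^{n}_{n}[x_{\beta},x_{\vec{0}}]\}_{\beta\in\prod_{i}\{1,\ldots,m_{i}\}}$ is a basis for the subspace $V$ of measures on $\prod S_{i}$ whose marginals of size $\leq n-1$ all vanish. Linear independence is immediate: the coefficient of $\delta_{x_{\beta'}}$ in $\mu^{n}_{n}[x_{\beta},x_{\vec{0}}]$ (for $\beta,\beta'$ with all entries nonzero) equals $\delta_{\beta,\beta'}$. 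Spanning follows from a dimension count, since $\dim V = \prod_{i=1}^{n}m_{i}$ by a standard inclusion-exclusion on the grid (the value of any element of $V$ at an interior point $x_{\beta}$ determines, via the vanishing-marginal constraints, its values at every boundary point). Hence $\mu=\sum_{k}c_{k}\mu^{n}_{n}[y_{k},x_{\vec{0}}]$ for some scalars, and the identity above plus positive definiteness of $K^{\mathfrak{I}}$ yields $\int\!\!\int(-1)^{n}\mathfrak{I}\,d\mu\,d\mu\geq 0$.

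The main obstacle is the spanning claim: although the dimension count is short, a self-contained argument may be cleaner by explicitly defining $c_{\beta}$ to be the coefficient of $\delta_{x_{\beta}}$ in $\mu$ at every fully-interior $\beta$, and then checking, by iteratively invoking the vanishing of the size-$(n-1)$ marginals of $\mu$, that $\mu-\sum_{\beta}c_{\beta}\mu^{n}_{n}[x_{\beta},x_{\vec{0}}]$ has zero mass at each boundary atom as well. Either route turns the positivity of the quadratic form $(c_{k},c_{\ell})\mapsto \sum c_{k}c_{\ell}K^{\mathfrak{I}}(y_{k},y_{\ell})$ into the positivity of the integral $\int\!\!\int(-1)^{n}\mathfrak{I}\,d\mu\,d\mu$, completing the equivalence.
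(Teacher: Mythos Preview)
Your proof is correct and the forward direction matches the paper exactly. For the converse you take a genuinely different (and slightly longer) route than the paper.

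The paper does not set up a basis or invoke a dimension count. Instead, given $\mu=\sum_{\alpha\in\mathbb{N}_{m}^{n}}c_{\alpha}\delta_{x_{\alpha}}\in\mathcal{M}_{n}(\mathds{X}_{n})$, it verifies \emph{directly} that
\[
\sum_{\alpha}c_{\alpha}\,\mu_{n}^{n}[x_{\alpha},x_{\vec{0}}]\;=\;\sum_{\alpha}c_{\alpha}\,\delta_{x_{\alpha}}
\]
as measures, by integrating an arbitrary function $f$ and using that $\sum_{\alpha}c_{\alpha}f(x_{\alpha_{F}})=0$ for every $|F|\leq n-1$ (this is exactly Equation~\eqref{integmu0n}). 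This is a one-line cancellation that exploits the very constraints defining $\mathcal{M}_{n}$, with no need to enlarge the grid to contain $x_{\vec{0}}$ or to count dimensions. Your ``alternate route'' at the end (choosing $c_{\beta}$ to be the interior coefficients and checking the boundary atoms cancel) is essentially this same computation in coordinates; the paper's version is just the clean functional formulation of it. Your dimension-count argument is a valid substitute and has the minor conceptual benefit of identifying $\{\mu_{n}^{n}[x_{\beta},x_{\vec{0}}]\}$ as a genuine basis of $\mathcal{M}_{n}$ on the grid, but it is more work than needed here.
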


\begin{proof} Suppose that $\mathfrak{I}$ is PDI$_{n}$, then for arbitrary points $z_{1}, \ldots, z_{m} \in \mathds{X}_{n} $ and scalars $c_{1}, \ldots, c_{m} \in \mathbb{R}$ 
	\begin{align*}
		\sum_{i,j=1}^{m}c_{i}c_{j}&K^{\mathfrak{I}}(z_{i}, z_{j})\\
		&= \int_{\mathds{X}_{n} }\int_{\mathds{X}_{n} }(-1)^{n}\mathfrak{I}(u,v)d\left [\sum_{i=1}^{m}c_{i}\mu^{n}_{n}[z_{i}, x_{\vec{0}}] \right ](u)d\left [\sum_{j=1}^{m}c_{j}\mu^{n}_{n}[z_{j}, x_{\vec{0}}]\right ](v) \geq 0
	\end{align*}    
	because $\mathcal{M}_{n}( \mathds{X}_{n}) $ is a vector space.\\
	Conversely, if $K^{\mathfrak{I}}$ is PD, let $x^{1}_{i}, \dots, x^{n}_{i} \in X_{i}$, $1\leq i \leq n$ and scalars $c_{\alpha} \in \mathbb{R}$, $\alpha \in \mathbb{N}^{n}_{m}$ that satisfies the restrictions in Definition \ref{PDIn} (or equivalently, $    \sum_{\alpha \in \mathbb{N}_{m}^{n}}c_{\alpha}\delta_{x_{\alpha}} \in \mathcal{M}_{n}( \mathds{X}_{n})$), then 
	\begin{align*}
		0 &\leq \sum_{\alpha, \beta \in \mathbb{N}_{m}^{n}}c_{\alpha}c_{\beta}K^{\mathfrak{I}}(x_{\alpha},x_{\beta})\\
		&= \int_{\mathds{X}_{n} }\int_{\mathds{X}_{n} }(-1)^{n}\mathfrak{I}(u,v)d\left [\sum_{\alpha \in \mathbb{N}_{m}^{n}}c_{\alpha}\mu^{n}_{n}[x_{\alpha}, x_{\vec{0}}] \right ](u)d\left [\sum_{\beta \in \mathbb{N}_{m}^{n}}c_{\beta}\mu^{n}_{n}[x_{\beta}, x_{\vec{0}}]\right ](v).
	\end{align*}
	However
	$$
	\sum_{\alpha \in \mathbb{N}_{m}^{n}}c_{\alpha}\mu^{n}_{n}[x_{\alpha}, x_{\vec{0}}]=\sum_{\alpha \in \mathbb{N}_{m}^{n}}c_{\alpha} \delta_{x_{\alpha}}, 
	$$
	because by Equation \eqref{integmu0n} for any function $f:\mathds{X}_{n} \to \mathbb{R} $
	\begin{align*}
		\int_{\mathds{X}_{n}}f(u)&d\left [\sum_{\alpha \in \mathbb{N}_{m}^{n}}c_{\alpha}\mu^{n}_{n}[x_{\alpha}, x_{\vec{0}}]\right ](u) = \sum_{\alpha \in \mathbb{N}_{m}^{n}}c_{\alpha} \left [ f(x_{\alpha}) + \sum_{|F|\leq n-1}(-1)^{n-|F|} f(x_{ \alpha_{F}} ) \right] \\
		&= \sum_{\alpha \in \mathbb{N}_{m}^{n}}c_{\alpha} f(x_{\alpha}) = \int_{\mathds{X}_{n}}f(u)d\left [\sum_{\alpha \in \mathbb{N}_{m}^{n}}c_{\alpha}\delta_{x_{\alpha}} \right ](u),
	\end{align*}
	as for every fixed $|F|\leq n-1$, we get that $\sum_{\alpha \in \mathbb{N}_{m}^{n}}c_{\alpha} f(x_{ \alpha_{F}})=0$ by Equation \eqref{integmu0n}. Then 
	\begin{equation}\label{contaPDIntoPDn}
		\sum_{\alpha, \beta \in \mathbb{N}_{m}^{n}}c_{\alpha}c_{\beta}(-1)^{n}\mathfrak{I}(x_{\alpha},x_{\beta}) =\sum_{\alpha, \beta \in \mathbb{N}_{m}^{n}}c_{\alpha}c_{\beta}K^{\mathfrak{I}}(x_{\alpha},x_{\beta}) \geq 0.
\end{equation}\end{proof}            

We emphasize that the kernel $K^{\mathfrak{I}}$ depends on the choice of the element $x_{\vec{0}}$, which we omit to simplify the notation, however, the equality in Equation \eqref{contaPDIntoPDn} is independent of this choice. In the special case that $\mathfrak{I}$ is the Kronecker product of $n$ CND kernels, then $K^{\mathfrak{I}}$ is the Kronecker product of the $n$ relative PD kernels using the same point $x_{\vec{0}} \in \mathds{X}_{n}$. 

The explicit expression for $K^{\mathfrak{I}}$ is
\begin{equation}\label{newrepkI}
	K^{\mathfrak{I}} (x_{\vec{1}}, x_{\vec{2}}):= (-1)^{n} \sum_{i,j=0}^{n}(-1)^{i+j}\sum_{ |F|=i}\sum_{ |\mathcal{F}|=j}\mathfrak{I} (x_{\vec{1}_{F}}, x_{\vec{2}_{\mathcal{F}}}),
\end{equation}
and by a similar argument as the one in Lemma \ref{PDInsimpli}, if $\mathfrak{I}$ is $n$-symmetric we have that 
\begin{equation}\label{newrepkI2}
	K^{\mathfrak{I}} (x_{\vec{1}}, x_{\vec{1}})= 2^{n}\mathfrak{I} (x_{\vec{1}}, x_{\vec{0}}).
\end{equation}

As a consequence of Lemma \ref{PDIntoPDn} we can obtain another geometrical interpretation for PDI$_{n}$ kernels by connecting them to PD kernels by using the RKHS of the related positive definite kernel, which is a generalization of Equation \eqref{geogamma}. We use the notation $\mathcal{H}_{\mathfrak{I}}$ for the RKHS of the corresponding PD kernel $K^{\mathfrak{I}}$.

\begin{theorem}\label{PDIngeometryrkhs} Let $\mathfrak{I}: \mathds{X}_{n} \times \mathds{X}_{n} \to \mathbb{R}$ be an $n$-symmetric kernel which is zero at the extended diagonal $\Delta_{n-1}^{n}$, a fixed $x_{\vec{0}} \in \mathds{X}_{n}$ and the positive definite kernel $K^{\mathfrak{I}}: \mathds{X}_{n}\times \mathds{X}_{n} \to \mathbb{R}$ defined in Lemma \ref{PDIntoPDn}. The following equality is satisfied
	$$
	\left (    \left \|\sum_{\alpha \in \mathbb{N}_{2}^{n}} (-1)^{|\alpha|} K_{x_{\alpha}}^{\mathfrak{I}} \right \|_{\mathcal{H}_{\mathfrak{I}}}\right )^{2}= \sum_{\alpha, \beta \in \mathbb{N}_{2}^{n}} (-1)^{|\alpha| + |\beta|}K^{\mathfrak{I}}(x_{\alpha}, x_{\beta})= 2^{n}\mathfrak{I}(x_{\vec{1}}, x_{\vec{2}}).
	$$
\end{theorem}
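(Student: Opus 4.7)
The first equality is purely formal: by the reproducing property of $\mathcal{H}_{K^{\mathfrak{I}}}$ and bilinearity of its inner product, expanding the squared norm of $\sum_{\alpha \in \mathbb{N}_{2}^{n}} (-1)^{|\alpha|} K_{x_{\alpha}}^{\mathfrak{I}}$ produces $\sum_{\alpha,\beta} (-1)^{|\alpha|+|\beta|}\langle K^{\mathfrak{I}}_{x_{\alpha}}, K^{\mathfrak{I}}_{x_{\beta}}\rangle = \sum_{\alpha,\beta} (-1)^{|\alpha|+|\beta|}K^{\mathfrak{I}}(x_{\alpha},x_{\beta})$. All the substance lies in the second equality.

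To prove $\sum_{\alpha,\beta \in \mathbb{N}_{2}^{n}} (-1)^{|\alpha|+|\beta|}K^{\mathfrak{I}}(x_{\alpha},x_{\beta}) = 2^{n}\mathfrak{I}(x_{\vec{1}},x_{\vec{2}})$, I will substitute the integral definition of $K^{\mathfrak{I}}$ from Lemma \ref{PDIntoPDn} and pull the alternating signs inside the double integral by linearity. This reduces the claim to the measure-level identity
$$
\nu := \sum_{\alpha \in \mathbb{N}_{2}^{n}} (-1)^{|\alpha|} \mu^{n}_{n}[x_{\alpha}, x_{\vec{0}}] = (-1)^{n}\mu^{n}_{n}[x_{\vec{1}}, x_{\vec{2}}].
$$
Using the tensor product form $\mu^{n}_{n}[x_{\alpha}, x_{\vec{0}}] = \bigtimes_{i=1}^{n}[\delta_{x_{i}^{\alpha_{i}}} - \delta_{x_{i}^{0}}]$ noted after Equation \ref{PDInsimplieq}, together with $(-1)^{|\alpha|} = \prod_{i} (-1)^{\alpha_{i}}$, the sum factors as a tensor product over $i$, in which each factor equals $\sum_{\alpha_{i}=1}^{2} (-1)^{\alpha_{i}}[\delta_{x_{i}^{\alpha_{i}}} - \delta_{x_{i}^{0}}] = \delta_{x_{i}^{2}} - \delta_{x_{i}^{1}}$, since the two $\delta_{x_{i}^{0}}$ contributions cancel. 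Hence $\nu = \bigtimes_{i}[\delta_{x_{i}^{2}} - \delta_{x_{i}^{1}}] = (-1)^{n}\mu^{n}_{n}[x_{\vec{1}}, x_{\vec{2}}]$.

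Plugging $\nu$ back in, and collecting the $(-1)^{n}$ from the definition of $K^{\mathfrak{I}}$ with the two factors of $(-1)^{n}$ coming from $\nu$ on each side, the triple sign gives $(-1)^{3n} = (-1)^{n}$, so
$$
\sum_{\alpha,\beta \in \mathbb{N}_{2}^{n}} (-1)^{|\alpha|+|\beta|} K^{\mathfrak{I}}(x_{\alpha},x_{\beta}) = (-1)^{n}\int_{\mathds{X}_{n}}\int_{\mathds{X}_{n}} \mathfrak{I}(u,v)\, d\mu^{n}_{n}[x_{\vec{1}},x_{\vec{2}}](u)\, d\mu^{n}_{n}[x_{\vec{1}},x_{\vec{2}}](v),
$$
which by the definition of $\mathfrak{I}'$ in Lemma \ref{PDInsimpli} is precisely $2^{n}\mathfrak{I}'(x_{\vec{1}},x_{\vec{2}})$. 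Since $\mathfrak{I}$ vanishes on the extended diagonal $\Delta_{n-1}^{n}$ by hypothesis, the last assertion of Lemma \ref{PDInsimpli} yields $\mathfrak{I}'=\mathfrak{I}$, and the claim follows. No step presents a real obstacle; the only thing to watch is the bookkeeping of the three factors of $(-1)^{n}$, which combine into a single $(-1)^{n}$ that is absorbed by the normalization $(-1)^{n}/2^{n}$ in the definition of $\mathfrak{I}'$.
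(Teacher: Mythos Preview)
Your proof is correct. The route differs slightly from the paper's. The paper invokes Equation \eqref{contaPDIntoPDn} (established in the proof of Lemma \ref{PDIntoPDn}) to pass from $\sum_{\alpha,\beta}(-1)^{|\alpha|+|\beta|}K^{\mathfrak{I}}(x_{\alpha},x_{\beta})$ directly to $(-1)^{n}\sum_{\alpha,\beta}(-1)^{|\alpha|+|\beta|}\mathfrak{I}(x_{\alpha},x_{\beta})$, and then evaluates this double sum by hand: the extended-diagonal hypothesis kills every term except those with $\beta=\vec{3}-\alpha$, and $n$-symmetry makes each surviving term equal to $\mathfrak{I}(x_{\vec{1}},x_{\vec{2}})$. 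You instead compute the measure identity $\sum_{\alpha}(-1)^{|\alpha|}\mu_{n}^{n}[x_{\alpha},x_{\vec{0}}]=(-1)^{n}\mu_{n}^{n}[x_{\vec{1}},x_{\vec{2}}]$ via the tensor-product factorization, and then recognize the resulting double integral as $2^{n}\mathfrak{I}'(x_{\vec{1}},x_{\vec{2}})$ from Lemma \ref{PDInsimpli}, closing with $\mathfrak{I}'=\mathfrak{I}$. Your factorization makes transparent why the base point $x_{\vec{0}}$ disappears, and reusing Lemma \ref{PDInsimpli} avoids redoing a computation already on record; the paper's version is more self-contained at the cost of repeating that small calculation.
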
 

\begin{proof} The first equality is a consequence of the inner product in $\mathcal{H}_{\mathfrak{I}}$. For the second, the scalars $c_{\alpha}=(-1)^{|\alpha|}$, $\alpha \in \mathbb{N}_{2}^{n}$, satisfy the restrictions in Definition \ref{PDIn}, hence by Equation \eqref{contaPDIntoPDn} and the fact that $\mathfrak{I}$ is zero at the extended diagonal $\Delta_{n-1}^{n}$, we have that
	\begin{align*}
		\sum_{\alpha, \beta \in \mathbb{N}_{2}^{n}} (-1)^{|\alpha|}(-1)^{|\beta|} K^{\mathfrak{I}}(x_{\alpha}, x_{\beta} )&=    (-1)^{n}\sum_{\alpha, \beta \in \mathbb{N}_{2}^{n}} (-1)^{|\alpha|}(-1)^{|\beta|} \mathfrak{I}(x_{\alpha}, x_{\beta} )\\
		&= (-1)^{n}\sum_{\alpha \in \mathbb{N}_{2}^{n}} (-1)^{|\alpha|}(-1)^{|\vec{3} - \alpha|} \mathfrak{I}(x_{\alpha}, x_{\vec{3} - \alpha} )\\
		&=\sum_{\alpha \in \mathbb{N}_{2}^{n}} \mathfrak{I}(x_{\vec{1}}, x_{\vec{2}} )= 2^{n}\mathfrak{I}(x_{\vec{1}}, x_{\vec{2}} ).
\end{align*}\end{proof}

An immediate and useful inequality for the results in Subsection \ref{Integrabilityrestrictionsn} and a generalization of Equation \eqref{CNDINEQ} is the following. 

\begin{corollary}\label{ineqPDIngeometryrkhs}Let $\mathfrak{I}: \mathds{X}_{n} \times \mathds{X}_{n} \to \mathbb{R}$ be an $n$-symmetric kernel which is zero at the extended diagonal $\Delta_{n-1}^{n}$. Then, the following inequalities are satisfied
	$$
	\mathfrak{I}(x_{\vec{1}}, x_{\vec{2}}) \leq \left (\sum_{\alpha \in \mathbb{N}_{2}^{n}}\sqrt{\mathfrak{I}(x_{\alpha}, x_{\vec{0}})} \right )^{2} \leq 2^{n} \sum_{\alpha \in \mathbb{N}_{2}^{n}}\mathfrak{I}(x_{\alpha}, x_{\vec{0}}) 
	$$
	for every $x_{\vec{0}}, x_{\vec{1}}, x_{\vec{2}} \in \mathds{X}_{n}$.
\end{corollary}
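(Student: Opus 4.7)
The plan is to derive both inequalities directly from the RKHS representation given in Theorem \ref{PDIngeometryrkhs}, which is the main tool. That theorem tells us that for any choice of $x_{\vec{0}}$,
$$
2^{n}\mathfrak{I}(x_{\vec{1}}, x_{\vec{2}}) = \left\|\sum_{\alpha \in \mathbb{N}_{2}^{n}} (-1)^{|\alpha|} K^{\mathfrak{I}}_{x_{\alpha}}\right\|^{2}_{\mathcal{H}_{K^{\mathfrak{I}}}},
$$
and from the observation made right after Equation \ref{newrepkI}, we also have $K^{\mathfrak{I}}(x_{\alpha}, x_{\alpha}) = 2^{n}\mathfrak{I}(x_{\alpha}, x_{\vec{0}})$, so that $\|K^{\mathfrak{I}}_{x_{\alpha}}\|_{\mathcal{H}_{K^{\mathfrak{I}}}} = 2^{n/2}\sqrt{\mathfrak{I}(x_{\alpha}, x_{\vec{0}})}$. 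Note that this also confirms that $\mathfrak{I}(x_{\alpha}, x_{\vec{0}}) \geq 0$, so the square roots that appear in the statement make sense.

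For the first inequality, I would apply the triangle inequality in $\mathcal{H}_{K^{\mathfrak{I}}}$ to the sum above, which yields
$$
\sqrt{2^{n}\mathfrak{I}(x_{\vec{1}}, x_{\vec{2}})} \leq \sum_{\alpha \in \mathbb{N}_{2}^{n}} \|K^{\mathfrak{I}}_{x_{\alpha}}\|_{\mathcal{H}_{K^{\mathfrak{I}}}} = 2^{n/2}\sum_{\alpha \in \mathbb{N}_{2}^{n}}\sqrt{\mathfrak{I}(x_{\alpha}, x_{\vec{0}})}.
$$
Squaring and dividing by $2^{n}$ yields the first claimed bound.

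For the second inequality, I would apply the Cauchy--Schwarz inequality in $\mathbb{R}^{2^{n}}$ to the vector with entries $\sqrt{\mathfrak{I}(x_{\alpha}, x_{\vec{0}})}$ against the all-ones vector, which immediately gives
$$
\left(\sum_{\alpha \in \mathbb{N}_{2}^{n}}\sqrt{\mathfrak{I}(x_{\alpha}, x_{\vec{0}})}\right)^{2} \leq |\mathbb{N}_{2}^{n}| \cdot \sum_{\alpha \in \mathbb{N}_{2}^{n}}\mathfrak{I}(x_{\alpha}, x_{\vec{0}}) = 2^{n}\sum_{\alpha \in \mathbb{N}_{2}^{n}}\mathfrak{I}(x_{\alpha}, x_{\vec{0}}),
$$
completing the chain. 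There is no serious obstacle here, since the bulk of the work was already done in establishing Theorem \ref{PDIngeometryrkhs}; the corollary is a two-step elementary consequence combining the RKHS triangle inequality with the discrete Cauchy--Schwarz inequality.
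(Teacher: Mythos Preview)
Your proof is correct and follows essentially the same approach as the paper: the first inequality is obtained via the triangle inequality in $\mathcal{H}_{K^{\mathfrak{I}}}$ combined with the identity $K^{\mathfrak{I}}(x_{\alpha},x_{\alpha})=2^{n}\mathfrak{I}(x_{\alpha},x_{\vec{0}})$, and the second is an elementary bound on $\bigl(\sum_{\alpha}\sqrt{\mathfrak{I}(x_{\alpha},x_{\vec{0}})}\bigr)^{2}$. The only cosmetic difference is that the paper expands the square and uses $|ab|\leq (a^{2}+b^{2})/2$ termwise, whereas you invoke Cauchy--Schwarz against the all-ones vector; the two arguments are equivalent.
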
 

\begin{proof}
	Indeed, by using the PD kernel related to the element $x_{\vec{0}} \in \mathds{X}_{n}$ of Lemma \ref{PDIntoPDn}, the triangle inequality and Equation \eqref{newrepkI2} we get that
	$$
	\left \|\sum_{\alpha \in \mathbb{N}_{2}^{n}} (-1)^{|\alpha|} K_{x_{\alpha}}^{\mathfrak{I}} \right \|_{\mathcal{H}_{\mathfrak{I}}}\leq \sum_{\alpha \in \mathbb{N}_{2}^{n}} \left \| K_{x_{\alpha}}^{\mathfrak{I}} \right \|_{\mathcal{H}_{\mathfrak{I}}}=\sum_{\alpha \in \mathbb{N}_{2}^{n}} \sqrt{K^{\mathfrak{I}}(x_{\alpha},x_{\alpha} )} =2^{n/2}\sum_{\alpha \in \mathbb{N}_{2}^{n}} \sqrt{\mathfrak{I}(x_{\alpha},x_{\vec{0}} )},
	$$
	thus obtaining the first inequality by using Theorem \ref{PDIngeometryrkhs}. For the last inequality, since for every real numbers $|ab|\leq (a^{2} + b^{2})/2$ we have that
	$$
	\left (\sum_{\alpha \in \mathbb{N}_{2}^{n}}\sqrt{\mathfrak{I}(x_{\alpha}, x_{\vec{0}})} \right )^{2}= \sum_{\alpha, \beta \in \mathbb{N}_{2}^{n} }\sqrt{\mathfrak{I}(x_{\alpha}, x_{\vec{0}})}\sqrt{\mathfrak{I}(x_{\beta}, x_{\vec{0}})} \leq 2^{n} \sum_{\alpha \in \mathbb{N}_{2}^{n}}\mathfrak{I}(x_{\alpha}, x_{\vec{0}}). 
	$$\end{proof}

Surprisingly, for $n\geq 2$, there is no direct analogue of Schoenberg's characterization in Equation \eqref{schoenmetriccond} that connects PDI$_{n}$ kernels with PD kernels.

\begin{lemma} Let $n\geq 2$, $\mathfrak{I}: \mathds{X}_{n}\times\mathds{X}_{n} \to [0, \infty) $ be a PDI$_{n}$ kernel which is zero at the extended diagonal $\Delta_{n-1}^{n}$ and also a function $f: [0, \infty) \to \mathbb{R}$. The kernel
	$$
	f(\mathfrak{I}(x_{\vec{1}}, x_{\vec{2}})), \quad x_{\vec{1}}, x_{\vec{2}} \in \mathds{X}_{n}
	$$
	is positive definite if and only if this is a constant kernel.
\end{lemma}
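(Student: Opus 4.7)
My plan is to establish the nontrivial forward direction: assuming $K(x,y) := f(\mathfrak{I}(x,y))$ is positive definite, I will show $K$ is constant. The reverse direction is immediate since a constant nonnegative kernel is automatically PD. From the extended diagonal hypothesis, $\mathfrak{I}(x,x) = 0$ for every $x \in \mathds{X}_n$, so $K(x,x) = f(0)$, and positivity of diagonal entries forces $f(0) \geq 0$. Moreover, whenever $x_{\vec{1}}, x_{\vec{2}} \in \mathds{X}_n$ share at least one coordinate, $\mathfrak{I}(x_{\vec{1}}, x_{\vec{2}}) = 0$ and hence $K(x_{\vec{1}}, x_{\vec{2}}) = f(0)$. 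The only case that requires work is when $x_{\vec{1}}$ and $x_{\vec{2}}$ disagree in every one of the $n$ coordinates.

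In that case, exploiting $n \geq 2$, I would introduce the interpolating point $x_{\vec{3}} := (x_1^1, x_2^2, x_3^2, \ldots, x_n^2) \in \mathds{X}_n$, which shares the first coordinate with $x_{\vec{1}}$ and the last $n-1$ coordinates with $x_{\vec{2}}$. The extended diagonal hypothesis then gives $\mathfrak{I}(x_{\vec{1}}, x_{\vec{3}}) = 0 = \mathfrak{I}(x_{\vec{2}}, x_{\vec{3}})$. Writing $a := f(0)$ and $b := f(\mathfrak{I}(x_{\vec{1}}, x_{\vec{2}}))$, the $3 \times 3$ Gram matrix of $K$ on $\{x_{\vec{1}}, x_{\vec{2}}, x_{\vec{3}}\}$ is
$$
\begin{pmatrix} a & b & a \\ b & a & a \\ a & a & a \end{pmatrix},
$$
whose determinant computes to $-a(a-b)^2$. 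Positive semidefiniteness combined with $a \geq 0$ forces either $a = 0$ or $b = a$. If $a = 0$, the $2 \times 2$ principal minor on $\{x_{\vec{1}}, x_{\vec{2}}\}$ has determinant $-b^2 \geq 0$, forcing $b = 0$; in either situation $b = a$, i.e. $K(x_{\vec{1}}, x_{\vec{2}}) = f(0)$. Hence $K$ is the constant kernel equal to $f(0)$.

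The trivial edge case where some $X_i$ is a singleton is handled separately: then any two points of $\mathds{X}_n$ agree in that coordinate, so $\mathfrak{I} \equiv 0$ and $K \equiv f(0)$ immediately. I do not foresee any technical obstacle beyond the $3\times 3$ determinant calculation; the conceptual point is recognizing that $n \geq 2$ permits constructing a point simultaneously diagonal-adjacent to both $x_{\vec{1}}$ and $x_{\vec{2}}$, which is precisely the rigidity that fails for $n = 1$ and allows nonconstant transforms such as $e^{-r\gamma}$ to remain PD on CND kernels by Equation \ref{schoenmetriccond}.
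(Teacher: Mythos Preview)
Your proof is correct and takes a genuinely different, more elementary route than the paper. The paper fixes $x_{\vec{1}}, x_{\vec{2}}$ with all coordinates distinct, builds the full $2^{n}\times 2^{n}$ Gram matrix on the grid $\{x_{\alpha}:\alpha\in\mathbb{N}_{2}^{n}\}$, and tests it against the two vectors $v_{\alpha}=(-1)^{\alpha_{1}}$ and $u_{\alpha}=(-1)^{\alpha_{1}+\alpha_{2}}$ to obtain $\pm 2^{n}(f(c)-f(0))\geq 0$, forcing $f(c)=f(0)$. Your argument instead inserts a single interpolating point $x_{\vec{3}}=(x_{1}^{1},x_{2}^{2},\ldots,x_{n}^{2})$ that lies on the extended diagonal with respect to both $x_{\vec{1}}$ and $x_{\vec{2}}$, reducing the question to the positive semidefiniteness of a $3\times 3$ matrix with determinant $-a(a-b)^{2}$. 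Your approach is shorter, uses only the vanishing of $\mathfrak{I}$ on $\Delta_{n-1}^{n}$ (neither the $n$-symmetry nor the PDI$_{n}$ inequality itself is invoked), and makes transparent why $n\geq 2$ is the threshold: it is exactly what allows a point to be extended-diagonal-adjacent to two points that differ in every coordinate. The paper's approach, by contrast, exhibits the connection to the measures in $\mathcal{M}_{n}(\mathds{X}_{n})$ and the $n$-symmetric structure more explicitly, which fits the surrounding development but is not needed for this particular lemma.
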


\begin{proof}
	Indeed, pick $x_{\vec{1}}$ and $x_{\vec{2}}$ for which all of its coordinates are different. Since the kernel is positive definite, the interpolation matrix at the $2^{n}$ points $x_{\alpha}$, $\alpha \in \mathbb{N}_{2}^{n}$, is 
	$$
	A:=[f(\mathfrak{I}(x_{\alpha}, x_{\beta}))]_{\alpha, \beta}= [f(0)]_{\alpha, \beta}+ [(f(c)- f(0))\delta_{\vec{3}}]_{\alpha, \beta} 
	$$
	where $c := \mathfrak{I}(x_{\alpha}, x_{\beta})$ for every $\alpha +\beta = \vec{3}$. However, for scalars $v_{\alpha}= (-1)^{\alpha_{1}}$ and $u_{\alpha}= (-1)^{\alpha_{1} + \alpha_{2}}$, which satisfy that $\sum_{\alpha \in \mathbb{N}_{2}^{n} } v_{\alpha} = \sum_{\alpha \in \mathbb{N}_{2}^{n} } u_{\alpha}=0 $, we have that
	\begin{align*}
		\sum_{\alpha, \beta \in \mathbb{N}_{2}^{n}}v_{\alpha}v_{\beta} [f(0) + (f(c)- f(0))\delta_{\vec{3}}]&= \sum_{\alpha+ \beta= \vec{3}}v_{\alpha}v_{\beta} [ (f(c)- f(0))]\\
		&= \sum_{\alpha \in \mathbb{N}_{2}^{n}}(-1)^{\alpha_{1}}(-1)^{3-\alpha_{1}} [ (f(c)- f(0))]\\
		&= -2^{n}[ (f(c)- f(0))],
	\end{align*}
	and similarly
	\begin{align*}
		\sum_{\alpha, \beta \in \mathbb{N}_{2}^{n}}&u_{\alpha}u_{\beta} [f(0) + (f(c)- f(0))\delta_{\vec{3}}]\\
		&= \sum_{\alpha \in \mathbb{N}_{2}^{n}}(-1)^{\alpha_{1}+\alpha_{2}}(-1)^{3-\alpha_{1} + 3-\alpha_{2}} [ (f(c)- f(0))]= 2^{n}[ (f(c)- f(0))].
	\end{align*}
	Since by the hypothesis the matrix $A$ is positive semidefinite, these two relations imply that $f(c)=f(0)$.\end{proof}

\subsection{Integrability restrictions}\label{Integrabilityrestrictionsn}   

In this subsection, we prove the technical results regarding the description of the continuous probability measures that can be compared using a PDI$_{n}$ kernel. First, we make a few observations regarding the use of PD kernels for testing the independence of two variables.  

Let $K: [X\times Y] \times [X \times Y] \to \mathbb{R}$ be a continuous PD kernel. As mentioned in Theorem \ref{initialextmmddominio}, for measures $\lambda, \eta \in \mathfrak{M}_{\sqrt{K}}(X\times Y)$, then
$$
\int_{X\times Y}\int_{X\times Y}K((x_{1},y_{1}),(x_{2},y_{2}))d\lambda(x_{1},y_{1})d\eta(x_{2},y_{2})= \langle K_{\lambda}, K_{\eta}\rangle, 
$$
and the set of measures that satisfies these properties is a vector space. As expected by measure theory, if we want to use this kernel to analyze independence in two variables, the fact that a probability $P  \in \mathfrak{M}_{\sqrt{K}}(X\times Y)$ does not immediately imply that $P_{X}\times P_{Y}  \in \mathfrak{M}_{\sqrt{K}}(X\times Y)$, hence we must add this hypothesis. However, even this does not lead to satisfactory results, as the integrals 
\[
\int_{X}\sqrt{K((x_{1},y),(x_{1},y))}dP_{X}(x_{1}), \quad \int_{Y}\sqrt{K((x,y_{1}),(x,y_{1}))}dP_{Y}(y_{1})
\]
may not exist for a fixed but arbitrary $x \in X$, $y \in Y$. Regarding this last point, we emphasize that for a Kronecker product of kernels, such as in HSIC or Distance Covariance, this issue is solved on the literature by using non-degenerate probabilities and, implicitly, by the structure of this Kronecker product. 

The concept that will allow us to pursue these ideas for an arbitrary PDI$_{n}$ kernel (but it can also be used on a PD kernel on $\mathds{X}_{n}$) is the following.

\begin{definition}\label{FIP} A function $f: \mathds{X}_{n} \to \mathbb{R}$ is \textbf{Fully Integrable by Partitions (FIP)} with respect to a measure $\mu \in \mathfrak{M}(\mathds{X}_{n})$ if for every $x_{\vec{2}} \in \mathds{X}_{n}$, partition $\pi $ and subset $F$ of $\{1, \ldots, n\}$, it holds that
	$$
	\int_{\mathds{X}_{n}}\left |f(x_{\vec{1}_{F} + \vec{2}_{F^{c}}}) \right |d|\mu|_{\pi}(x_{\vec{1}})< \infty.
	$$
\end{definition}

There is a redundancy in the integrability restrictions in Definition \ref{FIP} and on subsequent uses of it, because we only need to check the partitions of the subset $F$ and not on the whole set $\{1,\ldots, n\}$. It is presented in this way to simplify the terminology. Hence, the amount of integrability restrictions for a fixed $x_{\vec{2}} \in \mathds{X}_{n}$ is (including the case $F=\emptyset$) 
$$
\sum_{j=0}^{n}\binom{n}{j}B_{j}= B_{n+1},
$$ 
while the number that appears in Definition \ref{FIP} is $2^{n}B_{n}$.

We develop the theory regarding FIP integrals separately, on Appendix \ref{FIPintegrals} as they are of importance on their own. The remaining results in this section are obtained through them. Other reasons for using this approach are presented in Section \ref{futurework}.

By Definition \ref{FIP}, a function $T: \mathds{X}_{n} \times \mathds{Y}_{m} \to \mathbb{R}$ is FIP with respect to $\mu\times \nu \in \mathfrak{M}(\mathds{X}_{n} \times \mathds{Y}_{m})$ if for every $x_{\vec{2}} \in \mathds{X}_{n}$, $y_{\vec{2}} \in \mathds{Y}_{m}$, partition $\pi $ and subset $F$ of $\{1, \ldots, n\}$, and partition $\sigma $ and subset $G$ of $\{1, \ldots, m\}$, it holds that
$$
\int_{\mathds{X}_{n} \times \mathds{Y}_{m}}|T(x_{\vec{1}_{F} + \vec{2}_{F^{c}}}, y_{\vec{1}_{G} + \vec{2}_{G^{c}}} )| d |\mu|_{\pi} (x_{\vec{1}})d |\nu|_{\sigma} (y_{\vec{1}}) < \infty.
$$

\begin{lemma}\label{probintegPDIn} Let $\mathfrak{I}: \mathds{X}_{n} \times \mathds{X}_{n} \to \mathbb{R}$ be a continuous PDI$_{n}$ kernel that is zero at the extended diagonal $\Delta_{n-1}^{n}$ of $\mathds{X}_{n}$. Then, the following conditions are equivalent for a measure $ \mu \in \mathfrak{M}(\mathds{X}_{n})$
	\begin{enumerate}
		\item[$(i)$] The function 
		$$
		(x_{\vec{1}}, x_{\vec{2}}) \in \mathds{X}_{n}\times \mathds{X}_{n} \to \mathfrak{I}(x_{\vec{1} },x_{\vec{2}}) \in \mathbb{R}
		$$
		is FIP with respect to $\mu\times \mu \in \mathfrak{M}(\mathds{X}_{n}\times \mathds{X}_{n})$.
		\item[$(ii)$] There exists an element $x_{\vec{3}} \in \mathds{X}_{n}$ such that
		$$ 
		x_{\vec{1}} \in \mathds{X}_{n} \to \mathfrak{I}(x_{\vec{1} },x_{\vec{3}}) \in \mathbb{R}
		$$
		is FIP with respect to $\mu $.
		\item[$(iii)$] For every element $x_{\vec{3}} \in \mathds{X}_{n}$, the function 
		$$ 
		x_{\vec{1}} \in \mathds{X}_{n} \to \mathfrak{I}(x_{\vec{1} },x_{\vec{3}}) \in \mathbb{R}
		$$
		is FIP with respect to $\mu $.
		\item[$(iv)$] For whichever $x_{\vec{0}} \in \mathds{X}_{n}$ that is used to define $K^{\mathfrak{I}}$ defined in Lemma \ref{PDIntoPDn}, the function 
		$$ 
		x_{\vec{1}} \in \mathds{X}_{n} \to K^{\mathfrak{I}}(x_{\vec{1}}, x_{\vec{1}}) \in \mathbb{R}
		$$
		is FIP with respect to $\mu $.
	\end{enumerate}   
\end{lemma}

\begin{proof} Suppose, without loss of generality, that $|\mu|$ is a probability that we denote as $P$. Also, all functions involved are nonnegative by Equation \eqref{nonnegativity}.\\
	Suppose that relation $(i)$ is valid. By the comment made before the statement of this lemma, for any $x_{\vec{3}}, x_{\vec{4}} \in \mathds{X}_{n}$, subset $F$ and partition $\pi$ of $\{1, \ldots, n \}$
	$$
	\int_{\mathds{X}_{n}}\mathfrak{I}(x_{\vec{1}_{F}+ \vec{4}_{F^{c}} },x_{\vec{3}})dP_{\pi}(x_{\vec{1}})= \int_{\mathds{X}_{n} \times \mathds{X}_{n} }\mathfrak{I}(x_{\vec{1}_{F}+ \vec{4}_{F^{c}} },x_{\vec{3}})dP_{\pi}(x_{\vec{1}})dP_{\{1,\ldots, n\}} (x_{\vec{2}}) < \infty, 
	$$
	thus relation $(iii)$ is valid, and in particular relation $(ii)$ is valid. \\
	If relation $(ii)$ is valid for a fixed $x_{\vec{3}} \in \mathds{X}_{n}$, then by Corollary \ref{ineqPDIngeometryrkhs}, for every $x_{\vec{2}} \in \mathds{X}_{n}$ we have that
	\begin{equation}\label{probintegPDIneq1}
		0 \leq \mathfrak{I}(x_{\vec{1}},x_{\vec{2}}) \leq 2^{n} \sum_{\alpha \in \mathbb{N}_{2}^{n}}\mathfrak{I}(x_{\alpha},x_{\vec{3}})= 2^{n}\sum_{|\mathcal{F}|=0}^{n}\mathfrak{I}(x_{\vec{1}_{\mathcal{F}}+ \vec{2}_{\mathcal{F}^{c}} },x_{\vec{3}}). 
	\end{equation}
	As all functions $ x_{\vec{1}} \to \mathfrak{I}(x_{\vec{1}_{\mathcal{F}}+ \vec{2}_{\mathcal{F}^{c}} },x_{\vec{3}})$ are FIP with respect to $P$ by Lemma \ref{propertiesFIP}, we obtain that $ x_{\vec{1}} \to \mathfrak{I}(x_{\vec{1} },x_{\vec{2}})$ is FIP with respect to $P$, concluding that relation $(iii)$ is valid.\\
	Relation $(iv)$ and relation $(iii)$ are equivalent by Equation \eqref{newrepkI2} and setting $x_{\vec{0}} = x_{\vec{3}}$.\\
	If relation $(iii)$ is valid, by Equation \eqref{probintegPDIneq1}, to prove that relation $(i)$ is valid is sufficient to prove that 
	$$
	(x_{\vec{1}}, x_{\vec{2}}) \in \mathds{X}_{n}\times \mathds{X}_{n} \to     \mathfrak{I}(x_{\vec{1}_{\mathcal{F}}+ \vec{2}_{\mathcal{F}^{c}} },x_{\vec{3}}) \in \mathbb{R}
	$$
	is FIP with respect to $P\times P$ for every subset $\mathcal{F}$ of $\{1, \ldots, n\}$, which follows as all integrals are special cases of the integrals 
	$$
	\int_{\mathds{X}_{m}}\mathfrak{I}(x_{\vec{5}_{F}+ \vec{6}_{F^{c}} },x_{\vec{3}})dP_{\pi}(x_{\vec{5}})
	$$
	that appear on the hypothesis that $ x_{\vec{1}} \to \mathfrak{I}(x_{\vec{1} },x_{\vec{3}})$ is FIP with respect to $P$ (the term $x_{\vec{6}}$ is the constant term in Definition \ref{FIP}).\end{proof}

In particular, Theorem \ref{estimativa} is the special case of Lemma \ref{probintegPDIn} when $n=1$. For a kernel $\mathfrak{I}$ that satisfies the requirements of Lemma \ref{probintegPDIn} we define the sets
\begin{equation}\label{intsetsn}
	\begin{split}
		&\mathfrak{M}[\mathfrak{I}]:=\{ \mu, \quad \mu \in \mathfrak{M}(\mathds{X}_{n}) \text{ satisfies Lemma \ref{probintegPDIn}} \},\\
		&\mathfrak{M}_{n}[\mathfrak{I}]:= \mathfrak{M}_{n}(\mathds{X}_{n}) \cap \mathfrak{M}[\mathfrak{I}].\\
	\end{split}
\end{equation}

With this approach we still cannot define the concept of PDI$_{n}$-Characteristic using a single vector space such as the CND-Characteristic property in Theorem \ref{estimativa}, as we cannot guarantee, in general, that the set $\mathfrak{M}_{n}[\mathfrak{I}]$ is a vector space. The same would occur if we have proved Lemma \ref{probintegPDIn} for a PD kernel. Below we show an example.

\begin{example}\label{exkerneldiag} Let $X= Y= \mathbb{N}$, consider the PD kernel 
	\[
	K((n,m),(a,b)):= \frac{nmab}{n+a+m+b}= \int_{(0, \infty)}nme^{-ar-mr}abe^{-ar -br}dr,
	\] 
	and the probabilities $P(n,m) = \zeta(5/2)^{-1} \zeta(4)^{-1}n^{-5/2}m^{-4} $ and $Q(n,m) := P(m,n) $ in $\mathbb{N}^{2}$, where $\zeta$ is the Riemann Zeta function. We claim that $f(n,m):=K((n,m),(n,m))$ is FIP with respect to $P$ and $Q$, but is not FIP with respect to $(P+Q)/2$.\\
	Indeed,
	$$
	2\zeta(5/2) \zeta(4)    \sum_{n,m=1}^{\infty} f(n,m)P(n,m) = \sum_{n,m=1}^{\infty} \frac{1}{n^{1/2}m^{2}(n+m)}\leq \sum_{n,m=1}^{\infty}\frac{1}{n^{3/2}m^{2}}< \infty,
	$$
	and since $P=P_{1}\times P_{2}$, we also have that 
	$$
	\sum_{n,m=1}^{\infty} f(n,m)P_{1}(n)P_{2}(m) < \infty.
	$$   
	Also, for every fixed $m \in \mathbb{N}$
	$$
	2\zeta(5/2)\sum_{n=1}^{\infty} f(n,m)P_{1}(n)= \sum_{n =1 }^{\infty}\frac{m^{2}}{n^{1/2}(n+m)}\leq \sum_{n=1 }^{\infty}\frac{m^{2}}{n^{3/2}}= m^{2} \zeta(3/2),
	$$
	and to conclude, for every fixed $n \in \mathbb{N}$
	$$
	2\zeta(4)\sum_{m=1}^{\infty} f(n,m)P_{2}(m)= \sum_{m =1 }^{\infty}\frac{n^{2}}{m^{2}(n+m)}\leq \sum_{m=1 }^{\infty}\frac{n^{2}}{m^{3}}= n^{2}\zeta( 3).
	$$   
	The same properties hold for the probability $Q$, as the function $f$ is symmetric in the sense that $f(n,m)=f(m,n)$ and the definition of $Q$. However, $f$ is not integrable with respect to 
	$$
	\left (\frac{P+Q}{2} \right )_{1}\times \left (\frac{P+Q}{2} \right )_{2}.
	$$     
	To prove this, it is sufficient to show that $f$ is not integrable with respect to $P_{1}\times Q_{2}$. Indeed, our aim is to prove that 
	$$
	2\zeta(5/2)^{2}\sum_{n,m=1}^{\infty} f(n,m)P_{1}(n)Q_{2}(m) = \sum_{n,m=1}^{\infty}\frac{1}{n^{1/2}m^{1/2}(n+m)},
	$$
	diverges. After the change of variables $n+m=k $ and $n=l$, this double sum is equivalent to
	$$
	\sum_{k=2}^{\infty}\sum_{l=1}^{k-1}\frac{1}{l^{1/2}(k-l)^{1/2}k},
	$$
	but, for $0\leq l \leq k$ we have that $l^{1/2}(k-l)^{1/2} \leq k/2$, thus 
	\begin{align*}
		\sum_{k=2}^{\infty}\sum_{l=1}^{k-1}\frac{1}{l^{1/2}(k-l)^{1/2}k} &\geq     \sum_{k=2}^{\infty}\sum_{l=1}^{k-1}\frac{2}{k^{2}}= \sum_{k=2}^{\infty}\frac{2(k-1)}{k^{2}}=\infty,
	\end{align*}
	because the harmonic series diverges.
\end{example}

We overturn this issue by adapting to the FIP integrals a concept that appeared in Theorem 3.8 in \cite{guella2023}, in the context of $n=2$.

\begin{definition}\label{margdeli} Let $P \in \mathfrak{M}(\mathds{X}_{n})$ be a probability and $\mu \in \mathfrak{M}(\mathds{X}_{n})$. We say that $\mu$ is \textbf{marginally delimited} by $P$, if there exists an $M\geq 1 $ such that for any $F \subset \{1, \ldots, n\}$ with $|F|\leq n-1$
	it holds that the measure
	$$
	M \sum_{\pi \in \mathscr{P}(F)} (P_{F})_{\pi} - |\mu|_{F} \text{ is nonnegative}, 
	$$
	where $\mathscr{P}(F)$ stands for the set of partitions of the set $F$.
\end{definition}

We need only to verify this definition on subsets of size $n-1$, as the other cases would be a consequence of it, but the arguments of a few results are simplified when using the full definition. 

\begin{lemma} \label{integPDIn0}Let $\mathfrak{I}: \mathds{X}_{n} \times \mathds{X}_{n} \to \mathbb{R}$ be a continuous $n$-symmetric PDI$_{n}$ kernel that is zero at the extended diagonal $\Delta_{n-1}^{n}(\mathds{X}_{n})$. Consider the sets
	\begin{align*}
		&\mathcal{P}[\mathfrak{I}]:=\{ Q, \quad Q \text{ is a probability and satisfies Lemma \ref{probintegPDIn}} \},\\
		&\mathfrak{M}[\mathfrak{I}, P]:=\{ \mu \in \mathfrak{M}[\mathfrak{I}] \text{ and $\mu$ is marginally delimited by $P \in \mathcal{P}[\mathfrak{I}]$} \},\\
		&\mathfrak{M}_{n }[\mathfrak{I}, P]:=\mathfrak{M}[\mathfrak{I}, P] \cap \mathfrak{M}_{n}(\mathds{X}_{n}).
	\end{align*}
	Then, we have that: 
	\begin{enumerate}
		\item [$(i)$] If $P \in \mathcal{P}[\mathfrak{I}]$ then every linear combination $\sum_{\pi} a_{\pi}P_{\pi}$ is an element of $ \mathfrak{M}[\mathfrak{I}]$.     
		\item [$(ii)$] If $P \in \mathcal{P}[\mathfrak{I}]$ and $\mu \in \mathfrak{M} (\mathds{X}_{n})$ is a measure for which there exists a constant $M \geq 0$ such that the measure $MP -|\mu|$ is nonnegative, then $\mu \in \mathcal{M}[\mathfrak{I}, P]$.
		\item [$(iii)$] For any fixed $P \in \mathcal{P}[\mathfrak{I}]$, the sets $\mathfrak{M}_{n }[\mathfrak{I}, P]$ and $\mathfrak{M}[\mathfrak{I}, P]$ are vector spaces. 
		\item [$(iv)$] If $P \in \mathcal{P}[\mathfrak{I}]$, the Streitberg $\Sigma[P]$ and the Lancaster $\Lambda[P]$ interactions are elements of $\mathfrak{M}_{n }[\mathfrak{I}, P]$.
		\item [$(v)$] If $\mu \in \mathfrak{M}[\mathfrak{I}]$, then $|\mu| \in \mathfrak{M}[\mathfrak{I}]$. In particular, a nonzero $\mu$ is marginally delimited by the probability $|\mu|/ |\mu|( \mathds{X}_{n}).$ 
	\end{enumerate}
\end{lemma}

\begin{proof} 
	Given a linear combination $\sum_{\pi} a_{\pi}P_{\pi}$, note that for every partition $\pi^{\prime} $ of $\{1, \ldots, n \}$ the measure $(\sum_{\pi} |a_{\pi}|P_{\pi})_{\pi^{\prime}}$ also is a linear combination $\sum_{\pi} b_{\pi}P_{\pi}$. Since for every subset $F$ of $\{1, \ldots, n\}$ and for every elements $x_{\vec{3}}, x_{\vec{4}} \in \mathds{X}_{n}$ 
	$$
	0\leq     \int_{\mathds{X}_{n}}\mathfrak{I}(x_{\vec{1}_{F} + \vec{3}_{F^{c}}},x_{\vec{4}})dP_{\pi}(x_{\vec{1}}) < \infty,
	$$
	and since $|\sum_{\pi} a_{\pi}P_{\pi}| \leq \sum_{\pi} |a_{\pi}|P_{\pi}$, we conclude that 
	\begin{align*}
		0 &\leq \int_{\mathds{X}_{n}}\mathfrak{I}(x_{\vec{1}_{F} + \vec{3}_{F^{c}}},x_{\vec{4}})d \left | \sum_{\pi} a_{\pi}P_{\pi}\right |_{\pi^{\prime}}(x_{\vec{1}})
		\leq  \int_{\mathds{X}_{n}}\mathfrak{I}(x_{\vec{1}_{F} + \vec{3}_{F^{c}}},x_{\vec{4}})d \left | \sum_{\pi} |a_{\pi}|P_{\pi}\right |_{\pi^{\prime}}(x_{\vec{1}})\\
		& \leq \sum_{\pi } b_{\pi}\int_{\mathds{X}_{n}}\mathfrak{I}(x_{\vec{1}_{F} + \vec{3}_{F^{c}}},x_{\vec{4}})dP_{\pi}(x_{\vec{1}}) < \infty.
	\end{align*}
	For relation $(ii)$, we obtain that $\mu$ is marginally delimited by $P$ by the inequality in the hypothesis. To prove that $\mu \in \mathcal{M}[\mathfrak{I}]$, suppose, without loss of generality, that $M\geq 1$. Let $\pi=\{L_{1}, \ldots, L_{|\pi|}\} $ be a partition of the set $\{1, \ldots, n\}$. By the hypothesis, for every $A_{i} \in \mathscr{B}(X_{i})$
	\begin{align*}
		M^{n}P_{\pi} (\prod_{i=1}^{n}A_{i})&=M^{n} \prod_{\ell =1}^{|\pi|}\left [P(\left [\prod_{i \in L_{\ell}}A_{i}\right ] \times \left [\prod_{i \notin L_{\ell}}X_{i} \right ] ) \right ]\\
		&\geq M^{n-|\pi|} \prod_{\ell =1}^{|\pi|}\left [|\mu|(\left [\prod_{i \in L_{\ell}}A_{i} \right ] \times \left [\prod_{i \notin L_{\ell}}X_{i} \right ] ) \right ] \geq |\mu|_{\pi}(\prod_{i=1}^{n}A_{i}).
	\end{align*}
	Thus, for every subset $F$ of $\{1, \ldots, n\}$ and for every elements $x_{\vec{3}}, x_{\vec{4}} \in \mathds{X}_{n}$ 
	$$
	\int_{\mathds{X}_{n}}\mathfrak{I}(x_{\vec{1}_{F} + \vec{3}_{F^{c}}},x_{\vec{4}})d|\mu|_{\pi}(x_{\vec{1}}) \leq M^{n}\int_{\mathds{X}_{n}}\mathfrak{I}(x_{\vec{1}_{F} + \vec{3}_{F^{c}}},x_{\vec{4}})dP_{\pi}(x_{\vec{1}})< \infty.
	$$
	To prove relation $(iii)$, note that $\mathfrak{M}[\mathfrak{I}, P]$ is a vector space by Lemma \ref{FIPmarvecspace} and relation $(iii)$ in Lemma \ref{probintegPDIn}, while $\mathfrak{M}_{n}[\mathfrak{I}, P]$ is just the intersection of two vector spaces.\\
	To conclude, relation $(v)$ is a direct consequence of the definition, as $|(|\mu|)|_{\pi} = |\mu|_{\pi}$, and the marginally delimited property occurs for the constant $M \geq |\mu|( \mathds{X}_{n})$.\end{proof}   

The following result is a version of the famous kernel mean embedding for PDI$_{n}$ kernels.  

\begin{theorem}\label{integPDIn} Let $\mathfrak{I}: \mathds{X}_{n} \times \mathds{X}_{n} \to \mathbb{R}$ be a continuous $n$-symmetric PDI$_{n}$ kernel that is zero at the extended diagonal $\Delta_{n-1}^{n}(\mathds{X}_{n})$. Then for any $\lambda \in  \mathfrak{M}_{n}[\mathfrak{I}]$ and for whichever $x_{\vec{0}} \in \mathds{X}_{n}$ is used to define $K^{\mathfrak{I}}$    
	\begin{align*}
		\int_{\mathds{X}_{n}}\int_{\mathds{X}_{n}}(-1)^{n}\mathfrak{I}(u, v)d\lambda(u)d\lambda(v)&= \int_{\mathds{X}_{n}}\int_{\mathds{X}_{n}}K^{\mathfrak{I}}(u, v)d\lambda(u)d \lambda(v)\\
		&= \langle K^{\mathfrak{I}}_{\lambda}, K^{\mathfrak{I}}_{\lambda} \rangle _{\mathcal{H}_{\mathfrak{I}}} \geq 0.
	\end{align*}    
	In particular, for any fixed $P \in \mathcal{P}[\mathfrak{I}]$ the bilinear function 
	$$
	(\lambda, \eta) \in \mathfrak{M}_{n}[\mathfrak{I}, P]\times \mathfrak{M}_{n}[\mathfrak{I}, P] \to \int_{\mathds{X}_{n}}\int_{\mathds{X}_{n}}(-1)^{n}\mathfrak{I}(u, v)d\lambda(u)d \eta(v) \in \mathbb{R} , 
	$$    
	is well defined and is a semi-inner product because
	\begin{align*}
		\int_{\mathds{X}_{n}}\int_{\mathds{X}_{n}}(-1)^{n}\mathfrak{I}(u, v)d\lambda(u)d\eta(v)&= \int_{\mathds{X}_{n}}\int_{\mathds{X}_{n}}K^{\mathfrak{I}}(u, v)d\lambda(u)d \eta(v)\\
		&= \langle K^{\mathfrak{I}}_{\lambda}, K^{\mathfrak{I}}_{\eta} \rangle _{\mathcal{H}_{\mathfrak{I}}},
	\end{align*}
	for whichever $x_{\vec{0}} \in \mathds{X}_{n}$ is used to define $K^{\mathfrak{I}}$.
\end{theorem}

\begin{proof} The first statement will be a direct consequence of the subsequent statement by relation $(v)$ in Lemma \ref{integPDIn0}.\\ 
	Let $x_{\vec{0}} \in \mathds{X}_{n}$ be arbitrary and consider the PD kernel $K^{\mathfrak{I}}$ related to it, whose explicit expression is given in Equation \eqref{newrepkI}.\\ 
	Since $\lambda, \eta \in \mathfrak{M}[ \mathfrak{I}, P]$ then $ |\lambda|, |\eta| \in \mathfrak{M}[ \mathfrak{I}]$ and these two measures are also marginally delimited by $P$. Thus, by relation $(iii)$ in Lemma \ref{integPDIn0} we obtain that $|\lambda | + |\eta| \in \mathfrak{M}[ \mathfrak{I},P]$. As a direct consequence of this fact, we obtain that all kernels that appear on the right hand side of Equation \eqref{newrepkI} are in $L^{1}((|\lambda| + |\eta|)\times (|\lambda| +|\eta|))$.\\ 
	By using Equation \eqref{integmu0n} in Equation \eqref{newrepkI}, we get that 
	$$
	\int_{\mathds{X}_{n}}\int_{\mathds{X}_{n}}(-1)^{n}\mathfrak{I}(u, v)d\lambda(u)d \eta(v)= \int_{\mathds{X}_{n}}\int_{\mathds{X}_{n}}K^{\mathfrak{I}}(u, v)d\lambda (u)d\eta(v),
	$$
	because with the exception of the term $\mathfrak{I}(x_{\vec{1}},x_{\vec{2}})$, all the other terms that appear on the right hand side of Equation \eqref{newrepkI} are zero.\\
	The third equality is a direct consequence of the kernel mean embedding in Theorem \ref{initialextmmddominio} because $2^{n}\mathfrak{I}(x_{\vec{1}}, x_{\vec{0}})=K^{\mathfrak{I}}(x_{\vec{1}}, x_{\vec{1}}) \in L^{1}(|\lambda|+|\eta|)$.
\end{proof}    

\begin{definition}\label{PDIn-Characteristic} Let $\mathfrak{I}: \mathds{X}_{n} \times \mathds{X}_{n} \to \mathbb{R}$ be a continuous $n$-symmetric PDI$_{n}$ kernel that is zero at the extended diagonal $\Delta_{n-1}^{n}(\mathds{X}_{n})$.    We say that $\mathfrak{I}$ is PDI$_{n}$-Characteristic if for every nonzero $\lambda \in  \mathfrak{M}_{n}[\mathfrak{I}]$
	\[
	\int_{\mathds{X}_{n}}\int_{\mathds{X}_{n}}(-1)^{n}\mathfrak{I}(u, v)d\lambda(u)d \lambda(v)>0.
	\]
	Equivalently, $\mathfrak{I}$ is PDI$_{n}$-Characteristic if for any fixed $P \in \mathcal{P}[\mathfrak{I}]$, the bilinear function 
	$$
	(\lambda, \eta) \in \mathfrak{M}_{n}[\mathfrak{I}, P]\times \mathfrak{M}_{n}[ \mathfrak{I}, P] \to \int_{\mathds{X}_{n}}\int_{\mathds{X}_{n}}(-1)^{n}\mathfrak{I}(u, v)d\lambda(u)d \eta(v), 
	$$    
	is an inner product.
\end{definition}

In particular, for a PDI$_{n}$-Characteristic kernel, we can determine whether the Streitberg $\Sigma[P]$ or the Lancaster $\Lambda[P]$ interactions are zero for any probability $P \in \mathcal{P}[\mathfrak{I}]$, by using the double integration of Theorem \ref{integPDIn}.     

The example of a PDI$_{n}$ kernel by taking a Kronecker product of $n$ CND kernels  has several additional properties, including others in Section \ref{KroneckerproductsofPDIkernels}.

\begin{corollary}\label{integmarginkroeprodndim2} Let $\gamma_{i}: X_{i} \times X_{i} \to \mathbb{R}$, $1\leq i \leq n$, be continuous CND metrizable kernels that are zero at the diagonal. The following assertions are equivalent for a measure $\mu \in \mathfrak{M}(\mathds{X}_{n})$ 
	\begin{enumerate}
		\item[$(i)$] $\prod_{i \in F}\gamma_{i} \in L^{1}(|\mu|\times |\mu|)$, for any $F \subset \{1, \ldots, n\}$.
		\item[$(ii)$] The functions $\prod_{i \in F}\gamma_{i}(\cdot, x_{i}) \in L^{1}(|\mu|)$ for any $F \subset \{1, \ldots, n\}$ and a fixed $x \in \mathds{X}_{n}$.
		\item[$(iii)$] The functions $\prod_{i \in F}\gamma_{i}(\cdot, x_{i}) \in L^{1}(|\mu|)$ for any $F \subset \{1, \ldots, n\}$ and for every $x \in \mathds{X}_{n}$.
		\item[$(iv)$] The measure $\mu$ satisfies the requirements of Lemma \ref{probintegPDIn}.
	\end{enumerate}
	Further, the set of measures that satisfies these relations is a vector space.\end{corollary}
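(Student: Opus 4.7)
The plan is to establish $(i) \Leftrightarrow (ii) \Leftrightarrow (iii)$ via a master inequality coming from the CND property, and then identify these with $(iv)$ by specializing Lemma \ref{probintegPDIn} to the kernel $\mathfrak{I}(x,y) := \prod_{i=1}^n \gamma_i(x_i, y_i)$. Since each $\gamma_i$ is CND with $\gamma_i(x,x) = 0$, the inequality \ref{CNDINEQ} collapses to $\gamma_i(x_i, y_i) \le 2\gamma_i(x_i, z_i) + 2\gamma_i(y_i, z_i)$ for any $z_i \in X_i$. Taking the product over $F$ and expanding yields the master inequality
\begin{equation*}
\prod_{i \in F}\gamma_i(x_i, y_i) \;\le\; 2^{|F|}\sum_{G \subseteq F}\prod_{i \in G}\gamma_i(x_i, z_i) \prod_{i \in F\setminus G}\gamma_i(y_i, z_i),
\end{equation*}
which completely separates the $x$- and $y$-dependence on the right-hand side.

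With this in hand, $(iii) \Rightarrow (ii)$ is immediate; $(ii) \Rightarrow (iii)$ follows by applying the master inequality with $y$ set to the fixed point of $(ii)$ and $z$ arbitrary, so that the $y$-factors become constants and the $x$-factors lie in $L^1(|\mu|)$ by $(ii)$ applied to each subset $G \subseteq F$; and $(ii) \Rightarrow (i)$ follows by applying the master inequality with $z$ the fixed point of $(ii)$, integrating against $|\mu|(dx)\,|\mu|(dy)$, and invoking Fubini so each summand splits into a product of single integrals that are finite by $(ii)$. For $(i) \Rightarrow (ii)$, assuming $\mu \ne 0$, Fubini--Tonelli applied to each of the $2^n$ conditions in $(i)$ produces, for each $F$, a full-$|\mu|$-measure set of $y$ at which $\int \prod_{i \in F}\gamma_i(\cdot, y_i)\, d|\mu| < \infty$; the intersection of these finitely many full-measure sets is nonempty, and any point in it serves as the required witness for $(ii)$.

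For the equivalence with $(iv)$: the Kronecker product $\mathfrak{I} = \prod_i \gamma_i$ is PDI$_n$ (by the example following Definition \ref{PDIn}) and vanishes on the extended diagonal $\Delta_{n-1}^n$ because each $\gamma_i$ is zero on its diagonal, so Lemma \ref{probintegPDIn} applies to $P := |\mu|/\|\mu\|$. Since $\mathfrak{I}(x_{\vec{1}_F + \vec{3}_{F^c}}, x_{\vec{4}}) = \big(\prod_{i \in F^c}\gamma_i(x_i^3, x_i^4)\big)\prod_{i \in F}\gamma_i(x_i^1, x_i^4)$, and metrizability of the $\gamma_i$ lets us choose $x_{\vec{3}}, x_{\vec{4}}$ with all coordinates in $F^c$ distinct so that the first factor is nonzero, the Lemma's condition at the trivial partition $\pi = \{\{1, \ldots, n\}\}$ translates to exactly our $(iii)$; conversely, at a general partition $\pi = \{F_1, \ldots, F_\ell\}$, the integral factorizes across the blocks, with each factor finite by $(iii)$ applied to the subset $F \cap F_j$, so $(iii)$ implies the full Lemma condition. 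Finally, the vector-space property is clearest from $(iii)$: if $\mu$ and $\nu$ both satisfy $(iii)$, the bound $|a\mu + b\nu| \le |a|\,|\mu| + |b|\,|\nu|$ yields $(iii)$ for $a\mu + b\nu$. The main technical obstacle is the implication $(i) \Rightarrow (ii)$, where simultaneously realizing all $2^n$ integrability conditions at a single base point $y$ is essential.
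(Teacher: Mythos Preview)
Your argument is correct and close in spirit to the paper's, though organized differently. The paper shows each of (i)--(iii) in the Corollary is equivalent to the same-numbered condition in Lemma~\ref{probintegPDIn} (whose internal equivalences were already established via Corollary~\ref{ineqPDIngeometryrkhs}), and only afterwards reads off (i)$\Leftrightarrow$(ii)$\Leftrightarrow$(iii). You instead prove (i)--(iii) directly via the product of the CND triangle bounds and then link (iii) to (iv) through the Lemma. Both routes rest on the same inequality; yours makes the Kronecker-case bound explicit and is slightly more self-contained, while the paper's is more uniform in that it treats all four conditions symmetrically as specializations of the general Lemma.

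One small slip: in your $(ii)\Rightarrow(iii)$ step the roles of $y$ and $z$ are reversed. As you wrote the master inequality, setting $y$ to the fixed point of (ii) puts that point on the \emph{left-hand side}, which is already known to be integrable and is not what you need. To bound $\prod_{i\in F}\gamma_i(x_i,\,\cdot\,)$ at an arbitrary second argument, take $z$ to be the fixed point from (ii) and $y$ the arbitrary test point; then on the right-hand side the $(x,z)$-factors lie in $L^1(|\mu|)$ by (ii) applied to each $G\subseteq F$, and the $(y,z)$-factors are constants. With that relabeling the argument goes through exactly as you intend.
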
   

\begin{proof}     
	Suppose, without loss of generality, that $\mu$ is a probability that we denote as $P$. We prove that each relation in the statement of this corollary is equivalent to the same statement in Lemma \ref{probintegPDIn}. We focus on relation $(ii)$, as the others are proved similarly.\\
	Indeed, let $x=(x_{1}, \ldots, x_{n}) \in \mathds{X}_{n}$ and a probability $P$ that satisfies relation $(ii)$ of the corollary. Define $x_{\vec{3}}=x$ and let an arbitrary $x_{\vec{2}} \in \mathds{X}_{n}$. Since for every subset $F$ of $\{1,\ldots, n\}$ the function $\prod_{i \in F}\gamma_{i}(\cdot, x_{i}^{3}) \in L^{1}(P)$ (which is equivalent to $\prod_{i \in F}\gamma_{i}(\cdot, x_{i}^{3}) \in L^{1}(P_{F})$), we obtain that for every partition $\pi$ of $\{1, \ldots, n\}$ it holds that $\prod_{i \in F}\gamma_{i}(\cdot, x_{i}^{3}) \in L^{1}(P_{\pi})$, thus, relation $(ii)$ in the corollary implies relation $(ii)$ in Lemma \ref{probintegPDIn}, as the remaining term $\prod_{i \in F^{c}}\gamma_{i}(x_{i}^{2}, x_{i}^{3})$ is a constant. \\
	For the converse, let $P$ be a probability that satisfies relation $(ii)$ with a fixed element $x_{\vec{3}} \in \mathds{X}_{n}$ in Lemma \ref{probintegPDIn}. Then, by the hypothesis, for any subset $F$ of $\{1, \ldots, n\}$ consider the partition $\pi=\{F, F^{c}\}$ and an $x_{\vec{2}} \in \mathds{X}_{n}$ for which $ x_{i}^{2}\neq x_{i}^{3}$ for every $1\leq i \leq n$, thus
	$$
	\int_{\mathds{X}_{n}}[\times_{i=1}^{n}\gamma_{i}](x_{\vec{1}_{F} + \vec{2}_{F^{c}}},x_{\vec{3}})d[P_{F}\times P_{F^{c}}]d(x_{\vec{1}})< \infty.
	$$   
	But, for every $ i \in F^{c}$ the kernel $\gamma_{i}$ is metrizable, so $\gamma_{i}(x_{i}^{2}, x_{i}^{3}) \neq 0$, thus
	\begin{align*}
		\left [\prod_{i \in F^{c}} \gamma_{i}(x_{i}^{2}, x_{i}^{3}) \right ]&\int_{\mathds{X}_{n}}\prod_{i \in F} \gamma_{i}(x_{i}^{1}, x_{i}^{3})dP(x_{\vec{1}})\\
		&=    \int_{\mathds{X}_{n}}[\times_{i=1}^{n}\gamma_{i}](x_{\vec{1}_{F} + \vec{2}_{F^{c}}},x_{\vec{3}})d[P_{F}\times P_{F^{c}}]d(x_{\vec{1}})< \infty,
	\end{align*}
	which concludes the converse. The fact that the set of measures that satisfies these relations is a vector space is a direct consequence of relation $(iii)$.
\end{proof}

In particular, in the case of a Kronecker product of CND kernels, the concept of a PDI$_{n}$-Characteristic kernel is simpler. Instead of working with an infinite amount of inner products on different vector spaces, we may work with only one inner product on the vector space of measures that satisfies Corollary \ref{integmarginkroeprodndim2}, see Corollary \ref{lastkronprod}. 

Also, Corollary \ref{integmarginkroeprodndim2} still holds if we assume that the CND kernels $\gamma_{i}: X_{i} \times X_{i} \to \mathbb{R}$ are bounded at the diagonal instead of being zero at it. This occurs because if $M\geq |\gamma_{i}(x_{i}, x_{i})|$, for every $1\leq i \leq n$ and $x_{i}\in X_{i}$, then
$$
0 \leq |\gamma_{i}(x_{i}^{1}, x_{i}^{2}) -\gamma_{i}(x_{i}^{1}, x_{i}^{1})/2-\gamma_{i}(x_{i}^{2}, x_{i}^{2})/2 | \leq |\gamma_{i}(x_{i}^{1}, x_{i}^{2})| + M
$$
$$
0\leq |\gamma_{i}(x_{i}^{1}, x_{i}^{2})| \leq |\gamma_{i}(x_{i}^{1}, x_{i}^{2}) -\gamma_{i}(x_{i}^{1}, x_{i}^{1})/2-\gamma_{i}(x_{i}^{2}, x_{i}^{2})/2 | + M
$$   
so, for each of the $4$ relations, we may, without loss of generality, assume that each $\gamma_{i}$ is zero at the diagonal.

\section{Generalized Independence tests embedded in Hilbert spaces}\label{IndependencetestsembeddedinHilbertspaces}

In this section, we generalize the concept of a PDI$_{n}$ kernel on a set $\mathds{X}_{n}$, by introducing a parameter $k \in \{0, \ldots, n\}$, where the case $k=0$ are PD kernels in $\mathds{X}_{n}$, $k=1$ are the CND kernels in $\mathds{X}_{n}$ and $k=n$ are PDI$_{n}$ kernels in $\mathds{X}_{n}$ presented in Section \ref{Positivedefiniteindependentkernelsofordern}.

\begin{definition}\label{PDI2}Let $n \in \mathbb{N}$ and $0\leq k\leq n$. An $n$-symmetric kernel $\mathfrak{I}: \mathds{X}_{n} \times \mathds{X}_{n} \to \mathbb{R}$ is a positive definite independent kernel of order $k$ (PDI$_{k}$) in $\mathds{X}_{n}$ if, for every $\mu \in \mathcal{M}_{k}( \mathds{X}_{n})$, it satisfies
	$$
	\int_{\mathds{X}_{n} }    \int_{\mathds{X}_{n} }(-1)^{k}\mathfrak{I}(u,v)d\mu(u)d\mu(v) \geq 0.
	$$    
	If the previous inequality is an equality only when $\mu$ is the zero measure in $\mathcal{M}_{k}( \mathds{X}_{n})$, we say that $\mathfrak{I}$ is a strictly positive definite independent kernel of order $k$ (SPDI$_{k}$) in $\mathds{X}_{n}$.
\end{definition}

If $\mathfrak{I}$ is a PDI$_{k}$ kernel in $\mathds{X}_{n}$, then $(-1)^{k^{\prime} - k}\mathfrak{I}$ is a PDI$_{k^{\prime}}$ kernel in $\mathds{X}_{n}$ for any $k\leq k^{\prime} \leq n$ due to the inclusion in Equation \eqref{inclumeas}, similarly, this also holds for the strict case.

The class of PDI$_{1}$ kernels in $\mathds{X}_{n}$ is slightly different from the class of CND kernels in $\mathds{X}_{n}$, the former being more restrictive solely because we assume the $n$-symmetry of the kernel (the same issue occurs in the case $k=0$). 


For  $ k-1 < a < k$, the following Bernstein-type equality can be proved by induction on the derivatives (see page 78  in \cite{Berg1984} and Lemma $4.1$ in \cite{Guella2022}) 
\[
t^{a}=(-1)^{k}\frac{(a)_{k}}{\Gamma(k-a)}\int_{(0,\infty)} (e^{-tr} - \omega_{k}(rt))\frac{1+r}{r^{k}} \left [\frac{r^{k-a-1}}{1+r}\right ]d(r),
\]	 
where $(a)_{k}:=(a-k+1)\ldots(a) $ is the standard Pochhammer symbol and $\omega_{k} (s):= \sum_{l=0}^{k-1}(-1)^{l}s^{l}/l!$.  From this relation we can obtain an interesting example of PDI$_{k}$ kernel, as if  $\gamma_{i}: X_{i} \times X_{i}\to \mathbb{R}$, $1\leq i \leq n$, are CND kernels that are zero on its diagonal, the kernel
\begin{equation}\label{exPDIK}
	(x_{\vec{1}}, x_{\vec{2}}) \to \left ( \sum_{i=1}^{n}\gamma_{i}(x_{i}^{1}, x_{i}^{2})  \right )^{a},
\end{equation}
is PDI$_{k}$, because for any $\mu \in \mathcal{M}(\mathds{X}_{n})$ it holds that
\begin{align*}
	&\int_{\mathds{X}_{n}}\int_{\mathds{X}_{n}}(-1)^{k}\left ( \sum_{i=1}^{n}\gamma_{i}(x_{i}^{1}, x_{i}^{2})  \right )^{a}d\mu(x_{\vec{1}})d\mu(x_{\vec{2}})\\
	&= \frac{(a)_{k}}{\Gamma(k-a)}\int_{(0,\infty)} \left [ \int_{\mathds{X}_{n}}\int_{\mathds{X}_{n}} e^{-\gamma_{1}(x_{1}^{1}, x_{1}^{2}) } \ldots e^{-\gamma_{n}(x_{n}^{1}, x_{n}^{2}) }  d\mu(x_{\vec{1}})d\mu(x_{\vec{2}}) \right ] \frac{1+r}{r^{k}} \left [\frac{r^{k-a-1}}{1+r}\right ]d(r) \geq 0, 
\end{align*}
where the double integrals  involving the terms of the polynomial $\omega$ vanished by Equation \eqref{integmu0n}.

For another example, let $1\leq k \leq |F| \leq n-1$ for some $F \subset \{1, \ldots, n\}$, and let $\mathfrak{L}: \mathds{X}_{F} \times \mathds{X}_{F} \to \mathbb{R}$ be an $|F|$-symmetric kernel that is PDI$_{k}$ in $\mathds{X}_{F}$. Then $\mathfrak{I}(x_{\vec{1}}, x_{\vec{2}}):= \mathfrak{L}(x_{\vec{1}_{F}}, x_{\vec{2}_{F}})$ is PDI$_{k}$ in $\mathds{X}_{n}$ because for every $\mu \in \mathcal{M}_{k}( \mathds{X}_{n})$, the measure 
$$
\mu_{F}\left(\prod_{i \in F} A_{i} \right) := \mu\left(\left[\prod_{i \in F} A_{i}\right] \times \left[\prod_{i \in F^{c}} X_{i} \right] \right),
$$
belongs to $\mathcal{M}_{k}( \mathds{X}_{F})$. Note, however, that this kernel is never SPDI$_{k}$. To see this, take an arbitrary nonzero measure $\eta \in \mathcal{M}_{n}( \mathds{X}_{n})$ and observe that the double integral of Definition \ref{PDI2} is always zero with respect to this measure. More generally, even a positive linear combination of all possible $k \leq |F| \leq n-1$, where $F \subset \{1, \ldots, n\}$, is PDI$_{k}$ but not an SPDI$_{k}$. In the notation of the previous example, we can obtain that 
$$
(x_{\vec{1}}, x_{\vec{2}}) \to \left ( \sum_{i=1}^{n}\gamma_{i}(x_{i}^{1}, x_{i}^{2})  \right )^{k},
$$
is PDI$_{k}$ but not SPDI$_{k}$. A characterization of when an arbitrary Kronecker product of kernels is SPDI$_{k}$ is presented in Section \ref{KroneckerproductsofPDIkernels}. 

We generalize Lemma \ref{PDInsimpli} to PDI$_{k}$ kernels from a different perspective, using the measure $\mu_{k}^{n}$ defined in Equation \eqref{measorderk}.

\begin{lemma}\label{PDI2simpli} Let $\mathfrak{I}: \mathds{X}_{n} \times \mathds{X}_{n} \to \mathbb{R}$ be an $n$-symmetric kernel. Consider the kernel $\mathfrak{I}^{\prime}: \mathds{X}_{n} \times \mathds{X}_{n} \to \mathbb{R}$ 
	\begin{align*}
		\mathfrak{I}^{\prime} (x_{\vec{1}}, x_{\vec{2}} ):= \frac{1}{2}\int_{\mathds{X}_{n}}    \mathfrak{I}(x_{\vec{1}}, y )d     \mu^{n}_{k}[x_{\vec{2}}, x_{\vec{1}}] (y) + \frac{1}{2}\int_{\mathds{X}_{n}}    \mathfrak{I}(x_{\vec{2}}, y )d    \mu^{n}_{k}[x_{\vec{1}}, x_{\vec{2}}](y). 
	\end{align*}
	Then, for any $\mu \in \mathcal{M}_{k}( \mathds{X}_{n})$ 
	$$ 
	\int_{\mathds{X}_{n}}\int_{\mathds{X}_{n}}(-1)^{k}\mathfrak{I}^{\prime} (u, v )d\mu(u)d\mu(v)= \int_{\mathds{X}_{n}}\int_{\mathds{X}_{n}}(-1)^{k}\mathfrak{I} (u, v )d\mu(u)d\mu(v)
	$$
	If at least $n-k+1$ coordinates of $x_{\vec{1}}$ and $x_{\vec{2}}$ are equal, then $\mathfrak{I}^{\prime}(x_{\vec{1}}, x_{\vec{2}})=0$.\\
	If $\mathfrak{I}(x_{\vec{1}}, x_{\vec{2}})=0$ whenever at least $n-k+1$ coordinates of $x_{\vec{1}}$ and $x_{\vec{2}}$ are equal, then $\mathfrak{I}= \mathfrak{I}^{\prime}$.
\end{lemma}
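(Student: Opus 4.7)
All three claims follow by expanding the measure $\mu^{n}_{k}$ in the definition of $\mathfrak{I}^{\prime}$ and exploiting two facts already established earlier in the text: the vanishing property stated immediately after Equation \ref{measorderk} (that $\mu^{n}_{k}[x_{\vec{1}}, x_{\vec{2}}] = 0$ as soon as $x_{\vec{1}}$ and $x_{\vec{2}}$ disagree in fewer than $k$ coordinates), together with Equation \ref{integmu0n} (that a function depending on fewer than $k$ of the $n$ coordinates integrates to zero against any $\mu \in \mathcal{M}_{k}(\mathds{X}_{n})$).

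For the first claim, expanding via Equation \ref{measorderk} gives
\begin{align*}
\int_{\mathds{X}_{n}} \mathfrak{I}(x_{\vec{1}}, y)\, d\mu^{n}_{k}[x_{\vec{2}}, x_{\vec{1}}](y)
= \mathfrak{I}(x_{\vec{1}}, x_{\vec{2}}) + \sum_{j=0}^{k-1}(-1)^{k-j}\binom{n-j-1}{n-k}\sum_{|F|=j}\mathfrak{I}(x_{\vec{1}}, x_{\vec{2}_{F}+\vec{1}_{F^{c}}}).
\end{align*}
Viewed as a function of $x_{\vec{2}}$, each correction term $\mathfrak{I}(x_{\vec{1}}, x_{\vec{2}_{F}+\vec{1}_{F^{c}}})$ depends only on the $|F|=j<k$ coordinates indexed by $F$. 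By Equation \ref{integmu0n}, integration against $d\mu(x_{\vec{2}})$ annihilates every such term. An identical argument applied to the second half of $\mathfrak{I}^{\prime}$ (now as a function of $x_{\vec{1}}$) leaves only $\mathfrak{I}(x_{\vec{2}}, x_{\vec{1}})$, which equals $\mathfrak{I}(x_{\vec{1}}, x_{\vec{2}})$ by the $n$-symmetry (specialize to $\alpha = \vec{2}$). Summing the two halves with the factor $1/2$ and the sign $(-1)^{k}$ yields the asserted equality of double integrals.

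For the second claim, if $x_{\vec{1}}$ and $x_{\vec{2}}$ share at least $n-k+1$ coordinates then $|L|=|\{i : x_{i}^{1}\neq x_{i}^{2}\}| \leq k-1$, whence both $\mu^{n}_{k}[x_{\vec{1}}, x_{\vec{2}}]$ and $\mu^{n}_{k}[x_{\vec{2}}, x_{\vec{1}}]$ are the zero measure; consequently $\mathfrak{I}^{\prime}(x_{\vec{1}}, x_{\vec{2}}) = 0$ with no use of properties of $\mathfrak{I}$.

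For the third claim, expand $\mathfrak{I}^{\prime}(x_{\vec{1}},x_{\vec{2}})$ exactly as in the first step. In each correction term $\mathfrak{I}(x_{\vec{1}}, x_{\vec{2}_{F}+\vec{1}_{F^{c}}})$ with $|F|=j<k$, the two arguments agree on the $n-j \geq n-k+1$ indices in $F^{c}$; the vanishing hypothesis on $\mathfrak{I}$ makes each such term zero, and the expansion collapses to $\mathfrak{I}(x_{\vec{1}}, x_{\vec{2}})$. The same occurs in the second half, so after using $n$-symmetry once more we obtain $\mathfrak{I}^{\prime} = \mathfrak{I}$. The entire argument is combinatorial bookkeeping; the only care required is to correctly identify which coordinates remain free and which are frozen in each summand, so that Equation \ref{integmu0n} and the vanishing of $\mu^{n}_{k}$ can be applied cleanly.
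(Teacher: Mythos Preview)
Your proof is correct and follows essentially the same approach as the paper: expand $\mu^{n}_{k}$ via Equation \ref{measorderk}, use Equation \ref{integmu0n} to kill the correction terms (each depending on fewer than $k$ coordinates of the relevant variable), and invoke the remark after Equation \ref{measorderk} for the vanishing of $\mu^{n}_{k}$ when too many coordinates coincide. The paper's write-up is slightly more compressed (it packages the correction as the single measure $\mu^{n}_{k}[x_{\vec{2}},x_{\vec{1}}]-\delta_{x_{\vec{2}}}$ rather than writing the sum out), but the logic is identical.
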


\begin{proof}
	Due to the definition of $\mu^{n}_{k}[x_{\vec{2}}, x_{\vec{1}}]$, for every fixed $x_{\vec{1}} \in \mathds{X}_{n}$, the function 
	$$
	x_{\vec{2}} \in \mathds{X}_{n} \to \int_{\mathds{X}_{n}}    \mathfrak{I}(x_{\vec{1}}, y )d    [ \mu^{n}_{k}[x_{\vec{2}}, x_{\vec{1}}]-\delta_{x_{\vec{2}}}](y) \in \mathbb{R}
	$$
	is a linear combination of functions that only depend on at most $k-1$ of its $n$ variables; hence, due to Equation \eqref{integmu0n}, for every $\mu \in \mathcal{M}_{k}( \mathds{X}_{n})$ 
	$$
	\int_{\mathds{X}_{n}}\left [ \int_{\mathds{X}_{n}}\mathfrak{I}(x_{\vec{1}}, y )d    [ \mu^{n}_{k}[x_{\vec{2}}, x_{\vec{1}}] \right ]d\mu(x_{\vec{2}})=     \int_{\mathds{X}_{n}}\mathfrak{I}(x_{\vec{1}}, x_{\vec{2}} ) d\mu(x_{\vec{2}}),
	$$
	and similarly for $\mu^{n}_{k}[x_{\vec{1}}, x_{\vec{2}}] $. Then 
	$$ 
	\int_{\mathds{X}_{n}}\int_{\mathds{X}_{n}}(-1)^{k}\mathfrak{I}^{\prime} (u, v )d\mu(u)d\mu(v)=\int_{\mathds{X}_{n}}\int_{\mathds{X}_{n}}(-1)^{k}\mathfrak{I} (u, v )d\mu(u)d\mu(v).
	$$
	If at least $n-k+1$ coordinates of $x_{\vec{1}}$ and $x_{\vec{2}}$ are equal, then by the comment after Equation \eqref{measorderk}, both $\mu^{n}_{k}[x_{\vec{2}}, x_{\vec{1}}]$ and $\mu^{n}_{k}[x_{\vec{1}}, x_{\vec{2}}]$ are the zero measure.\\
	If $\mathfrak{I}(x_{\vec{1}}, x_{\vec{2}})=0$ whenever at least $n-k+1$ coordinates of $x_{\vec{1}}$ and $x_{\vec{2}}$ are equal, then 
	$$
	\int_{\mathds{X}_{n}}\mathfrak{I}(x_{\vec{1}}, y )d    [ \mu^{n}_{k}[x_{\vec{2}}, x_{\vec{1}}]-\delta_{x_{\vec{2}}}](y)=\sum_{j=0}^{k-1}(-1)^{k-j}\binom{n-j-1}{n-k}\sum_{|F|=j}\mathfrak{I}(x_{\vec{1}},x_{\vec{1} + \vec{1}_{F}} )=0 
	$$
	because all terms $\mathfrak{I}(x_{\vec{1}},x_{\vec{1} + \vec{1}_{F}} )$ are zero, and thus $\mathfrak{I}= \mathfrak{I}^{\prime}$.
\end{proof}    

\begin{remark}\label{pdiksymmetryissues}\begin{enumerate}
		\item We cannot guarantee that the kernel $\mathfrak{I}^{\prime}$ defined in Lemma \ref{PDI2simpli} is $n$-symmetric except when $k=n$. 
		\item We cannot guarantee that the kernel 
		$$
		(x_{\vec{1}}, x_{\vec{2}} ) \in \mathds{X}_{n}\times \mathds{X}_{n} \to \int_{\mathds{X}_{n}}\int_{\mathds{X}_{n}}    \mathfrak{I}(u, v )d     \mu^{n}_{k}[x_{\vec{2}}, x_{\vec{1}}] (u)d     \mu^{n}_{k}[x_{\vec{2}}, x_{\vec{1}}] (v) \in \mathbb{R}
		$$
		is PDI$_{k}$ except when $k=1$ or $n=k$. The reason is that if we analyze the explicit expression for this kernel, some of its terms depend on more than $k$ variables of $x_{\vec{1}}$ and of $x_{\vec{2}}$ simultaneously.    
		\item  The difference between the definition of $\mathfrak{I}^{\prime}$ in Lemma \ref{PDInsimpli} (for the case $n=k$) and the one in Lemma \ref{PDI2simpli} is a kernel  in $\mathds{X}_{n}$ that is zero whenever at least one of the coordinates of $x_{\vec{1}}$ and $x_{\vec{2}}$ is equal, and whose double integral against any $\mu\in\mathcal{M}_k(\mathds{X}_n)$ is zero.
	\end{enumerate}
\end{remark}

Now, we present another symmetry property that will lead to better behavior of the kernel presented in Lemma \ref{PDI2simpli}, taking into account Remark \ref{pdiksymmetryissues}.

\begin{definition}\label{completensymmetry} An $n$-symmetric kernel $\mathfrak{I}: \mathds{X}_{n} \times \mathds{X}_{n} \to \mathbb{R}$ is called complete $n$-symmetric if for any $F \subset \{1, \dots,n\}$ and $x_{\vec{1}}, x_{\vec{2}}, x_{\vec{3}}, x_{\vec{4}} \in \mathds{X}_{n} $
	$$
	\mathfrak{I} (x_{\vec{1}_{F} + \vec{3}_{F^{c}}}, x_{\vec{2}_{F} + \vec{3}_{F^{c}}} )=\mathfrak{I} (x_{\vec{1}_{F} + \vec{4}_{F^{c}}}, x_{\vec{2}_{F} + \vec{4}_{F^{c}}} ).
	$$    
\end{definition}

For $n=3$, this implies $\mathfrak{I}((x_1, y_1, z), (x_2, y_2, z)) = \mathfrak{I}((x_1, y_1, w), (x_2, y_2, w))$ for any $z,w \in X_{3}$, and so forth.

As an example, consider $g:[0, \infty)^{n} \to \mathbb{R}$ and symmetric kernels $ \gamma_{i}: X_{i}\times X_{i} \to [0, \infty)$, $1\leq i \leq n$ that are constant on the diagonal. Then $g(\gamma_{1}, \ldots, \gamma_{n})$ is a complete $n$-symmetric kernel in $\mathds{X}_{n}$. The radial PDI$_{k}$ kernels presented in Section \ref{Bernsteinfunctionsofordern} are complete $n$-symmetric, as are the PDI$_{n}$ kernels that are zero at the extended diagonal $\Delta_{n-1}^{n}$. 

\begin{corollary}\label{complnsimzerodiag} If the kernel $\mathfrak{I}: \mathds{X}_{n} \times \mathds{X}_{n} \to \mathbb{R}$ is complete $n$-symmetric, then the kernel $\mathfrak{I}^{\prime}$ defined in Lemma \ref{PDI2simpli} is also complete $n$-symmetric, and in particular, $\mathfrak{I}$ is PDI$_{k}$ if and only if $\mathfrak{I}^{\prime}$ is PDI$_{k}$.\\ Further, if $k=n$, the kernel $\mathfrak{I}^{\prime}$ in Lemma \ref{PDInsimpli} and the one in Lemma \ref{PDI2simpli} are the same. 
\end{corollary}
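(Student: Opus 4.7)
My plan is to establish the three assertions in order, guided by a single structural remark. The remark: if $\mathfrak{I}$ is complete $n$-symmetric (hence in particular $n$-symmetric), then the value $\mathfrak{I}(y,z)$ depends only on the difference set $D:=\{i: y_i\neq z_i\}$ together with the unordered pairs $\{y_i,z_i\}$ for $i\in D$. Indeed, complete $n$-symmetry lets me freely change the common values on $D^c$, and $n$-symmetry lets me swap $y_i\leftrightarrow z_i$ coordinate by coordinate in $D$. Thus for fixed base points $x_{\vec{1}}, x_{\vec{2}}$, every evaluation $\mathfrak{I}(y,z)$ whose differing coordinates are the pairs $\{x^1_i, x^2_i\}$ on a set $D$ equals the canonical value $\mathfrak{I}(x_{\vec{1}}, x_{\vec{2}_D+\vec{1}_{D^c}})$.

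For the first assertion (complete $n$-symmetry of $\mathfrak{I}^{\prime}$), I would expand $\mathfrak{I}^{\prime}(x_{\vec{1}_F+\vec{3}_{F^c}}, x_{\vec{2}_F+\vec{3}_{F^c}})$ via the formula of Lemma \ref{PDI2simpli}. The support of each measure $\mu^n_k[\cdot,\cdot]$ appearing consists of points built by mixing coordinates of the two base points; since those base points agree on $F^c$, every pair $(y,w)$ appearing in an integrand also agrees on $F^c$. The structural remark thus shows each such term is invariant under replacing $x^3$ by $x^4$ on $F^c$. The same canonicalization, reducing each term to the form $\mathfrak{I}(x_{\vec{1}}, x_{\vec{2}_L+\vec{1}_{L^c}})$, also gives $n$-symmetry of $\mathfrak{I}^{\prime}$ (which is required both as part of complete $n$-symmetry and as the prerequisite for being PDI$_{k}$). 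Granted this, the second assertion is immediate: Lemma \ref{PDI2simpli} already equates the double integrals of $\mathfrak{I}$ and $\mathfrak{I}^{\prime}$ against every $\mu\in\mathcal{M}_{k}(\mathds{X}_n)$, so the positivity condition transfers either way.

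The third assertion ($k=n$) reduces both formulas to the same canonical sum. Using $\mu^n_n[x_{\vec{1}},x_{\vec{2}}]=\sum_{\alpha\in\mathbb{N}_2^n}(-1)^{n-|\alpha|}\delta_{x_\alpha}$, Lemma \ref{PDInsimpli} gives $\mathfrak{I}^{\prime}(x_{\vec{1}},x_{\vec{2}})=2^{-n}\sum_{\alpha,\beta}(-1)^{n-|\alpha|-|\beta|}\mathfrak{I}(x_\alpha,x_\beta)$. Grouping pairs $(\alpha,\beta)$ by their difference set $D$ yields exactly $2^n$ pairs per $D$, each with sign $(-1)^{n-|D|}$ (since the contribution from $D^c$ to $|\alpha|+|\beta|$ is always even) and the same canonical value $\mathfrak{I}(x_{\vec{1}},x_{\vec{2}_D+\vec{1}_{D^c}})$, collapsing the double sum to $\sum_D (-1)^{n-|D|}\mathfrak{I}(x_{\vec{1}},x_{\vec{2}_D+\vec{1}_{D^c}})$. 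For Lemma \ref{PDI2simpli}, the antisymmetry $\mu^n_n[x_{\vec{2}},x_{\vec{1}}]=(-1)^n\mu^n_n[x_{\vec{1}},x_{\vec{2}}]$ combined with canonicalization makes each of the two single-integral halves contribute $\tfrac12\sum_L(-1)^{n-|L|}\mathfrak{I}(x_{\vec{1}},x_{\vec{2}_L+\vec{1}_{L^c}})$, summing to the same expression. The main technical obstacle throughout is keeping the sign conventions and combinatorial multiplicities aligned between the two formulas; once the ``depends only on the difference set'' principle is installed, the remainder is mechanical bookkeeping.
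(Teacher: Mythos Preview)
Your proposal is correct and follows essentially the same approach as the paper: both exploit complete $n$-symmetry (together with $n$-symmetry) to identify every evaluation $\mathfrak{I}(y,z)$ with a canonical representative determined by the difference set, and then check that the formulas of Lemma \ref{PDInsimpli} and Lemma \ref{PDI2simpli} collapse to the same sum over subsets. The paper carries this out by explicit term-by-term computation of $\mathfrak{I}^{\prime}(x_{\alpha},x_{\vec{3}-\alpha})$ and of $\mathfrak{I}^{\prime}(x_{\vec{1}_{L}+\vec{3}_{L^{c}}},x_{\vec{2}_{L}+\vec{3}_{L^{c}}})$, whereas you package the same reductions into a single structural remark up front; this makes your write-up a bit more streamlined but the underlying mechanism is identical.
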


\begin{proof} The explicit expression for $\mathfrak{I}^{\prime}$ is
	\begin{align*}
		\mathfrak{I}^{\prime}& (x_{\vec{1}}, x_{\vec{2}} )= \mathfrak{I}(x_{\vec{1}}, x_{\vec{2}} )\\
		& + \frac{1}{2} \sum_{j=0}^{k-1}(-1)^{k-j}\binom{n-j-1}{n-k}\sum_{|F|=j}\left [\mathfrak{I} (x_{\vec{1}}, x_{\vec{1}_{F^{c}}+ \vec{2}_{F}} ) +\mathfrak{I} (x_{\vec{2}}, x_{\vec{2}_{F^{c}}+ \vec{1}_{F}} ) \right ].
	\end{align*}
	First, we prove that $    \mathfrak{I}^{\prime}$ is $n$-symmetric. Indeed, by the previous expression
	\begin{align*}
		&\mathfrak{I}^{\prime} (x_{ \alpha}, x_{\vec{3}-\alpha} )= \mathfrak{I}(x_{ \alpha}, x_{\vec{3}-\alpha})\\
		& + \frac{1}{2} \sum_{j=0}^{k-1}(-1)^{k-j}\binom{n-j-1}{n-k}\sum_{|F|=j}\left [\mathfrak{I} (x_{\alpha}, x_{\alpha_{F^{c}}+ (\vec{3} -\alpha)_{F}} ) +\mathfrak{I} (x_{ \vec{3} -\alpha}, x_{(\vec{3} -\alpha)_{F^{c}}+ \alpha_{F}} ) \right ].   
	\end{align*}
	By the $n$-symmetry of $\mathfrak{I}$, we have that $\mathfrak{I}(x_{ \alpha}, x_{\vec{3}-\alpha}) = \mathfrak{I}(x_{\vec{1}}, x_{\vec{2}} )$ and, by changing the coordinates of $F$:
	\begin{align*}
		&\mathfrak{I} (x_{\alpha}, x_{\alpha_{F^{c}}+ (\vec{3} -\alpha)_{F}} )= \mathfrak{I} (x_{\vec{1}_{F} + \alpha_{F^{c}}}, x_{ \vec{2}_{F} +\alpha_{F^{c}}} ),\\
		& \mathfrak{I} (x_{ \vec{3} -\alpha}, x_{(\vec{3} -\alpha)_{F^{c}}+ \alpha_{F}} )=\mathfrak{I} (x_{\vec{1}_{F}+ (\vec{3} -\alpha)_{F^{c}}}, x_{\vec{2}_{F} +(\vec{3} -\alpha)_{F^{c}}} ). 
	\end{align*}
	By the complete $n$-symmetry property, we have that 
	$$
	\mathfrak{I} (x_{\vec{1}_{F} + \alpha_{F^{c}}}, x_{ \vec{2}_{F} +\alpha_{F^{c}}} )=\mathfrak{I} (x_{\vec{1}}, x_{ \vec{2}_{F} +\vec{1}_{F^{c}}} ), \quad \mathfrak{I} (x_{\vec{1}_{F}+ (\vec{3} -\alpha)_{F^{c}}}, x_{\vec{2}_{F} +(\vec{3} -\alpha)_{F^{c}}} ) = \mathfrak{I} (x_{\vec{2}}, x_{\vec{2}_{F^{c}}+ \vec{1}_{F}} ),
	$$
	hence $    \mathfrak{I}^{\prime} (x_{ \alpha}, x_{\vec{3}-\alpha} )= \mathfrak{I}^{\prime} (x_{\vec{1}}, x_{\vec{2}} )$. Now, we prove that $\mathfrak{I}^{\prime} $ is complete $n$-symmetric. For arbitrary $L \subset \{1, \dots,n\}$ and $x_{\vec{1}}, x_{\vec{2}}, x_{\vec{3}}, x_{\vec{4}} \in \mathds{X}_{n} $, we have that
	\begin{align*}
		\mathfrak{I}^{\prime}& (x_{\vec{1}_{L} + \vec{3}_{L^{c}}}, x_{\vec{2}_{L} + \vec{3}_{L^{c}}} )= \mathfrak{I}(x_{\vec{1}_{L} + \vec{3}_{L^{c}}}, x_{\vec{2}_{L} + \vec{3}_{L^{c}}} )\\
		& + \frac{1}{2} \sum_{j=0}^{k-1}(-1)^{k-j}\binom{n-j-1}{n-k}\sum_{|F|=j} \mathfrak{I} (x_{\vec{1}_{L} + \vec{3}_{L^{c}}}, x_{\vec{1}_{L\cap F^{c}}+\vec{3}_{L^{c}\cap F^{c} } + \vec{2}_{L\cap F} +\vec{3}_{L^{c}\cap F}} )\\
		& +\frac{1}{2} \sum_{j=0}^{k-1}(-1)^{k-j}\binom{n-j-1}{n-k}\sum_{|F|=j}\mathfrak{I} (x_{\vec{2}_{L} + \vec{3}_{L^{c}}},x_{\vec{2}_{L\cap F^{c}}+\vec{3}_{L^{c}\cap F^{c} } + \vec{1}_{L\cap F} +\vec{3}_{L^{c}\cap F}}).
	\end{align*} 
	Since $\mathfrak{I}(x_{\vec{1}_{L} + \vec{3}_{L^{c}}}, x_{\vec{2}_{L} + \vec{3}_{L^{c}}} )= \mathfrak{I}(x_{\vec{1}_{L} + \vec{4}_{L^{c}}}, x_{\vec{2}_{L} + \vec{4}_{L^{c}}} )$ and
	$$
	\mathfrak{I} (x_{\vec{1}_{L} + \vec{3}_{L^{c}}}, x_{\vec{1}_{L\cap F^{c}}+\vec{3}_{L^{c}\cap F^{c} } + \vec{2}_{L\cap F} +\vec{3}_{L^{c}\cap F}} )= \mathfrak{I} (x_{\vec{1}_{L} + \vec{4}_{L^{c}}}, x_{\vec{1}_{L\cap F^{c}}+\vec{4}_{L^{c}\cap F^{c} } + \vec{2}_{L\cap F} +\vec{4}_{L^{c}\cap F}} ),
	$$
	and similarly for the other term, we obtain that $\mathfrak{I}^{\prime} (x_{\vec{1}_{L} + \vec{3}_{L^{c}}}, x_{\vec{2}_{L} + \vec{3}_{L^{c}}} )=\mathfrak{I}^{\prime} (x_{\vec{1}_{F} + \vec{4}_{F^{c}}}, x_{\vec{2}_{F} + \vec{4}_{F^{c}}} )$. \\
	To conclude, suppose that $k=n$. By Equation \eqref{simplidoublen} and the hypothesis of complete $n$-symmetry, the kernel $\mathfrak{I}^{\prime}$ in Lemma \ref{PDInsimpli} can be written as
	\begin{align*}
		\mathfrak{I}^{\prime}(x_{\vec{1}}, x_{\vec{2}})&= \sum_{|F|=0}^{n} \sum_{\xi\in \mathbb{N}_{2}^{n-|F|}} (-1)^{n-|F|}2^{|F|-n} \mathfrak{I}(x_{\vec{1}_{F} + \xi_{F^{c}}},x_{\vec{2}_{F} + \xi_{F^{c}}} )\\
		&=\sum_{|F|=0}^{n} \sum_{\xi \in \mathbb{N}_{2}^{n-|F|}} (-1)^{n-|F|}2^{|F|-n} \left [ \frac{1}{2}\mathfrak{I}(x_{\vec{1} },x_{\vec{2}_{F} + \vec{1}_{F^{c}}} ) + \frac{1}{2}\mathfrak{I}(x_{\vec{1}_{F} + \vec{2}_{F^{c}}},x_{\vec{2}} ) \right ]\\
		&=\sum_{|F|=0}^{n} (-1)^{n-|F|}\left [ \frac{1}{2}\mathfrak{I}(x_{\vec{1} },x_{\vec{2}_{F} + \vec{1}_{F^{c}}} ) + \frac{1}{2}\mathfrak{I}(x_{\vec{1}_{F} + \vec{2}_{F^{c}}},x_{\vec{2}} ) \right ],
	\end{align*}
	and the latter is the kernel $    \mathfrak{I}^{\prime}$ defined in Lemma \ref{PDI2simpli}.
\end{proof}

Note that a PDI$_{n}$ kernel that is zero at the extended diagonal $\Delta_{n-1}^{n}$ is necessarily complete $n$-symmetric. 

In essence, Lemma \ref{PDI2simpli} states that it is convenient to assume that a complete $n$-symmetric PDI$_{k}$ kernel $\mathfrak{I}: \mathds{X}_{n} \times \mathds{X}_{n} \to \mathbb{R}$ is zero on the extended diagonal 
\begin{equation}\label{exdia1def}
	\Delta_{k-1}^{n} :=\{(x_{\vec{1}}, x_{\vec{2}}) \in \mathds{X}_{n} \times \mathds{X}_{n}, \quad |\{ i, \quad x_{i}^{1}= x_{i}^{2}\}| \geq n-k+1 \},
\end{equation}        
where, if $k=0$, then $\Delta_{-1}^{n}= \emptyset$ and $\Delta_{0}^{n}$ is the standard diagonal. They satisfy the following inclusion relations:
$$
\Delta_{-1}^{n} \subset \Delta_{0}^{n} \subset \ldots \subset \Delta_{n-1}^{n}.
$$
From now on, we will assume the restriction that $\mathfrak{I}$ is complete $n-$symmetric according to its necessity.

From a PDI$_{k}$ kernel, we can obtain several other kernels of order $a \leq k$ by fixing a few coordinates, in a manner similar to Lemma \ref{fixedkern}.

\begin{lemma}\label{projectionkernels}    Let $\mathfrak{I}: \mathds{X}_{n} \times \mathds{X}_{n} \to \mathbb{R}$ be an $n$-symmetric PDI$_{k}$ kernel. If $F \subset \{1, \ldots, n \}$, $|F| \geq 1$, the kernel $\mathfrak{I}_{\lambda} : \mathds{X}_{F} \times \mathds{X}_{F} \to \mathbb{R} $, defined as
	\begin{equation}
		\mathfrak{I}_{\lambda}(x_{\vec{1}_{F}}, x_{\vec{2}_{F}}):= \int_{\mathds{X}_{F^{c}}}\int_{\mathds{X}_{F^{c}}}(-1)^{k - a}\mathfrak{I}((x_{\vec{1}_{F}},u ) ),(x_{\vec{2}_{F}}, v) )d\lambda(u)d\lambda(v),
	\end{equation}
	is PDI$_{a}$ in $\mathds{X}_{F}$ for any $ \lambda \in \mathcal{M}_{b}(\mathds{X}_{F^{c}})$ such that $a+b \geq k$ and with the set size  restrictions. Further, if $\mathfrak{I}$ is complete $n$-symmetric, then $\mathfrak{I}_{\lambda}$ is complete $|F|$-symmetric.
\end{lemma}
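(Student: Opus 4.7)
The plan is to reduce the PDI$_\ell$ condition for $\mathfrak{I}_{\lambda^F}$ (where $\ell := \max(0, k+|F|-n)$) to the PDI$_k$ condition for $\mathfrak{I}$ via a product-measure identity. Given any $\mu \in \mathcal{M}_\ell(\mathds{X}_F)$, I would apply Fubini to the iterated integral defining $\mathfrak{I}_{\lambda^F}$ and observe that
\begin{align*}
\int_{\mathds{X}_F}\int_{\mathds{X}_F} (-1)^\ell \mathfrak{I}_{\lambda^F}(x_{\vec{1}_F}, x_{\vec{2}_F}) \, d\mu(x_{\vec{1}_F}) d\mu(x_{\vec{2}_F})
= \int_{\mathds{X}_n}\int_{\mathds{X}_n} (-1)^k \mathfrak{I}(a,b) \, d(\mu \times \lambda^F)(a) d(\mu \times \lambda^F)(b),
\end{align*}
after absorbing the factor $(-1)^{k-\ell}$. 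The whole task then reduces to verifying that $\mu \times \lambda^F \in \mathcal{M}_k(\mathds{X}_n)$, after which the hypothesis that $\mathfrak{I}$ is PDI$_k$ delivers nonnegativity immediately.

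The key combinatorial step, which I expect to be the only real obstacle, is this containment for product measures. For a product of Borel sets $\prod_{i=1}^n A_i$, write $s_1 := |\{i \in F : A_i = X_i\}|$ and $s_2 := |\{i \in F^c : A_i = X_i\}|$. Then $(\mu \times \lambda^F)(\prod A_i) = \mu(\prod_{i\in F} A_i)\, \lambda^F(\prod_{i\in F^c}A_i)$, and I need to show this vanishes whenever $s_1 + s_2 \geq n - k + 1$. I would argue by contradiction: if both factors are nonzero, the defining properties of $\mathcal{M}_\ell(\mathds{X}_F)$ and $\mathcal{M}_{\min(k,n-|F|)}(\mathds{X}_{F^c})$ force $s_1 \leq |F| - \ell$ and $s_2 \leq (n-|F|) - \min(k, n-|F|)$, so
\[
s_1 + s_2 \leq n - \ell - \min(k, n-|F|).
\]
A short case split on whether $k + |F| \leq n$ or $k + |F| \geq n$ (the two cases defining $\ell$ and $\min(k, n-|F|)$) shows this bound equals $n - k$ in either case, contradicting $s_1 + s_2 \geq n - k + 1$.

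For the symmetry assertions, given bijections $\sigma_i : \{1,2\} \to \{1,2\}$ for $i \in F$, I would extend them to bijections on all of $\{1,\dots,n\}$ by setting $\sigma_i = \mathrm{id}$ for $i \in F^c$ and invoke the $n$-symmetry of $\mathfrak{I}$ inside the defining double integral; since the extension acts trivially on the $F^c$-coordinates being integrated against $\lambda^F \times \lambda^F$, the equality descends to $\mathfrak{I}_{\lambda^F}$ and yields $|F|$-symmetry. The complete $|F|$-symmetry in the refined setting follows the same pattern: for any $L \subset F$ and $x_{\vec{3}_F}, x_{\vec{4}_F} \in \mathds{X}_F$, one applies the complete $n$-symmetry property of $\mathfrak{I}$ with the extended subset $L \subset \{1,\dots,n\}$ and with the extra coordinates in $F^c$ chosen identically for both points, so those coordinates are swept out by the identical integrations against $\lambda^F$ on each side. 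No further analysis is needed, since no integrability issue arises in the discrete setting $\mathcal{M}$.
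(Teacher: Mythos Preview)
Your proposal is correct and follows essentially the same approach as the paper: both reduce the claim to showing that $\mu \times \lambda^F \in \mathcal{M}_k(\mathds{X}_n)$ whenever $\mu \in \mathcal{M}_{\max(0,k+|F|-n)}(\mathds{X}_F)$ and $\lambda^F \in \mathcal{M}_{\min(k,n-|F|)}(\mathds{X}_{F^c})$, and then verify this by counting the coordinates where $A_i = X_i$ under the same two-case split on the sign of $k+|F|-n$. The paper organizes the case split first and computes directly in each case, whereas you derive the unified bound $s_1 + s_2 \le n - \ell - \min(k,n-|F|)$ and then evaluate it in each case; these are equivalent, and the paper likewise declares the symmetry assertions immediate.
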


\begin{proof}Indeed, by using in order  Lemma   \ref{inclusionmeasures2indexkroen} , Lemma \ref{inclusionmeasures2index} and Equation \eqref{inclumeas}, we have that if $0\leq a \leq |F|$ and $0\leq b \leq n-|F|$ then
	$$
	\mathcal{M}_{a}(\mathds{X}_{F }) \times \mathcal{M}_{b}(\mathds{X}_{F^{c}}) \subset 	\mathcal{M}_{a,b}(\mathds{X}_{F }, \mathds{X}_{F^{c}}) \subset     \mathcal{M}_{a+b}(\mathds{X}_{n }) \subset \mathcal{M}_{k}(\mathds{X}_{n }),
	$$
	and the conclusion follows directly. The complete $|F|$-symmetry is immediate and the proof is omitted. 
\end{proof}

How the  products of vector spaces of measures work, and how they interact with Kronecker products of PDI kernels, is presented in Section \ref{KroneckerproductsofPDIkernels}. 

In order to obtain a geometrical interpretation of PDI$_{k}$ kernels, we connect them to PD kernels in a similar way as Lemma \ref{PDIntoPDn} using the measure $\mu_{k}^{n}[x_{\vec{1}}, x_{\vec{2}}]$ defined in Equation \eqref{measorderk}.

\begin{lemma}\label{pdi2topdn} Let $\mathfrak{I} : \mathds{X}_{n} \times \mathds{X}_{n} \to \mathbb{R} $ be an $n$-symmetric kernel and a fixed $x_{\vec{0}} \in \mathds{X}_{n} $. The kernel $ K^{\mathfrak{I}} : \mathds{X}_{n} \times \mathds{X}_{n} \to \mathbb{R} $ defined as
	$$
	K^{\mathfrak{I}}(x_{\vec{1}}, x_{\vec{2}}):= \int_{\mathds{X}_{n} }\int_{\mathds{X}_{n} }(-1)^{k}\mathfrak{I}(u,v)d\mu^{n}_{k}[x_{\vec{1}}, x_{\vec{0}}](u)d\mu^{n}_{k}[x_{\vec{2}}, x_{\vec{0}}](v) 
	$$
	is PD if and only if $\mathfrak{I}$ is PDI$_{k}$. Further, for every $\eta \in \mathcal{M}_{k}( \mathds{X}_{n})$, we have that
	$$
	\int_{\mathds{X}_{n} }\int_{\mathds{X}_{n}}    K^{\mathfrak{I}}(u, v)d\eta(u)d\eta(v) = \int_{\mathds{X}_{n} }\int_{\mathds{X}_{n} }(-1)^{k}\mathfrak{I}(u,v)d\eta(u)d\eta(v).
	$$
\end{lemma}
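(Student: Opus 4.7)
The plan is to adapt the argument used in Lemma \ref{PDIntoPDn} (which handled the case $k=n$) to general $k$, with $\mu^{n}_{k}[\cdot,\cdot]$ playing the role that $\mu^{n}_{n}[\cdot,\cdot]$ played there. The point that drives both directions is that, although $\mu^{n}_{k}[x_{\vec{1}}, x_{\vec{0}}]$ differs from $\delta_{x_{\vec{1}}}$ by a signed combination of Dirac masses at points $x_{\vec{1}_{F} + \vec{0}_{F^{c}}}$ with $|F| = j \leq k-1$, those extra masses correspond to functions depending on at most $k-1$ coordinates of $x_{\vec{1}}$, hence are annihilated when paired with a measure in $\mathcal{M}_{k}(\mathds{X}_{n})$ by Equation \ref{integmu0n}.

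For the forward implication I would assume $\mathfrak{I}$ is PDI$_{k}$ and observe that each $\mu^{n}_{k}[z_{i}, x_{\vec{0}}] \in \mathcal{M}_{k}(\mathds{X}_{n})$ (by the comment following Equation \ref{measorderk}); since $\mathcal{M}_{k}(\mathds{X}_{n})$ is a vector space, any linear combination $\sum_{i} c_{i}\, \mu^{n}_{k}[z_{i}, x_{\vec{0}}]$ also lies there, so bilinearity of the double integral yields
$$
\sum_{i,j} c_{i} c_{j} K^{\mathfrak{I}}(z_{i}, z_{j}) = \int_{\mathds{X}_{n}}\int_{\mathds{X}_{n}} (-1)^{k} \mathfrak{I}(u,v)\, d\Bigl[\sum_{i} c_{i}\, \mu^{n}_{k}[z_{i}, x_{\vec{0}}]\Bigr](u)\, d\Bigl[\sum_{j} c_{j}\, \mu^{n}_{k}[z_{j}, x_{\vec{0}}]\Bigr](v) \geq 0,
$$
which is positive definiteness of $K^{\mathfrak{I}}$.

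For the converse implication together with the displayed identity, I would take an arbitrary $\eta = \sum_{\alpha} c_{\alpha} \delta_{x_{\alpha}} \in \mathcal{M}_{k}(\mathds{X}_{n})$ and first establish the intermediate claim that, for any bounded $f: \mathds{X}_{n} \to \mathbb{R}$,
$$
\int_{\mathds{X}_{n}} f(u)\, d\Bigl[\sum_{\alpha} c_{\alpha}\, \mu^{n}_{k}[x_{\alpha}, x_{\vec{0}}]\Bigr](u) = \int_{\mathds{X}_{n}} f(u)\, d\eta(u).
$$
This follows by expanding $\mu^{n}_{k}[x_{\alpha}, x_{\vec{0}}]$ via Equation \ref{measorderk}: the leading term contributes $\sum_{\alpha} c_{\alpha} f(x_{\alpha}) = \int f\, d\eta$, and each remainder term $f(x_{\alpha_{F} + \vec{0}_{F^{c}}})$ with $|F|=j \leq k-1$ equals $g(x_{\alpha})$ for the function $g(u) := f(u_{F} + (x_{\vec{0}})_{F^{c}})$ depending on only $j < k$ coordinates of $u$, so its weighted sum over $\alpha$ equals $\int g\, d\eta = 0$ by Equation \ref{integmu0n}. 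Applying this identity twice via Fubini (first to $f = \mathfrak{I}(\cdot, v)$ for each fixed $v$, then to the resulting function of $v$) gives
$$
\sum_{\alpha,\beta} c_{\alpha} c_{\beta} K^{\mathfrak{I}}(x_{\alpha}, x_{\beta}) = \int_{\mathds{X}_{n}}\int_{\mathds{X}_{n}} (-1)^{k} \mathfrak{I}(u,v)\, d\eta(u)\, d\eta(v),
$$
which is precisely the final identity of the statement; if in addition $K^{\mathfrak{I}}$ is PD the left side is nonnegative, confirming that $\mathfrak{I}$ is PDI$_{k}$.

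The only mildly delicate step is the bookkeeping in the intermediate identity, namely checking that each remainder term in the expansion of $\mu^{n}_{k}$ corresponds to a function with strictly fewer than $k$ active coordinates so that Equation \ref{integmu0n} applies. Everything else is bilinearity and Fubini, and crucially the same computation simultaneously delivers the reverse implication and the closing identity, which is why it is efficient to treat them together.
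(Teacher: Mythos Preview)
Your proposal is correct and follows essentially the same approach as the paper's own proof: both directions hinge on the identity $\sum_{\alpha} c_{\alpha}\,\mu^{n}_{k}[x_{\alpha}, x_{\vec{0}}] = \sum_{\alpha} c_{\alpha}\,\delta_{x_{\alpha}}$ for $\eta = \sum_{\alpha} c_{\alpha}\delta_{x_{\alpha}} \in \mathcal{M}_{k}(\mathds{X}_{n})$, established by observing that the remainder terms $f(x_{\alpha_{F}+\vec{0}_{F^{c}}})$ with $|F|\leq k-1$ depend on fewer than $k$ coordinates and hence vanish under $\eta$ via Equation~\ref{integmu0n}. Your remark that this single computation simultaneously yields the converse implication and the final displayed identity is exactly how the paper organizes it as well.
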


\begin{proof} Suppose that $\mathfrak{I}$ is PDI$_{k}$, then for arbitrary points $z_{1}, \ldots, z_{m} \in \mathds{X}_{n} $ and scalars $d_{1}, \ldots, d_{m} \in \mathbb{R}$ 
	\begin{align*}
		& \sum_{i,j=1}^{m}d_{i}d_{j}K^{\mathfrak{I}}(z_{i}, z_{j})\\
		&= \int_{\mathds{X}_{n} }\int_{\mathds{X}_{n} }(-1)^{k}\mathfrak{I}(u,v)d\left [\sum_{i=1}^{m}d_{i}\mu^{n}_{k}[z_{i}, x_{\vec{0}} ] \right ](u)d\left [\sum_{j=1}^{m}d_{j}\mu^{n}_{k}[z_{j}, x_{\vec{0}} ]\right ](v) \geq 0,
	\end{align*}
	because $\mathcal{M}_{k}( \mathds{X}_{n}) $ is a vector space.\\
	Conversely, if $K^{\mathfrak{I}}$ is PD, let $x^{1}_{i}, \dots, x^{m}_{i} \in X_{i}$, $1\leq i \leq n$ and scalars $c_{\alpha} \in \mathbb{R}$, $\alpha \in \mathbb{N}^{n}_{m}$ such that $\sum_{ \alpha \in \mathbb{N}_{m}^{n}}c_{\alpha}\delta_{x_{\alpha}} \in \mathcal{M}_{k}( \mathds{X}_{n})$, then 
	\begin{align*}
		0 &\leq \sum_{\alpha, \beta \in \mathbb{N}_{m}^{n}}c_{\alpha}c_{\beta}K^{\mathfrak{I}}(x_{\alpha},x_{\beta})\\
		&= \int_{\mathds{X}_{n} }\int_{\mathds{X}_{n} }(-1)^{k}\mathfrak{I}(u,v)d\left [\sum_{\alpha \in \mathbb{N}_{m}^{n}}c_{\alpha}\mu^{n}_{k}[x_{\alpha}, x_{\vec{0}}] \right ](u)d\left [\sum_{\beta \in \mathbb{N}_{m}^{n}}c_{\beta}\mu^{n}_{k}[x_{\beta}, x_{\vec{0}}] \right ](v).
	\end{align*}
	However
	$$
	\sum_{\alpha \in \mathbb{N}_{m}^{n}}c_{\alpha}\mu^{n}_{k}[x_{\alpha}, x_{\vec{0}}]=\sum_{\alpha \in \mathbb{N}_{m}^{n}}c_{\alpha} \delta_{x_{\alpha}}, 
	$$
	because for any function $f:\mathds{X}_{n} \to \mathbb{R} $
	\begin{align*}
		\int_{\mathds{X}_{n}}f(u)&d\left [\sum_{\alpha \in \mathbb{N}_{m}^{n}}c_{\alpha} \mu^{n}_{k}[x_{\alpha}, x_{\vec{0}}]\right ](u)\\
		&= \sum_{\alpha \in \mathbb{N}_{m}^{n}}c_{\alpha} \left [ f(x_{\alpha}) + \sum_{j=0}^{k-1}(-1)^{k-j}\binom{n-j-1}{n-k}\sum_{|F|=j}f(x_{\alpha_{F} }) \right] \\
		&= \sum_{\alpha \in \mathbb{N}_{m}^{n}}c_{\alpha} f(x_{\alpha}) = \int_{\mathds{X}_{n}}f(u)d\left [\sum_{\alpha \in \mathbb{N}_{m}^{n}}c_{\alpha}\delta_{x_{\alpha}} \right ](u),
	\end{align*}
	which again occurs by Equation \eqref{integmu0n}, because the functions 
	$$
	(u_{1}, \ldots, u_{n}) \in \mathds{X}_{n} \to f((u_{F}, x_{\vec{0}_{F^{c}}})) \in \mathbb{R}, \quad 0\leq |F| \leq k-1
	$$
	only depend on $|F|\leq k-1$ among the $n$ variables and
	$$
	\int_{\mathds{X}_{n}}f((u_{F}, x_{\vec{0}_{F^{c}}}))d\left [ \sum_{\alpha \in \mathbb{N}_{m}^{n}}c_{\alpha} \delta_{x_{\alpha}} \right ](u)= \sum_{\alpha \in \mathbb{N}_{m}^{n}}c_{\alpha} f(x_{\alpha_{F} }).
	$$\end{proof}   

A direct consequence of the proof of Lemma \ref{pdi2topdn} is that for every $\eta \in \mathcal{M}_{k}( \mathds{X}_{n})$ 
\begin{equation}\label{contaPDIkntoPD}
	\begin{split}
		\int_{\mathds{X}_{n}}\int_{\mathds{X}_{n}} (-1)^{k}\mathfrak{I}(x_{\vec{1} },x_{\vec{2} } ) d\lambda(x_{\vec{1} })d\eta(x_{\vec{2} }) &=     \int_{\mathds{X}_{n}}\int_{\mathds{X}_{n}} K^{\mathfrak{I}}(x_{\vec{1} },x_{\vec{2} } ) d\lambda(x_{\vec{1} })d\eta(x_{\vec{2} })\\
		&=\langle K^{\mathfrak{I}}_{\lambda}, K^{\mathfrak{I}}_{\eta} \rangle_{K^{\mathfrak{I}}}. 
	\end{split}
\end{equation}

Inspired by the distance covariance generalization done in Section 6 in \cite{guella2023}, we can prove a stronger property than the one in Lemma \ref{pdi2topdn} with a very similar argument, as the kernel 
$$
K^{\mathfrak{I}}((x_{\vec{1}}, x_{\vec{3}}), (x_{\vec{2}}, x_{\vec{4}})):= \int_{\mathds{X}_{n} }\int_{\mathds{X}_{n} }(-1)^{k}\mathfrak{I}(u,v)d\mu^{n}_{k}[x_{\vec{1}}, x_{\vec{3}}](u)\mu^{n}_{k}[x_{\vec{2}}, x_{\vec{4}}](v) 
$$
is PD in $\mathds{X}_{n} \times \mathds{X}_{n}$ if and only if $\mathfrak{I}$ is PDI$_{k}$ in $\mathds{X}_{n}$. 

The explicit expression for $    K^{\mathfrak{I}}$ in Lemma \ref{pdi2topdn} is 

\begin{equation}\label{expleqkIk}
	\begin{split}
		K^{\mathfrak{I}}(x_{\vec{1}}, x_{\vec{2}}):=& (-1)^{k}\mathfrak{I}(x_{\vec{1}}, x_{\vec{2}})+ \sum_{i=0}^{k-1}(-1)^{i}\binom{n-i-1}{n-k}\sum_{|F|=i} \mathfrak{I}(x_{\vec{1}_{F}}, x_{\vec{2}})\\
		+&\sum_{j=0}^{k-1}(-1)^{j} \binom{n-j-1}{n-k} \sum_{|\mathcal{F}|=j} \mathfrak{I}(x_{\vec{1}}, x_{\vec{2}_{\mathcal{F}}})\\
		+&\sum_{i,j=0}^{k-1}(-1)^{k+i+j}\binom{n-i-1}{n-k}\binom{n-j-1}{n-k}\sum_{|F|=i}\sum_{|\mathcal{F}|=j} \mathfrak{I}(x_{\vec{1}_{F}}, x_{\vec{2}_{\mathcal{F}}}).
	\end{split}
\end{equation}

Unfortunately, the geometrical interpretation for PDI$_{k}$ kernels defined in $\mathds{X}_{n}$ by using the RKHS of the related positive definite kernel $    K^{\mathfrak{I}}$ (in a similar way as Theorem \ref{PDIngeometryrkhs} or Equation \eqref{geogamma}) gets more complicated as the codimension $n-k$ increases. Our results are presented in Appendix \ref{geoint}. 

Unlike Equation  \eqref{newrepkI2}, the value of $K^{\mathfrak{I}}$ in Equation \eqref{expleqkIk} does not have a simple expression at the diagonal of this kernel. However, we can present an expression for a specific scenario that is enough to obtain the integrability restrictions equivalences in Corollary \ref{integrestrik}. First, we prove another representation for the measure $\mu_{k}^{n}$.

\begin{lemma}\label{muknsimplified}
	For every $1\leq k \leq n$ and $x_{\vec{1}},x_{\vec{2}}\in \mathds{X}^{n}$, we have
	$$
	\mu_{k}^{n}[x_{\vec{1}},x_{\vec{2}}] = \sum_{|F|\geq k }\mu_{|F|}^{|F|}[x_{\vec{1}_{F}},x_{\vec{2}_{F}}]
	\times \delta(x_{\vec{2}_{F^{c}}}) .
	$$
\end{lemma}

\begin{proof}
	Indeed,
	$$
	\mu_{|F|}^{|F|}[x_{\vec{1}_{F}},x_{\vec{2}_{F}}]
	\times \delta(x_{\vec{2}_{F^{c}}})= \left [ \bigtimes_{i\in F}(\delta_{x^{1}_{i}}-\delta_{x^{2}_{i}}) \right ]\times \delta(x_{\vec{2}_{F^{c}}})=\sum_{G\subset F} (-1)^{|F|-|G|}\,
	\delta_{x_{\vec{1}_{G} +\vec{2}_{G^{c}}}}.
	$$
	Thus 
	\begin{align*}
		\sum_{|F|\geq k }\mu_{|F|}^{|F|}[x_{\vec{1}_{F}},x_{\vec{2}_{F}}]
		\times \delta(x_{\vec{2}_{F^{c}}}) &= \sum_{|F|\geq k } \left [\sum_{G\subset F} (-1)^{|F|-|G|} \delta_{x_{\vec{1}_{G} +\vec{2}_{G^{c}}}} \right ]\\
		&= \sum_{|G|=0}^{n}\left [\sum_{F }^{G\subset F, |F|\geq k} (-1)^{|F|-|G| } \right ]\delta_{x_{\vec{1}_{G} +\vec{2}_{G^{c}}}}.      
	\end{align*}
	To obtain this number, if $|G|=\ell$, then
	\begin{align*}
		\sum_{F }^{G\subset F, |F|\geq k} (-1)^{|F|-\ell }&= \sum_{j=\max{\{k,\ell\}}}^{n} (-1)^{j-\ell }\sum_{|F|= j, G\subset F }1=\sum_{j=\max{\{k,\ell\}}}^{n}(-1)^{j-\ell } \binom{n-\ell}{j-\ell}.   
	\end{align*}
	When $\ell=n$, this sum is equal to $1$. When $k \leq \ell \leq n-1$, this sum is equal to $0$ because
	$$
	\sum_{j=\ell}^{n}(-1)^{j-\ell}\binom{n-\ell}{j-\ell}= \sum_{s=0}^{n-\ell}(-1)^{s}\binom{n-\ell}{s} = (1-1)^{n-\ell}=0,
	$$
	and if  $\ell \leq k-1$ 
	$$
	\sum_{j=k}^{n} (-1)^{j-\ell}\binom{n-\ell}{j-\ell} = \sum_{s=k-\ell}^{n-\ell}(-1)^{s}\binom{n-\ell}{s} =(-1)^{k-\ell}\binom{n-\ell-1}{k-1},
	$$
	which concludes the proof.
\end{proof}

\begin{corollary}\label{corexpleqkIkcaseG}
	For any $x_{\vec{1}} \in \mathds{X}_{n} $ and $G \subset \{1, \ldots, n \}$ with $|G|=k$, the kernel $K^{\mathfrak{I}}$ of Equation \eqref{expleqkIk} for a PDI$_{k}$ kernel that is zero at the extended diagonal $\Delta_{k-1}^{n}$ satisfies 
	\begin{equation}\label{expleqkIkcaseG}
		K^{\mathfrak{I}}(x_{\vec{1}_{G}}, x_{\vec{1}_{G}}) = 2^{k}\mathfrak{I}(x_{\vec{1}_{G}}, x_{\vec{0}} ).
	\end{equation}
\end{corollary}

\begin{proof} Indeed, by Lemma \ref{muknsimplified}
	$$
	\mu_{k}^{n}[x_{\vec{1}_{G}}, x_{\vec{0}}] = \sum_{|F|\geq k }\mu_{|F|}^{|F|}[x_{\vec{1}_{F\cap G}},x_{\vec{0}_{F}}]
	\times \delta(x_{\vec{0}_{F^{c}}}).
	$$
	If $F \cap G \subsetneq F$, then $\{i \in F, \quad (x_{\vec{1}_{F\cap G}})_{i}= (x_{\vec{0}_{F}})_{i} \} \neq \emptyset$, hence by the comment after Equation \eqref{measorderk}, we have $\mu_{|F|}^{|F|}[x_{\vec{1}_{F\cap G}},x_{\vec{0}_{F}}]=0$. As this occurs on any set $F$ such that $|F| \geq k$ and which is not $G$, we obtain  $ \mu_{k}^{n}[x_{\vec{1}_{G}}, x_{\vec{0}}] = \mu_{k}^{k}[x_{\vec{1}_{ G}},x_{\vec{0}_{G}}]
	\times \delta(x_{\vec{0}_{G^{c}}})$. From this, we conclude 
	\begin{align*}
		K^{\mathfrak{I}}(x_{\vec{1}_{G}},x_{\vec{1}_{G}})&= \int_{\mathds{X}_{n} }\int_{\mathds{X}_{n} }(-1)^{k}\mathfrak{I}(u,v)d\mu_{k}^{n}[x_{\vec{1}_{G}}, x_{\vec{0}}](u)d\mu_{k}^{n}[x_{\vec{1}_{G}}, x_{\vec{0}}](v)\\
		&=\sum_{\alpha, \beta \in \mathbb{N}_{1}^{0,k}}(-1)^{k+|\alpha| + |\beta|}\mathfrak{I}(x_{\alpha_{G}}, x_{\beta_{G}})\\
		&= \sum_{\alpha \in \mathbb{N}_{1}^{0,k}}\mathfrak{I}(x_{\alpha_{G}}, x_{ (\vec{1} - \alpha)_{G}}) = 2^{k}\mathfrak{I}(x_{\vec{1}_{G}}, x_{\vec{0}}),
	\end{align*}
	where the equalities in the last line are due to the fact that $\mathfrak{I}$ is zero at the extended diagonal and the $n$-symmetry.\end{proof}

The next result is a partial version of Corollary \ref{growthpdiknvar} for an arbitrary PDI$_{k}$ kernel. 

\begin{theorem}\label{ineqpdik}Let $\mathfrak{I}: \mathds{X}_{n} \times \mathds{X}_{n} \to \mathbb{R}$ be a complete $n$-symmetric PDI$_{k}$ kernel that is zero at the extended diagonal $\Delta_{k-1}^{n}$. Then there exists a constant $C_{n,k}> 0$ for which
	\begin{equation}\label{ineqpdiknprin}
		|\mathfrak{I}(x_{\vec{1}}, x_{\vec{2}})| \leq     C_{n,k} \sum_{|F|=k}\mathfrak{I}(x_{\vec{1}_{F}+ \vec{3}_{F^{c}}}, x_{\vec{2}_{F}+ \vec{3}_{F^{c}}}), \quad x_{\vec{1}},x_{\vec{2}}, x_{\vec{3}} \in \mathds{X}_{n}.
	\end{equation}
	In particular, the only complete $n$-symmetric PDI$_{k}$ kernel that is zero at the extended diagonal $\Delta_{k}^{n}$ is the zero kernel.
\end{theorem}

\begin{proof}The second part is a direct consequence of the inequality, because we would have that for any $ |F|=k $ and $x_{\vec{1}},x_{\vec{2}}, x_{\vec{3}} \in \mathds{X}_{n}$  it occur that $\mathfrak{I}(x_{\vec{1}_{F}+ \vec{3}_{F^{c}}}, x_{\vec{2}_{F}+ \vec{3}_{F^{c}}})=0$.\\
	The rest of the proof is done by induction on $n\geq k$. The case $n=k$ is immediate as we can define $C_{n,n}=1$ and then  both sides have the same value. Suppose then that it holds for all values of $n \in \{k, \ldots, m-1\}$, and we prove that it also holds for $n=m$.\\
	By Lemma \ref{projectionkernels}, for every $G \subset \{1, \ldots, m\}$, $k\leq |G|\leq m-1$, and fixed $x_{\vec{3}} \in \mathds{X}_{n}$, the kernel
	$$
	(x_{\vec{1}_{G}},x_{\vec{2}_{G}} ) \in \mathds{X}_{G}\times \mathds{X}_{G} \to \mathfrak{I}(x_{\vec{1}_{G}+ \vec{3}_{G^{c}}}, x_{\vec{2}_{G}+ \vec{3}_{G^{c}}}),
	$$
	is a complete $|G|$-symmetric PDI$_{k}$ kernel that is zero at the extended diagonal $\Delta_{k-1}^{|G|}$. In particular, the terms on the right-hand side of Equation \eqref{ineqpdiknprin} are nonnegative. By the hypothesis, for every such $G$, we have that 
	$$
	|\mathfrak{I}(x_{\vec{1}_{G}+ \vec{3}_{G^{c}}}, x_{\vec{2}_{G}+ \vec{3}_{G^{c}}})| \leq     C_{|G|,k} \sum_{\mathcal{F} \subset G, |\mathcal{F}|=k}\mathfrak{I}(x_{\vec{1}_{\mathcal{F}}+ \vec{3}_{\mathcal{F}^{c}}}, x_{\vec{2}_{\mathcal{F}}+ \vec{3}_{\mathcal{F}^{c}}}), \quad x_{\vec{1}},x_{\vec{2}}, x_{\vec{3}} \in \mathds{X}_{m}.
	$$
	Fix an arbitrary $\mathcal{G} \subset \{1, \ldots, m\}$ with $|\mathcal{G}|=k$, and consider the measures $\lambda^{\mathcal{G}}:= \delta(x_{\vec{1}_{\mathcal{G}^{c}}}) + \delta(x_{\vec{2}_{\mathcal{G}^{c}}})$, $\lambda^{\prime, \mathcal{G}}:= \delta(x_{\vec{1}_{\mathcal{G}^{c}}}) - \delta(x_{\vec{2}_{\mathcal{G}^{c}}}) \in \mathcal{M}(\mathds{X}_{\mathcal{G}^{c}})$ and $\mu =\bigtimes_{i \in \mathcal{G}}(\delta_{x_{i}^{1}}- \delta_{x_{i}^{2}}) \in \mathcal{M}_{k}(\mathds{X}_{\mathcal{G}}) $. Then $\lambda^{ \mathcal{G}} \times \mu$ and $\lambda^{\prime, \mathcal{G}}\times \mu$ are elements of $\mathcal{M}_{k}(\mathds{X}_{n})$. Hence
	\begin{align*}
		0& \leq \int_{\mathds{X}_{n}}\int_{\mathds{X}_{n}}(-1)^{k}\mathfrak{I}(u,v ) d[\mu \times \lambda^{\mathcal{G}}](u)d[\mu \times \lambda^{\mathcal{G}}](v)\\
		&=2^{k}\mathfrak{I}(x_{\vec{1} },x_{\vec{2}_{\mathcal{G}}+\vec{1}_{\mathcal{G}^{c}} } ) + 2^{k}\mathfrak{I}(x_{\vec{1}_{\mathcal{G}}+\vec{2}_{\mathcal{G}^{c}} },x_{\vec{2} } ) +2\left [    \sum_{\alpha, \beta \in \mathbb{N}_{2}^{\mathcal{G}}}(-1)^{k+ |\alpha|+|\beta|}\mathfrak{I}( x_{\alpha_{\mathcal{G}}+ \vec{1}_{\mathcal{G}^{c}}}, x_{\beta_{\mathcal{G}}+\vec{2}_{\mathcal{G}^{c}}} )\right],
	\end{align*}
	because the kernel $\mathfrak{I}$ is zero at the extended diagonal $\Delta_{k-1}^{n}$. Using the same approach on the measure     $\lambda^{\prime, \mathcal{G}}$ and comparing the inequalities, we obtain that
	\begin{align*}
		&\left |    \sum_{\alpha, \beta \in \mathbb{N}_{2}^{\mathcal{G}}}(-1)^{ |\alpha|+|\beta|}\mathfrak{I}( x_{\alpha_{\mathcal{G}}+ \vec{1}_{\mathcal{G}^{c}}}, x_{\beta_{\mathcal{G}}+\vec{2}_{\mathcal{G}^{c}}} )\right | \leq 2^{k-1}\mathfrak{I}(x_{\vec{1}},x_{\vec{2}_{\mathcal{G}}+\vec{1}_{\mathcal{G}^{c}} } ) + 2^{k-1}\mathfrak{I}(x_{\vec{1}_{\mathcal{G}}+\vec{2}_{\mathcal{G}^{c}} },x_{\vec{2} } ).
	\end{align*}
	However, the kernel $\mathfrak{I}$ is complete $n$-symmetric, so for an arbitrary but fixed $x_{\vec{3}} \in \mathds{X}_{n}$
	$$
	\mathfrak{I}(x_{\vec{1} },x_{\vec{2}_{\mathcal{G}}+\vec{1}_{\mathcal{G}^{c}} } ) = \mathfrak{I}(x_{\vec{1}_{\mathcal{G}}+\vec{2}_{\mathcal{G}^{c}} },x_{\vec{2} } ) =\mathfrak{I}(x_{\vec{1}_{\mathcal{G}}+\vec{3}_{\mathcal{G}^{c}} },x_{\vec{2}_{\mathcal{G}}+\vec{3}_{\mathcal{G}^{c}} } ). 
	$$
	For every $H \subset \mathcal{G}$, it holds that
	$$
	\sum_{\xi \in \mathbb{N}_{2}^{|\mathcal{G}|-|H|}} \left [\sum_{\varsigma \in \mathbb{N}_{2}^{|H|}}(-1)^{|\varsigma_{H}+\xi_{\mathcal{G}\setminus{H} }|} (-1)^{|(\vec{3}-\varsigma)_{H}+\xi_{\mathcal{G}\setminus{H} }|} \right ]= 2^{k}(-1)^{|H|},
	$$ 
	hence, by  Equation \eqref{simplidouble} and the complete $n$-symmetry we have that 
	\begin{align*}
		&\sum_{\alpha, \beta \in \mathbb{N}_{2}^{\mathcal{G}}}(-1)^{ |\alpha|+|\beta|}\mathfrak{I}( x_{\alpha_{\mathcal{G}}+ \vec{1}_{\mathcal{G}^{c}}}, x_{\beta_{\mathcal{G}}+\vec{2}_{\mathcal{G}^{c}}} )=2^{k}\sum_{H \subset \mathcal{G}} (-1)^{|H|} \mathfrak{I}(x_{\vec{1}_{H \cup \mathcal{G}^{c}}+ \vec{3}_{\mathcal{G} \setminus{H}}},x_{\vec{2}_{H \cup\mathcal{G}^{c}}+ \vec{3}_{\mathcal{G} \setminus{H}}} ).
	\end{align*}
	In the previous sum, when $H=\mathcal{G}$, we have the term $2^{k} \mathfrak{I}(x_{\vec{1}},x_{\vec{2}})$, hence, by the triangle inequality
	\begin{align*}
		&|\mathfrak{I}(x_{\vec{1}}, x_{\vec{2}})|\\
		&\leq    2^{-k} \left |    \sum_{\alpha, \beta \in \mathbb{N}_{2}^{\mathcal{G}}}(-1)^{|\alpha|+|\beta|}\mathfrak{I}( x_{\alpha_{\mathcal{G}}+ \vec{1}_{\mathcal{G}^{c}}}, x_{\beta_{\mathcal{G}}+\vec{2}_{\mathcal{G}^{c}}} )\right | + \sum_{H \subsetneq \mathcal{G}} \left | \mathfrak{I}(x_{\vec{1}_{H \cup \mathcal{G}^{c}}+ \vec{3}_{\mathcal{G} \setminus{H}}},x_{\vec{2}_{H \cup\mathcal{G}^{c}}+ \vec{3}_{\mathcal{G} \setminus{H}}} )\right |\\
		&\leq \mathfrak{I}(x_{\vec{1}_{\mathcal{G}}+\vec{3}_{\mathcal{G}^{c}} },x_{\vec{2}_{\mathcal{G}}+\vec{3}_{\mathcal{G}^{c}} } ) + \sum_{H \subsetneq \mathcal{G}} C_{|H\cup \mathcal{G}^{c}|, k} \left [ \sum_{\mathcal{F} \subset H\cup \mathcal{G}^{c}, |\mathcal{F}|=k}\mathfrak{I}(x_{\vec{1}_{\mathcal{F}}+ \vec{3}_{\mathcal{F}^{c}}}, x_{\vec{2}_{\mathcal{F}}+ \vec{3}_{\mathcal{F}^{c}}}) \right ]\\
		&=\mathfrak{I}(x_{\vec{1}_{\mathcal{G}}+\vec{3}_{\mathcal{G}^{c}} },x_{\vec{2}_{\mathcal{G}}+\vec{3}_{\mathcal{G}^{c}} } ) + \sum_{|\mathcal{F}|=k}\left [ \sum_{H}^{\mathcal{F} \cap \mathcal{G}\subset H \subsetneq \mathcal{G}} C_{|H\cup \mathcal{G}^{c}|, k}\right ]\mathfrak{I}(x_{\vec{1}_{\mathcal{F}}+ \vec{3}_{\mathcal{F}^{c}}}, x_{\vec{2}_{\mathcal{F}}+ \vec{3}_{\mathcal{F}^{c}}}),
	\end{align*}
	which concludes the proof.
\end{proof}

We do not need a precise estimation for $C_{n,k}$, as the inequality is used to obtain integrability properties for $\mathfrak{I}$. It remains elusive whether the left-hand inequality of the main equation in Corollary \ref{growthpdiknvar} can be obtained for an arbitrary PDI$_{k}$ kernel $\mathfrak{I}$ in $\mathds{X}_{n}$.

In Theorem \ref{finalgeomkn}, and more explicitly in Example \ref{examplecoefi},   we show that $\mathfrak{I}$ is a sum of squares, and that there are several possibilities to describe $\mathfrak{I}$ in this way. However, we could prove that when either the codimension $n-k=1$ (see Subsection \ref{cod1}) or when the codimension $n-k=2$ (see Subsection \ref{cod2}), there is a nonnegative combination of sums of squares that describes $\mathfrak{I}$. As a by-product, in these two specific cases, we obtain that 
\begin{equation} 
	B_{n,k}    \sum_{|F|=k}\mathfrak{I}(x_{\vec{1}_{F}+ \vec{3}_{F^{c}}}, x_{\vec{2}_{F}+ \vec{3}_{F^{c}}})\leq \mathfrak{I}(x_{\vec{1}}, x_{\vec{2}}) \leq C_{n,k}    \sum_{|F|=k}\mathfrak{I}(x_{\vec{1}_{F}+ \vec{3}_{F^{c}}}, x_{\vec{2}_{F}+ \vec{3}_{F^{c}}}), 
\end{equation}
for every $x_{\vec{1}},x_{\vec{2}}, x_{\vec{3}} \in \mathds{X}_{n}$ and some nonnegative constant $B_{n,k}$. We conjecture that the results in Subsections \ref{cod1} and \ref{cod2} can be generalized for any $0\leq k \leq n$.

\subsection{Integrability restrictions}\label{Integrabilityrestrictionsnk}

In this subsection, we prove the technical results regarding the description of the continuous probability measures that can be compared using a continuous complete $n$-symmetric PDI$_{k}$ kernel, and provide a Kernel Mean Embedding result for them. Due to the inequality of Theorem \ref{ineqpdik}, they are often a consequence of the results proved in Subsection \ref{Integrabilityrestrictionsn}. 

\begin{corollary}\label{integrestrik} Let $n>k$, $\mathfrak{I}: \mathds{X}_{n} \times \mathds{X}_{n} \to \mathbb{R}$ be a continuous complete $n$-symmetric PDI$_{k}$ kernel that is zero at the extended diagonal $\Delta_{k-1}^{n}$ of $\mathds{X}_{n}$. Then, the following conditions are equivalent for a measure $ \mu \in \mathfrak{M}(\mathds{X}_{n})$:
	\begin{enumerate}
		\item[$(i)$] The function 
		$$
		(x_{\vec{1}}, x_{\vec{2}}) \in \mathds{X}_{n}\times \mathds{X}_{n} \to \mathfrak{I}(x_{\vec{1} },x_{\vec{2}}) \in \mathbb{R}
		$$
		is FIP with respect to $\mu\times \mu \in \mathfrak{M}(\mathds{X}_{n}\times \mathds{X}_{n})$.
		\item[$(ii)$] There exists an $x_{\vec{3}} \in \mathds{X}_{n}$ such that
		$$ 
		x_{\vec{1}} \in \mathds{X}_{n} \to \mathfrak{I}(x_{\vec{1} },x_{\vec{3}}) \in \mathbb{R}
		$$
		is FIP with respect to $\mu \in \mathfrak{M}(\mathds{X}_{n})$.
		\item[$(iii)$] For every $x_{\vec{3}} \in \mathds{X}_{n}$ 
		$$ 
		x_{\vec{1}} \in \mathds{X}_{n} \to \mathfrak{I}(x_{\vec{1} },x_{\vec{3}}) \in \mathbb{R}
		$$
		is FIP with respect to $\mu \in \mathfrak{M}(\mathds{X}_{n})$.
		\item[$(iv)$] For whichever $x_{\vec{0}} \in \mathds{X}_{n}$ that is used to define $K^{\mathfrak{I}}$ defined in Lemma \ref{pdi2topdn}, the function 
		$$ 
		x_{\vec{1}} \in \mathds{X}_{n} \to K^{\mathfrak{I}}(x_{\vec{1}}, x_{\vec{1}}) \in \mathbb{R}
		$$
		is FIP with respect to $\mu \in \mathfrak{M}(\mathds{X}_{n})$.
		\item[$(v)$] For any $F \subset \{1, \ldots, n\}$ with $|F|=k$, the PDI$_{k}$ kernel $$
		\mathfrak{I}_{F} : \mathds{X}_{F} \times \mathds{X}_{F} \to \mathbb{R}, \quad     \mathfrak{I}_{F}(x_{\vec{1}_{F}}, x_{\vec{2}_{F}}):=\mathfrak{I} (x_{\vec{1}_{F} + \vec{4}_{F^{c}}}, x_{\vec{2}_{F} + \vec{4}_{F^{c}}}),
		$$
		satisfies the equivalences of Lemma \ref{probintegPDIn} for the measure $|\mu|_{F} \in \mathfrak{M}(\mathds{X}_{F})$ (which is independent of the choice of $x_{\vec{4}} \in \mathds{X}_{n}$).
	\end{enumerate}   
\end{corollary}
\begin{proof}
	If relation $(v)$ is satisfied, then by Equation \eqref{ineqpdiknprin} and relation $(iii)$ in Lemma \ref{propertiesFIP}, we obtain that each one of the equivalences $(i)$, $(ii)$ and $(iii)$ occurs. Conversely, if any of the relations $(i)$, $(ii)$ and $(iii)$ holds, we obtain relation $(v)$ by its respective relation in Lemma \ref{probintegPDIn}, due to relation $(ii)$ in Lemma \ref{propertiesFIP}.\\
	To conclude, if relation $(iv)$ holds, then by Equation \eqref{expleqkIkcaseG}, we have that $x_{\vec{1}} \to \mathfrak{I}(\vec{1}_{G}, x_{\vec{0}})$ is also FIP with respect to $\mu$ for any $G$ with $|G|=k$, and Equation \eqref{ineqpdiknprin} concludes that relation $(i)$ is satisfied. On the other hand, if relation $(i)$ is satisfied, then relation $(iv)$ occurs by Equation \eqref{expleqkIk}. \end{proof}

The proof of the next lemma is very similar to that of Lemma \ref{integPDIn0}, so its proof is omitted. 

\begin{lemma}\label{integPDInk} Let $n>k$, $\mathfrak{I}: \mathds{X}_{n} \times \mathds{X}_{n} \to \mathbb{R}$ be a continuous complete $n$-symmetric PDI$_{k}$ kernel that is zero at the extended diagonal $\Delta_{k-1}^{n}$ of $\mathds{X}_{n}$. Consider the sets
	\begin{align*}
		\mathcal{P}[\mathfrak{I}]:=&\{ Q, \quad Q \text{ is a probability and satisfies Corollary \ref{integrestrik}} \},\\
		\mathfrak{M}[\mathfrak{I}]:=&\{ \mu, \quad \mu \in \mathfrak{M}(\mathds{X}_{n}) \text{ satisfies Corollary \ref{integrestrik}} \},\\
		\mathfrak{M}_{k}[\mathfrak{I}]:=&\mathfrak{M}[\mathfrak{I}]\cap \mathfrak{M}_{k}(\mathds{X}_{n}),\\
		\mathfrak{M}[ \mathfrak{I}, P]:=&\{ \mu \in \mathfrak{M}[\mathfrak{I}]\cap \mathfrak{M}(\mathds{X}_{n}), \quad \text{ $\mu$ is marginally delimited by $P \in \mathcal{P}[\mathfrak{I}]$} \},\\
		\mathfrak{M}_{k }[ \mathfrak{I}, P]:=& \mathfrak{M}[\mathfrak{I},P]\cap \mathfrak{M}_{k}(\mathds{X}_{n}).
	\end{align*}
	Then, we have that    
	\begin{enumerate}
		\item [$(i)$] If $P \in \mathcal{P}[\mathfrak{I}]$, then every linear combination $\sum_{\pi} a_{\pi}P_{\pi}$ is an element of $     \mathfrak{M}[ \mathfrak{I}, P]$.   
		\item [$(ii)$] If $P \in \mathcal{P}[\mathfrak{I}]$ and $\mu \in \mathfrak{M} (\mathds{X}_{n})$ is a measure for which there exists a constant $M \geq 0$ such that the measure $MP -|\mu|$ is nonnegative, then $\mu \in \mathfrak{M}[\mathfrak{I}, P]$.
		\item [$(iii)$] For any fixed $P \in \mathcal{P}[\mathfrak{I}]$, the sets $\mathfrak{M}_{k }[\mathfrak{I}, P]$ and $\mathfrak{M}[\mathfrak{I}, P]$ are vector spaces. 
		\item [$(iv)$] If $P \in \mathcal{P}[\mathfrak{I}]$, the generalized Lancaster interaction $\Lambda_{k}^{n}[P]$ is an element of $\mathfrak{M}_{k }[\mathfrak{I}, P]$. 
		\item [$(v)$] If $\mu \in \mathfrak{M}[\mathfrak{I}]$, then $|\mu| \in \mathfrak{M}[\mathfrak{I}]$. In particular, a nonzero $\mu$ is marginally delimited by the probability $|\mu|/ |\mu|( \mathds{X}_{n}).$ 
	\end{enumerate}
\end{lemma}

The following result is a version of the kernel mean embedding for PDI$_{k}$ kernels.

\begin{theorem}\label{integPDIkn}Let $n>k$, $\mathfrak{I}: \mathds{X}_{n} \times \mathds{X}_{n} \to \mathbb{R}$ be a continuous complete $n$-symmetric PDI$_{k}$ kernel that is zero at the extended diagonal $\Delta_{k-1}^{n}$ of $\mathds{X}_{n}$.  Then for any $\lambda \in  \mathfrak{M}_{k}[\mathfrak{I}]$ and for whichever $x_{\vec{0}} \in \mathds{X}_{n}$ is used to define $K^{\mathfrak{I}}$    
	\begin{align*}
		\int_{\mathds{X}_{n}}\int_{\mathds{X}_{n}}(-1)^{k}\mathfrak{I}(u, v)d\lambda(u)d\lambda(v)&= \int_{\mathds{X}_{n}}\int_{\mathds{X}_{n}}K^{\mathfrak{I}}(u, v)d\lambda(u)d \lambda(v)\\
		&= \langle K^{\mathfrak{I}}_{\lambda}, K^{\mathfrak{I}}_{\lambda} \rangle _{\mathcal{H}_{\mathfrak{I}}} \geq 0.
	\end{align*}    
	In particular, for any fixed $P \in \mathcal{P}[\mathfrak{I}]$, the bilinear function 
	$$
	(\lambda, \eta) \in \mathfrak{M}_{k}[ \mathfrak{I}, P]\times \mathfrak{M}_{k}[\mathfrak{I}, P] \to \int_{\mathds{X}_{n}}\int_{\mathds{X}_{n}}(-1)^{k}\mathfrak{I}(u, v)d\lambda(u)d \eta(v) 
	$$    
	is well defined and is a semi-inner product because
	\begin{align*}
		\int_{\mathds{X}_{n}}\int_{\mathds{X}_{n}}(-1)^{k}\mathfrak{I}(u, v)d\lambda(u)d\eta(v)&= \int_{\mathds{X}_{n}}\int_{\mathds{X}_{n}}K^{\mathfrak{I}}(u, v)d\lambda(u)d \eta(v)\\
		&= \langle K^{\mathfrak{I}}_{\lambda}, K^{\mathfrak{I}}_{\eta} \rangle _{\mathcal{H}_{\mathfrak{I}}}.
	\end{align*}
\end{theorem}

\begin{proof}         Let $x_{\vec{0}} \in \mathds{X}_{n}$ be arbitrary and consider the PD kernel $K^{\mathfrak{I}}$ related to it, whose explicit expression is given in Equation \eqref{expleqkIk}.\\ 
	Since $\lambda, \eta \in \mathfrak{M}[ \mathfrak{I},P]$, then $ |\lambda|, |\eta| \in \mathfrak{M}[ \mathfrak{I}]$, and these two measures are also marginally delimited by $P$. Thus, by relation $(iii)$ in Lemma \ref{integPDInk}, we obtain that $|\lambda | + |\eta| \in \mathfrak{M}[ \mathfrak{I},P]$. As a direct consequence of this fact, we obtain that all kernels that appear on the right-hand side of Equation \eqref{expleqkIk} are in $L^{1}(|\lambda|\times |\eta|)$, because they are elements of $L^{1}((|\lambda| + |\eta|)\times (|\lambda| +|\eta|))$.\\ 
	By Equation \eqref{integmu0n}, it holds
	$$
	\int_{\mathds{X}_{n}}\int_{\mathds{X}_{n}}(-1)^{k}\mathfrak{I}(u, v)d\lambda(u)d \eta(v)= \int_{\mathds{X}_{n}}\int_{\mathds{X}_{n}}K^{\mathfrak{I}}(u, v)d\lambda (u)d\eta(v),
	$$
	because, with the exception of the term $\mathfrak{I}(x_{\vec{1}},x_{\vec{2}})$, all the other terms that appear on the right-hand side of Equation \eqref{expleqkIk} are zero.\\
	The third equality is a direct consequence of the kernel mean embedding in Theorem \ref{initialextmmddominio} because $K^{\mathfrak{I}}(x_{\vec{1}}, x_{\vec{1}}) \in L^{1}(|\lambda|+|\eta|)$, as due to complete $n$-symmetry, all terms on the right-hand side of Equation \eqref{expleqkIk} for when $x_{\vec{1}}= x_{\vec{2}}$ are in $L^{1}(|\lambda|+|\eta|)$.\end{proof}

\begin{definition}\label{PDIkn-Characteristic} Let $\mathfrak{I}: \mathds{X}_{n} \times \mathds{X}_{n} \to \mathbb{R}$ be a continuous $n$-symmetric PDI$_{k}$ kernel that is zero at the extended diagonal $\Delta_{k-1}^{n}(\mathds{X}_{n})$.  We say that $\mathfrak{I}$ is PDI$_{k}$-Characteristic if for every nonzero $\lambda \in  \mathfrak{M}_{k}[\mathfrak{I}]$
	\[
	\int_{\mathds{X}_{n}}\int_{\mathds{X}_{n}}(-1)^{k}\mathfrak{I}(u, v)d\lambda(u)d \lambda(v)>0.
	\]
	Equivalently, $\mathfrak{I}$ is    PDI$_{k}$-Characteristic if for any fixed $P \in \mathcal{P}[\mathfrak{I}]$, the bilinear function 
	$$
	(\lambda, \eta) \in \mathfrak{M}_{k}[\mathfrak{I}, P]\times \mathfrak{M}_{k}[ \mathfrak{I}, P] \to \int_{\mathds{X}_{n}}\int_{\mathds{X}_{n}}(-1)^{k}\mathfrak{I}(u, v)d\lambda(u)d \eta(v) 
	$$    
	is an inner product.
\end{definition}

\section{Kronecker products of PDI kernels}\label{KroneckerproductsofPDIkernels}
The key property of distance covariance, as defined in \cite{Szekely2007, Lyons2013} and other references, is that $\gamma$ and $\varsigma$ being CND-Characteristic kernels is necessary and sufficient for their Kronecker product to be used as an independence test. In \cite{guella2023}, it is proved that this Kronecker product can do even more: it is PDI$_{2}$-Characteristic.  

In this section, we characterize how this property behaves in general by analyzing the properties of the Kronecker product of PDI kernels. We first address the discrete case before detailing the continuous case and its necessary integrability restrictions in  Subsection \ref{continuouspdikkron}.

In order to understand them, we define a new class of subspaces of $\mathcal{M}( \mathds{X}_{n} \times \mathds{Y}_{m})$. Under the restriction that $0\leq a \leq n$ and $0\leq b \leq m$, we define 
\[
\mathcal{M}_{a,b}( \mathds{X}_{n},\mathds{Y}_{m}):= \mathcal{M}_{a, 0}( \mathds{X}_{n},\mathds{Y}_{m})\cap \mathcal{M}_{0, b}( \mathds{X}_{n},\mathds{Y}_{m}),
\] 
\begin{equation*}
	\begin{split}
		\mathcal{M}_{a, 0}( \mathds{X}_{n},\mathds{Y}_{m}):&= \{ \mu \in \mathcal{M}( \mathds{X}_{n} \times \mathds{Y}_{m}), \quad \mu\left(\left[\prod_{i=1}^{n}A_{i}\right] \times \left[\prod_{j=1}^{m}B_{j}\right] \right)=0,\\
		& \text{ when } |\{ i, \quad A_{i}=X_{i} \}| \geq n-a+1, \text{ for arbitrary choices of } B_{j} \}.
	\end{split}
\end{equation*}
\begin{equation*}
	\begin{split}
		\mathcal{M}_{0, b}( \mathds{X}_{n},\mathds{Y}_{m}):&= \{ \mu \in \mathcal{M}( \mathds{X}_{n} \times \mathds{Y}_{m}), \quad \mu\left(\left[\prod_{i=1}^{n}A_{i}\right] \times \left[\prod_{j=1}^{m}B_{j}\right] \right)=0,\\
		& \text{ when } |\{ j, \quad B_{j}=Y_{j} \}| \geq m-b+1, \text{ for arbitrary choices of } A_{i} \}.
	\end{split}
\end{equation*}

We point out the immediate equalities
$$
\mathcal{M}_{0,0}( \mathds{X}_{n},\mathds{Y}_{m})= \mathcal{M}_{0 }( \mathds{X}_{n}\times \mathds{Y}_{m}), \quad \mathcal{M}_{n,m}( \mathds{X}_{n},\mathds{Y}_{m})= \mathcal{M}_{n+m }( \mathds{X}_{n}\times \mathds{Y}_{m}),
$$
and, similar to Equation \eqref{inclumeas}, if $c\geq a$ and $d \geq b$, then 
\begin{equation}\label{inclumeasdoubleindex}
	\mathcal{M}_{c,d}( \mathds{X}_{n},\mathds{Y}_{m}) \subset \mathcal{M}_{a,b}( \mathds{X}_{n},\mathds{Y}_{m}).
\end{equation}

Analogous to the measures in $\mathcal{M}_{k}(\mathds{X}_{n})$, the technical property that we frequently use for a measure $\mu \in \mathcal{M}_{a, b}( \mathds{X}_{n},\mathds{Y}_{m})$ is that if $T: \mathds{X}_{n}\times \mathds{Y}_{m} \to \mathbb{R}$ either only depends on at most $a-1$ of the $n$ variables of $\mathds{X}_{n}$ or only depends on at most $b-1$ of the $m$ variables of $\mathds{Y}_{m}$, then 
\begin{equation}\label{integmuklnm}
	\int_{\mathds{X}_{n} \times \mathds{Y}_{m}}T(x, y)d\mu(x, y)=0.
\end{equation}

Before showing the main results, we detail two lemmas that will be needed. We use the notation that for $r \in \mathbb{R}$, $\operatorname{sgn}(r)=1$ for $r>0$, $\operatorname{sgn}(r)=-1$ for $r<0$, and $\operatorname{sgn}(0)=0$.

\begin{lemma}\label{inclusionmeasures2index} For any $0\leq k \leq n+m$, the following inclusion is satisfied:
	\begin{equation}\label{abeq1}
		\begin{split}   
			&\mathcal{M}_{a^{\prime}, b^{\prime \prime }}( \mathds{X}_{n},\mathds{Y}_{m}) \subset    \mathcal{M}_{k}( \mathds{X}_{n} \times \mathds{Y}_{m}) \subset \mathcal{M}_{a^{\prime}, b^{\prime}}( \mathds{X}_{n},\mathds{Y}_{m}),\\
			&     \mathcal{M}_{a^{\prime \prime }, b^{\prime}}( \mathds{X}_{n},\mathds{Y}_{m}) \subset    \mathcal{M}_{k}( \mathds{X}_{n} \times \mathds{Y}_{m}) \subset \mathcal{M}_{a^{\prime}, b^{\prime}}( \mathds{X}_{n},\mathds{Y}_{m}),
		\end{split} 
	\end{equation}
	where $a^{\prime}:=\max(k-m,0) $, $b^{\prime}:=\max(k-n,0) $, $a^{\prime \prime }:= \min(n,k)$, and $b^{\prime \prime }:= \min(m,k)$.\\
	Also, for any $0\leq a \leq n$ and $0\leq b \leq m$, the following inclusion is satisfied:
	\begin{equation}\label{abeq2}
		\mathcal{M}_{k^{\prime}}( \mathds{X}_{n} \times \mathds{Y}_{m}) \subset \mathcal{M}_{a, b}( \mathds{X}_{n},\mathds{Y}_{m}) \subset \mathcal{M}_{a+ b}( \mathds{X}_{n} \times \mathds{Y}_{m}),
	\end{equation}
	where $k^{\prime}:=\max((a+m)\operatorname{sgn}(a), (n+b)\operatorname{sgn}(b))$ and $\operatorname{sgn}(c)= 0$ when $c=0$.
\end{lemma}        

\begin{proof}First, we prove the second inclusion in Equation \eqref{abeq1}.
	By the definition of $\mathcal{M}_{a^{\prime},b^{\prime}}( \mathds{X}_{n},\mathds{Y}_{m})$, we prove that $\mathcal{M}_{k}( \mathds{X}_{n} \times \mathds{Y}_{m}) \subset \mathcal{M}_{a^{\prime},0}( \mathds{X}_{n},\mathds{Y}_{m})$, the proof for the inclusion with $b^{\prime} $ being similar. When $a^{\prime} =0$, it is immediate because $ \mathcal{M}_{0,0}( \mathds{X}_{n},\mathds{Y}_{m})= \mathcal{M} ( \mathds{X}_{n}\times \mathds{Y}_{m})$. When $a^{\prime} >0$, we have that $a^{\prime} = k-m$. Hence, for any measure $\mu \in \mathcal{M}_{k}( \mathds{X}_{n} \times \mathds{Y}_{m})$, subsets $A_{i} \subset X_{i}$ and $B_{j} \subset Y_{j}$, we have that $\mu\left(\left[\prod_{i=1}^{n}A_{i}\right] \times \left[\prod_{j=1}^{m}B_{j}\right] \right)=0$ when the number $s:=|\{ j, \quad B_{j}=Y_{j} \}|$ is arbitrary and $r:= |\{ i, \quad A_{i}=X_{i} \}| \geq n- (k-m) +1$ because 
	$$
	r+s \geq r \geq n- (k-m) + 1 = n+m-k+1, 
	$$
	thus $\mu \in \mathcal{M}_{a^{\prime},0}( \mathds{X}_{n},\mathds{Y}_{m})$. From this, we obtain the first inclusion of Equation \eqref{abeq2} as 
	\begin{align*}
		\max(k^{\prime} -m, 0)&=\max((a+m)\operatorname{sgn}(a) -m, (n+b)\operatorname{sgn}(b)-m,0) \geq a,\\
		\max(k^{\prime} -n, 0)&= \max((a+m)\operatorname{sgn}(a) -n, (n+b)\operatorname{sgn}(b)-n,0)\geq b, 
	\end{align*}
	hence, by Equation \eqref{inclumeasdoubleindex},
	$$
	\mathcal{M}_{k^{\prime}}( \mathds{X}_{n} \times \mathds{Y}_{m}) \subset \mathcal{M}_{\max(k^{\prime} -m, 0), \max(k^{\prime} -n, 0)}( \mathds{X}_{n},\mathds{Y}_{m}) \subset \mathcal{M}_{a,b }( \mathds{X}_{n},\mathds{Y}_{m}).
	$$
	Now we prove the second inclusion in Equation \eqref{abeq2}. Indeed, if $\mu \in \mathcal{M}_{a,b}( \mathds{X}_{n}, \mathds{Y}_{m})$, for subsets $A_{i} \subset X_{i}$ and $B_{j} \subset Y_{j}$, define $r:= |\{ i, \quad A_{i}=X_{i} \}| $ and $s:=|\{ j, \quad B_{j}=Y_{j} \}| $. If $r+s\geq n +m -(a+b) +1 $, it must occur that either $r \geq n-a +1$ or $s\geq m-b +1$, but then $\mu\left(\left[\prod_{i=1}^{n}A_{i}\right] \times \left[\prod_{j=1}^{m}B_{j}\right] \right)=0$ because $\mu \in \mathcal{M}_{a,b}( \mathds{X}_{n},\mathds{Y}_{m})$. Thus, we obtain that $\mu \in \mathcal{M}_{a + b}( \mathds{X}_{n} \times \mathds{Y}_{m})$. 
	From this last proven inclusion, we obtain the first inclusion in both lines of Equation \eqref{abeq1}, as
	$$
	a^{\prime \prime} + b^{\prime }= \max(k-n,0) + \min(n,k)= k= \max(k-m,0) + \min(m,k) = a^{\prime } + b^{\prime \prime }.
	$$\end{proof}   

\begin{lemma}\label{inclusionmeasures2indexkroen} Let $0\leq k \leq n+m$, $0\leq a \leq n$, and $0\leq b \leq m$. The following properties hold for nonzero measures $\lambda \in \mathcal{M}( \mathds{X}_{n} )$ and $\eta \in \mathcal{M}( \mathds{Y}_{m})$:
	\begin{enumerate}
		\item[$(i)$] $\lambda \times \eta \in \mathcal{M}_{a,b}( \mathds{X}_{n},\mathds{Y}_{m})$ if and only if $\lambda \in \mathcal{M}_{a }( \mathds{X}_{n} )$ and $\eta \in \mathcal{M}_{b }( \mathds{Y}_{m})$.
		\item[$(ii)$] If $\lambda \times \eta \in \mathcal{M}_{k}( \mathds{X}_{n} \times \mathds{Y}_{m})$, then $\lambda \in \mathcal{M}_{a^{\prime} }( \mathds{X}_{n} ) $ and $\eta \in \mathcal{M}_{b^{\prime} }( \mathds{Y}_{m})$. 
		\item[$(iii)$]     If $\lambda \in \mathcal{M}_{a^{\prime} }( \mathds{X}_{n} )$ and $\eta \in \mathcal{M}_{b^{\prime \prime }}( \mathds{Y}_{m})$, then $\lambda \times \eta \in \mathcal{M}_{k}( \mathds{X}_{n} \times \mathds{Y}_{m})$.
		\item[$(iv)$] If $\lambda \in \mathcal{M}_{a^{\prime \prime } }( \mathds{X}_{n} )$ and $\eta \in \mathcal{M}_{b^{\prime}}( \mathds{Y}_{m})$, then $\lambda \times \eta \in \mathcal{M}_{k}( \mathds{X}_{n} \times \mathds{Y}_{m})$.
	\end{enumerate}   
	where $a^{\prime}:=\max(k-m,0) $, $b^{\prime}:=\max(k-n,0) $, $a^{\prime \prime }:= \min(n,k)$, and $b^{\prime \prime }:= \min(m,k)$.\end{lemma}   

\begin{proof} 
	For subsets $A_{i} \subset X_{i}$ and $B_{j} \subset Y_{j}$, define $r:= |\{ i, \quad A_{i}=X_{i} \}| $ and $s:=|\{ j, \quad B_{j}=Y_{j} \}|$. If $\lambda \in \mathcal{M}_{a }( \mathds{X}_{n} )$, then $\lambda\left(\prod_{i=1}^{n}A_{i}\right)=0$ if $r\geq n-a +1$; analogously, if $\eta \in \mathcal{M}_{b }( \mathds{Y}_{m})$, then $\eta\left(\prod_{j=1}^{m}B_{j} \right)=0$ if $s \geq m-b+1$. Under these hypotheses, we also have that $\lambda\left(\prod_{i=1}^{n}A_{i}\right) \eta\left(\prod_{j=1}^{m}B_{j} \right)=0$ if either $r\geq n-a +1 $ or $s \geq m-b+1$. Conversely, as $\eta$ is nonzero, there must exist subsets $D_{j} \subset Y_{j}$ for which $\eta\left(\prod_{j=1}^{m}D_{j} \right) \neq 0$. Hence, if $\lambda \times \eta \in \mathcal{M}_{a,b}( \mathds{X}_{n},\mathds{Y}_{m})$, for subsets $A_{i} \subset X_{i}$ such that $ |\{ i, \quad A_{i}=X_{i} \}| \geq n-a+1 $, we have that $\lambda\left(\prod_{i=1}^{n}A_{i}\right) \eta\left(\prod_{j=1}^{m}D_{j} \right)=0$, which can only occur if $\lambda\left(\prod_{i=1}^{n}A_{i}\right)=0$. As the proof is similar for the other case, this concludes the proof of relation $(i)$.\\
	The remaining properties are obtained from relation $(i)$ and Equation \eqref{abeq1}.\\
	If $\lambda \times \eta \in \mathcal{M}_{k}( \mathds{X}_{n} \times \mathds{Y}_{m})$, then by Equation \eqref{abeq1} we obtain that $\lambda \times \eta \in \mathcal{M}_{a^{\prime}, b^{\prime}}( \mathds{X}_{n}, \mathds{Y}_{m})$, and relation $(i)$ concludes that $\lambda \in \mathcal{M}_{a^{\prime} }( \mathds{X}_{n} ) $ and $\eta \in \mathcal{M}_{b^{\prime} }( \mathds{Y}_{m})$.\\
	If $\lambda \in \mathcal{M}_{a^{\prime} }( \mathds{X}_{n} )$ and $\eta \in \mathcal{M}_{b^{\prime \prime }}( \mathds{Y}_{m})$, then by relation $(i)$ we obtain that $\lambda \times \eta \in \mathcal{M}_{a^{\prime}, b^{\prime \prime }}( \mathds{X}_{n}, \mathds{Y}_{m})$, and by Equation \eqref{abeq1} we conclude that $\lambda \times \eta \in \mathcal{M}_{k}( \mathds{X}_{n} \times \mathds{Y}_{m})$.\\
	The proof of relation $(iv)$ is similar to the proof of relation $(iii)$, and thus is omitted. 
\end{proof}

With these two results, we are able to prove the following characterization about the Kronecker product of PDI kernels.

\begin{theorem}\label{Kroengeralthm} Let $n,m \in \mathbb{N}$, $0\leq a \leq n$, $0\leq b\leq m$, and $0\leq k \leq n+m$. Given an $n$-symmetric kernel $\mathfrak{I}: \mathds{X}_{n} \times \mathds{X}_{n} \to \mathbb{R}$ and an $m$-symmetric kernel $\mathfrak{L}: \mathds{Y}_{m} \times \mathds{Y}_{m} \to \mathbb{R}$, consider the $(n+m)$-symmetric kernel $\mathfrak{I}\times \mathfrak{L} : [\mathds{X}_{n} \times \mathds{Y}_{m}] \times [\mathds{X}_{n} \times \mathds{Y}_{m}] \to \mathbb{R}$. Then, for  any nonzero $\mu \in \mathcal{M}_{a, b }( \mathds{X}_{n} \times \mathds{Y}_{m})$, we have that 
	$$
	\int_{\mathds{X}_{n} \times \mathds{Y}_{m}}    \int_{\mathds{X}_{n} \times \mathds{Y}_{m}} (-1)^{a+b}\mathfrak{I}(x_{\vec{1}}, x_{\vec{2}}) \mathfrak{L}(y_{\vec{1}}, y_{\vec{2}})d\mu(x_{\vec{1}}, y_{\vec{1}})d\mu(x_{\vec{2}}, y_{\vec{2}}) >0,
	$$
	if and only if for some $\ell \in \{0,1\}$, the kernel $(-1)^{\ell }\mathfrak{I}$ is SPDI$_{a}$ in $\mathds{X}_{n}$ and $(-1)^{\ell }\mathfrak{L}$ is SPDI$_{b}$ in $\mathds{Y}_{m}$.
\end{theorem}

\begin{proof}	For an arbitrary nonzero $\lambda \in \mathcal{M}_{a }( \mathds{X}_{n})$ and a fixed nonzero $\eta \in \mathcal{M}_{b }( \mathds{Y}_{m})$, the nonzero measure $\lambda \times \eta \in \mathcal{M}_{a, b}( \mathds{X}_{n} \times \mathds{Y}_{m}) $. By the hypothesis,
	$$
	\left [    \int_{\mathds{X}_{n} }    \int_{\mathds{X}_{n} } (-1)^{a}\mathfrak{I}(x_{\vec{1}}, x_{\vec{2}}) d\lambda(x_{\vec{1}} )d\lambda(x_{\vec{2}} ) \right ] \left [ \int_{ \mathds{Y}_{m}}    \int_{ \mathds{Y}_{m}} (-1)^{b}\mathfrak{L}(y_{\vec{1}}, y_{\vec{2}})d\eta( y_{\vec{1}})d\eta( y_{\vec{2}}) \right ]>0.
	$$
	Defining
	$$
	(-1)^{\ell}:= \operatorname{sgn} \left [ \int_{ \mathds{Y}_{m}}    \int_{ \mathds{Y}_{m}} (-1)^{b}\mathfrak{L}(y_{\vec{1}}, y_{\vec{2}})d\eta( y_{\vec{1}})d\eta( y_{\vec{2}})\right ] \neq 0, \quad \ell \in \{0,1\}
	$$
	we obtain that $(-1)^{\ell }\mathfrak{I} $ is SPDI$_{a}$ in $\mathds{X}_{n}$. By a similar argument, we obtain that $(-1)^{\ell}\mathfrak{L} $ is SPDI$_{b}$ in $\mathds{Y}_{m}$.\\
	Conversely, suppose that $(-1)^{\ell }\mathfrak{I} $ is SPDI$_{a}$ in $\mathds{X}_{n}$ and $(-1)^{\ell}\mathfrak{L} $ is SPDI$_{b}$ in $\mathds{Y}_{m}$. Without loss of generality, suppose that $\ell =0$. Note that for an arbitrary $x_{\vec{0}} \in \mathds{X}_{n}$, $y_{\vec{0}} \in \mathds{Y}_{m}$, and $\mu \in \mathcal{M}_{a, b}( \mathds{X}_{n} \times \mathds{Y}_{m})$,
	\begin{align*}
		&\int_{\mathds{X}_{n} \times \mathds{Y}_{m}}    \int_{\mathds{X}_{n} \times \mathds{Y}_{m}} (-1)^{a}\mathfrak{I}(x_{\vec{1}}, x_{\vec{2}}) (-1)^{b}\mathfrak{L}(y_{\vec{1}}, y_{\vec{2}})d\mu(x_{\vec{1}}, y_{\vec{1}})d\mu(x_{\vec{2}}, y_{\vec{2}})\\
		&=\int_{\mathds{X}_{n} \times \mathds{Y}_{m}}    \int_{\mathds{X}_{n} \times \mathds{Y}_{m}} K^{\mathfrak{I}}(x_{\vec{1}}, x_{\vec{2}}) (-1)^{b}\mathfrak{L}(y_{\vec{1}}, y_{\vec{2}})d\mu(x_{\vec{1}}, y_{\vec{1}})d\mu(x_{\vec{2}}, y_{\vec{2}})\\
		&=\int_{\mathds{X}_{n} \times \mathds{Y}_{m}}    \int_{\mathds{X}_{n} \times \mathds{Y}_{m}} K^{\mathfrak{I}}(x_{\vec{1}}, x_{\vec{2}}) K^{\mathfrak{L}}(y_{\vec{1}}, y_{\vec{2}})d\mu(x_{\vec{1}}, y_{\vec{1}})d\mu(x_{\vec{2}}, y_{\vec{2}}) \geq 0,
	\end{align*}
	due to Equation \eqref{integmuklnm}, the definition of $ K^{\mathfrak{I}}$ and $ K^{\mathfrak{L}}$ in Lemma \ref{pdi2topdn}, and the fact that the Kronecker product of PD kernels is PD. \\
	It only remains to prove that this double integral is zero only when $\mu$ is the zero measure in $\mathcal{M}_{a, b}( \mathds{X}_{n} \times \mathds{Y}_{m})$.\\
	Due to Equation \eqref{rkhsfunctionalzero} and Theorem \ref{initialextmmddominio}, for every function $g \in \mathcal{H}_{\mathfrak{L} }$, 
	$$
	\int_{\mathds{X}_{n}\times \mathds{Y}_{m}}\int_{\mathds{X}_{n}\times \mathds{Y}_{m}} K^{\mathfrak{I}}(x_{\vec{1}}, x_{\vec{2}}) g(y_{\vec{1}})g(y_{\vec{2}}) d\mu(x_{\vec{1}},y_{\vec{1}})d\mu(x_{\vec{2}},y_{\vec{2}})=0.
	$$
	For a fixed $g \in \mathcal{H}_{\mathfrak{L }} $, we define the measure
	\begin{equation}\label{Kroengeraleq2}
		\mu_{g}(A):= \int_{A \times \mathds{Y}_{m } }g(u)d\mu(x_{\vec{1}},y_{\vec{1}}), \quad A \subset \mathds{X}_{n}, 
	\end{equation}
	which is well defined, finite, and discrete. Also, $    \mu_{g} \in \mathcal{M}_{a}( \mathds{X}_{n})$ because $\mu \in \mathcal{M}_{a, b}( \mathds{X}_{n} \times \mathds{Y}_{m})$ and by its definition. Note that 
	\begin{align*}
		\int_{\mathds{X}_{n}}\int_{\mathds{X}_{n}}(-1)^{a}& \mathfrak{I} (x_{ \vec{1}}, x_{ \vec{2}})d\mu_{g}(x_{\vec{1}})d\mu_{g}(x_{\vec{2}})=\int_{\mathds{X}_{n}}\int_{\mathds{X}_{n}} K^{\mathfrak{I}}(x_{ \vec{1}}, x_{ \vec{2}})d\mu_{g}(x_{\vec{1}})d\mu_{g}(x_{\vec{2}})\\
		&= \int_{\mathds{X}_{n} \times \mathds{Y}_{m}}    \int_{\mathds{X}_{n} \times \mathds{Y}_{m}} K^{\mathfrak{I}}(x_{\vec{1}}, x_{\vec{2}}) g(y_{\vec{1}})g(y_{\vec{2}})d\mu(x_{\vec{1}}, y_{\vec{1}})d\mu(x_{\vec{2}}, y_{\vec{2}}) =0.
	\end{align*}
	However, as $\mu_{g} \in \mathcal{M}_{a} (\mathds{X}_{n})$ and $\mathfrak{I}$ is SPDI$_{a}$ in $\mathds{X}_{n}$, we obtain that $\mu_{g}$ is the zero measure for every $g \in \mathcal{H}_{\mathfrak{L} }$.\\
	For every fixed $A \subset \mathds{X}_{n} $, the following measure satisfies 
	$$
	\mu_{A}(B): = \mu(A\times B) \in \mathcal{M}_{b}(\mathds{Y}_{m} ). 
	$$
	If we pick $g(y_{\vec{1}}) = K^{\mathfrak{L}}(y_{\vec{1}},y_{\vec{2}}) $, for an arbitrary $y_{\vec{2}} \in \mathds{Y}_{m}$, by Equation \eqref{Kroengeraleq2}, we obtain that
	$$
	0 = \mu_{g}(A) = \int_{A \times \mathds{Y}_{m } } K^{\mathfrak{L}}(y_{\vec{1}},y_{\vec{2}})d\mu(x_{\vec{1}},y_{\vec{1}}) = \int_{ \mathds{Y}_{m } } K^{\mathfrak{L}}(y_{\vec{1}},y_{\vec{2}})d\mu_{A}(y_{\vec{1}}), 
	$$
	thus
	$$
	0= \int_{ \mathds{Y}_{m } } \int_{ \mathds{Y}_{m } } K^{\mathfrak{L}}(y_{\vec{1}},y_{\vec{2}})d\mu_{A}(y_{\vec{1}}) d\mu_{A}(y_{\vec{2}}) = \int_{ \mathds{Y}_{m } } \int_{ \mathds{Y}_{m } } (-1)^{b}\mathfrak{L}(y_{\vec{1}},y_{\vec{2}})d\mu_{A}(y_{\vec{1}}) d\mu_{A}(y_{\vec{2}}). 
	$$
	But, $\mathfrak{L}$ is SPDI$_{b}$ in $\mathds{Y}_{m}$, hence $\mu_{A}(B)=0$ for every $A \subset \mathds{X}_{n}$ and $B \subset \mathds{Y}_{m}$, thus concluding that $\mu$ is the zero measure, which concludes the proof.\\ 
\end{proof}
Theorem \ref{Kroengeralthm} makes explicit that the Kronecker product of SPDI kernels implies a behavior on this new spaces $ \mathcal{M}_{a, b }$ and not (directly) on a  the space $ \mathcal{M}_{k}$. From it, we obtain that the distance covariance is a very specific case, because as mentioned at the beginning of this section, $\mathcal{M}_{1,1}( \mathds{X},\mathds{Y})= \mathcal{M}_{2 }( \mathds{X}\times \mathds{Y})$. However, due to the lemmas proved before, we can obtain several interesting consequences.

\begin{corollary}\label{Kroengeral} Let $n,m \in \mathbb{N}$, $0\leq a \leq n$, $0\leq b\leq m$, and $0\leq k \leq n+m$. Given an $n$-symmetric kernel $\mathfrak{I}: \mathds{X}_{n} \times \mathds{X}_{n} \to \mathbb{R}$ and an $m$-symmetric kernel $\mathfrak{L}: \mathds{Y}_{m} \times \mathds{Y}_{m} \to \mathbb{R}$, consider the $(n+m)$-symmetric kernel $\mathfrak{I}\times \mathfrak{L} : [\mathds{X}_{n} \times \mathds{Y}_{m}] \times [\mathds{X}_{n} \times \mathds{Y}_{m}] \to \mathbb{R}$. Then:
	\begin{enumerate}
		
		\item[$(i)$] The kernel $\mathfrak{I}\times \mathfrak{L}$ is SPDI$_{k}$ in $\mathds{X}_{n} \times \mathds{Y}_{m}$ if and only if $(-1)^{a^{\prime}+k +\ell }\mathfrak{I}$ is SPDI$_{a^{\prime}}$ in $\mathds{X}_{n}$ and $(-1)^{b^{\prime}+\ell }\mathfrak{L}$ is SPDI$_{b^{\prime}}$ in $\mathds{Y}_{m}$, where $a^{\prime}:=\max(k-m,0) $ and $b^{\prime}:=\max(k-n,0) $ for some $\ell \in \{0, 1\}$.
		\item [$(ii)$] If $ (-1)^{\ell}\mathfrak{I}$ is SPDI$_{a}$ in $\mathds{X}_{n}$ and $ (-1)^{\ell}\mathfrak{L}$ is SPDI$_{b}$ in $\mathds{Y}_{m}$ for some $\ell \in \{0,1\}$, 
		then $(-1)^{k^{\prime} + a +b}\mathfrak{I}\times \mathfrak{L}$ is SPDI$_{k^{\prime}}$ in $\mathds{X}_{n} \times \mathds{Y}_{m}$, where $k^{\prime}:=\max((a+m)\operatorname{sgn}(a), (n+b)\operatorname{sgn}(b))$. The converse holds when $(a+m)\operatorname{sgn}(a)=(n+b)\operatorname{sgn}(b)$.
		\item[$(iii)$] For $k\leq \min\{n,m\}$, the kernel $\mathfrak{I}\times \mathfrak{L}$ is SPDI$_{k}$ in $\mathds{X}_{n} \times \mathds{Y}_{m}$ if and only if $(-1)^{k }\mathfrak{I} \times \mathfrak{L}$ is SPD in $\mathds{X}_{n} \times \mathds{Y}_{m}$, which is also equivalent to $(-1)^{k +\ell }\mathfrak{I}$ being SPD in $\mathds{X}_{n}$ and $(-1)^{\ell }\mathfrak{L}$ being SPD in $\mathds{Y}_{m}$ for some $\ell \in \{0, 1\}$.
	\end{enumerate}   
\end{corollary}

\begin{proof} To prove relation $(i)$, if $(-1)^{a^{\prime}+k +\ell }\mathfrak{I}$ is SPDI$_{a^{\prime}}$ in $\mathds{X}_{n}$ and $(-1)^{b^{\prime} +\ell }\mathfrak{L}$ is SPDI$_{b^{\prime}}$ in $\mathds{Y}_{m}$, then by Theorem \ref{Kroengeralthm}  
	$$
	\int_{\mathds{X}_{n} \times \mathds{Y}_{m}}    \int_{\mathds{X}_{n} \times \mathds{Y}_{m}} (-1)^{k}\mathfrak{I}(x_{\vec{1}}, x_{\vec{2}}) \mathfrak{L}(y_{\vec{1}}, y_{\vec{2}})d\mu(x_{\vec{1}}, y_{\vec{1}})d\mu(x_{\vec{2}}, y_{\vec{2}}) >0, 
	$$
	for every nonzero $\mu \in \mathcal{M}_{k}( \mathds{X}_{n}\times \mathds{Y}_{m}) \subset \mathcal{M}_{a^{\prime}, b^{\prime}}( \mathds{X}_{n},\mathds{Y}_{m})$,  due to Equation \eqref{abeq1}. \\
	Conversely, if $\mathfrak{I}\times \mathfrak{L}$ is SPDI$_{k}$ in $\mathds{X}_{n} \times \mathds{Y}_{m}$, by the first inclusion in the first line of Equation \eqref{abeq1} and relation $(iii)$ in Lemma \ref{inclusionmeasures2indexkroen}, we find that for any nonzero $\lambda \in \mathcal{M}_{a^{\prime} }( \mathds{X}_{n} )$ and a fixed nonzero $\eta \in \mathcal{M}_{b^{\prime \prime }}( \mathds{Y}_{m})$, it holds that
	$$
	\left [ (-1)^{k}    \int_{\mathds{X}_{n} } \mathfrak{I}(x_{\vec{1}}, x_{\vec{2}}) d\lambda(x_{\vec{1}} )d\lambda(x_{\vec{2}} ) \right ] \left [ \int_{ \mathds{Y}_{m}}    \int_{ \mathds{Y}_{m}} \mathfrak{L}(y_{\vec{1}}, y_{\vec{2}})d\eta( y_{\vec{1}})d\eta( y_{\vec{2}}) \right ]>0.
	$$
	Defining
	$$
	(-1)^{\ell}:= \operatorname{sgn} \left [ \int_{ \mathds{Y}_{m}}    \int_{ \mathds{Y}_{m}} \mathfrak{L}(y_{\vec{1}}, y_{\vec{2}})d\eta( y_{\vec{1}})d\eta( y_{\vec{2}})\right ] \neq 0, \quad \ell \in \{0,1\}
	$$
	we obtain that 
	$$
	\int_{\mathds{X}_{n} } (-1)^{a^{\prime}}\left [(-1)^{k + a^{\prime} + \ell}\mathfrak{I}(x_{\vec{1}}, x_{\vec{2}})\right ] d\lambda(x_{\vec{1}} )d\lambda(x_{\vec{2}} ) >0,
	$$
	for any nonzero $\lambda \in \mathcal{M}_{a^{\prime} }( \mathds{X}_{n} )$. Using the first inclusion in the second line of Equation \eqref{abeq1}, we also find that $(-1)^{b^{\prime}+\ell }\mathfrak{L}$ is SPDI$_{b^{\prime}}$ in $\mathds{Y}_{m}$.\\
	Now we prove relation $(ii)$. By the hypothesis, Theorem \ref{Kroengeralthm}, and the first inclusion in Equation \eqref{abeq2}, for any nonzero $\mu \in \mathcal{M}_{k^{\prime} }( \mathds{X}_{n} \times \mathds{Y}_{m})$ we have that 
	$$
	\int_{\mathds{X}_{n} \times \mathds{Y}_{m}}    \int_{\mathds{X}_{n} \times \mathds{Y}_{m}} (-1)^{k^{\prime}}\left [(-1)^{k^{\prime}+a+b} \mathfrak{I}(x_{\vec{1}}, x_{\vec{2}}) \mathfrak{L}(y_{\vec{1}}, y_{\vec{2}}) \right ]d\mu(x_{\vec{1}}, y_{\vec{1}})d\mu(x_{\vec{2}}, y_{\vec{2}}) >0, 
	$$
	thus the kernel $(-1)^{k^{\prime} + a +b}\mathfrak{I}\times \mathfrak{L}$ is SPDI$_{k^{\prime}}$ in $\mathds{X}_{n} \times \mathds{Y}_{m}$. For the converse, suppose that $(-1)^{k^{\prime} + a +b}\mathfrak{I}\times \mathfrak{L}$ is SPDI$_{k^{\prime}}$ in $\mathds{X}_{n} \times \mathds{Y}_{m}$. If $ (a+m)\operatorname{sgn}(a)= (n+b)\operatorname{sgn}(b)$, then $\max(k^{\prime}-m,0 )=a$ and $\max(k^{\prime}-n,0 )=b$, and the conclusion is a consequence of relation $(i)$.\\
	To conclude, relation $(iii)$ is a direct consequence of relation $(i)$, as by it we obtain that $\mathfrak{I}\times \mathfrak{L}$ is SPDI$_{k}$ in $\mathds{X}_{n} \times \mathds{Y}_{m}$ if and only if $(-1)^{k +\ell }\mathfrak{I}$ is SPDI$_{0}$ (that is, SPD) in $\mathds{X}_{n}$ and $(-1)^{\ell }\mathfrak{L}$ is SPDI$_{0}$ in $\mathds{Y}_{m}$, for some $\ell \in \{0, 1\}$, which is equivalent to $\mathfrak{I}\times \mathfrak{L}$ being SPD in $\mathds{X}_{n} \times \mathds{Y}_{m}$ (see Lemma 7.7 in \cite{Guella2022a}).
\end{proof}

As a direct consequence of Corollary \ref{Kroengeral}, we obtain a characterization of when a general Kronecker product of kernels is SPDI$_{2}$, which are in particular independence tests for discrete probability measures. Surprisingly, there are not many possibilities.

\begin{corollary} \label{classkroenne} Let $n,\ell \geq 2 $ and consider a disjoint family of subsets $F^{1}, \ldots, F^{\ell}$ of $\{1,\ldots, n\}$ whose union is the entire set and $|F^{1}|\geq\ldots \geq |F^{\ell}| $. Given $|F^{i}|$-symmetric kernels $\mathfrak{I}_{i} : \mathds{X}_{F^{i}} \times \mathds{X}_{F^{i}}\to \mathbb{R} $, $1\leq i \leq \ell$, the kernel 
	$$
	\mathfrak{I}(x_{\vec{1}}, x_{\vec{2}}):= \prod_{i=1}^{\ell} \mathfrak{I}_{i}( x_{\vec{ 1}_{F^{i}} }, x_{\vec{2}_{F^{i}}})
	$$
	is SPDI$_{2}$ if and only if 
	\begin{enumerate}
		\item[$(i)$] $\ell >2$: For every $1\leq i \leq \ell$, the kernel $(-1)^{a_{i}}\mathfrak{I}_{i}$ is SPD for some $a_{i}\in \{0,1\}$ such that $\sum_{i=1}^{\ell}a_{i} \in 2\mathbb{N}$.
		\item[$(ii)$] $\ell=2=n $: For every $1\leq i \leq 2$, the kernel $(-1)^{a}\mathfrak{I}_{i}$ is strictly conditionally negative definite for some $a \in \{0,1\}$. 
		\item[$(iii)$] $\ell=2< n $ and $|F^{2}|=1$: The kernel $(-1)^{a +1}\mathfrak{I}_{1}$ is strictly conditionally negative definite and the kernel $(-1)^{a }\mathfrak{I}_{2}$ is SPD for some $a \in \{0,1\}$.
		\item[$(iv)$] $\ell=2< n $ and $|F^{2}|\geq 2$: For every $1\leq i \leq 2$, the kernel $(-1)^{a }\mathfrak{I}_{i}$ is SPD for some $a \in \{0,1\}$.
	\end{enumerate}
\end{corollary}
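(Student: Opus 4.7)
The plan is to induct on the number of factors $\ell \geq 2$, repeatedly invoking relation $(ii)$ of Theorem \ref{Kroengeral}. The base case $\ell = 2$ amounts to a single application of that relation with $k = 2$: reading Theorem \ref{Kroengeral} with $n = |F^1|$ and $m = |F^2|$ gives $a' = \max(2 - |F^2|, 0)$ and $b' = \max(2 - |F^1|, 0)$, so the three sub-cases $(|F^1|, |F^2|) = (1,1)$, $(\geq 2, 1)$, $(\geq 2, \geq 2)$ produce respectively $(a', b') = (1,1), (1,0), (0,0)$, which are exactly the conclusions of cases $(ii), (iii), (iv)$ of the statement. The translation SPDI$_1 \leftrightarrow$ SCND and SPDI$_0 \leftrightarrow$ SPD is free of charge because each $\mathfrak{I}_i$ is already $|F^i|$-symmetric by hypothesis, so the extra symmetry demanded by the PDI definition is automatic.

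For the inductive step $\ell > 2$, I would split $\mathfrak{I} = \mathfrak{I}_1 \times \bigl(\prod_{i \geq 2}\mathfrak{I}_i\bigr)$ and apply Theorem \ref{Kroengeral}$(ii)$ with $k = 2$ and $m = \sum_{i \geq 2}|F^i| \geq \ell - 1 \geq 2$. This forces $a' = 0$, so $\mathfrak{I}_1$ is SPD up to a sign. When $|F^1| \geq 2$ one also has $b' = 0$, the tail is SPD up to a sign, and the standard fact that a Kronecker product of kernels is SPD iff each factor is SPD up to matched signs (a special case of Theorem \ref{Kroengeral}$(ii)$ with $k = 0$) finishes the job. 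When $|F^1| = 1$ the monotonic labelling forces $|F^i| = 1$ for every $i$ and $b' = 1$, so the tail must be SCND; a second application of Theorem \ref{Kroengeral}$(ii)$ with $k = 1$ on the tail peels off $\mathfrak{I}_2$ as SPD (both new orders are $0$ since $\ell - 2 \geq 1$) and reduces the remainder to a pure product of SPD single-variable kernels. The converse direction in each case is obtained by running the same chain of implications in reverse.

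The main obstacle is the sign bookkeeping: each application of Theorem \ref{Kroengeral}$(ii)$ constrains only the product $\epsilon_1 \epsilon_2 = (-1)^{a'+b'+k}$ of the two strictness signs and leaves one bit $\ell_{\mathrm{sign}} \in \{0,1\}$ free, so I must verify that after the recursion the accumulated parity reproduces the statement's sign conditions. In cases $(i)$ and $(iv)$, writing $\epsilon_i = (-1)^{a_i}$ in the SPD case, the free bits cancel pairwise and the identity $\epsilon_1 \cdots \epsilon_\ell = (-1)^k$ with $k = 2$ forces $\sum a_i$ to be even. In case $(iii)$, substituting $\epsilon_1 = (-1)^{a_1+1}$ and $\epsilon_2 = (-1)^{a_2}$ into the single split yields $a_1 = a_2 = \ell_{\mathrm{sign}}$, making $a_1 + a_2$ automatically even; case $(ii)$ collapses to the shared-sign statement $\epsilon_1 = \epsilon_2 = (-1)^a$. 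These checks are mechanical once the free bits of each split are substituted and tracked.
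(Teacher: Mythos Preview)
Your proposal is correct and rests on the same engine as the paper---relation $(ii)$ of Theorem \ref{Kroengeral}---but the execution differs. For $\ell>2$ the paper does not induct: it applies Theorem \ref{Kroengeral}$(ii)$ once for \emph{each} index $k$, splitting $\mathfrak{I}_{k}$ against $\prod_{i\neq k}\mathfrak{I}_{i}$. The single observation that $n-|F^{k}|\geq 2$ (there are at least two other nonempty blocks) forces $a'=0$ every time, so each $(-1)^{a_{k}}\mathfrak{I}_{k}$ is SPD directly; the parity of $\sum a_{k}$ then drops out in one line from the fact that a Kronecker product of SPD kernels is SPD. Your inductive peel-off instead produces a chain $k=2\to k=1\to k=0$ (in the $|F^{1}|=1$ subcase) or $k=2\to k=0\to\cdots$ (otherwise), which is why your sign bookkeeping becomes a telescoping sum of free bits rather than a one-shot product. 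Both routes arrive at the same place; the paper's is shorter and avoids the internal case split on $|F^{1}|$, while yours makes the recursive structure more explicit. For $\ell=2$ the two arguments coincide.
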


\begin{proof}
	If $\ell>2$, then $n-|F^{k}|\geq 2$ for any possible $k$; so, if we apply relation $(ii)$ in Theorem \ref{Kroengeral} on $\mathfrak{I}_{i}$ and $\mathfrak{L}=\prod_{\ell \neq i} \mathfrak{I}_{\ell}$, we get that $(-1)^{a_i}\mathfrak{I}_{i}$ should be SPD because $a^{\prime}= \max(2 -\sum_{\ell \neq i}|F^{\ell}|, 0 )=0$, for some $a_{i} \in \{0,1\}$. The sum of these constants $a_{i}$ must be an even number because the Kronecker product of SPD kernels is SPD.\\
	The other $3$ cases are a direct application of relation $(ii)$ in Theorem \ref{Kroengeral}. 
\end{proof}

We emphasize that we cannot affirm in general that for the kernels given in Corollary \ref{classkroenne}, it is equivalent to be an independence test for discrete probability measures in $\mathds{X}_{n}$ and to be an SPDI$_{2}$ kernel. However, we expect that for the most commonly used families of kernels, these two properties are equivalent.

The special case of the Kronecker product of $n$ PD kernels is extensively studied in the literature. When $n \geq 3$, independence tests based on them are denoted as $\ell$-variable Hilbert-Schmidt independence criterion (dHSIC), see \cite{Pfister2018}. Case $(ii)$ is the standard method of distance covariance \cite{Sejdinovic2013a}. Partial proofs of this corollary can be found in \cite{Szabo2018}, but more generally in Theorem 7.1 in \cite{guella2023}, where it is proved that in the case where $n=\ell$, the Kronecker product of kernels is SPDI$_{2}$ if and only if it defines an independence test, and the same equivalence occurs in the continuous case. 

In the next corollary, we completely characterize what type of independence tests can be done on the Kronecker product of $n$ kernels on an $n$-fold Cartesian product.  

\begin{corollary}\label{productnkernels} Let $\gamma_{i}: X_{i} \times X_{i} \to \mathbb{R}$, $1\leq i \leq n$, be symmetric kernels. Define the complete $n$-symmetric kernel
	\begin{align*}
		(x_{\vec{1}}, x_{\vec{2}}) \in \mathds{X}_{n}\times\mathds{X}_{n} \to \vec{\gamma} (x_{\vec{1}}, x_{\vec{2}}) := \prod_{i=1}^{n}\gamma_{i}(x_{i}^{1}, x_{i}^{2}),
	\end{align*}
	the following conditions are equivalent for $2\leq k \leq n$:
	\begin{enumerate}
		\item [$(i)$] The kernel $\vec{\gamma}$ is SPDI$_{k}$.
		\item[$(ii)$] For every discrete probability measure $P $ for which $\Lambda_{k}^{n}[P]$ is not the zero measure: 
		$$
		\int_{\mathds{X}_{n}}\int_{\mathds{X}_{n}} (-1)^{k} \vec{\gamma} (x_{\vec{1}}, x_{\vec{2}})d\Lambda_{k}^{n}[P](x_{\vec{1}})d\Lambda_{k}^{n}[P](x_{\vec{2}})>0.
		$$
		\item[$(ii^{\prime})$] When $n=k$, for every discrete probability measure $P $ for which $\Sigma[P]$ is not the zero measure: 
		$$
		\int_{\mathds{X}_{n}}\int_{\mathds{X}_{n}} (-1)^{k}\vec{\gamma} (x_{\vec{1}}, x_{\vec{2}})d\Sigma[P](x_{\vec{1}})d\Sigma[P](x_{\vec{2}})>0.
		$$
		\item[$(iii)$] The kernel $(-1)^{k}\vec{\gamma}$ is SPD when $n >k $. The kernel $ \vec{\gamma}$ is SPDI$_{n}$ when $n=k$.
		\item[$(iv)$] The kernel $(-1)^{a_{i}}\gamma_{i}$, $1\leq i \leq n$, is SPD for some $a_{i}\in \{0,1\}$ such that $k-\sum_{i}a_{i} \in 2\mathbb{Z}$ when $n>k$. The kernel $(-1)^{a_{i}}\gamma_{i}$, $1\leq i \leq n$, is SCND for some $a_{i}\in \{0,1\}$ such that $\sum_{i}a_{i} \in 2\mathbb{N}$ when $n=k$.   
	\end{enumerate}            
\end{corollary}   

\begin{proof} 
	Relation $(iii)$ and relation $(iv)$ are equivalent by recursively using relation $(ii)$ in Theorem \ref{Kroengeral}. \\
	Relation $(i)$ directly implies relation $(ii)$ and relation $(ii^{\prime})$ by Theorem \ref{generallancaster} and Lemma \ref{exm02xn}.\\
	Relation $(iii)$ implies relation $(i)$ because $ \mathcal{M}_{k}(\mathds{X}_{n}) \subset \mathcal{M}(\mathds{X}_{n}) $. \\
	We conclude by showing that relation $(ii)$ and relation $(ii^{\prime})$ imply relation $(iv)$. Indeed, suppose that $n>k$. By Lemma \ref{genlancastercartesianproduct} and Remark \ref{hanhjordanequivalence}, for every nonzero $\mu_{i} \in \mathcal{M}(X_{i})$ and fixed nonzero $\mu_{j} \in \mathcal{M}_{1}(X_{j})$ for $j\neq i $, the measure $\mu = \times _{l=1}^{n}\mu_{l}$ is a multiple of the Lancaster interaction of some discrete probability measure in $\mathds{X}_{n}$. By the hypothesis,
	\begin{align*}
		(-1)^{k}&\prod_{l=1}^{n}\left [ \int_{ X_{l}}\int_{X_{l}} \gamma_{l}(x_{l}^{1}, x_{l}^{2})d\mu_{l}(x_{l}^{1})d\mu_{l}(x_{l}^{2}) \right ] = \int_{\mathds{X}_{n}}\int_{\mathds{X}_{n}} (-1)^{k} \vec{\gamma} (x_{\vec{1}}, x_{\vec{2}})d\mu(x_{\vec{1}})d\mu(x_{\vec{2}})>0. 
	\end{align*}
	Then, if 
	$$
	a_{i }:= \operatorname{sgn} \left [ (-1)^{k} \prod_{l\neq i}\int_{ X_{l}}\int_{X_{l}} \gamma_{l}(x_{l}^{1}, x_{l}^{2})d\mu_{l}(x_{l}^{1})d\mu_{l}(x_{l}^{2}) \right ] 
	$$     
	we obtain that $(-1)^{a_{i}}\gamma_{i}$ is SPD, as $\mu_{i} \in \mathcal{M}(X_{i})$ is arbitrary. It must hold that $k-\sum_{i}a_{i} \in 2\mathbb{Z}$ because      
	$$
	(-1)^{k}\vec{\gamma} = (-1)^{k- \sum_{i}a_{i}} \left [ \prod_{i=1}^{n} (-1)^{a_{i}}\gamma_{i} \right ] = \prod_{i=1}^{n} (-1)^{a_{i}}\gamma_{i},
	$$
	and the classical result that the Kronecker product of SPD kernels is SPD.\\
	The case where $n=k$ is proved similarly, but we pick all measures $\mu_{i} \in \mathcal{M}_{1}(X_{i})$.
\end{proof}   

Corollary \ref{productnkernels} still holds true in the case $k=0$ or $k=1$, if we remove relations $(ii)$ and $(ii^{\prime})$, which are not defined in these cases. 

We emphasize that for the kernels in Corollary \ref{productnkernels}, being an independence test for the discrete Lancaster Interaction $\Lambda_{k}^{n}$ (or the Streitberg Interaction $\Sigma$) is equivalent to being SPDI$_{k}$. 

\subsection{The continuous case}\label{continuouspdikkron}

In this subsection, we prove a version of Theorem \ref{Kroengeral} for the continuous case. First, we obtain a description for the integrability restrictions for the Kronecker product of PDI kernels.

\begin{lemma}\label{measkroencontrestric} Let $\mathfrak{I} : \mathds{X}_{n} \times \mathds{X}_{n} \to \mathbb{R}$ be a continuous complete $n$-symmetric PDI$_{a}$ kernel that is zero at the extended diagonal $\Delta_{a-1}^{n}$ of $\mathds{X}_{n}$, and let $\mathfrak{L} : \mathds{Y}_{m} \times \mathds{Y}_{m} \to \mathbb{R}$ be a continuous complete $m$-symmetric PDI$_{b}$ kernel that is zero at the extended diagonal $\Delta_{b-1}^{m}$ of $\mathds{Y}_{m}$. The following conditions are equivalent for a measure $\mu \in \mathfrak{M}(\mathds{X}_{n} \times \mathds{Y}_{m})$:
	\begin{enumerate}
		\item [$(i)$]The function 
		$$
		((x_{\vec{1}}, y_{\vec{1}}), (x_{\vec{2}}, y_{\vec{2}})) \in (\mathds{X}_{n}\times \mathds{Y}_{m} ) \times (\mathds{X}_{n} \times \mathds{Y}_{m}) \to \mathfrak{I}(x_{\vec{1} },x_{\vec{2}})\mathfrak{L}(y_{\vec{1}}, y_{\vec{2}}) \in \mathbb{R}
		$$
		is FIP with respect to $\mu\times \mu \in \mathfrak{M}((\mathds{X}_{n} \times \mathds{Y}_{m})\times (\mathds{X}_{n} \times \mathds{Y}_{m}))$.
		\item[$(ii)$] There exists an element $(x_{\vec{3}}, y_{\vec{3}}) \in \mathds{X}_{n}\times \mathds{Y}_{m}$ such that the function 
		$$ 
		(x_{\vec{1}}, y_{\vec{1}}) \in \mathds{X}_{n}\times \mathds{Y}_{m} \to \mathfrak{I}(x_{\vec{1} },x_{\vec{3}})\mathfrak{L}(y_{\vec{1}}, y_{\vec{3}}) \in \mathbb{R}
		$$
		is FIP with respect to $\mu \in \mathfrak{M}(\mathds{X}_{n}\times \mathds{Y}_{m})$.
		\item[$(iii)$] For every element $(x_{\vec{3}}, y_{\vec{3}}) \in \mathds{X}_{n}\times \mathds{Y}_{m}$, the function
		$$ 
		(x_{\vec{1}}, y_{\vec{1}}) \in \mathds{X}_{n}\times \mathds{Y}_{m} \to \mathfrak{I}(x_{\vec{1} },x_{\vec{3}})\mathfrak{L}(y_{\vec{1}}, y_{\vec{3}}) \in \mathbb{R}
		$$
		is FIP with respect to $\mu \in \mathfrak{M}(\mathds{X}_{n}\times \mathds{Y}_{m})$.
		\item[$(iv)$] For whichever $x_{\vec{0}} \in \mathds{X}_{n}$ and $y_{\vec{0}} \in  \mathds{Y}_{m}$ that is used to define $K^{\mathfrak{I}}$ and $K^{\mathfrak{L}}$ defined in Lemma \ref{pdi2topdn}, the function 
		$$ 
		(x_{\vec{1}}, y_{\vec{1}}) \in \mathds{X}_{n} \to K^{\mathfrak{I}}(x_{\vec{1}}, x_{\vec{1}})K^{\mathfrak{L}}(y_{\vec{1}}, y_{\vec{1}}) \in \mathbb{R}
		$$
		is FIP with respect to $\mu \in \mathfrak{M}(\mathds{X}_{n} \times \mathds{Y}_{m})$. 
	\end{enumerate}        
	Additionally, a measure $\mu$ that satisfies these 4 equivalent properties also satisfies the 4 equivalences of Lemma \ref{integrestrik} (or Lemma \ref{probintegPDIn}) for the PDI$_{k^{\prime}}$ kernel $\mathfrak{T}= (\mathfrak{I}\times \mathfrak{L})^{\prime }$ defined in Corollary \ref{complnsimzerodiag}, where $k^{\prime}= \max((a+m)\operatorname{sgn}(a), (n+b)\operatorname{sgn}(b))$. 
\end{lemma}   
\begin{proof}
	We focus the first part of the proof on the case $a=n$ and $b=m$, because the other cases, similar to the proof of Corollary \ref{integrestrik}, rely on Equation \eqref{ineqpdiknprin} and the arguments of Lemma \ref{probintegPDIn}. However, the case $a=n$ and $b=m$ already fits Lemma \ref{probintegPDIn}, because by adapting the converse of relation $(i)$ in Theorem \ref{Kroengeral} to the PDI instead of SPDI setting, we obtain that $\mathfrak{I}\times \mathfrak{L}$ is a PDI$_{n+m}$ kernel in $\mathds{X}_{n} \times \mathds{Y}_{m}$. Also, this kernel is zero at the extended diagonal $\Delta_{n+m-1}^{n+m}$ of $\mathds{X}_{n} \times \mathds{Y}_{m}$. \\
	For the second part, we point out that by relation $(ii)$ in Lemma \ref{propertiesFIP}, if $\mu$ satisfies relation $(i)$, then all functions that appear in the definition of $\mathfrak{T}$ are also FIP with respect to $\mu\times \mu$. \end{proof}

The set of measures that satisfies Lemma \ref{measkroencontrestric} is denoted as $\mathfrak{M}( \mathfrak{I}, \mathfrak{L} )$. Note that $\mathfrak{M}( \mathfrak{I} ) \times \mathfrak{M}( \mathfrak{L} )\subset \mathfrak{M}( \mathfrak{I}, \mathfrak{L} ) $. As usual, we have the sets $\mathfrak{M}_{k}( \mathfrak{I}, \mathfrak{L} )$ and $\mathfrak{M}_{a,b}( \mathfrak{I}, \mathfrak{L} )$.

The inclusion $\mathfrak{M}( \mathfrak{I}, \mathfrak{L} ) \subset \mathfrak{M}( (\mathfrak{I}\times \mathfrak{L} )^{\prime})$, proved in Lemma \ref{measkroencontrestric}, may be  strict. For that, suppose that for $ 1\leq i \leq a $ we have CND kernels $\gamma_{i}^{1}: X_{i} \times X_{i} \to \mathbb{R}$, and for $ 1\leq j \leq b$ we have CND kernels $\gamma_{j}^{2}: Y_{j} \times Y_{j} \to \mathbb{R}$, and all those kernels are zero at the diagonal. By the examples presented after Definition \ref{PDI2}, the complete $n$-symmetric kernel $\mathfrak{I}(x_{\vec{1}}, x_{\vec{2}}) = \prod_{i=1}^{a} \gamma_{i}^{1}(x_{i}^{1}, x_{i}^{2})$ is PDI$_{a}$ and is zero at the extended diagonal $\Delta_{a-1}^{n}$ of $\mathds{X}_{n}$. Similarly, the complete $m$-symmetric kernel $\mathfrak{L}(y_{\vec{1}}, y_{\vec{2}}) = \prod_{j=1}^{b} \gamma_{j}^{2}(y_{j}^{1}, y_{j}^{2})$ is PDI$_{b}$ and is zero at the extended diagonal $\Delta_{b-1}^{m}$ of $\mathds{Y}_{m}$. By adapting relation $(ii)$ in Corollary \ref{Kroengeral}, we can obtain that the kernel $\mathfrak{I} \times \mathfrak{L}$ is PDI$_{k^{\prime}}$ on $X_{n} \times Y_{m}$, where $k^{\prime}=\max((a+m)\operatorname{sgn}(a), (n+b)\operatorname{sgn}(b))$. Naturally, the set $\mathfrak{M}( \mathfrak{I}, \mathfrak{L} )$ depends on the growth of the CND kernels that define it. 

However, we affirm that $(\mathfrak{I}\times \mathfrak{L})^{\prime}$ is the zero kernel, unless $a+b = k^{\prime}$, which only occurs if either $a=b=0$ or $a=n$ and $b=m$, and in these cases $(\mathfrak{I}\times \mathfrak{L})^{\prime} = \mathfrak{I}\times \mathfrak{L}$. Indeed, when $a+ b < k^{\prime} $ (the opposite inequality does not occur), then by Equation \eqref{integmu0n}, for any $\mu \in \mathcal{M}_{k^{\prime}}(\mathds{X}_{n}\times \mathds{Y}_{m}) $
\begin{align*}
	0&= \int_{\mathds{X}_{n} \times \mathds{Y}_{m}}    \int_{\mathds{X}_{n} \times \mathds{Y}_{m}} (-1)^{k}\mathfrak{I}(x_{\vec{1}}, x_{\vec{2}}) \mathfrak{L}(y_{\vec{1}}, y_{\vec{2}})d\mu(x_{\vec{1}}, y_{\vec{1}})d\mu(x_{\vec{2}}, y_{\vec{2}})\\
	& = \int_{\mathds{X}_{n} \times \mathds{Y}_{m}}    \int_{\mathds{X}_{n} \times \mathds{Y}_{m}} (-1)^{k}[\mathfrak{I} \times \mathfrak{L}]^{\prime} ((x_{\vec{1}}, y_{\vec{1}}),(x_{\vec{2}}, y_{\vec{2}}))d\mu(x_{\vec{1}}, y_{\vec{1}})d\mu(x_{\vec{2}}, y_{\vec{2}}), 
\end{align*}
and the conclusion comes from the second part of Theorem \ref{ineqpdik}. We remark that a combinatorial proof using the explicit expression of $[\mathfrak{I} \times \mathfrak{L}]^{\prime}$ in Lemma \ref{PDI2simpli} is also possible. 

Taking into account this behavior and also the consequences of being zero at some extended diagonal (see the comment made before Subsection \ref{Integrabilityrestrictionsnk}), we present a version of Theorem  \ref{Kroengeralthm} and Corollary \ref{Kroengeral} for the continuous case, with the necessary adaptations.

\begin{theorem}\label{Kroenconti} Let $\mathfrak{I} : \mathds{X}_{n} \times \mathds{X}_{n} \to \mathbb{R}$ be a continuous complete $n$-symmetric PDI$_{a}$ kernel that is zero at the extended diagonal $\Delta_{a-1}^{n}$ of $\mathds{X}_{n}$, and let $\mathfrak{L} : \mathds{Y}_{m} \times \mathds{Y}_{m} \to \mathbb{R}$ be a continuous complete $m$-symmetric PDI$_{b}$ kernel that is zero at the extended diagonal $\Delta_{b-1}^{m}$ of $\mathds{Y}_{m}$. 
	\begin{enumerate}
		\item[$(i)$] For any nonzero measure $
		\mu \in \mathfrak{M}_{a,b}( \mathfrak{I}, \mathfrak{L} )$, we have that 
		$$
		\int_{\mathds{X}_{n} \times \mathds{Y}_{m}}    \int_{\mathds{X}_{n} \times \mathds{Y}_{m}} (-1)^{a+b}\mathfrak{I}(x_{\vec{1}}, x_{\vec{2}}) \mathfrak{L}(y_{\vec{1}}, y_{\vec{2}})d\mu(x_{\vec{1}}, y_{\vec{1}})d\mu(x_{\vec{2}}, y_{\vec{2}}) >0,
		$$
		if and only if for some $\ell \in \{0,1\}$, the kernel $(-1)^{\ell }\mathfrak{I}$ is PDI$_{a}$-Characteristic in $\mathds{X}_{n}$ and $(-1)^{\ell }\mathfrak{L}$ is PDI$_{b}$-Characteristic in $\mathds{Y}_{m}$.
		\item[$(ii)$] For every nonzero measure $ \mu \in \mathfrak{M}_{k}( \mathfrak{I}, \mathfrak{L} )$, we have that 
		$$
		\int_{\mathds{X}_{n} \times \mathds{Y}_{m}}    \int_{\mathds{X}_{n} \times \mathds{Y}_{m}} (-1)^{k}\mathfrak{I}(x_{\vec{1}}, x_{\vec{2}}) \mathfrak{L}(y_{\vec{1}}, y_{\vec{2}})d\mu(x_{\vec{1}}, y_{\vec{1}})d\mu(x_{\vec{2}}, y_{\vec{2}}) >0,
		$$
		if and only if for some $\ell \in \{0, 1\}$
		$$
		\int_{\mathds{X}_{n} }    \int_{\mathds{X}_{n} } (-1)^{k +\ell}\mathfrak{I}(x_{\vec{1}}, x_{\vec{2}}) d\mu_{1}(x_{\vec{1}})d\mu_{1}(x_{\vec{2}}) >0,
		$$
		for every nonzero measure $ \mu_{1}\in \mathfrak{M}_{a^{\prime}}( \mathfrak{I} )$, where $a^{\prime}:=\max(k-m,0) $, and 
		$$
		\int_{\mathds{Y}_{m} }    \int_{\mathds{Y}_{m} } (-1)^{ \ell}\mathfrak{L}(y_{\vec{1}}, y_{\vec{2}}) d\mu_{2}(y_{\vec{1}})d\mu_{2}(y_{\vec{2}}) >0,
		$$
		for every nonzero measure $ \mu_{2}\in \mathfrak{M}_{b^{\prime}}( \mathfrak{L} )$, where $b^{\prime}:=\max(k-n,0) $.
		\item [$(iii)$]If $ \mathfrak{I}$ is PDI$_{a}$-Characteristic in $\mathds{X}_{n}$ and $ \mathfrak{L}$ is PDI$_{b}$-Characteristic in $\mathds{Y}_{m}$,     then for every nonzero measure $ \mu \in \mathfrak{M}_{k^{\prime}}( \mathfrak{I}, \mathfrak{L} )$ we have that 
		$$
		\int_{\mathds{X}_{n} \times \mathds{Y}_{m}}    \int_{\mathds{X}_{n} \times \mathds{Y}_{m}} (-1)^{a+b}\mathfrak{I}(x_{\vec{1}}, x_{\vec{2}}) \mathfrak{L}(y_{\vec{1}}, y_{\vec{2}})d\mu(x_{\vec{1}}, y_{\vec{1}})d\mu(x_{\vec{2}}, y_{\vec{2}}) >0,
		$$
		where $k^{\prime}:=\max((a+m)\operatorname{sgn}(a), (n+b)\operatorname{sgn}(b))$.
		\item [$(iv)$] For $k\leq \min\{n,m\}$, we have that 
		$$
		\int_{\mathds{X}_{n} \times \mathds{Y}_{m}}    \int_{\mathds{X}_{n} \times \mathds{Y}_{m}} (-1)^{a+b}\mathfrak{I}(x_{\vec{1}}, x_{\vec{2}}) \mathfrak{L}(y_{\vec{1}}, y_{\vec{2}})d\mu(x_{\vec{1}}, y_{\vec{1}})d\mu(x_{\vec{2}}, y_{\vec{2}}) >0,
		$$
		for every nonzero measure $ \mu \in \mathfrak{M}_{k}( \mathfrak{I}, \mathfrak{L} )$ if and only if for some $\ell \in \{0,1\}$ it occurs that     for every $ \mu_{1}\in \mathfrak{M} ( \mathfrak{I} )$ and $ \mu_{2}\in \mathfrak{M}( \mathfrak{L} )$,
		$$
		\int_{\mathds{X}_{n} }    \int_{\mathds{X}_{n} } (-1)^{k +\ell}\mathfrak{I}(x_{\vec{1}}, x_{\vec{2}}) d\mu_{1}(x_{\vec{1}})d\mu_{1}(x_{\vec{2}}) >0,
		$$
		$$
		\int_{\mathds{Y}_{m} }    \int_{\mathds{Y}_{m} } (-1)^{ \ell}\mathfrak{L}(y_{\vec{1}}, y_{\vec{2}}) d\mu_{2}(y_{\vec{1}})d\mu_{2}(y_{\vec{2}}) >0.
		$$
	\end{enumerate}   
\end{theorem}

\begin{proof} The proof follows the same steps as the proofs of  Theorem  \ref{Kroengeralthm} and Corollary \ref{Kroengeral}, but we must verify if the necessary integrability restrictions are fulfilled.\\
	We focus on relation $(i)$. Note that $ \mathfrak{M}_{a}( \mathfrak{I} ) \times \mathfrak{M}_{b}( \mathfrak{L} )\subset \mathfrak{M}_{a,b}( \mathfrak{I}, \mathfrak{L} ) $. For the converse relation, first, we show that the measure     \begin{equation}\label{Kroengeralconteq2}
		\mu_{g}(A):= \int_{A \times \mathds{Y}_{m } }g(u)d\mu(x_{\vec{1}},y_{\vec{1}}), \quad A \subset \mathscr{B}(\mathds{X}_{n}), 
	\end{equation}
	is an element of $\mathfrak{M}_{a}(\mathfrak{I})$. Indeed, by relation $(iv)$ in Corollary \ref{integrestrik} and Theorem \ref{FIPproduct}, to show that $\mu_{g} \in \mathfrak{M}(\mathfrak{I})$, it is sufficient to prove that the function 
	$$
	(x_{\vec{1}}, y_{\vec{1}}) \to K^{\mathfrak{I}}(x_{\vec{1}}, x_{\vec{1}}) g^{2}( y_{\vec{1}}) \in \mathbb{R}
	$$
	is FIP with respect to $\mu$. By the reproducing property,
	$$
	0 \leq g^{2}( y_{\vec{1}})\leq \left[\|g\|_{\mathcal{H}^{\mathfrak{L}}} \sqrt{K^{\mathfrak{L}}(y_{\vec{1}}, y_{\vec{1}})}\right]^2 = (\|g\|_{\mathcal{H}^{\mathfrak{L}}})^{2} K^{\mathfrak{L}}(y_{\vec{1}}, y_{\vec{1}}), 
	$$
	and relation $(iii)$ in Lemma \ref{propertiesFIP} concludes our claim. Also, $\mu_{g} \in \mathfrak{M}_{a}(\mathds{X}_{n})$ because $\mu \in \mathfrak{M}_{a,b}(\mathds{X}_{n},\mathds{Y}_{m} )$ and by its definition.   
\end{proof}

Note that we cannot affirm that $(-1)^{\ell}\mathfrak{L}$ is ISPD in relation $(iv)$, as we can only analyze its double integral on $\mathfrak{M}(\mathfrak{L})$.   

As described in Corollary \ref{productnkernels} for the discrete case, the continuous case of the Kronecker product of $n$ CND kernels on an $n$-fold Cartesian product has additional properties. For that, we use the integrability restrictions provided in Lemma \ref{integmarginkroeprodndim2}. 

\begin{corollary}\label{lastkronprod} Let $\gamma_{i}: X_{i} \times X_{i} \to \mathbb{R}$, $1\leq i \leq n$, be continuous CND metrizable kernels with bounded diagonal and $2\leq k \leq n $. The set
	\begin{align*}
		\mathfrak{M}_{k}(\vec{\gamma}):=\left\{ \mu \in \mathfrak{M}_{k} ( \mathds{X}_{n} ), \quad \prod_{i\in F}\gamma_{i} \in L^{1}(|\mu|\times |\mu|) \text{ for every } F \subset \{1,\ldots, n\} \right\},
	\end{align*}
	is a vector space and 
	$$
	(\mu, \nu) \to \int_{\mathds{X}_{n}}\int_{\mathds{X}_{n}}
	(-1)^{n}\left (\prod_{i=1}^{n} \gamma_{i} \right )(x_{\vec{1}}, x_{\vec{2}})d\mu(x_{\vec{1}})d\nu(x_{\vec{2}})
	$$
	is a well-defined semi-inner product in $\mathfrak{M}_{k}(\vec{\gamma})$. Further, the following are equivalent:
	\begin{enumerate}
		\item[$(i)$] It is an inner product in $\mathfrak{M}_{k}(\vec{\gamma})$.
		\item[$(ii)$] For every probability $P $ that satisfies Corollary \ref{integmarginkroeprodndim2} for which $\Lambda_{k}^{n}[P]$ is not the zero measure: 
		$$
		\int_{\mathds{X}_{n}}\int_{\mathds{X}_{n}}    (-1)^{n} \left (\prod_{i=1}^{n} \gamma_{i} \right )(x_{\vec{1}}, x_{\vec{2}})d\Lambda_{k}^{n}[P](x_{\vec{1}})d\Lambda_{k}^{n}[P](x_{\vec{2}})>0.
		$$
		\item[$(ii^{\prime})$] When $n=k$, for every probability $P $ that satisfies Corollary \ref{integmarginkroeprodndim2} for which $\Sigma[P]$ is not the zero measure: 
		$$
		\int_{\mathds{X}_{n}}\int_{\mathds{X}_{n}}    (-1)^{n}\left (\prod_{i=1}^{n} \gamma_{i} \right )(x_{\vec{1}}, x_{\vec{2}})d\Sigma[P](x_{\vec{1}})d\Sigma[P](x_{\vec{2}})>0.
		$$
		\item[$(iii)$] All CND kernels $\gamma_{i}$ are CND-Characteristic when $n=k$. The kernel $(-1)^{k}\vec{\gamma}$ is ISPD when $k< n$. 
	\end{enumerate}
\end{corollary}   

\begin{proof} The fact that $\mathfrak{M}_{k}(\vec{\gamma})$ is a vector space is a direct consequence of relation $(iii)$ in Lemma \ref{integmarginkroeprodndim2} and a comment made after the proof of the same lemma.\\ 
	Consider an arbitrary $x_{\vec{0}} \in \mathds{X}_{n}$ and the PD kernel $K^{\mathfrak{I}}$ relative to the PDI$_{n}$ kernel $\vec{\gamma}$ defined in Lemma \ref{PDIntoPDn}. Since
	$$
	K^{\mathfrak{I}}(x_{\vec{1}}, x_{\vec{2}})=\prod_{i=1}^{n}[-\gamma_{i}(x_{i}^{1}, x_{i}^{2}) +\gamma_{i}(x_{i}^{1}, x_{i}^{0}) +\gamma_{i}(x_{i}^{0}, x_{i}^{2}) - \gamma_{i}(x_{i}^{0}, x_{i}^{0})],
	$$
	by the same arguments as Theorem \ref{integPDIn}, we find that for any $\lambda, \eta \in \mathfrak{M}_{k}(\vec{\gamma})$, it holds that $\lambda, \eta$ satisfy the restrictions for the kernel mean embedding of $ K^{\mathfrak{I}}$ and also that     
	$$
	\int_{\mathds{X}_{n}}\int_{\mathds{X}_{n}}    (-1)^{n}\vec{\gamma}(x_{\vec{1}}, x_{\vec{2}})d\lambda(x_{\vec{1}})d\eta(x_{\vec{2}}) = \int_{\mathds{X}_{n}}\int_{\mathds{X}_{n}}     K^{\mathfrak{I}}(x_{\vec{1}}, x_{\vec{2}})d\lambda(x_{\vec{1}})d\eta(x_{\vec{2}}).
	$$
	The equivalences are obtained similarly to the proofs of Corollary \ref{productnkernels}, and thus are omitted.
\end{proof}

Similar to Corollary \ref{productnkernels}, Corollary \ref{lastkronprod} still holds true in the case $k=0$ or $k=1$, if we remove relations $(ii)$ and $(ii^{\prime})$, which are not defined in these cases. 

Explicit examples of ISPD and CND-Characteristic kernels in a general setting were proved in \cite{Guella2022a} and in \cite{Guella2022}.

\section{Discussion and Future Work}\label{futurework}

\textbf{Statistical Estimation:}
The primary contribution of this paper has been to establish the geometric, functional, and measure-theoretic foundations of $\mathrm{PDI}_{k}$ kernels. By formalizing the spaces of interaction measures $\mathfrak{M}_k(\mathds{X}_n)$ and introducing Fully Integrable by Partitions (FIP) integrals, we have provided the analytic machinery necessary to treat generalized independence tests as bounded operators on abstract product spaces. 

However, translating this population-level theory into practical statistical tools requires a rigorous finite-sample asymptotic analysis. Evaluating these interactions in practice involves constructing empirical estimators from finite data. For an i.i.d. sample of size $N$ drawn from a joint distribution $P$, the empirical probability measure $P_N$ can be used to construct an empirical interaction measure $\mu_N = T[P_N]$ (where $T$ is the generalized Lancaster or Streitberg operator or any interaction measure obtained from a partition lattice). The natural empirical test statistic is the $V$-statistic:
\begin{equation}
	\hat{T}_{N} =\int_{\mathds{X}_{n}} \int_{\mathds{X}_{n}}(-1)^{k} \mathfrak{I}(u,v) d\mu_{N}(u) d\mu_{N}(v)  =\int_{\mathds{X}_{n}} \int_{\mathds{X}_{n}} K^{\mathfrak{I}}(u,v) d\mu_{N}(u) d\mu_{N}(v) = \|K^{\mathfrak{I}}_{\mu_N}\|_{\mathcal{H}_{\mathfrak{I}}}^2.
\end{equation}

Developing the exact limiting distributions of $\hat{T}_{N}$ as $N \to \infty$ is left for future research, but our theoretical framework provides the prerequisites for this analysis. Because the test statistic is constructed to be degenerate under the null hypothesis of no interaction ($H_0: \mu = 0$), classical statistical theory \citep{Serfling1980, Lee1990} dictates that $N\hat{T}_N$ should converge in distribution to an infinite weighted sum of independent $\chi^2_1$ variables, $\sum \lambda_j Z_j^2$, where $\{\lambda_j\}$ are the eigenvalues of the associated integral operator. 

Crucially, this limit is only valid if the integral operator is trace-class. While this is immediate for bounded kernels (like the Gaussian kernel) \citep{Gretton2005, Liu2023}, $\mathrm{PDI}_k$ kernels generalizing distance covariance are frequently unbounded. In future work, we anticipate demonstrating that the FIP integrability conditions and the \emph{marginally delimited} property established in this paper provide exactly the required moment bounds to guarantee finite traces for these unbounded operators \citep{Lyons2013}.

\textbf{Further Examples of Interaction Measures:}
While this paper primarily focused on the generalized Lancaster and Streitberg interactions, the mathematical framework surrounding $\mathcal{M}_k(\mathds{X}_n)$ might be compatible with other interaction measures defined via Möbius inversion on partition lattices \citep{Liu2023, Liu2024}. Future research should explore deriving new types of such interaction measures and verifying whether they possess the same properties. Also, empirical tests using distinct kernels and distinct interactions measures are also desired.

\textbf{Examples Using Radial Kernels:}
Future work will also  focus on defining specific parametric families of these radial functions that satisfies   Theorem \ref{bernsksevndimpart3}. A characterization of when the functions in 	Theorem \ref{bernsksevndimpart3} defines a PDI$_{k}$-Characteristic kernel is also aimed. 

\appendix

\section{FIP integrals}\label{FIPintegrals}

In this first appendix, we prove the main results about FIP integrals, which are necessary for the development of Subsection \ref{Integrabilityrestrictionsnk} and subsequent applications. 

\begin{lemma} \label{propertiesFIP}Let $f: \mathds{X}_{n} \to \mathbb{R}$, $g: \mathds{X}_{n} \to \mathbb{R}$, $\mu \in \mathfrak{M}(\mathds{X}_{n})$, and a probability measure $P \in \mathfrak{M}(\mathds{X}_{n})$. Then: 
	\begin{enumerate}
		\item [$(i)$] The set of functions that are FIP with respect to $\mu$ is a vector space.
		\item [$(ii)$] If $f$ is FIP with respect to $\mu$, then for any $\mathcal{F} \subset \{1,\ldots, n\}$ and $x_{\vec{3}} \in \mathds{X}_{n}$, the functions
		$$
		x_{\vec{1}} \in \mathds{X}_{n} \to f(x_{\vec{1}_{\mathcal{F} } + \vec{3}_{\mathcal{F} ^{c}}}), \quad x_{\vec{1}_{F}} \in \mathds{X}_{\mathcal{F} } \to f(x_{\vec{1}_{\mathcal{F} } + \vec{3}_{\mathcal{F} ^{c}}}),
		$$
		are FIP with respect to $\mu$ and $\mu_{\mathcal{F} }$, respectively. 
		\item [$(iii)$] If $f$ is FIP with respect to $\mu$ and $|g(x )| \leq |f(x )|$ for every $x \in \mathds{X}_{n}$, then $g$ is FIP with respect to $\mu$.
		\item [$(iv)$] If $f$ is FIP with respect to $P$, then for any partition $\pi$, the function $f$ is also FIP with respect to $P_{\pi}$.
	\end{enumerate} 
\end{lemma}

\begin{proof} Relation $(i)$ is immediate, since for any $\lambda \in \mathbb{R}$ and $f, g$ which are FIP with respect to $\mu$, we have that
	\begin{align*}
		\int_{\mathds{X}_{n}}&\left |(f+t g)(x_{\vec{1}_{F} + \vec{2}_{F^{c}}}) \right |d|\mu|_{\pi}(x_{\vec{1}})\\
		& \leq \int_{\mathds{X}_{n}}\left |f(x_{\vec{1}_{F} + \vec{2}_{F^{c}}}) \right |d|\mu|_{\pi}(x_{\vec{1}}) + |t| \int_{\mathds{X}_{n}}\left |g(x_{\vec{1}_{F} + \vec{2}_{F^{c}}}) \right |d|\mu|_{\pi}(x_{\vec{1}}) < \infty.
	\end{align*}    
	For relation $(ii)$, note that if $h(x_{\vec{1}}):=f(x_{\vec{1}_{\mathcal{F}} + \vec{3}_{\mathcal{F}^{c}}}) $, then 
	$$
	h(x_{\vec{1}_{F} + \vec{2}_{F^{c}}}) = f(x_{\vec{1}_{F \cap \mathcal{F}} + \vec{2}_{\mathcal{F} - F} + \vec{3 }_{\mathcal{F}^{c}}}), 
	$$
	which is $|\mu|_{\pi}$-integrable as it is a special case of the functions that appear in the assumption that $f$ is FIP with respect to $\mu$.\\
	For relation $(iv)$, similar to relation $(ii)$, all integrals that are necessary to verify that $f$ is FIP with respect to $P_{\pi}$ are special cases of the integrals that appear in the hypothesis that $f$ is FIP with respect to $P$.\end{proof}   

\begin{lemma}\label{FIPmarvecspace} Let $f: \mathds{X}_{n} \to \mathbb{R}$ and a probability measure $P \in \mathfrak{M}(\mathds{X}_{n})$, such that $f$ is FIP with respect to $P$. Then, the set of measures in $\mathds{X}_{n}$ for which $f$ is FIP with respect to them and that are also marginally delimited by $P$ is a vector space.
\end{lemma}

\begin{proof}Let $\mu, \eta \in \mathfrak{M}(\mathds{X}_{n})$ be measures that satisfy these restrictions. Then, for any $t \in \mathbb{R}$, $|\mu| + |t||\eta| - |\mu + t \eta| $ is a nonnegative measure. Further, if $G \subset \{1, \ldots, n\}$ with $|G|\leq n-1$ and $\sigma=\{F_{1}, \ldots, L_{\ell}\}$ is a partition of $G$, then for any measurable set $A \subset \mathds{X}_{n}$,
	\begin{align*}
		(|\mu| + |t||\eta|)_{\sigma}(A) &\leq (M_{1} + |t|M_{2})^{\ell} \left [\prod_{j=1}^{\ell} \sum_{\pi_{j} \in \mathscr{P}(F_{j})} (P_{F})_{\pi_{j}} \right ](A)\\ 
		&\leq (M_{1} + |t|M_{2})^{\ell} \left [ \sum_{\pi \in \mathscr{P}(G)} P_{\pi} \right ](A).
	\end{align*}
	The first inequality is a consequence of the fact that $\mu$ and $\eta$ are marginally delimited by $P$ (where $M_{1}, M_{2}$ are the respective constants). The second inequality holds because for any sets $V,W$, we have that $\mathscr{P}(V) \times \mathscr{P}(W) \subset \mathscr{P}(V \cup W) $. This concludes that $\mu + t\eta$ is marginally delimited by $P$.\\     
	To show that $f$ is also FIP with respect to $\mu + t\eta$, note that if $\sigma$ is a partition of $\{1, \ldots, n\}$ with $|\sigma| \geq 2$, then by the same argument as the previous inequality, we get that for any $F \subset \{1,\ldots, n\}$:
	$$
	\int_{\mathds{X}_{m}}|f(x_{\vec{1}_{F}+ \vec{2}_{F^{c}} })|d(|\mu + t \eta|_{\sigma})(x_{\vec{1}}) \leq (M_{1} + |t|M_{2})^{|\sigma|} \sum_{\pi }\int_{\mathds{X}_{m}}|f(x_{\vec{1}_{F}+ \vec{2}_{F^{c}} })|dP_{\pi}(x_{\vec{1}}) < \infty.
	$$
	Now, if $|\sigma| =1$, the argument is simpler as $f(x_{\vec{1}_{F}+ \vec{2}_{F^{c}} })$ is integrable to both $|\mu|$ and $|\eta|$, as these integrals are a particular case of the integrals that appear in the hypothesis that $f$ is FIP with respect to $\mu$ and $\eta$. 
\end{proof}

Now we present an example of a function for which the set of measures that it is FIP is not a vector space. The example is very similar to the one for kernels in Example \ref{exkerneldiag}.  

\begin{example} \label{FIPbadintegration}Let $X_{1}= X_{2}= \mathbb{N}$ and consider the function $f(n,m):= \frac{nm}{n+m}$, the probabilities $P(n,m) = \zeta(3/2)^{-1} \zeta(3)^{-1}n^{-3/2}m^{-3} $ and $Q(n,m) := P(m,n) $ in $\mathbb{N}^{2}$, where $\zeta$ is the Riemann Zeta function. We claim that $f$ is FIP with respect to $P$ and $Q$, but is not FIP with respect to $(P+Q)/2$.\\
	Indeed,
	$$
	\zeta(3/2) \zeta(3)    \sum_{n,m=1}^{\infty} f(n,m)P(n,m) = \sum_{n,m=1}^{\infty} \frac{1}{n^{1/2}m^{2}(n+m)}\leq \sum_{n,m=1}^{\infty}\frac{1}{n^{3/2}m^{2}}< \infty,
	$$
	and since $P=P_{1}\times P_{2}$, we also have that 
	$$
	\sum_{n,m=1}^{\infty} f(n,m)P_{1}(n)P_{2}(m) < \infty.
	$$   
	Also, for every fixed $m \in \mathbb{N}$,
	$$
	\zeta(3/2)\sum_{n=1}^{\infty} f(n,m)P_{1}(n)= \sum_{n =1 }^{\infty}\frac{m}{n^{1/2}(n+m)}\leq \sum_{n=1 }^{\infty}\frac{m}{n^{3/2}}= m \zeta(3/2),
	$$
	and to conclude, for every fixed $n \in \mathbb{N}$,
	$$
	\zeta(3)\sum_{m=1}^{\infty} f(n,m)P_{2}(m)= \sum_{m =1 }^{\infty}\frac{n}{m^{2}(n+m)}\leq \sum_{m=1 }^{\infty}\frac{1}{m^{2}}= \zeta( 2).
	$$   
	The same properties hold for the probability $Q$, due to the fact that the function $f$ is symmetric in the sense that $f(n,m)=f(m,n)$ and by the definition of $Q$. However, $f$ is not integrable with respect to 
	$$
	\left (\frac{P+Q}{2} \right )_{1}\times \left (\frac{P+Q}{2} \right )_{2}.
	$$     
	To prove this, it is sufficient to show that $f$ is not integrable with respect to $P_{1}\times Q_{2}$. Indeed, our aim is to prove that 
	$$
	\zeta(3/2)^{2}\sum_{n,m=1}^{\infty} f(n,m)P_{1}(n)Q_{2}(m) = \sum_{n,m=1}^{\infty}\frac{1}{n^{1/2}m^{1/2}(n+m)},
	$$
	diverges. After the change of variables $n+m=k $ and $n=l$, this double sum is equivalent to
	$$
	\sum_{k=2}^{\infty}\sum_{l=1}^{k-1}\frac{1}{l^{1/2}(k-l)^{1/2}k},
	$$
	but, for $0\leq l \leq k$, we have that $l^{1/2}(k-l)^{1/2} \leq k/2$, thus 
	\begin{align*}
		\sum_{k=2}^{\infty}\sum_{l=1}^{k-1}\frac{1}{l^{1/2}(k-l)^{1/2}k} &\geq     \sum_{k=2}^{\infty}\sum_{l=1}^{k-1}\frac{2}{k^{2}}= \sum_{k=2}^{\infty}\frac{2(k-1)}{k^{2}}=\infty,
	\end{align*}
	because the harmonic series diverges.
\end{example}

The following two results are used in Section \ref{KroneckerproductsofPDIkernels}  for the  generalization of distance covariance property on the continuous case (see Theorem \ref{Kroenconti}). 

\begin{lemma}\label{integrationpartition} Let $g: \mathds{Y}_{m} \to \mathbb{R}$ be a nonzero function such that $g$ is integrable with respect to $|\mu|$, where $ \mu \in \mathfrak{M}(\mathds{X}_{n}\times \mathds{Y}_{m})$. Then, the measure
	$$
	\mu_{g}(A):=\int_{A\times \mathds{Y}_{m} } g(y_{\vec{1}}) d\mu(x_{\vec{1}}, y_{\vec{1}}), \quad A \in \mathscr{B}(\mathds{X}_{n})
	$$
	is finite. Also, for every partition $\pi$ of $\{1, \ldots, n\}$ with $\pi=\{F^{1}, \ldots, F^{\ell}\}$, it holds that
	\begin{align*}
		\int_{ \mathds{X}_{n} }& h(x) d(\mu_{g})_{\pi}(x)\\
		&= \int_{\mathds{X}_{n} \times \mathds{Y}_{m}} \ldots \int_{\mathds{X}_{n} \times \mathds{Y}_{m}}h(x_{\vec{1}_{{F}^{1}}}, \ldots, x_{\vec{\ell}_{{F}^{\ell}}})g(y_{\vec{1}}) \ldots g(y_{\vec{\ell}})d\mu(x_{\vec{1}}, y_{\vec{1}}) \ldots d\mu(x_{\vec{\ell}}, y_{\vec{\ell}}) 
	\end{align*}
	for any $h: \mathds{X}_{n} \to \mathbb{R} $ such that either side of the equality is an integrable function.        
\end{lemma}

\begin{proof} The measure $\mu_{g}$ is finite because $|\mu|$ is finite and $g$ is integrable with respect to it. As for the equality, if $h = \chi_{A}$, where $A = \prod_{i=1}^{n}A_{i} $, then
	\begin{align*}
		&\int_{ \mathds{X}_{n} }h(x) d(\mu_{g})_{\pi}(x) = (\mu_{g})_{\pi}(A) = \prod_{j=1}^{\ell} \int_{A_{F^{j}} \times \mathds{X}_{(F^{j})^{c}} \times \mathds{Y}_{m} } g(y_{\vec{j}}) d\mu(x_{\vec{j}}, y_{\vec{j}})\\
		&= \int_{\mathds{X}_{n} \times \mathds{Y}_{m}} \ldots \int_{\mathds{X}_{n} \times \mathds{Y}_{m}}\chi_{A}(x_{\vec{1}_{{F}^{1}}}, \ldots, x_{\vec{\ell}_{{F}^{\ell}}})g(y_{\vec{1}}) \ldots g(y_{\vec{\ell}})d\mu(x_{\vec{1}}, y_{\vec{1}}) \ldots d\mu(x_{\vec{\ell}}, y_{\vec{\ell}}). 
	\end{align*}   
	As linear combinations of these functions also satisfy this equality, we obtain the desired result by standard integral approximations.    
\end{proof}

Now, we prove a nontrivial version of Holder's inequality for the FIP integration.  

\begin{theorem}\label{FIPproduct} Let $f: \mathds{X}_{n} \to \mathbb{R} $ and $g: \mathds{Y}_{m} \to \mathbb{R}$ be nonzero functions such that $fg^{2}$ is FIP with respect to $\mu \in \mathfrak{M}(\mathds{X}_{n}\times \mathds{Y}_{m})$. Then, the function $f$ is FIP with respect to $\mu_{g} \in \mathfrak{M}(\mathds{X}_{n})$ as defined in Lemma \ref{integrationpartition}. 
\end{theorem}

\begin{proof} We may, without loss of generality, assume that $P=|\mu|$ is a probability in $\mathds{X}_{n}\times \mathds{Y}_{m}$ and that $g$ is a nonnegative function. Suppose also that $\pi=\{G, G^{c}\}$ is a partition of $\{1,\ldots, n\}$; by the definition of the measure in Lemma \ref{integrationpartition} and its equality, we get that for any nonnegative function $h: \mathds{X}_{n} \to \mathbb{R}$,
	\begin{align*}
		&\int_{ \mathds{X}_{n} } h(x) d(|\mu_{g}|)_{\pi}(x) =\int_{ \mathds{X}_{n} } h(x) d(P_{g})_{\pi}(x) \\
		&= \int_{\mathds{X}_{n} \times \mathds{Y}_{m}} \int_{\mathds{X}_{n} \times \mathds{Y}_{m}}h(x_{\vec{1}_{{G}}}, x_{\vec{2}_{{G}^{c}}}) g(y_{\vec{1}}) g(y_{\vec{2}}) dP(x_{\vec{1}}, y_{\vec{1}}) dP(x_{\vec{2}}, y_{\vec{2}}). 
	\end{align*}
	Note that 
	\begin{align*}
		& \left \| \sqrt{h(x_{\vec{1}_{{G}}}, x_{\vec{2}_{{G}^{c}}}) } g(y_{\vec{1}}) \right \|_{L^{2}(P\times P)} ^{2}\\
		&=\int_{\mathds{X}_{n} \times \mathds{Y}_{m}} \int_{\mathds{X}_{n} \times \mathds{Y}_{m}}h(x_{\vec{1}_{{G}}}, x_{\vec{2}_{{G}^{c}}}) g(y_{\vec{1}}) ^{2}dP(x_{\vec{1}}, y_{\vec{1}}) dP(x_{\vec{2}}, y_{\vec{2}})\\
		&= \int_{\mathds{X}_{n} \times \mathds{Y}_{m}} h(x_{\vec{1}}) g(y_{\vec{1}}) ^{2}dP_{\pi^{\prime}}(x_{\vec{1}}, y_{\vec{1}}) 
	\end{align*}
	where $\pi^{\prime} = \{G^{c}, G\cup\{1,\ldots, m\} \}$ is (with a little abuse of notation) a partition of the set $\{1,\ldots, n\} \cup \{1,\ldots, m\}$ with $|\pi^{\prime}|=2$. Similarly,
	$$
	\left \| \sqrt{h(x_{\vec{1}_{{G}}}, x_{\vec{2}_{{G}^{c}}}) } g(y_{\vec{2}}) \right \|_{L^{2}(P\times P)} ^{2}= \int_{\mathds{X}_{n} \times \mathds{Y}_{m}} h(x_{\vec{1}}) g(y_{\vec{1}}) ^{2}dP_{\pi^{\prime \prime}}(x_{\vec{1}}, y_{\vec{1}}),
	$$
	where $\pi^{\prime \prime} = \{G^{c}\cup\{1,\ldots, m\}, G \}$. Thus, by H\"older's inequality in $L^{2}(P \times P)$, we get that
	\begin{align*}
		& \int_{\mathds{X}_{n} \times \mathds{Y}_{m}} \int_{\mathds{X}_{n} \times \mathds{Y}_{m}}h(x_{\vec{1}_{{G}}}, x_{\vec{2}_{{G}^{c}}}) g(y_{\vec{1}}) g(y_{\vec{2}}) dP(x_{\vec{1}}, y_{\vec{1}}) dP(x_{\vec{2}}, y_{\vec{2}}) \\
		& \leq \left \| \sqrt{h(x_{\vec{1}_{{G}}}, x_{\vec{2}_{{G}^{c}}}) } g(y_{\vec{1}}) \right \|_{L^{2}(P\times P)} \left \| \sqrt{h(x_{\vec{1}_{{G}}}, x_{\vec{2}_{{G}^{c}}}) } g(y_{\vec{2}}) \right \|_{L^{2}(P\times P)}.
	\end{align*}
	Now, if $f$ is as in the hypothesis, for any subset $F$ of $\{1, \ldots, n\}$ and arbitrary element $x_{\vec{3}} \in \mathds{X}_{n}$, the function $h(x_{\vec{1}}):=|f(x_{\vec{1}_{F} + \vec{3}_{F^{c}}})| $ is such that
	$$
	\left \| \sqrt{h(x_{\vec{1}_{{G}}}, x_{\vec{2}_{{G}^{c}}}) } g(y_{\vec{1}}) \right \|_{L^{2}(P\times P)} ^{2}= \int_{\mathds{X}_{n} \times \mathds{Y}_{m}} |f(x_{\vec{1}_{F} + \vec{3}_{F^{c}}})| g(y_{\vec{1}}) ^{2}dP_{\pi^{\prime }}(x_{\vec{1}}, y_{\vec{1}}) < \infty, 
	$$
	and similarly for the other term, reaching that 
	$$
	\int_{ \mathds{X}_{n} } |f(x_{\vec{1}_{F} + \vec{3}_{F^{c}}})| d(|\mu_{g}|)_{\pi}(x_{\vec{1}}) < \infty. 
	$$
	We have proved that Definition \ref{FIP} holds for when $|\pi| = 2$; the other values are obtained recursively. For instance, if $\pi= \{G^{1}, G^{2}, G^{3} \}$, then 
	\begin{align*}
		&\int_{ \mathds{X}_{n} } h(x) d(|\mu_{g}|)_{\pi}(x) = \int_{ \mathds{X}_{n} } h(x) d(P_{g})_{\pi}(x) \\
		&= \int_{(\mathds{X}_{n} \times \mathds{Y}_{m})^{3}}h(x_{\vec{1}_{{G^{1}}}}, x_{\vec{2}_{{G^{2}}}}, x_{\vec{3}_{{G^{3}}}}) g(y_{\vec{1}}) g(y_{\vec{2}}) g(y_{\vec{3}})dP(x_{\vec{1}}, y_{\vec{1}}) dP(x_{\vec{2}}, y_{\vec{2}})dP(x_{\vec{3}}, y_{\vec{3}})\\
		&\leq \left \| \sqrt{h(x_{\vec{1}_{{G^{1}}}}, x_{\vec{2}_{{G^{2}}}}, x_{\vec{3}_{{G^{3}}}}) } g(y_{\vec{1}}) \right \|_{L^{2}(P^{3})}\left \| \sqrt{h(x_{\vec{1}_{{G^{1}}}}, x_{\vec{2}_{{G^{2}}}}, x_{\vec{3}_{{G^{3}}}}) } g(y_{\vec{2}}) g(y_{\vec{3}}) \right \|_{L^{2}(P^{3})}, 
	\end{align*}
	and we may reapply H\"older's inequality to reach
	\begin{align*}
		&\left \| \sqrt{h(x_{\vec{1}_{{G^{1}}}}, x_{\vec{2}_{{G^{2}}}}, x_{\vec{3}_{{G^{3}}}}) } g(y_{\vec{2}}) g(y_{\vec{3}}) \right \|_{L^{2}(P\times P \times P)}^{2}\\
		& \leq \left \| \sqrt{h(x_{\vec{1}_{{G^{1}}}}, x_{\vec{2}_{{G^{2}}}}, x_{\vec{3}_{{G^{3}}}}) } g(y_{\vec{2}}) \right \|_{L^{2}(P^{3})}\left \| \sqrt{h(x_{\vec{1}_{{G^{1}}}}, x_{\vec{2}_{{G^{2}}}}, x_{\vec{3}_{{G^{3}}}}) }g(y_{\vec{3}}) \right \|_{L^{2}(P^{3})}. 
	\end{align*}
\end{proof}

\section{Geometrical representation of PDI$_{k}$ kernels}\label{geoint}

The geometrical interpretation for PDI$_{k}$ kernels defined in $\mathds{X}_{n}$ by using the RKHS of the related positive definite kernel $    K^{\mathfrak{I}}$ becomes more complicated as the codimension $n-k$ increases.

First, we prove two technical results that will be needed for the proof of Theorem \ref{finalgeomkn}. 

\begin{lemma}\label{indpdi2} Let $\mathfrak{I} : \mathds{X}_{n} \times \mathds{X}_{n} \to \mathbb{R} $ be an $n$-symmetric PDI$_{k}$ kernel that is zero at the extended diagonal $\Delta_{k-1}^{n}$, a subset $F \subset \{1, \ldots, n\}$ with $|F| \geq k$, and a fixed $x_{i}^{3} \in X_{i}$, $i \in F^{c}$. Consider the kernel $\mathcal{I} : \mathds{X}_{F} \times \mathds{X}_{F} \to \mathbb{R} $, defined as
	$$
	\mathcal{I}(x_{\vec{1}_{F}}, x_{\vec{2}_{F}}):= \mathfrak{I}(x_{\vec{1}_{F}+\vec{3}_{F^{c}}},x_{\vec{2}_{F}+\vec{3}_{F^{c}}} ).
	$$ 
	Then, $\mathcal{I}$ is an $|F|$-symmetric PDI$_{k}$ kernel in $\mathds{X}_{F}$ that is zero at the extended diagonal $\Delta_{k-1}^{|F|}$ of $\mathds{X}_{F}$. Also, for any     $\eta \in \mathcal{M}_{k }( \mathds{X}_{F})$, it holds that $\eta\times \delta_{\vec{3}_{F^{c}}} \in \mathcal{M}_{k }( \mathds{X}_{n}) $ and 
	\begin{align*}
		\| K^{\mathcal{I}}_{\eta} \|^{2}_{\mathcal{H}_{\mathcal{I}}}=\|K^{\mathfrak{I}}_{\eta\times \delta(\vec{3}_{F^{c}})} \|^{2}_{\mathcal{H}_{\mathfrak{I}}}.
	\end{align*}
\end{lemma}

\begin{proof}The fact that $\mathcal{I}$ is an $|F|$-symmetric PDI$_{k}$ kernel that is zero at the extended diagonal $\Delta_{k-1}^{|F|}$ of $\mathds{X}_{F}$ is a direct consequence of Lemma \ref{projectionkernels} by taking $\lambda^{F}:= \delta(\vec{3}_{F^{c}})$ with $b=0$.\\
	Now, let $\eta \in \mathcal{M}_{k }( \mathds{X}_{F})$; then $\eta\times \delta(\vec{3}_{F^{c}}) \in \mathcal{M}_{k }( \mathds{X}_{n}) $ by relation $(iv)$ in Lemma \ref{inclusionmeasures2indexkroen}. By Equation \eqref{contaPDIntoPDn} and Equation \eqref{contaPDIkntoPD}, it holds that     
	\begin{align*}
		&\| K^{\mathcal{I}}_{\eta} \|^{2}_{\mathcal{H}_{\mathcal{I}}}=\langle K^{\mathcal{I}}_{\eta}, K^{\mathcal{I}}_{\eta} \rangle_{\mathcal{H}_{\mathcal{I}}} = \int_{\mathds{X}_{F}}\int_{\mathds{X}_{F}} (-1)^{k}\mathcal{I}(x_{\vec{1}_{F}},x_{\vec{2}_{F}} ) d\eta(x_{\vec{1}_{F}})d\eta(x_{\vec{2}_{F}})\\
		& = \int_{\mathds{X}_{n}}\int_{\mathds{X}_{n}} (-1)^{k}\mathfrak{I}(x_{\vec{1} },x_{\vec{2} } ) d[\eta\times \delta(\vec{3}_{F^{c}})](x_{\vec{1}})d[\eta\times \delta(\vec{3}_{F^{c}})](x_{\vec{2} })=\| K^{\mathfrak{I}}_{\eta\times \delta(\vec{3}_{F^{c}})} \|^{2}_{\mathcal{H}_{\mathfrak{I}}}. 
	\end{align*}
\end{proof}   

Given a measure $\eta= \sum_{\alpha \in \mathbb{N}_{2}^{n}}C_{\alpha}\delta_{\alpha} \in \mathcal{M}(\mathbb{N}_{2}^{n})$, where as defined in Section \ref{Terminology} the set $\mathbb{N}_{2}^{n}=\{1,2\}^{n}$, we can define a new measure in $\mathcal{M}(\mathds{X}_{n})$, where for a fixed but arbitrary pair of points $x_{\vec{1}}, x_{\vec{2}} \in \mathds{X}_{n}$:
$$
\eta[x_{\vec{1}}, x_{\vec{2}}]:= \sum_{\alpha \in \mathbb{N}_{2}^{n}}C_{\alpha} \delta_{x_{\alpha}}.
$$

Naturally, if $\eta \in \mathcal{M}_{k}(\mathbb{N}_{2}^{n})$, then $\eta[x_{\vec{1}}, x_{\vec{2}}] \in \mathcal{M}_{k}(\mathds{X}_{n})$ for any pair of points $x_{\vec{1}}, x_{\vec{2}} \in \mathds{X}_{n}$. For instance, the measure $\mu_{k}^{n}[x_{\vec{1}}, x_{\vec{2}}]$ can be obtained using 
\begin{align*}
	\mu_{k}^{n}: = \delta_{ \vec{1}}+ \sum_{j=0}^{k-1}(-1)^{k-j}\binom{n-j-1}{n-k}\sum_{|F|=j}\delta_{\vec{1}_{F} + \vec{2}_{F^{c}} }
\end{align*}
and by the well-known relations
$$
\binom{a}{0}=1 \text{ for every nonnegative } a \text{ and } \binom{a}{n-k}=0 \text{ when }|a| < n-k. 
$$    

We say that $\eta= \sum_{\alpha \in \mathbb{N}_{2}^{n}}C_{\alpha}\delta_{\alpha} \in \mathcal{M}(\mathbb{N}_{2}^{n})$ is Permutation Invariant (PI) if 
$$
\eta((\alpha_{1}, \ldots, \alpha_{n} ))= \eta((\alpha_{\sigma(1)}, \ldots, \alpha_{\sigma(n)} )) 
$$
for any $\alpha \in \mathbb{N}_{2}^{n}$ and bijection $\sigma : \{1,\ldots, n\} \to \{1,\ldots, n\}$. It is a simple result that $\eta= \sum_{\alpha \in \mathbb{N}_{2}^{n}}C_{\alpha}\delta_{\alpha} \in \mathcal{M}(\mathbb{N}_{2}^{n})$ is PI if and only if $C_{\alpha}= C_{\alpha^{\prime}}$ whenever $|\alpha| = |\alpha^{\prime}|$, that is, there exist constants $w_{j}$, $0\leq j \leq n$, for which
$$
\eta= \sum_{j=0} ^{n}w_{j} \sum_{|F|=j}\delta(\vec{1}_{F} + \vec{2}_{F^{c}}).
$$ 

In particular, an PI measure as before has marginal $\sum_{j=0} ^{n-1}[w_{j}+w_{j+1}] \sum_{|F|=j}\delta(\vec{1}_{F} + \vec{2}_{F^{c}})$ in the first $n-1$ variables. By recursively using  this relation,  we can get that such a measure is an element of $\mathcal{M}_{k}( \mathbb{N}_{2}^{n})$ if and only if 

$$
\sum_{j=0}^{k} \binom{k}{j}w_{\ell +j} =0, \quad 0 \leq \ell \leq n-k.
$$
As the characteristic polynomial of this recurrence sequence is $(t-1)^{k}$, its solutions are of the type
\begin{equation}\label{recursive}
	w_{j}= (-1)^{j}(A_{0} +A_{1}j +\ldots +A_{k-1}j^{k-1}), 
\end{equation}
for real constants $A_{0}, \ldots A_{k-1}$. The representation and coefficients we aim to obtain are simplified when dealing with PI measures, which occurs by the following result.

\begin{lemma}\label{procedure} Let $\mathfrak{I} : \mathds{X}_{n} \times \mathds{X}_{n} \to \mathbb{R} $ be a complete $n$-symmetric PDI$_{k}$ kernel that is zero at the extended diagonal $\Delta_{k-1}^{n}(\mathds{X}_{n})$, a fixed $x_{\vec{0}} \in \mathds{X}_{n} $, and $\eta= \sum_{\alpha \in \mathbb{N}_{2}^{n}} C_{\alpha}\delta_{\alpha} \in     \mathcal{M}_{k }( \mathbb{N}_{2}^{n})$. Then, for any $x_{\vec{3}} \in \mathds{X}_{n} $, it holds that         
	\begin{align*}
		&\| K^{\mathfrak{I}}_{\eta[x_{\vec{1}}, x_{\vec{2}}]}\|_{\mathcal{H}_{\mathfrak{I}}}^{2}=\sum_{\alpha, \beta \in \mathbb{N}_{2}^{n}} (-1)^{k}C_{\alpha}C_{\beta} \mathfrak{I}(x_{\alpha}, x_{\beta})\\
		&= (-1)^{k}\sum_{|F|=k}^{n} \left [\sum_{\xi \in \mathbb{N}_{2}^{n-|F|}}\sum_{\varsigma \in \mathbb{N}_{2}^{|F|}}C_{(\varsigma_{F}+ \xi_{F^{c}})} C_{((\vec{3}-\varsigma)_{F}+ \xi_{F^{c}})} \right ] \mathfrak{I}(x_{\vec{1}_{F} + \vec{3}_{F^{c}}},x_{\vec{2}_{F} + \vec{3}_{F^{c}}} ). 
	\end{align*}
	Additionally, if $\eta=\sum_{j=0} ^{n}w_{j} \sum_{|F|=j}\delta(\vec{1}_{F} + \vec{2}_{F^{c}})$ is PI, we have that 
	\begin{align*}
		&\| K^{\mathfrak{I}}_{\eta[x_{\vec{1}}, x_{\vec{2}}]}\|_{\mathcal{H}_{\mathfrak{I}}}^{2}= (-1)^{k}\sum_{|F|=k}^{n} D_{|F|}^{n,k} \mathfrak{I}(x_{\vec{1}_{F} + \vec{3}_{F^{c}}},x_{\vec{2}_{F} + \vec{3}_{F^{c}}} ), 
	\end{align*}
	where for $k\leq j \leq n$,
	$$
	D_{j}^{n,k}:=\sum_{p=0}^{n-j} \sum_{q=0}^{j}\binom{j}{q}\binom{n-j}{p} w_{p+q}w_{p + j-q}.
	$$
\end{lemma}

\begin{proof}
	Indeed, for the first equation, the first equality is a consequence of Lemma \ref{pdi2topdn} and the fact that $\eta[x_{\vec{1}}, x_{\vec{2}}] \in \mathcal{M}_{k }( \mathds{X}_{n})$. The second equality is a consequence of Equation \eqref{simplidouble}, the complete $n$-symmetry, and the fact that $\mathfrak{I}$ is zero at the extended diagonal $\Delta_{k-1}^{n}(\mathds{X}_{n})$. \\
	For the description of the coefficients $ D_{j}^{n,k}$, note that for any set with $|F|=j$, by counting how many entries with $1$ the coefficients $(\varsigma_{F}+ \xi_{F^{c}})$ and $((\vec{3}-\varsigma)_{F}+ \xi_{F^{c}})$ have, we obtain that
	\begin{align*}
		D_{j}^{n,k}&= \sum_{\xi \in \mathbb{N}_{2}^{n-|F|}}\sum_{\varsigma \in \mathbb{N}_{2}^{|F|}}C_{(\varsigma_{F}+ \xi_{F^{c}})} C_{((\vec{3}-\varsigma)_{F}+ \xi_{F^{c}})}\\
		&= \sum_{\xi \in \mathbb{N}_{2}^{n-|F|}}\sum_{\varsigma \in \mathbb{N}_{2}^{|F|}}w_{p+q} w_{p + j-q} = \sum_{p=0}^{n-j} \sum_{q=0}^{j}\binom{j}{q}\binom{n-j}{p} w_{p+q} w_{p + j-q} .    
	\end{align*}
\end{proof}

Based on Lemma \ref{procedure}, we define that a measure $\eta= \sum_{\alpha \in \mathbb{N}_{2}^{n}} C_{\alpha}\delta_{\alpha} \in     \mathcal{M}( \{1,2\}^{n} )$ is Reversible if 
$$
\sum_{\alpha \in \mathbb{N}_{2}^{n}}C_{\alpha}C_{\vec{3} -\alpha} \neq 0.
$$
This concept will be needed as we will obtain that $\mathfrak{I}$ is a sum of squares by a recurrence argument, in which we have to divide all terms by the factors $D^{n,k}_{n}$. 

Naturally, if $\eta$ is PI, that is, if $\eta=\sum_{j=0} ^{n}w_{j} \sum_{|F|=j}\delta(\vec{1}_{F} + \vec{2}_{F^{c}})$, then being Reversible is equivalent to 
$$
\sum_{q=0}^{n}\binom{n}{q} w_{q}w_{n-q} \neq 0.
$$

Note that if $\eta= \sum_{\alpha \in \mathbb{N}_{2}^{n}} C_{\alpha}\delta_{\alpha} \in     \mathcal{M}_{n}( \{1,2\}^{n} )$, by solving its restrictions, we obtain that $\eta = C \mu_{n}^{n}$ for some $C \in \mathbb{R}$, and for every nonzero $C$, this measure is Reversible, because
$$
\sum_{\alpha \in \mathbb{N}_{2}^{n}}C_{\alpha}C_{\vec{3} -\alpha}= C^{2} \sum_{\alpha \in \mathbb{N}_{2}^{n}}(-1)^{n-|\vec{2}-\alpha|}(-1)^{n-|\vec{2}-(\vec{3}-\alpha)|}=(-1)^{n}C^{2} 2^{n} \neq 0.
$$
The measure $\mu_{k}^{n}$ in Equation \eqref{measorderk} is Reversible because
$$
\sum_{q=0}^{n}\binom{n}{q} w_{q}w_{n-q}=(-1)^{k}\left [2\binom{n-1}{n-k} +\sum_{q=1}^{n-1}\binom{n}{q}\binom{n-q-1}{n-k}\binom{q-1}{n-k}\right ] \neq 0.
$$

\begin{remark} There are nonzero measures in $\mathcal{M}_{k}( \{1,2\}^{n} )$ for $2 \leq k < n$ that are not Reversible. 
	
	First, note that if $\eta=\sum_{j=0} ^{n}w_{j} \sum_{|F|=j}\delta(\vec{1}_{F} + \vec{2}_{F^{c}}) \in \mathcal{M}_{k}( \{1,2\}^{n} )$ is Reversible, then $\eta^{-1}:=\sum_{j=0} ^{n}w_{n-j} \sum_{|F|=j}\delta(\vec{1}_{F} + \vec{2}_{F^{c}}) \in \mathcal{M}_{k}( \{1,2\}^{n} )$ and is Reversible. For a value $a \in \{0,1\}$, the measure $\eta + (-1)^{a}\eta^{-1} \in \mathcal{M}_{k}( \{1,2\}^{n} )$ satisfies
	$$
	\sum_{q=0}^{n}\binom{n}{q} [ w_{q} + (-1)^{a}w_{n-q}][w_{n-q} + (-1)^{a}w_{q} ]= (-1)^{a}    \sum_{q=0}^{n}\binom{n}{q} [ w_{q} + (-1)^{a}w_{n-q}]^{2},
	$$
	hence it is Reversible unless $\eta + (-1)^{a}\eta^{-1}$ is the zero measure. 
	
	When $\eta = \mu_{k}^{n}$, for $2\leq k < n$, the measure $\eta + (-1)^{a}\eta^{-1}$ is always nonzero. In particular, we can choose the value $a \in \{0,1\}$ (depending on $n$ and $k$) so that the values of the Reversible property for the measures $\eta$ and $\eta + (-1)^{a}\eta^{-1}$ have opposite signs. 
	
	By continuity, there must exist an $r \in (0,1)$ for which $r\eta + (1-r)(\eta + (-1)^{a}\eta^{-1})$ is not Reversible. However, the coefficient that multiplies $\sum_{|F|=n-1}\delta(\vec{1}_{F} + \vec{2}_{F^{c}})$ for this measure is $(1-r)(-1)^{a+k+1}\binom{n-2}{n-k}$, which is nonzero for $r\in (0,1)$, which concludes our claim.\end{remark}   

The next theorem presents a representation of PDI$_{k}$ kernels as sums of squares.  

\begin{theorem}\label{finalgeomkn} Let $\mathfrak{I} : \mathds{X}_{n} \times \mathds{X}_{n} \to \mathbb{R} $ be a complete $n$-symmetric PDI$_{k}$ kernel that is zero at the extended diagonal $\Delta_{k-1}^{n}$ and PI Reversible measures $\eta_{j} \in \mathcal{M}_{k }( \{1,2\}^{j} )$, $k \leq j \leq n $. Then for any fixed $x_{\vec{3}} \in \mathds{X}_{n} $, we have that there exist real constants $b_{j }^{n,k} $, $k \leq j \leq n $, for which 
	$$
	\mathfrak{I}(x_{\vec{1}}, x_{\vec{2}})=     \sum_{|F|=k}^{n} b_{|F|}^{n,k} \| K^{\mathfrak{I}}_{\eta_{|F|}[ \vec{1}_{F} + \vec{3}_{F^{c}}, \vec{2}_{F} + \vec{3}_{F^{c}} ]} \|^{2}_{\mathcal{H}_{\mathfrak{I}}}, 
	$$
	for any $x_{\vec{0}} \in \mathds{X}_{n} $ that is used to define $K^{\mathfrak{I}}$, where 
	$$
	\eta_{|F|}[ \vec{1}_{F} + \vec{3}_{F^{c}}, \vec{2}_{F} + \vec{3}_{F^{c}} ] = \eta_{|F|}[x_{\vec{1}_{F}}, x_{\vec{2}_{F}}] \times \delta_{\vec{3}_{F^{c}}}.
	$$        
\end{theorem}

\begin{proof} The proof proceeds by induction on $n$.\\
	If $n=k$, this result is a consequence of Theorem \ref{PDIngeometryrkhs}, as any nonzero measure $\eta_{n} \in \mathcal{M}_{n}( \{1,2\}^{n} )$ is a multiple of $\mu_{n}^{n}$; thus, if $\eta_{n} = C \mu_{n}^{n} $,
	\begin{align*}
		&\frac{1}{2^{n}C^{2}}\| K^{\mathfrak{I}}_{\eta_{n}[ \vec{1}, \vec{2} ]}\|^{2}_{\mathcal{H}_{\mathfrak{I}}}= \frac{1}{2^{n}C^{2}} \| CK^{\mathfrak{I}}_{\mu_{n}^{n}[ \vec{1}, \vec{2} ]}\|^{2}_{\mathcal{H}_{\mathfrak{I}}} =\frac{1}{2^{n}} \| K^{\mathfrak{I}}_{\mu_{n}^{n}[ \vec{1}, \vec{2} ]}\|^{2}_{\mathcal{H}_{\mathfrak{I}}} =\mathfrak{I}(x_{\vec{1}},x_{\vec{2}}).
	\end{align*}
	Now, suppose that the result is valid for all values of $n \in \{k, \ldots, m-1\}$ and we prove that it also holds for $n=m$. By Lemma \ref{projectionkernels}, for every $G \subset \{1, \ldots, m\}$ with $k\leq |G| \leq m-1$, the kernel 
	$$
	(x_{\vec{1}_{G}},x_{\vec{2}_{G}}) \in \mathds{X}_{G} \times \mathds{X}_{G} \to \mathfrak{I}_{G}(x_{\vec{1}_{G}},x_{\vec{2}_{G}}):=\mathfrak{I}(x_{\vec{1}_{G} + \vec{3}_{G^{c}} },x_{\vec{2}_{G} + \vec{3}_{G^{c}}})
	$$
	is a complete $|G|$-symmetric PDI$_{k}$ kernel that is zero at the extended diagonal $\Delta_{k-1}^{|G|}$ in $\mathds{X}_{G}$. By the induction hypothesis, we have that 
	$$
	\mathfrak{I}(x_{\vec{1}_{G} + \vec{3}_{G^{c}} },x_{\vec{2}_{G} + \vec{3}_{G^{c}}})= \mathfrak{I}_{G}(x_{\vec{1}_{G}}, x_{\vec{2}_{G}})= \sum_{j=k}^{|G|}     \sum_{|F|=j, F \subset G }b_{|F|}^{|G|,k}\| K^{\mathfrak{I}_{G}}_{\eta_{j}[ \vec{1}_{F} + \vec{3}_{G-F}, \vec{2}_{F} + \vec{3}_{G-F} ]} \|^{2}_{ \mathcal{H}_{\mathfrak{I}_{G}}}, 
	$$
	while Lemma \ref{indpdi2} asserts that 
	$$
	\| K^{\mathfrak{I}_{G}}_{\eta_{j}[ \vec{1}_{F} + \vec{3}_{G-F}, \vec{2}_{F} + \vec{3}_{G-F} ]} \|^{2}_{\mathcal{H}_{\mathfrak{I}_{G}}}= \| K^{\mathfrak{I}}_{\eta_{j}[ \vec{1}_{F} + \vec{3}_{F^{c}}, \vec{2}_{F} + \vec{3}_{F^{c}} ]} \|^{2}_{ \mathcal{H}_{\mathfrak{I}}}.
	$$
	To conclude, if $ \eta_{n} = \sum_{\ell=0}^{n}w_{\ell} \sum_{|F|=\ell} \delta_{\vec{1}_{F} + \vec{2}_{F^{c}}}$, using Lemma \ref{procedure} to define
	\begin{align*}
		D^{n,k}_{\ell}:&= \sum_{p=0}^{n-\ell} \sum_{q=0}^{\ell}\binom{\ell}{q}\binom{n-\ell}{p} w_{p+q}w_{p + \ell-q}, 
	\end{align*}
	we reach that
	\begin{align*}
		&\|K^{\mathfrak{I}}_{\eta_{n}[ \vec{1}, \vec{2} ]}\|_{\mathcal{H}_{\mathfrak{I}}}^{2}= (-1)^{k}\sum_{|G|=k}^{n} D^{n,k}_{|G|} \mathfrak{I}(x_{\vec{1}_{G} + \vec{3}_{G^{c}}},x_{\vec{2}_{G} + \vec{3}_{G^{c}}} )\\
		&= (-1)^{k}D_{n}^{n,k} \mathfrak{I}(x_{\vec{1} },x_{\vec{2} } ) +(-1)^{k}\sum_{|G|=k}^{n-1} D^{n,k}_{|G|} \mathfrak{I}(x_{\vec{1}_{G} + \vec{3}_{G^{c}}},x_{\vec{2}_{G} + \vec{3}_{G^{c}}} )\\
		&= (-1)^{k}D_{n}^{n,k} \mathfrak{I}(x_{\vec{1} },x_{\vec{2} } ) +(-1)^{k}\sum_{|G|=k}^{n-1} D^{n,k}_{|G|} \left [ \sum_{j=k}^{|G|}     \sum_{|F|=j, F \subset G } b_{|F|}^{|G|,k}\| K^{\mathfrak{I}}_{\eta_{j}[ \vec{1}_{F} + \vec{3}_{F^{c}}, \vec{2}_{F} + \vec{3}_{F^{c}} ]} \|^{2}_{\mathcal{H}_{\mathfrak{I}}}\right ].   
	\end{align*}
	By a simple combinatorial argument, note that 
	\begin{align*}
		\sum_{|G|=k}^{n-1} D^{n,k}_{|G|} &\left [ \sum_{j=k}^{|G|}     \sum_{|F|=j, F \subset G } b_{|F|}^{|G|,k}\| K^{\mathfrak{I}}_{\eta_{j}[ \vec{1}_{F} + \vec{3}_{F^{c}}, \vec{2}_{F} + \vec{3}_{F^{c}} ]} \|^{2}_{ \mathcal{H}_{\mathfrak{I}}}\right ] \\
		&=     \sum_{|F|=k}^{n-1} E_{|F|}^{n,k}\| K^{\mathfrak{I}}_{\eta_{|F|}[ \vec{1}_{F} + \vec{3}_{F^{c}}, \vec{2}_{F} + \vec{3}_{F^{c}} ]} \|^{2}_{ \mathcal{H}_{\mathfrak{I}}}, 
	\end{align*}
	where 
	$$
	E_{j}^{n,k} = \binom{n-j}{0}D^{n,k}_{j+0}b^{j+0,k}_{j} + \ldots + \binom{n-j}{n-j-1}D^{n,k}_{n-1}b^{n-1,k}_{j} = \sum_{ r=0}^{n-j-1}\binom{n-j}{r}D^{n,k}_{j+r}b^{j+r,k}_{j}, 
	$$
	hence we reach that   
	$$
	\mathfrak{I}(x_{\vec{1} },x_{\vec{2} } ) = \frac{(-1)^{k}}{D_{n}^{n,k}}\|K^{\mathfrak{I}}_{\eta_{n}[ \vec{1}, \vec{2} ]}\|_{\mathcal{H}_{\mathfrak{I}}}^{2} - \sum_{|F|=k}^{n-1} \frac{E_{|F|}^{n,k}}{D_{n}^{n,k}}\| K^{\mathfrak{I}}_{\eta_{|F|}[ \vec{1}_{F} + \vec{3}_{F^{c}}, \vec{2}_{F} + \vec{3}_{F^{c}} ]} \|^{2}_{ \mathcal{H}_{\mathfrak{I}}}, 
	$$
	which concludes the proof.
\end{proof}

A version of Theorem \ref{finalgeomkn} still holds if we remove the assumption that the measures $\eta_{j}$ are PI. A direct application of this description is the following interesting result. 

\begin{corollary}\label{zerofunction} Let $\mathfrak{I} : \mathds{X}_{n} \times \mathds{X}_{n} \to \mathbb{R} $ be a complete $n$-symmetric PDI$_{k}$ kernel that is zero at the extended diagonal $\Delta_{k-1}^{n}$. If 
	$$
	\int_{\mathds{X}_{n}}\int_{\mathds{X}_{n}} (-1)^{k}\mathfrak{I}(x_{\vec{1} },x_{\vec{2} } ) d\mu(x_{\vec{1}})d\mu(x_{\vec{2} })=0
	$$
	for every $\mu \in \mathcal{M}_{k}(\mathds{X}_{n})$, then $\mathfrak{I} $ is the zero function.
	
\end{corollary}

\begin{proof} By the hypothesis and Lemma \ref{pdi2topdn}, we have that for any PI Reversible measure $\eta_{j} \in \mathcal{M}_{k }( \{1,2\}^{j} )$, any fixed $x_{\vec{3}} \in \mathds{X}_{n} $, and $F \subset \{1,\ldots n \}$ with $|F|=j$, where $k \leq j \leq n $, 
	\begin{align*}
		& \| K^{\mathfrak{I}}_{\eta_{j}[ \vec{1}_{F} + \vec{3}_{F^{c}}, \vec{2}_{F} + \vec{3}_{F^{c}} ]} \|^{2}_{\mathcal{H}_{\mathfrak{I}}} \\
		&= \int_{\mathds{X}_{n}}\int_{\mathds{X}_{n}} (-1)^{k}\mathfrak{I}(u,v ) d[\eta_{j}[ \vec{1}_{F} + \vec{3}_{F^{c}}, \vec{2}_{F} + \vec{3}_{F^{c}} ] ](u)d[\eta_{j}[ \vec{1}_{F} + \vec{3}_{F^{c}}, \vec{2}_{F} + \vec{3}_{F^{c}} ]](v)=0.
	\end{align*}
	The expression for $\mathfrak{I}$ in Theorem \ref{finalgeomkn} implies that $    \mathfrak{I}(x_{\vec{1}}, x_{\vec{2}})=0$ for every $x_{\vec{1}}, x_{\vec{2}} \in \mathds{X}_{n}$.
\end{proof}

When the codimension is zero, that is, when $n=k$, as shown at the beginning of the proof of Theorem \ref{finalgeomkn}, we have that  for $C\neq 0$
$$
\mathfrak{I}(x_{\vec{1}},x_{\vec{2}})=\frac{1}{2^{n}C^{2}}\| K^{\mathfrak{I}}_{C\mu_{n}^{n}[ \vec{1}, \vec{2} ]}\|^{2}_{\mathcal{H}_{\mathfrak{I}}},
$$
hence by choosing $C=2^{-n/2}$, that is, $\eta_{n} = 2^{-n/2}\mu_{n}^{n} $, we get that  $b_{n}^{n,n} = 1$ for every $n \in \mathbb{N}$. 

From this initial term we can present an explicit case of Theorem   \ref{finalgeomkn}, however, on it, not all coefficients are nonnegative. 

\begin{example}\label{examplecoefi} Let $\mathfrak{I} : \mathds{X}_{n} \times \mathds{X}_{n} \to \mathbb{R} $ be a complete $n$-symmetric PDI$_{k}$ kernel that is zero at the extended diagonal $\Delta_{k-1}^{n}$. The PI Reversible measures $\eta_{j}= 2^{-j/2}\mu_{j}^{j} \in \mathcal{M}_{j }( \{1,2\}^{j} ) \subset \mathcal{M}_{k }( \{1,2\}^{j} ) $,  where $k \leq j \leq n $ satisfies that
	$$
	\mathfrak{I}(x_{\vec{1}}, x_{\vec{2}})=     \sum_{|F|=k}^{n} (-1)^{|F| + k} \| K^{\mathfrak{I}}_{\eta_{|F|}[ \vec{1}_{F} + \vec{3}_{F^{c}}, \vec{2}_{F} + \vec{3}_{F^{c}} ]} \|^{2}_{\mathcal{H}_{\mathfrak{I}}}, 
	$$
	for any $x_{\vec{0}} \in \mathds{X}_{n} $ that is used to define $K^{\mathfrak{I}}$.      
\end{example}

\begin{proof}We prove by induction on $n$, where the first case was presented before the statement of this Corollary.\\
	Now, suppose that  $b_{j}^{m, k}= (-1)^{j+k}$ for every $k\leq m \leq n-1$ and $k\leq j \leq n-1$, and we shall prove it the representation for $b_{j}^{n,k}$ for $k \leq j \leq n$.\\
	First, note that by Equation \eqref{munn}, we may compute the terms $D^{n,k}_{\ell}$ for the measure $\eta_{n}$    
	\begin{align*}
		D^{n,k}_{\ell}&= \sum_{p=0}^{n-\ell} \sum_{q=0}^{\ell}\binom{\ell}{q}\binom{n-\ell}{p} w_{p+q}w_{p + \ell-q}\\
		&=2^{-n}\sum_{p=0}^{n-\ell} \sum_{q=0}^{\ell}\binom{\ell}{q}\binom{n-\ell}{p} (-1)^{n-p-q}(-1)^{n-p-\ell+q}=(-1)^{\ell}. 
	\end{align*}
	Thus $b_{n}^{n,k}= (-1)^{k}/D_{n}^{n,k}= (-1)^{n+k}$ and since $\sum_{ r=0}^{n-j-1}\binom{n-j}{r} (-1)^{r}=(-1)^{n-j-1}$, we get that
	\begin{align*}
		b_{j}^{n,k}&= \frac{-1}{D_{n}^{n,k}}\left [\sum_{ r=0}^{n-j-1}\binom{n-j}{r}D^{n,k}_{j+r}b^{j+r,k}_{j}\right ]\\
		& =(-1)^{n+1}\left [\sum_{ r=0}^{n-j-1}\binom{n-j}{r} (-1)^{j+r}(-1)^{j+k} \right ]\\
		&=(-1)^{n+1}(-1)^{k+n-j-1}=(-1)^{k+j}.
	\end{align*}
\end{proof}

In the following subsection, we provide explicit examples for when the codimension $n-k$ is $1$ or $2$ in a way that the coefficients are nonnegative. For this, we shall use the moment formulas:

\begin{align*}
	\sum_{q=0}^{n}\binom{n}{q} &= 2^n, \\
	\sum_{q=0}^{n}\binom{n}{q}q &= n 2^{n-1}, \\
	\sum_{q=0}^{n}\binom{n}{q}q^{2} &= n(n+1) 2^{n-2}, \\
	\sum_{q=0}^{n}\binom{n}{q}q^{3} &= n^2(n+3) 2^{n-3}, \\
	\sum_{q=0}^{n}\binom{n}{q}q^{4} &= n(n^3 + 6n^2 + 3n - 2)2^{n-4},
\end{align*}
which are obtained through the binomial equality $(1+x)^{n}= \sum_{j=0}^{n}\binom{n}{j}x^{j}$.

\subsection{Codimension $1$}\label{cod1}
We describe the case where $k=n-1$. The coefficients of the measure  $\eta_{n}$ that  simplifies the most our expressions are 
$$
w_{q}^{n,n-1}:=  (-1)^{q}2^{-n/2}n^{-1/2}(-n+2q)
$$ 
that is, $A_{0}=-n^{1/2}2^{-n/2}$ and $A_{1}=n^{-1/2}2^{(-n+2)/2}$, as explained in Equation \eqref{recursive}. We continue to use $\eta_{n-1}=2^{-(n-1)/2}\mu_{n-1}^{n-1}$ presented before the statement of Example \ref{examplecoefi}, so that $b_{n-1}^{n-1,n-1}=1$.

Note that  $w_{q}^{n,n-1}=(-1)^{n+1}w_{n-q}$ for every possible $q$. By the proof of Theorem \ref{finalgeomkn}, we have that

\begin{align*}
	D^{n,n-1}_{n}&= \sum_{q=0}^{n}\binom{n}{q} w_{q}^{n,n-1}w_{ n-q}^{n,n-1}\\
	&= (-1)^{n+1}n^{-1}2^{-n}\sum_{q=0}^{n}\binom{n}{q}(-n+2q)^{2}\\
	&= (-1)^{n+1}n^{-1}2^{-n}\left [4\sum_{q=0}^{n}\binom{n}{q}q^{2} - 4n\sum_{q=0}^{n}\binom{n}{q}q +n^{2}\sum_{q=0}^{n}\binom{n}{q}\right ]\\
	&=(-1)^{n+1},
\end{align*}

\begin{align*}   
	&D^{n,n-1}_{n-1}:= \sum_{p=0}^{1} \sum_{q=0}^{n-1}\binom{n-1}{q}\binom{1}{p} w_{p+q}^{n,n-1}w_{p + n-1-q}^{n,n-1}\\
	&= \sum_{q=0}^{n-1}\binom{n-1}{q} w_{q}^{n,n-1}w_{ n-1-q}^{n,n-1} + \sum_{q=0}^{n-1}\binom{n-1}{q} w_{1+q}^{n,n-1}w_{ n-q}^{n,n-1}\\
	&= (-1)^{n-1}n^{-1}2^{-n}\sum_{q=0}^{n-1}\binom{n-1}{q}[ (-n+2q)(n-2q-2) +(-n+2q+2)(n-2q)]\\
	&= (-1)^{n-1}n^{-1}2^{-n}\left[ -8\sum_{q=0}^{n-1}\binom{n-1}{q}q^{2} + 8(n-1)\sum_{q=0}^{n-1}\binom{n-1}{q}q + ( 4n -2n^{2})\sum_{q=0}^{n-1}\binom{n-1}{q} \right ]\\
	&= (-1)^{n}\frac{n-2}{n}. 
\end{align*}

Hence    

\begin{align*}
	E_{n-1}^{n,n-1} &=D^{n,n-1}_{n-1}b^{n-1,n-1}_{n-1}=(-1)^{n}\frac{n-2}{n}.
\end{align*}         

Thus
$$
b^{n,n-1}_{n}=\frac{(-1)^{n-1}}{D_{n}^{n,n-1}} =1, 
$$
and
$$
b^{n,n-1}_{n-1}=-\frac{E_{n-1}^{n,n-1}}{D_{n}^{n,n-1}}= \frac{n-2}{n},
$$         
and in particular, $\mathfrak{I}$ is nonnegative as it is the sum of nonnegative terms.

\subsection{Codimension $2$}\label{cod2}
We describe the case where $k=n-2$. For this case we continue to use the PI reversible measures
$$
\eta_{n-1}^{n,n-2}= \sum_{q=0}^{n-1}\sum_{|F|=j}(-1)^{q}(n-1)^{-1/2}2^{-(n-1)/2}(-(n-1)+2q)\delta(\vec{1}_{F} + \vec{2}_{F^{c}}) \in \mathcal{M}_{n-2}(\{1,2\}^{n-1}),
$$

$$
\eta_{n-2}^{n,n-2}= \sum_{q=0}^{n-2}\sum_{|F|=j}(-1)^{q}2^{-n/2}\delta(\vec{1}_{F} + \vec{2}_{F^{c}}) \in \mathcal{M}_{n-2}(\{1,2\}^{n-2})
$$

Following the steps in the proof of Theorem \ref{finalgeomkn}, we have that

$$
b_{n}^{n,n-2}=\frac{(-1)^{n-2}}{D_{n}^{n,n-2}}, 
$$

$$
b_{n-1}^{n,n-2} = -\frac{E_{n-1}^{n,n-2}}{D_{n}^{n,n-2}}= -\frac{D_{n-1}^{n,n-2}b_{n-1}^{n-1,n-2}}{D_{n}^{n,n-2}},
$$

$$
b_{n-2}^{n,n-2} = -\frac{E_{n-2}^{n,n-2}}{D_{n}^{n,n-2}}= -\frac{D_{n-2}^{n,n-2}b_{n-2}^{n-2,n-2} + 2D_{n-1}^{n,n-2}b_{n-2}^{n-1,n-2}}{D_{n}^{n,n-2}}, 
$$ 
where, by the results on codimension $1$ and $0$  
$$
b_{n-1}^{n-1,n-2}=1, \quad b_{n-2}^{n-2,n-2}= 1, \quad b_{n-2}^{n-1,n-2}=\frac{n-3}{n-1}.
$$

We compute the $b_{j}^{n,n-2}$ coefficients using two distinct measures $\eta_{n}^{n,n-2} $

\textbf{Case 1} Using the measure $2^{-n/2}\mu_{n}^{n} \in \mathcal{M}_{n}(\{1,2\}^{n}) \subset  \mathcal{M}_{n-2}(\{1,2\}^{n}) $

The values of $D_{n}^{n,n-2}$, $D_{n-1}^{n,n-2}$ and $D_{n-2}^{n,n-2}$ have already been computed for this measure in Example \ref{examplecoefi} and are respectively $(-1)^{n}$, $(-1)^{n-1}$ and $(-1)^{n-2}$. Plugging them into the expressions we aim to analyze we get that
$$
b_{n}^{n,n-2}=\frac{(-1)^{n-2}}{D_{n}^{n,n-2}}= 1,
$$
$$
b_{n-1}^{n,n-2} = -\frac{D_{n-1}^{n,n-2}b_{n-1}^{n-1,n-2}}{D_{n}^{n,n-2}}=1,
$$
$$
b_{n-2}^{n,n-2}=\frac{-D_{n-2}^{n,n-2}b_{n-2}^{n-2,n-2} - 2D_{n-1}^{n,n-2}b_{n-2}^{n-1,n-2}}{D_{n}^{n,n-2}}= -1+2\frac{n-3}{n-1}= \frac{n-5}{n-1}.
$$
On this case, all $3$ coefficients are nonnegative when $n\geq 5$. 

\textbf{Case 2} Using the measure $\sum_{q=0}^{n}\sum_{|F|=j}(-1)^{q}2^{-n/2}(-n+2q)^{2}\delta(\vec{1}_{F} + \vec{2}_{F^{c}}) \in \mathcal{M}_{n-2}(\{1,2\}^{n})$.

\begin{align*}
	D^{n,n-2}_{n}&= \sum_{q=0}^{n}\binom{n}{q} w_{q}^{n,n-2}w_{ n-q}^{n,n-2}\\
	&= (-1)^{n}2^{-n}\sum_{q=0}^{n}\binom{n}{q}(-n+2q)^{2}(n-2q)^{2}\\
	&= (-1)^{n}2^{-n}\sum_{q=0}^{n}\binom{n}{q} \left [16q^{4} - 32nq^{3} +24n^{2}q^{2}-8n^{3}q + n^{4}\right ]\\
	&=(-1)^{n}(3n^{2} -2n),
\end{align*}

\begin{align*}
	(-1)^{n-1}2^{n}D^{n,n-1}_{n}&= (-1)^{n-1}2^{n}\sum_{p=0}^{1}\sum_{q=0}^{n-1}\binom{n-1}{q}\binom{1}{p} w_{p+q}^{n,n-2}w_{ p+n-q-1}^{n,n-2}\\
	&= \sum_{q=0}^{n-1}\binom{n-1}{q}[(-n+2q)^{2} (n-2q-2)^{2} + (-n+2q+2)^{2}(n-2q)^{2}   ]\\
	&=\sum_{q=0}^{n-1}\binom{n-1}{q} \left [ 32q^{4} +64q^{3}(-n+1) +16q^{2}(3n^{2}-6n+2) \right .\\  
	& \quad \quad \quad \quad \quad \quad \left . +16q(-n^{3} +3n^{2} -2) +2(n^{4}-4n^{3} +4n^{2})     \right ]\\
	&=2^{n}(3n^{2} -10n +8).
\end{align*}

\begin{align*}
	&(-1)^{n-2}2^{n}D^{n,n-2}_{n-2}= \sum_{p=0}^{2}\sum_{q=0}^{n-2}\binom{n-2}{q}\binom{2}{p} w_{p+q}^{n,n-2}w_{ p+n-q-2}^{n,n-2}\\
	&=\sum_{q=0}^{n-2}\binom{n-2}{q}[ (-n+2q)^{2} (n-2q-4)^{2} + 2(-n+2q+2)^{4}+  (-n+2q+4)^{2} (n-2q)^{2}  ].
\end{align*}	
Since 

\begin{align*}
	(-n+2q)^{2} (n-2q-4)^{2}& + 2(-n+2q+2)^{4} + (-n+2q+4)^{2} (n-2q)^{2}\\
	=& (4n^{4} -32n^{3} +80n^{2} -64n +32)+  (-32n^{3} +192n^{2}-320n +128 )q \\
	&+ (96n^{2}-384n +320)q^{2} + ( -128n +256)q^{3} + 64q^{4},
\end{align*}
by using the binomial sum identities we get that
$$
(-1)^{n-1}2^{n}D^{n,n-2}_{n-2}=2^{n}(3n^2 - 18n + 32).
$$
Plugging those values into the expressions we aim to analyze we get that
$$
b_{n}^{n,n-2}=\frac{(-1)^{n-2}}{D_{n}^{n,n-2}}= \frac{1}{3n^{2} -2n} ,
$$
$$
b_{n-1}^{n,n-2} = -\frac{D_{n-1}^{n,n-2}b_{n-1}^{n-1,n-2}}{D_{n}^{n,n-2}}=\frac{3n^{2} -10n +8}{3n^{2} -2n},
$$
\begin{align*}
	b_{n-2}^{n,n-2}&=\frac{-D_{n-2}^{n,n-2}b_{n-2}^{n-2,n-2} - 2D_{n-1}^{n,n-2}b_{n-2}^{n-1,n-2}}{D_{n}^{n,n-2}}\\
	&=\frac{-(3n^{2} -18n +32) +2(\frac{n-3}{n-1})(3n^{2} -10n +8)}{3n^{2}-2n}=\frac{3n^{3} -17n^{2} +26n -16}{(n-1)(3n^{2} -2n)},
\end{align*}
and on this case, all $3$ coefficients are nonnegative when $n\geq 4$. 		
	\bibliographystyle{plainnat}
	\bibliography{sample}
	
	\end{document}